\definecolor{rouge}{rgb}{0.7,0.00,0.00}
\definecolor{vert}{rgb}{0.00,0.5,0.00}
\definecolor{bleu}{rgb}{0.00,0.00,0.8}
\newtheorem{theorem}{Theorem}[section]
\newtheorem*{theorem*}{Theorem}
\newtheorem{lemma}[theorem]{Lemma}
\newtheorem{corollary}[theorem]{Corollary}
\newtheorem{condition}{Condition}
\newtheorem{conditionA}{A\kern-0.1mm}
\renewcommand\dots{\hbox to 1em{.\hss.\hss.}}
\theoremstyle{definition}
\newtheorem{remark}[theorem]{Remark}
\def \eref#1{\hbox{(\ref{#1})}}
\numberwithin{equation}{section}
\newcommand*{\abs}[1]{\left\lvert#1\right\rvert}
\def\bb#1{\mathbb{#1}}
\def\geq{\geqslant}
\def\leq{\leqslant}
\newcommand\ee{\varepsilon}
\def\geq{\geqslant}
\def\leq{\leqslant}
\def\Rd {\bb R ^d}
\def\Rd*{(\bb R ^d)^*}
\def\Pd{{\mathbb{P}}^{d-1}}
\def\Pd*{(\mathbb{P}^{d-1})^*}
\def\bb#1{\mathbb{#1}}
\newcommand\veps{\ee}
\begin{document}
\title[Conditioned local limit theorems]{Conditioned local limit theorems for random walks on the real line} 
\author{ Ion~Grama}
\curraddr[I. Grama]{ Universit\'{e} de Bretagne Sud, Laboratoire de Math\'ematiques de Bretagne Atlantique, UMR CNRS 6205, 
     Centre Yves Coppens, Campus de Tohannic, 56017 Vannes, France}
\email[I. Grama]{ion.grama@univ-ubs.fr}
\author{Hui Xiao}
\curraddr[H. Xiao]{Universit\"{a}t Hildesheim, Institut f\"{u}r Mathematik und Angewandte Informatik, 31141 Hildesheim, Germany}
\email[H. Xiao]{xiao@uni-hildesheim.de}

\date{\today }
\subjclass[2020]{ Primary 60G50, 60F05, 60F10. Secondary 60J05 }  
\keywords{Exit time, random walks conditioned to stay positive, local limit theorems, large deviations}

\begin{abstract}   
Consider a random walk $S_n=\sum_{i=1}^n X_i$ with independent 
and identically distributed real-valued increments $X_i$
of zero mean and finite variance. Assume that $X_i$ is non-lattice and has a moment of order $2+\delta$.
For any $x\geq 0$, let 
$\tau_x  = \inf \left\{ k\geq 1: x+S_{k} < 0 \right\}$
be the first time when the random walk $x+S_n$ leaves the half-line $[0,\infty)$.
We study the asymptotic behavior of the probability $\bb P (\tau_x >n)$  
and that of the expectation 
$\mathbb{E} \left( f(x + S_n ),  \tau_x > n \right)$   
for a large class of target function $f$ and various values of $x$, $y$ possibly depending on $n$.  
This general setting implies limit theorems for the joint distribution 
$\mathbb{P} \left( x + S_n  \in y+ [0, \Delta],  \tau_x > n \right)$ where $\Delta>0$ may also depend on $n$.
In particular, the case of moderate deviations $y=\sigma \sqrt{q n\log n}$ 
is considered. 
We also deduce some new asymptotics for random walks with drift and
 give explicit constants in the asymptotic of the probability $\bb P (\tau_x =n)$. 
For the proofs we establish new conditioned integral limit theorems with precise error terms.
  
\end{abstract}

\maketitle

\tableofcontents

%


\section{Main results}\label{Sect-Mean0}

\subsection{Introduction and assumptions}\label{Subsec-Intro}
Assume that on the probability space $\left( \Omega ,\mathscr{F},\mathbb{P}\right)$ 
we are given a sequence of independent identically distributed real-valued random variables $(X_i)_{i\geq 1}$ 
with $\bb E X_1 = 0$ and  $\bb E X_1^2 = \sigma^2 \in (0, \infty)$.   
Let $S_n =\sum_{i=1}^{n}X_{i}$, $n \geq 1$. 
For any starting point $x\in \bb R_+: = [0,\infty)$, 
consider the first moment $\tau_x$ when the random walk $(x+S_n )_{n\geq 1}$ 
goes below the constant boundary $0$, 
which is defined as  
\begin{align*}
\tau_x 
= \inf \left\{ k \geq 1: x+S_{k} < 0 \right\}.
\end{align*}
For $\Delta > 0$, $x\geq 0$ and $y\geq 0$ possibly depending on $n$, 
consider the probability
\begin{align} \label{Objective-proba001}
\mathbb{P} \left( x + S_n  \in y+ [0, \Delta],  \tau_x > n \right). 
\end{align} 
The asymptotic behavior of the probability \eqref{Objective-proba001} has been studied by many authors,
we refer to 
L\'{e}vy \cite{Levy37},    Borovkov \cite{Borovk62, Borovkov04a, Borovkov04b},   Feller \cite{Fel64}, Spitzer \cite{Spitzer},
Bolthausen \cite{Bolth}, Iglehart \cite{Igle74}, Eppel \cite{Eppel-1979},  Bertoin and Doney \cite{BertDoney94},  
Caravenna \cite{Carav05},  Vatutin and Wachtel \cite{VatWacht09},  
Denisov and Wachtel \cite{Den Wachtel 2011}, Kersting and Vatutin \cite{KV17},
Denisov, Sakhanenko and Wachtel \cite{DSW18} and to the references therein.  
In particular,  Eppel \cite{Eppel-1979} and later Vatutin and Wachtel \cite{VatWacht09} 
have found its asymptotic  in the case when  $\frac{x}{\sqrt{n}}\to 0$ and $\frac{y}{\sqrt{n}}\to 0.$ 
However, there are very few results dealing with the case when these conditions are not satisfied. 
We refer to Doney \cite{Don12} for the case when 
$\frac{y}{\sqrt{n}} \sim c_1$ and $\frac{x}{\sqrt{n}} \sim c_2$
for some constants $c_1, c_2 >0$. 
Precise large deviations for the special class of heavy tailed distributions have been studied in 
Doney and Jones \cite{DJ12}. 
Note also that the results in \cite{Don12} and \cite{VatWacht09} are stated for the case of random variables 
in the domain of attraction of stable laws, which will not be considered here.

In this paper, we shall study the asymptotic of the probability \eqref{Objective-proba001}  when 
$x, y \geq 0$ are moving to $\infty$ under some moment assumptions on the increment $X_1$.
Before  stating our main results we give the necessary definitions and  introduce some notation.

We first recall the definition of non-latticity: a random variable $X$ is said to be non-lattice 
if for any $h>0$ and $a\in [0,h)$ it holds that
$\bb P(X \in  h\bb Z +a )\not=1$, where $\bb Z$ is the set of integers. 
We shall make use of the following conditions: 



\begin{conditionA}\label{A1}
The law of the random variable $X_1$ is non-lattice. 
%
\end{conditionA}

\begin{conditionA}\label{SecondMoment}
$\bb E X_1 = 0$ and there exists $\delta>0$ such that $\bb E (|X_1|^{2+\delta})  < \infty.$
\end{conditionA}

The limit behavior of the probability \eqref{Objective-proba001} depends on two harmonic functions 
related to the random walk $(x+S_n )_{n\geq 1}$, which we proceed to introduce. 
It is well known (see \cite{BertDoney94})  
that under condition \ref{SecondMoment}, 
the function
\begin{align*}
x \in \bb R_+ \mapsto V\left(x\right) = -\mathbb E S_{\tau_x} 
\end{align*}
is well-defined and strictly positive. 
Using the results of Tanaka \cite[Lemma 1]{Tanaka1999} and the renewal arguments, 
one can verify that 
for any $x\geq 0$,
\begin{align} \label{Doob transf}
\bb E V(x+S_1)\mathds 1 _{\{ x+S_1\geq 0  \}} = V(x).   
\end{align} 
The function $V$ will be called harmonic function of the random walk $(x+S_n )_{n\geq 1}$ killed at $\tau_x$. 
The harmonic function $V$ is uniquely defined up to a constant factor.
If $U$ is the renewal function  
in the strict increasing ladder process of $(S_n)_{n\geq 1}$   (\cite{KV17}), then, 
for any  $x\geq 0$ it holds $U(x)=V(x)/V(0)$. 
Due to identity \eqref{Doob transf}, the function $V$ can be uniquely extended to the whole real line $\bb R$
by setting $V(x)= \bb E V(x+S_1)\mathds 1 _{\{ x+S_1\geq 0  \}}$ for $x<0$, 
so that the harmonicity property \eqref{Doob transf} is preserved. 
Moreover, the support of $V$ is given by (\cite[Example 2.10]{GLL18Ann})
$\mathscr D = \{ x \in \bb R: \bb P(x + X_1 > 0) >0  \}$. 

Let $\left( S^*_{n}\right) _{n\geq 1}$ be the dual random walk: $S^*_{n}=-\sum_{i=1}^{n}X_{i}$. 
Denote by $\tau^*_x$ the dual exit time: $\tau^*_x = \inf \left\{ k\geq 1: x+S^*_{k} < 0 \right\}$.  
For $x \geq 0$, let  
$V^*(x) : = -\bb E (S_{\tau_x^*}^*)$
be  the harmonic function of the random walk $(x+S_n^* )_{n\geq 1}$
killed at $\tau_x^*$. 
The function $V^*$ can be extended to the whole real line in the same way as the function $V$. 

Denote by $\Phi(x)=\frac{1}{\sqrt{2\pi}}\int_{-\infty}^x e^{-t^2/2} dt$, $x\in \bb R$ the standard normal distribution function. 
Let $\phi^+$ be the Rayleigh density function, i.e. 
\begin{align} \label{Def-Rayleighdens}
\phi^+(s) = s e^{-s^{2}/2} \mathds 1_{\{ s \geq 0 \} }, \quad s \in \bb R.   
\end{align} 
When the starting point $x$ is far from the boundary we shall make use of the following function:
\begin{align} \label{Def-Levydens}
\psi(s,x)=\frac{1}{\sqrt{2\pi }}
   \left( e^{-\frac{(s-x)^2}{2}}- e^{-\frac{(s+x)^2}{2}} \right),  \quad  s, x \in \bb R.   
\end{align} 
Note that $\psi(s,x)>0$ for any $s, x>0$, and that $\psi(s,0)=0$ for any $s\in \bb R$.
It is useful to note that for any fixed $x > 0$, the function $s\in \bb R_+ \mapsto \psi(s,x)$  
is not a density on $\bb R_+$, 
but with the normalization factor $1/(2 \Phi( x ) -1)$ it becomes one.
By straightforward calculations, one can verify that uniformly in $s$ 
over compact sets of $\bb R_+$,
\begin{align*} 
\lim_{x\to 0} \frac{\psi(s,x)}{2 \Phi( x ) -1} = \phi^+(s).
\end{align*}
%
%
%

In the sequel, the notation $f_{\alpha}(n) \sim g_{\alpha}(n)$, uniformly in $\alpha \in A$ as $n\to \infty$, 
 means that $\lim_{n \to \infty} \sup_{\alpha \in A}\frac{f_{\alpha}(n)}{g_{\alpha}(n)} = 1$. 
 For short, we will write $\bb E (X; B)$ for the expectation $\bb E (X \mathds 1_{B})$.

\subsection{Starting point near the boundary}\label{Sect-CLLT-AA}
In this section we formulate our results when the starting point $x$ is near the boundary $0$.
Below we fix a target function $f(\cdot)$ on the sum $x+S_n-y$ where $y$ is a drift variable.
The following result is effective when the drift $y$ moves to infinity. 
It will be obtained as a consequence of the more general Theorem \ref{t-B 002}.

\begin{theorem} \label{Theorem-AA001}
Assume \ref{A1} and \ref{SecondMoment}. 
Let $f: \bb R \mapsto \bb R$ be a directly Riemann integrable function with support in $\bb R_+$ 
such that 
$\int_{\bb R_+}  f(t) (1 + t)  dt < \infty$. 
Then, for any $\eta \in (0, 1]$ and any sequence of positive numbers $(\alpha_n)_{n \geq 1}$ satisfying
$\lim_{n\rightarrow \infty} \alpha_n =  0$, 
we have, as $n \to \infty$,   uniformly in $x \in [0, \alpha_n \sqrt{n}]$ and 
 $y \in [\eta \sqrt{n}, \eta^{-1} \sqrt{n}]$,
 \begin{align}\label{demoCLLT-003_Targetaa}
\mathbb E \left( f( x+S_n - y);  \,  \tau_x > n  \right) 
   \sim
 \frac{2V (x) }{\sqrt{2\pi } \sigma^2 n } 
   \phi^+  \left( \frac{y}{ \sigma \sqrt{n}} \right)  \int_{\bb R_+} f(t)  dt. 
\end{align}
Moreover, there exist constants $\ee_0,  \delta_0 >0$ such that for any $\ee \in (0, \ee_0)$, $q \in (0, \delta_0)$ and $\eta >0$,
 as $n \to \infty$,  
 the asymptotic \eqref{demoCLLT-003_Targetaa} holds
 uniformly in $x \in [0, n^{1/2 - \ee}]$ and 
 $y \in [\eta \sqrt{n}, \sigma \sqrt{q n \log n}]$. 
\end{theorem}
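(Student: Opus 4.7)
The plan is to derive Theorem \ref{Theorem-AA001} by specializing the more general Theorem \ref{t-B 002}, which handles a wider range of starting points $x$ and drifts $y$. Given the placement of the $\psi$--$\phi^+$ identity in the preliminaries, I anticipate that Theorem \ref{t-B 002} provides an asymptotic formulated in terms of $V(x)$ and the density $\psi(y/(\sigma\sqrt n), x/(\sigma\sqrt n))$ (suitably normalized by $2\Phi(x/(\sigma\sqrt n)) - 1$), and that the purpose of the specialization is precisely to collapse this into the near-boundary form involving $V(x)$ and $\phi^+(y/(\sigma\sqrt n))$.

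For the first claim, under $x \in [0, \alpha_n\sqrt n]$ with $\alpha_n \to 0$ and $y \in [\eta\sqrt n, \eta^{-1}\sqrt n]$, the scaled starting point $x/(\sigma\sqrt n)$ tends to $0$ while $y/(\sigma\sqrt n)$ remains in the compact set $[\eta/\sigma, 1/(\eta\sigma)]$. The limit identity $\lim_{u\to 0}\psi(s, u)/(2\Phi(u)-1) = \phi^+(s)$ applies uniformly in $s$ on this compact set, producing the $\phi^+$ density in the limit. The prefactor $V(x)$ is retained unchanged from Theorem \ref{t-B 002}, and the constant $2/(\sqrt{2\pi}\,\sigma^2)$ in \eqref{demoCLLT-003_Targetaa} emerges from combining the $2\Phi-1$ normalization with the local expansion $2\Phi(u) - 1 \sim \sqrt{2/\pi}\, u$ as $u \to 0$.

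For the moderate-deviations extension $x \in [0, n^{1/2-\ee}]$ and $y \in [\eta\sqrt n, \sigma\sqrt{qn\log n}]$, two points require care. First, the quantitative remainder in Theorem \ref{t-B 002} must remain $o(n^{-1}\phi^+(y/(\sigma\sqrt n)))$ across the enlarged window; since $\phi^+(y/(\sigma\sqrt n)) \asymp \sqrt{\log n}\, n^{-q/2}$ is polynomially small, the admissible constants $\ee_0$ and $\delta_0$ are determined by the power saving built into Theorem \ref{t-B 002}. Second, in passing from a local limit statement at the endpoint to the factored form $\phi^+(y/(\sigma\sqrt n))\int_{\bb R_+} f(t)\,dt$, one must control the ratio
\begin{align*}
\frac{\phi^+((y+t)/(\sigma\sqrt n))}{\phi^+(y/(\sigma\sqrt n))} = \frac{y+t}{y}\,\exp\Bigl(-\frac{yt}{\sigma^2 n} - \frac{t^2}{2\sigma^2 n}\Bigr),
\end{align*}
which in the present range is $1 + O(t\sqrt{\log n /n})$ for $t$ of order one. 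The integrability hypothesis $\int_{\bb R_+} f(t)(1+t)\,dt < \infty$ is exactly what is needed to render $\sqrt{\log n/n}\int f(t)\,t\,dt = o(1)$, and it also controls the tail of the integral where $t$ becomes large.

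The main obstacle I anticipate is the quantitative aspect of the second claim: ensuring that the remainder in Theorem \ref{t-B 002} strictly beats the polynomially small leading term uniformly throughout the moderate-deviations window, which is what pins down the admissible pair $(\ee_0, \delta_0)$. Once that power saving is extracted from Theorem \ref{t-B 002}, the remaining steps — the density reduction via the $\psi$--$\phi^+$ identity, the uniform control on compact sets in the first claim, and the Laplace-type factorization of the $f$-integral using the $(1+t)$ weight — are routine.
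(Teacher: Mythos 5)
Your plan misreads Theorem \ref{t-B 002}. That theorem is already stated directly in terms of $V(x)$ and the Rayleigh density $\phi^+\bigl(t/(\sigma\sqrt n)\bigr)$ — there is no $\psi$ in it and no normalization by $2\Phi(x/(\sigma\sqrt n))-1$. The $\psi$-based form lives in Theorem \ref{t-C 002}, which covers the complementary regime $x\in[n^{1/2-\ee},\eta\sqrt n]$, and the limit $\psi(s,u)/(2\Phi(u)-1)\to\phi^+(s)$ as $u\to 0$ is never invoked in the derivation of Theorem \ref{Theorem-AA001}. So the central step you describe — collapsing $\psi$ to $\phi^+$ — is not needed, and no result in the paper hands you a $\psi$-type asymptotic valid for $x\in[0,\alpha_n\sqrt n]$ that you could then normalize: for $x$ near the boundary that form degenerates, and the $V(x)$-proportionality in Theorem \ref{t-B 002} is obtained by a different analysis (the convolution Lemma \ref{t-Aux lemma} showing $\phi_v*\phi^+_{1-v}\approx\phi^+$).

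What actually carries the specialization — and what your proposal omits entirely — is the smoothing/discretization step for directly Riemann integrable $f$. The paper applies the upper bound \eqref{eqt-B 001} of Theorem \ref{t-B 002} to the shifted function with $g=\overline f_{\delta,\ee}(\cdot)$, the upper $\ee$-envelope of the $\delta$-ladder majorant of $f$, and the lower bound \eqref{eqt-B 002} with the corresponding $h=\underline f_{\delta,-\ee}$; the sandwich $\underline f_{\delta,-\ee}\leq_\ee f\leq_\ee \overline f_{\delta,\ee}$ together with Lemma \ref{lemma-DRI-convergence} lets one pass back to $f$ by sending $\ee\to 0$ and then $\delta\to 0$. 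After that, Lipschitz continuity of $\phi^+$ gives $\phi^+\bigl((t+y)/(\sigma\sqrt n)\bigr)=\phi^+\bigl(y/(\sigma\sqrt n)\bigr)+O(|t|/\sqrt n)$; the additive error is controlled using your (correct) observation that $\phi^+\bigl(y/(\sigma\sqrt n)\bigr)\gtrsim n^{-q/2}$ in the moderate-deviations window, and the extra Gaussian term $\phi\bigl(\cdot/(\ee^{1/4}\sigma\sqrt n)\bigr)$ from \eqref{eqt-B 001} is absorbed via $\phi\bigl(y/(\ee^{1/4}\sigma\sqrt n)\bigr)\leq c_\eta e^{-\eta^2/(4\sqrt\ee\sigma^2)}\phi^+\bigl(y/(\sigma\sqrt n)\bigr)$ (the pair $(\ee_0,\delta_0)$ then comes from balancing $q<2\ee$ against the admissible range of $\ee$ in Theorem \ref{t-B 002}). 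Your remarks on the $\phi^+$ shift ratio, the role of the $(1+t)$ weight, and the need to beat the polynomially small leading term are all in the right spirit, but the $\psi$-to-$\phi^+$ reduction you take as the crux of the argument is not part of this proof.
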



In the particular case when $f = \mathds 1_{[0, \Delta]}$ with $\Delta >0$, 
one can improve Theorem \ref{Theorem-AA001}
by showing the uniformly in $\Delta$ in a certain range.    

\begin{theorem} \label{Theorem Delta-002}
Assume \ref{A1} and \ref{SecondMoment}. 
Then, for any $\eta \in (0, 1]$, $\Delta_0 >0$ and any sequence of positive numbers $(\alpha_n)_{n \geq 1}$ satisfying
$\lim_{n\rightarrow \infty} \alpha_n =  0$, 
we have, as $n \to \infty$,  uniformly in $x \in [0, \alpha_n \sqrt{n}]$,  
 $y \in [\eta \sqrt{n}, \eta^{-1} \sqrt{n}]$ and $\Delta \in [\Delta_0,   \alpha_n \sqrt{n}]$,
\begin{align} \label{demoCLLT-003aa}
\mathbb P \left( x+S_n  \in [0,  \Delta]  + y,  \,  \tau_x > n  \right)  
  \sim  
 \Delta  \frac{2  V (x) }{\sqrt{2\pi } \sigma^2 n }    
 \phi^+ \left( \frac{y}{ \sigma \sqrt{n}} \right). 
\end{align}
Moreover, there exist constants $\ee_0,  \delta_0 >0$ such that
for any $\ee \in (0, \ee_0)$, $q \in (0, \delta_0)$, $\eta >0$ and $\Delta_0 >0$,   
as $n \to \infty$,  the asymptotic \eqref{demoCLLT-003aa} holds
 uniformly in $x \in [0, n^{1/2 - \ee}]$,  $y \in [\eta \sqrt{n}, \sigma \sqrt{q n \log n}]$
and $\Delta \in [\Delta_0,   n^{1/2 - \ee} ]$. 
\end{theorem}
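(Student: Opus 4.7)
The plan is to deduce Theorem \ref{Theorem Delta-002} from Theorem \ref{Theorem-AA001} via a partitioning argument: slice the target interval $[0,\Delta]$ into sub-intervals of a fixed length $\Delta_0$ and apply Theorem \ref{Theorem-AA001} to each slice with an appropriately shifted drift. The only new feature compared to Theorem \ref{Theorem-AA001}, which already handles bounded $\Delta$ via the choice $f = \mathds 1_{[0,\Delta]}$, is that $\Delta$ may grow with $n$, forcing us to sum up to $O(\sqrt n)$ estimates while preserving uniformity.

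Concretely, set $M = \lfloor \Delta/\Delta_0 \rfloor$, $r = \Delta - M\Delta_0 \in [0,\Delta_0)$, and $y_k = y + k\Delta_0$, and decompose
\begin{align*}
\mathbb P\bigl(x+S_n \in y + [0,\Delta],\, \tau_x > n\bigr) = \sum_{k=0}^{M-1} \mathbb P\bigl(x+S_n \in y_k + [0,\Delta_0],\, \tau_x > n\bigr) + R_n,
\end{align*}
where $R_n$ is the probability for the leftover slice of length $r$. To each term in the sum I would apply Theorem \ref{Theorem-AA001} with $f = \mathds 1_{[0,\Delta_0]}$ (which plainly satisfies its hypotheses) and with the drift $y_k$. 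For the first regime, since $k\Delta_0 \leq \alpha_n\sqrt n$ and $\alpha_n \to 0$, every $y_k$ lies in $[\tilde\eta\sqrt n, \tilde\eta^{-1}\sqrt n]$ for a slightly smaller $\tilde\eta < \eta$, so Theorem \ref{Theorem-AA001} applies uniformly in $k$. For the moderate deviations regime, since $\Delta \leq n^{1/2-\ee} = o(\sqrt{n\log n})$, every $y_k$ lies in $[\eta\sqrt n, \sigma\sqrt{q'n\log n}]$ for any fixed $q' \in (q,\delta_0)$, so Theorem \ref{Theorem-AA001} applies with $q'$ in place of $q$. The remainder $R_n$ is bounded above by the probability for a slice of length $\Delta_0$, hence negligible relative to the main term whenever $\Delta \to \infty$; for bounded $\Delta$ one invokes Theorem \ref{Theorem-AA001} directly with $f = \mathds 1_{[0,\Delta]}$.

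The principal technical obstacle is to show that $\phi^+(y_k/(\sigma\sqrt n))$ can be replaced by $\phi^+(y/(\sigma\sqrt n))$ with a uniform $1+o(1)$ multiplicative error over $k \in \{0,\ldots,M-1\}$ and over $x, y, \Delta$ in the admissible ranges. A direct computation yields
\begin{align*}
\frac{\phi^+(y_k/(\sigma\sqrt n))}{\phi^+(y/(\sigma\sqrt n))} = \Bigl(1 + \tfrac{k\Delta_0}{y}\Bigr)\exp\!\Bigl(-\tfrac{y\, k\Delta_0}{\sigma^2 n} - \tfrac{(k\Delta_0)^2}{2\sigma^2 n}\Bigr),
\end{align*}
and under the constraints of the moderate deviations regime, $y \leq \sigma\sqrt{qn\log n}$ and $k\Delta_0 \leq n^{1/2-\ee}$, the three relevant quantities are $k\Delta_0/y = O(n^{-\ee}/\eta)$, $y\,k\Delta_0/(\sigma^2 n) = O(\sigma^{-1}\sqrt{q\log n}\cdot n^{-\ee})$, and $(k\Delta_0)^2/(\sigma^2 n) = O(n^{-2\ee})$, all $o(1)$; the first regime is strictly easier and follows from uniform continuity of $\phi^+$ on a compact subset of $(0,\infty)$. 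This is exactly the point where the restriction $\Delta \leq n^{1/2-\ee}$ is essential: it is the largest window over which the exponentially small Rayleigh factor is not meaningfully perturbed by a shift of size $\Delta$. Summing the $M$ asymptotics and using $M\Delta_0 \sim \Delta$ then yields the claim.
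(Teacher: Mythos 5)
Your slicing argument takes a genuinely different route from the paper. The paper deduces Theorem~\ref{Theorem Delta-002} directly from the non-asymptotic Theorem~\ref{t-B 002}, applying its upper and lower bounds with $f=\mathds 1_{[y,y+\Delta]}$ and the smoothed envelopes $g=\mathds 1_{[y-\ee,y+\Delta+\ee]}$, $h=\mathds 1_{[y+\ee,y+\Delta-\ee]}$; the key step is a Lipschitz estimate showing $\int_{-\ee}^{\Delta+\ee}\phi^+\bigl((t+y)/(\sigma\sqrt n)\bigr)\,dt\leq(\Delta+2\ee)(1+o(1))\phi^+\bigl(y/(\sigma\sqrt n)\bigr)$. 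Because Theorem~\ref{t-B 002} is a uniform bound valid for all $n$, uniformity in $\Delta$ comes essentially for free. Your approach instead reduces to Theorem~\ref{Theorem-AA001} by partitioning $[0,\Delta]$ into unit-scale slices. The Rayleigh ratio estimate you carry out, showing $\phi^+(y_k/(\sigma\sqrt n))/\phi^+(y/(\sigma\sqrt n))=1+o(1)$ uniformly over the admissible $k$, $y$ and $\Delta\leq n^{1/2-\ee}$, is correct in all three regimes you check and is the genuine technical content. Since the $o(1)$ in Theorem~\ref{Theorem-AA001} is multiplicative and uniform in $(x,y)$ for each fixed $f$, and $f=\mathds 1_{[0,\Delta_0]}$ is used for every slice and every $\Delta$, the sum over $k$ does inherit the uniformity. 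Your derivation is more elementary and does not need the $\ee$-envelope machinery, at the cost of an extra bookkeeping step.

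There is, however, a gap in your treatment of the leftover slice. With slice length $\Delta_0$, the ratio $M\Delta_0/\Delta$ is bounded away from $1$ when $\Delta$ is of order $\Delta_0$ (for $\Delta=1.5\Delta_0$ it equals $2/3$), so the sum over full slices alone misses a fraction of the target, and $R_n$ is not negligible. Your patch --- ``for bounded $\Delta$ invoke Theorem~\ref{Theorem-AA001} directly with $f=\mathds 1_{[0,\Delta]}$'' --- does not resolve this, because Theorem~\ref{Theorem-AA001} is stated for a \emph{fixed} target function: it gives, for each $\Delta$, a threshold $n_0(\Delta)$ beyond which the asymptotic holds, but not a single threshold uniform over $\Delta$ in a compact range. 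You need an extra argument here, and there are two standard fixes. The cleaner one is to use slices of length $\delta$, where $\delta>0$ is an arbitrarily small parameter independent of $n$ (not $\Delta_0$): covering $[0,\Delta]$ by $\lceil\Delta/\delta\rceil$ slices and inscribing $\lfloor\Delta/\delta\rfloor$ slices yields an upper/lower sandwich whose ratio to the target is $1+O(\delta/\Delta)\leq 1+O(\delta/\Delta_0)$, uniformly over all $\Delta\geq\Delta_0$, and one then lets $\delta\to 0$ after $n\to\infty$. Alternatively, one can use the monotonicity of $\Delta\mapsto\mathbb P(x+S_n\in y+[0,\Delta],\tau_x>n)$ and of the main term, applying Theorem~\ref{Theorem-AA001} along a finite $\ee'$-grid in $[\Delta_0,A_\ee]$ and interpolating between grid points, à la Glivenko--Cantelli. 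Either fix is routine, but without one of them the claimed uniformity in $\Delta$ is not established.
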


In particular, from the second assertion of Theorem \ref{Theorem Delta-002}, 
 taking $y = \sigma \sqrt{q  n \log n}$ in \eqref{demoCLLT-003aa}, we get, 
as $n \to \infty$,  uniformly in $x \in [0, n^{1/2 - \ee}]$
and $\Delta \in [\Delta_0,  n^{1/2 - \ee} ]$, 
\begin{align}\label{CCLTMDsmallxaa}
\mathbb{P} \left( x+S_n  \in  [0, \Delta] +  \sigma \sqrt{q  n \log n},  \,  \tau_{x} > n \right) 
\sim  \frac{2 V(x) }{\sqrt{2\pi }  \sigma^2 }  
    \frac{ \Delta \sqrt{q \log n}}{n^{1+ q/2 }}. 
\end{align}
The asymptotic \eqref{CCLTMDsmallxaa} can be compared with the classical local limit theorem with moderate deviations (for non-killed random walks) 
which can be deduced from the results of Nagaev \cite{Nag65} and Amosova \cite{Amo72}: as $n \to \infty$, 
\begin{align}\label{CLTMD01aa}
\mathbb{P} \left( S_n  \in  [0, \Delta] + \sigma  \sqrt{q n \log n}  \right) 
\sim  \frac{ \Delta }{\sqrt{2\pi }  \sigma  n^{(1+ q)/2 }}. 
\end{align}
We refer to Breuillard \cite{Bre05}
for similar results for random walks satisfying the diophantine condition, 
and to Grama \cite{Gra97} -- for martingales. 



In the particular case when $x =0$, $y = O(\sqrt{n})$ and $\Delta >0$ is a fixed real number,
the asymptotic \eqref{demoCLLT-003aa} has been established earlier by Caravenna \cite{Carav05}
under the non-lattice condition \ref{A1} and the optimal second moment assumption \ref{SecondMoment} with $\delta =0$. 
Under the same conditions, Caravenna's result has been generalized  by Doney \cite{Don12}
to the case when $x$ can depend on $n$.    
Under the stronger moment condition of oder $2 + \delta$, 
our asymptotic \eqref{demoCLLT-003aa} improves on these results
in two aspects. 
Firstly, $\Delta \in [\Delta_0, n^{1/2 - \ee} ]$ is allowed to depend on $n$;
secondly, 
$y$ can take values in the range $[\eta \sqrt{n}, \sigma \sqrt{q n \log n}]$. 
Indeed, the case when $y$ goes beyond the range $\sqrt{n}$ has not been considered in the literature so far
and our result gives a new asymptotic in this range. 
%
%

Our second result gives the exact asymptotics for the probability \eqref{Objective-proba001}
when $x$ is near the boundary and $\frac{y}{\sqrt{n}} \to 0$ as $n \to \infty$.
It is a consequence of a more general Theorem \ref{theorem-n3/2-upper-lower-bounds}.  
\begin{theorem} \label{Theorem-AA002}
Assume \ref{A1} and \ref{SecondMoment}. 
Let $(\alpha_n)_{n\geq 1}$ be any sequence of positive numbers satisfying
$\lim_{n\rightarrow \infty} \alpha_n =  0$ and $ n^{1/4}  \alpha_n \geq 1$.

1. Let $f: \bb R \mapsto \bb R$ be a directly Riemann integrable function with support in $\bb R_+$ 
satisfying 
$\int_{\bb R_+}  f(t) (1 + t)^{\gamma} dt < \infty$ for some constant $\gamma >1$. 
Then,  for any $a >0$, we have,  
 as $n \to \infty$,  uniformly in $x \in [0,  \alpha_n \sqrt{n}]$ and $y \in [0, a]$, 
\begin{align}\label{Asymn32Smallxaa}
\mathbb E \left( f(x+S_n-y);  \,   \tau_x > n  \right)
\sim      \frac{ 2 V(x) }{ \sqrt{2\pi }  \sigma^3  n^{3/2}  } 
    \int_{\bb R_+}   f( t-y)   V^{\ast } (t) dt. 
\end{align}

2. Let $f: \bb R \mapsto \bb R$ be a directly Riemann integrable function with support in $\bb R_+$ 
such that $\int_{\bb R_+}  f(t) (1 + t)  dt < \infty$. 
Then,  we have, as $n \to \infty$,  
 uniformly in $x \in [0, \alpha_n \sqrt{n}]$ and $y \in [\alpha_n^{-1}, \alpha_n \sqrt{n}]$,
 \begin{align}\label{demoCLLT-003_Targetbbaa}
\mathbb E \left( f( x+S_n - y);  \,  \tau_x > n  \right)  
    \sim  \frac{2 y V(x) }{\sqrt{2\pi } \sigma^3 n^{3/2} }    
    \int_{\bb R_+} f(t)  dt. 
\end{align}
\end{theorem}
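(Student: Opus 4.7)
The plan is to derive both parts of the theorem from the joint integral local limit theorem \ref{theorem-n3/2-upper-lower-bounds}, which one anticipates to furnish, uniformly in $x,w\in[0,\alpha_n\sqrt n]$ and $\Delta$ in a suitable range (the hypothesis $n^{1/4}\alpha_n\geq 1$ being precisely what guarantees this joint uniformity), an asymptotic of the form
\begin{align*}
\bb P\bigl(x+S_n\in[w,w+\Delta],\,\tau_x>n\bigr)
\sim\frac{2V(x)}{\sqrt{2\pi}\,\sigma^3 n^{3/2}}\int_{w}^{w+\Delta}V^{*}(t)\,dt.
\end{align*}
Both \eqref{Asymn32Smallxaa} and \eqref{demoCLLT-003_Targetbbaa} will then follow by integrating this density asymptotic against $f(\,\cdot\,-y)$, with different handling of the $y$-regime.

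For Part 1, I would first approximate the directly Riemann integrable function $f$ from above and below by step functions of mesh $h\to 0$, apply the above LLT to each resulting indicator of a small interval $[w_i,w_i+h]$, and pass to the Riemann-sum limit to obtain
\begin{align*}
\bb E\bigl[f(x+S_n-y);\,\tau_x>n\bigr]
\sim\frac{2V(x)}{\sqrt{2\pi}\,\sigma^3 n^{3/2}}\int_{\bb R_+}f(t)\,V^{*}(y+t)\,dt,
\end{align*}
which after the substitution $s=y+t$ gives \eqref{Asymn32Smallxaa}. Since $y$ varies only in the bounded interval $[0,a]$, uniformity in $y$ is straightforward. The tail of $f$ outside a compact set is absorbed by the standard linear bound $V^{*}(t)\leq C(1+t)$ combined with the hypothesis $\int_{\bb R_+} f(t)(1+t)^{\gamma}\,dt<\infty$ for some $\gamma>1$; the extra power $\gamma>1$ is what one needs to dominate the Riemann-approximation error arising from the non-compactness of the support of $f$.

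For Part 2, the same joint LLT is applied with $w=y+t$, but now $y\in[\alpha_n^{-1},\alpha_n\sqrt n]$ grows with $n$. The upper bound $y+t\leq \alpha_n\sqrt n+O(1)$ keeps us inside the validity range of the joint LLT (after possibly enlarging $\alpha_n$ slightly), while the lower bound $y\to\infty$ permits invoking the renewal-type asymptotic $V^{*}(s)\sim s$ as $s\to\infty$ (in the normalization implicit in the paper, as one can check by formal matching of \eqref{Asymn32Smallxaa} and \eqref{demoCLLT-003_Targetbbaa}). Consequently $V^{*}(y+t)=y(1+o(1))$ uniformly for $t$ in any compact set, and the integral against $f$ reduces to $y\int f(t)\,dt\,(1+o(1))$, yielding \eqref{demoCLLT-003_Targetbbaa}. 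Here only the weaker tail condition $\int_{\bb R_+} f(t)(1+t)\,dt<\infty$ is required, because the weight $V^{*}(y+t)/y$ is uniformly bounded by $C(1+t)$ for large $y$.

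The main obstacle is making the Riemann-sum approximation and the tail truncation uniform in $(x,y)$ throughout the stated ranges. The delicate balance in Part 2, between the upper bound $y\leq \alpha_n\sqrt n$ (to remain inside the LLT domain) and the need to have $V^{*}(y+t)\sim y$ uniformly in $t$ within a slowly growing truncation window, is precisely what fixes the admissible range of $y$; handling this uniformity, rather than the underlying asymptotic computations, is where the technical effort will lie.
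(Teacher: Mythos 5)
Your plan is correct and follows the paper's own route: both parts of Theorem \ref{Theorem-AA002} are deduced from Theorem \ref{theorem-n3/2-upper-lower-bounds} applied to $\ee$-envelopes of $f(\cdot-y)$, with the renewal asymptotic $V^{*}(t)\sim t$ and Lemma \ref{lemma-DRI-convergence} doing the remaining work. Two marginal remarks are slightly misdirected, though neither affects the validity of the plan: the hypothesis $n^{1/4}\alpha_n\geq 1$ is not about joint uniformity but simply ensures that the interval $[\alpha_n^{-1},\alpha_n\sqrt n]$ in Part 2 is non-empty, and the exponent $\gamma>1$ in Part 1 is used to make the tail error $\frac{V(x)}{n}\int_{\alpha_n\sqrt n}^{\infty}g(t-\ee)\,dt$ from \eqref{theorem-n3/2 001} negligible against the main $O(V(x)n^{-3/2})$ term, not to control a Riemann-sum approximation error per se.
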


The presence of the target function $f$ in the asymptotics \eqref{Asymn32Smallxaa} and  \eqref{demoCLLT-003_Targetbbaa}
is very important since it allows to deal with random walks with drift 
and to establish local theorems for the time $\tau_x$.
Indeed, 
with a suitable change of measure the drift vanishes 
so that we can apply \eqref{Asymn32Smallxaa} and  \eqref{demoCLLT-003_Targetbbaa} with an appropriate target function. 
This will be used in Sections \ref{Sect-Application} and \ref{Sect-LLT-exit-taux}
to obtain the exact asymptotics for the exit time of random walks with negative drift. 
The precise asymptotic of the local probability $\bb P(\tau_x=n)$ 
is given in Section \ref{Sect-LLT-exit-taux}.

It is easy to see that in the range $y \in [\alpha_n^{-1}, \alpha_n \sqrt{n}]$, 
the right hand side of the asymptotic \eqref{demoCLLT-003_Targetbbaa} is equivalent to that in \eqref{demoCLLT-003_Targetaa}.
We conjecture that our asymptotics 
\eqref{demoCLLT-003_Targetaa} and \eqref{demoCLLT-003aa} should be valid uniformly in the larger interval 
$y \in [\alpha_n, o(n^{1/6})]$ for any $\alpha_n \to \infty$. 

In particular, for any real number $a >0$, taking $f(t) = e^{-at}$ and $y = 0$ in \eqref{Asymn32Smallxaa}, 
we get that,  as $n \to \infty$,  uniformly in $x \in [0, \alpha_n \sqrt{n}]$, 
\begin{align}\label{AsymExponenFunaa}
\mathbb E \left( e^{-a (x+S_n) }; \tau_x >n\right)
\sim      \frac{ 2 V(x) }{ \sqrt{2\pi }  \sigma^3  n^{3/2}  } 
    \int_{\bb R_+}    e^{- a t}  V^{\ast } (t) dt. 
\end{align}
The result \eqref{AsymExponenFunaa} holds uniformly in $x \in [0, \alpha_n \sqrt{n}]$, 
thus improving that of \cite[Proposition 2.1]{ABKV12} (see also \cite[Theorem 4.10]{KV17}) 
which was proved for fixed $x \geq 0$. 
  Note that the results in \cite{ABKV12, KV17} are established using a completely different approach based on 
  the Wiener-Hopf factorization, in particular using the Sparre-Andersen and Spitzer identities. 
  These type of results turn out to be very useful for studying limit theorems for branching processes in random environment.

In the particular case when $f = \mathds 1_{[0, \Delta]}$ with $\Delta >0$, 
the following result improves the asymptotic \eqref{demoCLLT-003_Targetbbaa}
by showing the uniformly in $\Delta$. 

\begin{theorem}\label{Theorem-AA002bis}
Assume \ref{A1} and \ref{SecondMoment}. 
Let $(\alpha_n)_{n\geq 1}$ be any sequence of positive numbers satisfying
$\lim_{n\to \infty} \alpha_n =  0$ and $ n^{1/4}  \alpha_n \geq 1$.
Then, for any $\Delta_0 >0$, we have,  
as $n \to \infty$,  uniformly in $x \in [0, \alpha_n \sqrt{n}]$,  $y \in [\alpha_n^{-1}, \alpha_n \sqrt{n}]$
and $\Delta \in [\Delta_0,  o(y) ]$,  
\begin{align} \label{demoCLLT-003bbaa}
 \mathbb P \left( x+S_n  \in [0,  \Delta]  + y,  \,  \tau_x > n  \right)  
   \sim  
\Delta \frac{2 y  V(x) }{\sqrt{2\pi } \sigma^3 n^{3/2} }.  
\end{align}
\end{theorem}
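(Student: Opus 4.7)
The plan is to deduce Theorem \ref{Theorem-AA002bis} from part 2 of Theorem \ref{Theorem-AA002} by a Riemann-sum argument that partitions $[0, \Delta]$ into sub-intervals of small fixed length, in the same spirit as Theorem \ref{Theorem Delta-002} is obtained from Theorem \ref{Theorem-AA001}. The crucial feature is that Theorem \ref{Theorem-AA002} is uniform in the drift variable $y$ for a fixed directly Riemann integrable target $f$, so we may apply it to many shifts of a single short interval and sum the contributions.

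Fix $h > 0$ and set $K := \lfloor \Delta/h \rfloor$. Write $p_k(n) := \bb P(x + S_n \in [0, h] + (y + kh), \tau_x > n)$. Monotonicity in the interval $[0, \Delta]$ gives the sandwich
$$\sum_{k=0}^{K-1} p_k(n) \leq \bb P(x + S_n \in [0, \Delta] + y, \tau_x > n) \leq \sum_{k=0}^{K} p_k(n).$$
Since $\Delta = o(y)$ and $y \leq \alpha_n \sqrt{n}$, we have $y + kh \leq y + \Delta \leq 2 \alpha_n \sqrt{n}$ for $n$ large and all $k \leq K$, so each shifted drift $y + kh$ lies in $[(\alpha_n')^{-1}, \alpha_n' \sqrt{n}]$ with $\alpha_n' := 2 \alpha_n$, a sequence that still satisfies $\alpha_n' \to 0$ and $n^{1/4} \alpha_n' \geq 1$. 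Applying Theorem \ref{Theorem-AA002}(2) with the fixed target $f = \mathds 1_{[0, h]}$ (clearly directly Riemann integrable and of compact support) therefore gives, uniformly in $x$, $y$ and $k \in \{0, \ldots, K\}$,
$$p_k(n) = \frac{2 (y + kh) V(x)\, h}{\sqrt{2\pi}\, \sigma^3\, n^{3/2}} \, (1 + o_h(1)),$$
where $o_h(1) \to 0$ as $n \to \infty$ for each fixed $h$.

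Summing the arithmetic progressions yields $\sum_{k=0}^{K} h(y + kh) = (K+1) h (y + hK/2)$ and $\sum_{k=0}^{K-1} h(y + kh) = Kh(y + h(K-1)/2)$. Using $\Delta - h < Kh \leq \Delta$, $\Delta \geq \Delta_0$ and $hK/2 \leq \Delta/2 = o(y)$, both expressions equal $\Delta y \, (1 + O(h/\Delta_0))$ uniformly in the admissible parameters. Inserting this into the sandwich gives
$$\frac{\bb P(x + S_n \in [0, \Delta] + y, \tau_x > n)}{2 \Delta y V(x) / (\sqrt{2\pi}\, \sigma^3\, n^{3/2})} = 1 + O(h/\Delta_0) + o_h(1),$$
uniformly in the admissible parameters. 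Taking $\limsup$ and $\liminf$ as $n \to \infty$ for fixed $h$, and then letting $h \to 0$, concludes the proof.

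The main obstacle is ensuring that the implicit error term in Theorem \ref{Theorem-AA002} remains uniform in $k$ after summation over $K$ pieces, where $K$ itself may depend on $n$. This is handled by first fixing $h$ before letting $n \to \infty$: with $h$ fixed, the target $\mathds 1_{[0, h]}$ is fixed and all shifted drifts $y + kh$ lie in the common admissible range $[(\alpha_n')^{-1}, \alpha_n' \sqrt{n}]$, so the $y$-uniformity of Theorem \ref{Theorem-AA002} yields uniformity in $k$. Only afterwards does one let $h \to 0$ to wash out the $O(h/\Delta_0)$ discretization error.
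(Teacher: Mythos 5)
Your Riemann-sum approach is correct and represents a genuinely different route from the paper's own proof. The paper establishes Theorem~\ref{Theorem-AA002bis} by going back to the non-asymptotic Theorem~\ref{theorem-n3/2-upper-lower-bounds}: it plugs $f = \mathds 1_{[y,\,y+\Delta]}$, $g = \mathds 1_{[y-\ee,\,y+\Delta+\ee]}$ into \eqref{theorem-n3/2 001} (with $\alpha_n$ replaced by $3\alpha_n$), observes that the tail integral over $[3\alpha_n\sqrt n,\infty)$ vanishes, and then invokes Lemma~\ref{Lem_V_Ineq_aa} to replace $V^*(u+y)$ by $y(1+o(1))$ in the range $\Delta \in [\Delta_0, o(y)]$. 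Your proposal instead treats Theorem~\ref{Theorem-AA002}(2) as a black box and extracts the $\Delta$-uniform statement by discretizing $[0,\Delta]$ into $K=\lfloor\Delta/h\rfloor$ blocks of fixed length $h$, applying the drift-uniformity of \eqref{demoCLLT-003_Targetbbaa} to each shifted drift $y+kh$, and summing the arithmetic progression. Both arguments are valid; yours is more modular (it shows the $\Delta$-uniform local theorem is a \emph{formal consequence} of the target-function version), while the paper's direct route keeps explicit error rates visible and never invokes a double limit $n\to\infty$ then $h\to 0$.

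Two small remarks on bookkeeping. First, the sandwich $\sum_{k=0}^{K-1} p_k \leq \mathbb P(\cdot) \leq \sum_{k=0}^{K} p_k$ as written uses closed intervals $[0,h]$; under condition~\ref{A1} the law of $S_n$ may still carry atoms, so the lower inequality could fail at partition points. Taking half-open blocks $[0,h)$ (still directly Riemann integrable with $\int f=h$) fixes this with no change to the estimates. Second, for the upper sandwich you also need the shifted drift $y+Kh$ (and nothing beyond) to lie in the enlarged admissible window $[(\alpha_n')^{-1}, \alpha_n'\sqrt n]$ — which it does with $\alpha_n' = 2\alpha_n$, as you note — but it is worth remarking that only the \emph{drift} has to be admissible, not the right endpoint $y+(K+1)h$ of the last block, which makes the constant $2$ sufficient.
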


Note that the asymptotic \eqref{demoCLLT-003bbaa} does not hold 
when $\frac{\Delta}{\sqrt{n}} \to \infty$. 
Therefore Theorem \ref{Theorem-AA002bis} corrects a misstatement in \cite[Proposition 18]{Don12}, 
where it is claimed, in particular, that the asymptotic \eqref{demoCLLT-003bbaa} holds uniformly in $\Delta>0$.


\subsection{Starting point far from the boundary} \label{Sect-CLLT-BB}
In this section we formulate our results when the starting point $x$ is far from the boundary, 
more precisely when $\frac{x}{\sqrt{n}} \in [\eta^{-1}, \eta]$ for any real number $\eta \geq 1$.  
The following result is the analog of Theorem \ref{Theorem-AA001} for large $x$.

\begin{theorem} \label{Theorem-BB001}
Assume \ref{A1} and \ref{SecondMoment}. 
Let $\eta \geq 1$  be any fixed real number.   
Let $f: \bb R \mapsto \bb R$ be any directly Riemann integrable function with support in $\bb R_+$ satisfying 
$\int_{\bb R_+}  f(t) (1 + t)  dt < \infty$. 
Then, there exists a constant $q_0 >0$ such that for any $q \in (0, q_0)$,
 we have, as $n \to \infty$,  uniformly in $x \in [\eta^{-1} \sqrt{n}, \eta \sqrt{n}]$ 
 and $y \in  [\eta^{-1} \sqrt{n}, \sigma \sqrt{q n \log n}]$,
 \begin{align}\label{demoCLLTLargeffaa}
& \mathbb E ( f( x+S_n - y);   \tau_x > n )   
   \sim    
  \frac{1}{\sigma \sqrt{n}}   
  \psi  \left( \frac{y}{\sigma \sqrt{n}}, \frac{ x }{\sigma \sqrt{n}} \right) 
    \int_{\bb R_+} f(t)  dt. 
\end{align}
\end{theorem}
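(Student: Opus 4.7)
The plan is to split the time horizon as $n = n_1 + m$, apply the Markov property at the intermediate time $n_1$, and combine a conditioned integral limit theorem on the long piece $[0,n_1]$ (the main technical tool developed earlier in the paper) with an unconditioned Stone-type local limit theorem in the moderate deviation regime on the short piece $[n_1,n]$. Concretely, fix a small $\beta \in (0,1)$, set $m := \lfloor n^{\beta}\rfloor$ and $n_1 := n-m$, and use the Markov property to write
\begin{equation*}
\mathbb E(f(x+S_n-y);\tau_x>n) = \mathbb E\bigl(G_m(x+S_{n_1},y);\tau_x>n_1\bigr), \quad G_m(z,y):=\mathbb E(f(z+S_m-y);\tau_z>m).
\end{equation*}

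For typical $z$ of order $\sqrt n$, the conditioning $\tau_z>m$ becomes asymptotically trivial since $z\gg\sqrt{m\log m}$, and the Stone-type local limit theorem with moderate deviations (available under \ref{A1}--\ref{SecondMoment} via Nagaev--Amosova type estimates) gives
\begin{equation*}
G_m(z,y) \sim \frac{1}{\sigma\sqrt{2\pi m}}\,e^{-(y-z)^2/(2\sigma^2 m)}\int_{\bb R_+}f(t)\,dt,
\end{equation*}
uniformly in $y$ in the required range. Inserting this and invoking the conditioned integral limit theorem for $x \asymp \sqrt{n_1} \asymp \sqrt n$ to describe the law of $x+S_{n_1}$ under $\{\tau_x>n_1\}$, the main contribution reduces to
\begin{equation*}
\int_{\bb R_+}f(t)\,dt \cdot \int_{\bb R_+} \frac{e^{-(y-z)^2/(2\sigma^2 m)}}{\sigma\sqrt{2\pi m}} \cdot \frac{1}{\sigma\sqrt{n_1}}\,\psi\!\left(\frac{z}{\sigma\sqrt{n_1}},\frac{x}{\sigma\sqrt{n_1}}\right)dz.
\end{equation*}
Expanding $\psi(s,u)=\tfrac{1}{\sqrt{2\pi}}(e^{-(s-u)^2/2}-e^{-(s+u)^2/2})$ and extending the $z$-integration to all of $\bb R$ (the truncation error at $z=0$ is exponentially small because both Gaussian factors concentrate at points of order $\sqrt n$ away from the boundary), the two resulting Gaussian convolutions integrate exactly to $\tfrac{1}{\sigma\sqrt{2\pi n}}e^{-(y\mp x)^2/(2\sigma^2 n)}$, whose difference is precisely $\tfrac{1}{\sigma\sqrt n}\psi(y/(\sigma\sqrt n),x/(\sigma\sqrt n))$. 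This is the usual heat-kernel semigroup identity for killed Brownian motion, and yields the claimed asymptotic after multiplying by $\int_{\bb R_+}f$.

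The main technical obstacle is obtaining the conditioned integral limit theorem with errors precise enough to survive integration against a Gaussian kernel of width $\sigma\sqrt m$, and uniform in $y$ up to $\sigma\sqrt{qn\log n}$; this is what constrains the admissible value of $q_0$ and forces the choice of $\beta$. A delicate sub-issue is controlling the atypical-$z$ part: for $z$ close to $0$ the unconditioned LLT is not applicable to $G_m(z,y)$ and one must instead substitute bounds drawn from the near-boundary results, Theorems \ref{Theorem-AA001} and \ref{Theorem-AA002}, to verify that the near-boundary contribution is of lower order than $\tfrac{1}{\sigma\sqrt n}\psi(\cdot,\cdot)$; for very large $z$ the Gaussian right tail of the unconditioned walk suffices. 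The full $2+\delta$ moment in \ref{SecondMoment} is essential for obtaining a local limit theorem valid throughout the moderate deviation range.
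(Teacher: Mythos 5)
Your decomposition — intermediate-time Markov split, unconditioned LLT on the short piece, conditioned integral limit theorem on the long piece, and the exact identity $\phi_v * \psi_{1-v} = \psi$ (which is the paper's Lemma \ref{convol-phi-psi-001}) — is the same skeleton as the paper's proof of Theorem \ref{Theorem-BB001} via Theorem \ref{t-C 002}. However, your choice $m = \lfloor n^\beta\rfloor$ with \emph{small} $\beta$ has the scaling backwards, and this is a genuine gap. The conditioned integral limit theorem available here (Theorem \ref{CorCCLT}) has CDF error of order $n^{-\delta/(2(3+\delta))}$; after integrating by parts against your short-piece Gaussian kernel of width $\sigma\sqrt m$, whose derivative has $L^1_z$-norm of order $1/\sqrt m$, the propagated error is $O\big(n^{-\delta/(2(3+\delta))}/\sqrt m\big) = O\big(n^{-\beta/2-\delta/(2(3+\delta))}\big)$. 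Since the main term $\tfrac{1}{\sigma\sqrt n}\psi$ is of order $n^{-(1+q)/2}$ at the extreme $y\sim\sigma\sqrt{qn\log n}$, you need $\beta > 1 + q - \delta/(3+\delta)$, that is $\beta$ close to $1$, \emph{not small}; shrinking $\beta$ makes the ILT error overwhelm the target. (The LLT error $O(m^{-(1+\delta)/2})$ gives the comparable constraint $\beta > (1+q)/(1+\delta)$.) The paper takes $m = \lfloor\ee^{1/2}n\rfloor$ — a small positive \emph{fraction} of $n$ — to get the full $1/\sqrt n$ smoothing, at the price of a non-vanishing $O(\ee^{1/4})$ convolution error (the $1-\Phi(s/(\sigma\ee^{1/4}\sqrt n))$ terms in \eqref{eqt-C 001} and \eqref{eqt-C 002}), which is removed by letting $n\to\infty$ first and $\ee\to 0$ afterwards; your approach has no second limit and hence no slack.

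A second gap: ``the conditioning $\tau_z>m$ becomes asymptotically trivial'' cannot be waved through. For the lower bound you must show that $\mathbb E(f(z+S_m-y);\tau_z\le m)$, integrated in $z$ against the law of the first piece, is negligible against $n^{-(1+q)/2}$ uniformly in the stated $y$-range. The paper's proof of the lower bound \eqref{eqt-C 002} devotes its $K_1,K_2$ analysis precisely to this, using Fuk--Nagaev, H\"older, and the Brownian coupling, and it is this piece that produces the $\ee^{1/12}$-factor limiting the accuracy and hence determining $q_0$. Replacing it by ``trivial'' omits the estimate that actually constrains the theorem.
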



In the particular case  when $f = \mathds 1_{[0, \Delta]}$ with $\Delta >0$, 
we are able to prove the uniformly in $\Delta$. 

\begin{theorem} \label{Theorem-BB001-Delta}
Under \ref{A1} and \ref{SecondMoment}, for any $\ee, \eta, \Delta_0 >0$, there exists a constant $q_0 >0$ such that
for any $q \in (0, q_0)$, we have,  
as $n \to \infty$,  uniformly in $x \in [\eta^{-1} \sqrt{n}, \eta \sqrt{n}]$,  
$y \in [\eta^{-1} \sqrt{n}, \sigma \sqrt{q n \log n}]$
and $\Delta \in [\Delta_0,  n^{1/2 - \ee} ]$,  
\begin{align} \label{ThmCaraLargeStartaaaa}
& \mathbb P ( x+S_n  \in [0,  \Delta]  + y,   \tau_x > n )  
\sim  
     \frac{ \Delta }{\sigma \sqrt{n}}   
     \psi  \left( \frac{y}{\sigma \sqrt{n}}, \frac{ x }{\sigma \sqrt{n}} \right).  
\end{align}
\end{theorem}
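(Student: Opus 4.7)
The plan is to deduce Theorem \ref{Theorem-BB001-Delta} from Theorem \ref{Theorem-BB001} by partitioning $[0,\Delta]$ into sub-intervals of fixed length $h$, applying Theorem \ref{Theorem-BB001} to each block with a shifted drift, and then using the smoothness of $\psi$ together with the restriction $\Delta\leq n^{1/2-\ee}$ to collapse the resulting Riemann sum into the single term $\Delta\cdot\psi(y/\sigma\sqrt n,x/\sigma\sqrt n)$. A direct application of Theorem \ref{Theorem-BB001} with $f=\mathds{1}_{[0,\Delta]}$ is forbidden, since the hypothesis $\int f(t)(1+t)dt<\infty$ is not uniform in $\Delta$ once $\Delta$ depends on $n$.

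Concretely, fix $h\in(0,\Delta_0]$, set $K=\lfloor\Delta/h\rfloor$, and decompose
\begin{align*}
\bb P(x+S_n\in[0,\Delta]+y,\tau_x>n)
&=\sum_{k=0}^{K-1}\bb P(x+S_n\in[0,h]+(y+kh),\tau_x>n)\\
&\quad+\bb P(x+S_n\in[Kh,\Delta]+y,\tau_x>n).
\end{align*}
Let $q_1$ denote the constant $q_0$ furnished by Theorem \ref{Theorem-BB001}. Since $\Delta\leq n^{1/2-\ee}=o(\sqrt{n\log n})$, one may choose $q_0<q_1$ such that, for $n$ sufficiently large, every shifted drift $y+kh$ lies in the admissible window $[\eta^{-1}\sqrt n,\sigma\sqrt{q_1 n\log n}]$, uniformly in $y\in[\eta^{-1}\sqrt n,\sigma\sqrt{q_0 n\log n}]$ and $k\leq K$. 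Applying Theorem \ref{Theorem-BB001} with $f=\mathds{1}_{[0,h]}$ and drift $y+kh$ then gives, with an error $\veps_n\to 0$ uniform in $k$,
\begin{align*}
\bb P(x+S_n\in[0,h]+(y+kh),\tau_x>n)=\frac{h}{\sigma\sqrt n}\,\psi\!\left(\frac{y+kh}{\sigma\sqrt n},\frac{x}{\sigma\sqrt n}\right)(1+\veps_n),
\end{align*}
and likewise for the residual block of length $<h$.

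A direct computation yields $\partial_s\psi(s,x')/\psi(s,x')=-s+x'\coth(sx')$, which on the range $s\in[\eta^{-1},\sqrt{q_1\log n}]$, $x'\in[\eta^{-1},\eta]$ is $O(1+s)=O(\sqrt{\log n})$. Consequently, shifting the first argument of $\psi$ by at most $\Delta/(\sigma\sqrt n)\leq\sigma^{-1}n^{-\ee}$ alters $\psi$ only by a multiplicative factor $1+O(n^{-\ee}\sqrt{\log n})=1+o(1)$. Summing the preceding display over $k$ (including the residual block) and invoking this smoothness estimate therefore yields
\begin{align*}
\bb P(x+S_n\in[0,\Delta]+y,\tau_x>n)
=\frac{\Delta}{\sigma\sqrt n}\,\psi\!\left(\frac{y}{\sigma\sqrt n},\frac{x}{\sigma\sqrt n}\right)(1+o(1)),
\end{align*}
which is \eqref{ThmCaraLargeStartaaaa}. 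The main obstacle is maintaining uniformity in the moderate-deviation regime: there $\psi(y/\sigma\sqrt n,x/\sigma\sqrt n)$ decays polynomially like $n^{-q/2}$, so a merely absolute error on each block would be amplified by $K\asymp\Delta/h$ and destroy the asymptotic. It is therefore crucial that Theorem \ref{Theorem-BB001} delivers a \emph{relative} $(1+\veps_n)$-asymptotic that is uniform in the drift variable, and that $q_0$ be chosen strictly smaller than $q_1$ so that every shifted drift $y+kh$ remains strictly inside the admissible window of Theorem \ref{Theorem-BB001}.
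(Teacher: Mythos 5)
Your argument is correct in substance but takes a genuinely different route from the paper. The paper does not deduce Theorem \ref{Theorem-BB001-Delta} from Theorem \ref{Theorem-BB001}; instead it returns to the non-asymptotic envelope bounds of Theorem \ref{t-C 002}, plugging in $f = \mathds 1_{[y,y+\Delta]}$ and $g = \mathds 1_{[y-\ee,y+\Delta+\ee]}$ directly (here the $n$-dependence of $\Delta$ causes no trouble because \eqref{eqt-C 001}--\eqref{eqt-C 002} are explicit inequalities, not asymptotics for a fixed target), and then controls each integral using the Lipschitz estimates \eqref{Inequa-Psi01}--\eqref{Inequa-Psi03} on $\psi$. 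Your approach instead treats Theorem \ref{Theorem-BB001} as a black box, partitions $[0,\Delta]$ into $O(\Delta/h)$ blocks of a \emph{fixed} length $h$, applies Theorem \ref{Theorem-BB001} with the single fixed target $\mathds 1_{[0,h]}$ and an $n$-dependent drift $y+kh$, and exploits the fact that $\partial_s\log\psi(s,x') = -s + x'\coth(sx') = O(\sqrt{\log n})$ on the relevant range so that a shift of size $\Delta/(\sigma\sqrt n)\le \sigma^{-1}n^{-\ee}$ in the first argument perturbs $\psi$ only by a multiplicative $1+O(n^{-\ee}\sqrt{\log n})$. You correctly identify the crux — that Theorem \ref{Theorem-BB001} delivers a uniform \emph{relative} error, which survives multiplication by $K\asymp\Delta/h$, and that the shifted drifts must be kept strictly inside the admissible window by choosing $q_0<q_1$. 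The paper's route is shorter because it already has the effective Theorem \ref{t-C 002} on hand; your blockwise reduction is more modular and would work even if only the asymptotic form \ref{Theorem-BB001} were available.

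One loose end: the residual block of length $r=\Delta-Kh<h$ has $n$-dependent width, so you cannot apply Theorem \ref{Theorem-BB001} to $\mathds 1_{[0,r]}$ directly. You should sandwich it between $0$ and $\mathds 1_{[0,h]}$ shifted to $y+Kh$, which introduces a relative error of order $h/\Delta\le h/\Delta_0$. Your final display therefore holds only up to a factor $1+O(h/\Delta_0)$, not $1+o(1)$, for fixed $h$; to finish one must first let $n\to\infty$ and then $h\to 0$. This is a routine two-limit step, analogous to the $\ee\to0$ step the paper uses at the end of its own proof, but it should be stated.
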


In particular,  
taking $x = \sigma \eta \sqrt{n}$ and $y = \sigma \sqrt{q n \log n}$ in \eqref{ThmCaraLargeStartaaaa}, 
we have,  as $n \to \infty$,  uniformly in $\Delta \in [\Delta_0, n^{1/2 - \ee} ]$, 
\begin{align}\label{ThmCaraLargeStartbbaa}
\mathbb{P} \left( x+S_n  \in  [0, \Delta] + \sigma \sqrt{q n \log n},  \,  \tau_x > n  \right) 
\sim  \frac{ \Delta e^{- \frac{\eta^2}{2} + \eta \sqrt{q \log n}} }{\sqrt{2\pi } \sigma n^{(1 + q)/2}}. 
\end{align}

We continue with an analog of Theorem \ref{Theorem-AA002} for $x$ large.
\begin{theorem} \label{Theorem-BB002}
Assume \ref{A1} and \ref{SecondMoment}. Let $\eta \geq 1$ be any fixed real number. 

1. Let $f: \bb R \mapsto \bb R$ be any directly Riemann integrable function with support in $\bb R_+$ satisfying
$\int_{\bb R_+}  f(t) (1 + t)^{\gamma}  dt < \infty$ for some constant $\gamma >1$. 
Then, for any $a >0$, 
we have, as $n \to \infty$,  uniformly in $x \in [\eta^{-1} \sqrt{n}, \eta \sqrt{n}]$ and $y \in [0, a]$,  
\begin{align}\label{LLT32LargeStartingaa}
\mathbb E \left( f(x+S_n - y);  \,  \tau_x > n  \right)  
  \sim  
    \frac{2}{ \sqrt{2\pi} \sigma^2 n }  \phi^+ \left( \frac{x}{\sigma \sqrt{n}} \right)
    \int_{\bb R_+}   f( t - y)  V^{\ast } (t) dt. 
\end{align}

2. Let $f: \bb R \mapsto \bb R$ be any directly Riemann integrable function with support in $\bb R_+$ satisfying 
$\int_{\bb R_+}  f(t) (1 + t)  dt < \infty$.    
Let $(\alpha_n)_{n\geq 1}$ be any sequence of positive numbers satisfying 
$\lim_{n\to \infty} \alpha_n =  0$ and $ n^{1/4}  \alpha_n \geq 1$.
Then we have, as $n \to \infty$,  uniformly in $x \in [\eta^{-1} \sqrt{n}, \eta \sqrt{n}]$ 
 and $y \in  [\alpha_n^{-1}, \alpha_n \sqrt{n}]$,
 \begin{align}\label{demoCLLTLargeffbbaa}  
& \mathbb E ( f( x+S_n - y);   \tau_x > n )   
   \sim    
  \frac{ 2 y }{\sqrt{2 \pi} \sigma^2 n}   
  \phi^+  \left( \frac{ x }{\sigma \sqrt{n}} \right)  
    \int_{\bb R_+} f(t)  dt. 
\end{align}
\end{theorem}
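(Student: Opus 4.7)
The plan is to deduce Theorem \ref{Theorem-BB002} from the small-starting-point asymptotics of Section \ref{Sect-CLLT-AA} via the classical time-reversal identity for killed random walks. Reversing the path $(X_1, \ldots, X_n) \mapsto (X_n, \ldots, X_1)$ leaves the joint law of the increments invariant and, for $x, z \geq 0$, bijectively maps the event $\{\tau_x > n,\, x+S_n \in dz\}$ onto $\{\tau^*_z > n,\, z+S^*_n \in dx\}$. Under the non-lattice hypothesis \ref{A1}, both sides admit Lebesgue densities in the appropriate variable, and this yields $p_n(x,z) = p_n^*(z,x)$, where $p_n$ (resp.\ $p_n^*$) denotes the killed transition density of $S$ (resp.\ $S^*$). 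Consequently,
\[
\mathbb{E}(f(x+S_n-y);\,\tau_x > n) \;=\; \int_0^\infty f(u)\,p_n^*(y+u,x)\,du.
\]

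The next step is to insert into this formula the density-level refinement of Theorem \ref{Theorem Delta-002} applied to the dual walk. By letting $\Delta \downarrow 0$ in \eqref{demoCLLT-003aa} — a step carried out through a standard smoothing argument already contained in the more general Theorem \ref{t-B 002} alluded to just after Theorem \ref{Theorem-AA001} — one obtains, uniformly for $z \in [0, \alpha_n\sqrt n]$ and $x \in [\eta^{-1}\sqrt n, \eta\sqrt n]$,
\[
p_n^*(z,x) \;\sim\; \frac{2V^*(z)}{\sqrt{2\pi}\,\sigma^2 n}\,\phi^+\!\left(\frac{x}{\sigma\sqrt n}\right).
\]
Substituting this into the previous display and interchanging the asymptotic with the integration (legitimate thanks to the at-most-linear growth $V^*(z) = O(1+z)$ together with the weighted integrability of $f$) gives the baseline formula
\[
\mathbb{E}(f(x+S_n-y);\,\tau_x > n) \;\sim\; \frac{2}{\sqrt{2\pi}\,\sigma^2 n}\,\phi^+\!\left(\frac{x}{\sigma\sqrt n}\right)\int_0^\infty f(u)\,V^*(y+u)\,du.
\]

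For Part 1, where $y \in [0,a]$ is bounded, the substitution $t = y+u$ immediately converts the integral into $\int_{\mathbb R_+} f(t-y)V^*(t)\,dt$, producing \eqref{LLT32LargeStartingaa}. For Part 2, where $y \in [\alpha_n^{-1}, \alpha_n\sqrt n]$ tends to infinity, one invokes the renewal asymptotic $V^*(y+u)/y \to 1$, valid uniformly in $u$ over compacts as a consequence of the definition $V^*(t) = -\mathbb E S^*_{\tau^*_t}$ together with the finite-variance hypothesis \ref{SecondMoment}, in order to replace $V^*(y+u)$ by $y$ in the integral with a negligible error, thereby yielding \eqref{demoCLLTLargeffbbaa}.

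The main technical obstacle is establishing the uniform density-level asymptotic for $p_n^*(z,x)$ with $z$ allowed to grow up to $\alpha_n\sqrt n$: this strengthens Theorem \ref{Theorem Delta-002} from a macroscopic (fixed $\Delta_0$) statement to a microscopic (density) statement. Its proof proceeds through a Stone-type local limit argument combined with the $(2+\delta)$-moment assumption \ref{SecondMoment}, which provides the characteristic-function control needed to bound the remainder in the density uniformly. A secondary but routine issue is the tail of $\int f(u) V^*(y+u)\,du$ in Part 2: it is handled by splitting the integral at a large but fixed $u_0$, applying the density asymptotic on the compact part, and using $\int f(u)(1+u)\,du < \infty$ to control the remainder.
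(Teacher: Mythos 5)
Your approach has a genuine gap at its foundation. You assume that the killed transition laws $\mathbb P(x+S_n\in dz,\,\tau_x>n)$ and $\mathbb P(z+S_n^*\in dx,\,\tau_z^*>n)$ admit Lebesgue densities $p_n(x,z)$ and $p_n^*(z,x)$, invoking only the non-lattice condition \ref{A1}. This is false: non-latticity does not imply absolute continuity of $S_n$. For instance, a non-lattice law supported on $\{1,\sqrt2\}$ gives $S_n$ a purely discrete distribution for every $n$, and no density $p_n^*(\cdot,x)$ exists. All of the paper's conditioned local limit theorems therefore work at the level of expectations of target functions or probabilities of intervals $[0,\Delta]$ with $\Delta\geq\Delta_0>0$; none of them is a pointwise density asymptotic, and Theorem \ref{Theorem Delta-002} in particular is stated only for $\Delta\in[\Delta_0,\alpha_n\sqrt n]$, so "letting $\Delta\downarrow0$" is not available. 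The smoothing machinery in Theorem \ref{t-B 002} likewise produces bounds on $\mathbb E(f(x+S_n);\tau_x>n)$ for $\ee$-enveloped functions, not a microscopic statement about a density.

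The paper circumvents this entirely. Its proof (Theorem \ref{theorem-n3/2-upper-lower-bounds_Large}) splits the trajectory at $m=[n/2]$, applies the conditioned bound \eqref{eqt-C 001} of Theorem \ref{t-C 002} to the first half, and then uses the measure-level duality identity of Lemma \ref{lemma-duality-lemma-2_Cor} — which is just the translation invariance of Lebesgue measure and never requires existence of densities — to transfer the inner integral to the reversed walk. The conditioned integral limit theorem (Corollary \ref{Cor-CCLT-Optimal}) is then applied to $S_m^*$, and the convolution identity of Lemma \ref{Lem_Densities_Raileigh_Levy} produces the Rayleigh factor $\phi^+(x/\sigma\sqrt n)$. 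Your reversal intuition is in the right spirit — the duality is indeed the key mechanism — but to make it rigorous you must phrase the whole argument through integrals of target functions against the killed measures (as Lemma \ref{lemma-duality-lemma-2_Cor} does), rather than through a density $p_n^*$ that need not exist. A secondary concern: in Part 2 the convergence $V^*(y+u)/y\to1$ is only uniform over $u$ in compacts, so replacing $V^*(y+u)$ by $y$ inside the integral $\int f(u)V^*(y+u)\,du$ requires the truncation you gesture at, together with the linear bound $V^*(t)\leq c(1+t)$ — this part of your plan is sound but needs to be written out.
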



The following version of \eqref{demoCLLTLargeffbbaa} with $f = \mathds 1_{[0, \Delta]}$ 
holds uniformly in $\Delta$. 

\begin{theorem} \label{Theorem-BB002bis}
Assume \ref{A1} and \ref{SecondMoment}. 
Let $(\alpha_n)_{n\geq 1}$ be any sequence of positive numbers satisfying
$\lim_{n\to \infty} \alpha_n =  0$ and $ n^{1/4}  \alpha_n \geq 1$.
Then, for any $\Delta_0 >0$, 
we have,  as $n \to \infty$,  uniformly in $x \in [\eta^{-1} \sqrt{n}, \eta \sqrt{n}]$,  
$y \in  [\alpha_n^{-1}, \alpha_n \sqrt{n}]$ and $\Delta \in [\Delta_0,  o(y) ]$,  
\begin{align} \label{ThmCaraLargeStartaabbaa}
& \mathbb P ( x+S_n  \in [0,  \Delta]  + y,   \tau_x > n )  
\sim  
 \Delta  \frac{ 2 y  }{\sqrt{2 \pi} \sigma^2  n }   
     \phi^+  \left( \frac{ x }{\sigma \sqrt{n}} \right).  
\end{align}
\end{theorem}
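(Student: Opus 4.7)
The plan is to deduce this uniform-in-$\Delta$ result from the pointwise asymptotic \eqref{demoCLLTLargeffbbaa} of Theorem \ref{Theorem-BB002}(2), by discretising $[0,\Delta]$ into sub-intervals of fixed small length $\Delta'_0$, applying that asymptotic on each, and summing. A two-step limit --- first $n\to\infty$, then $\Delta'_0\to 0$ --- then removes the discretisation error.

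Fix an auxiliary $\Delta'_0 \in (0, \Delta_0]$. Write $N = \lfloor \Delta/\Delta'_0 \rfloor$ and $r = \Delta - N\Delta'_0 \in [0, \Delta'_0)$. Using the half-open sub-intervals $[j\Delta'_0, (j+1)\Delta'_0)$ yields the exact decomposition
\[
\mathbb{P}\bigl(x + S_n \in [0, \Delta] + y,\, \tau_x > n\bigr) = \sum_{j=0}^{N-1} \mathbb{P}\bigl(x + S_n \in [0, \Delta'_0) + y_j,\, \tau_x > n\bigr) + R_n,
\]
where $y_j := y + j\Delta'_0$ and $0 \leq R_n \leq \mathbb{P}(x + S_n \in [0, \Delta'_0] + y + N\Delta'_0, \tau_x > n)$. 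The assumption $\Delta = o(y)$ ensures that all $y_j$ and $y + N\Delta'_0$ lie in $[\alpha_n^{-1}, 2\alpha_n \sqrt{n}]$ for $n$ large; since $\tilde\alpha_n := 2\alpha_n$ still satisfies the hypotheses on the sequence in Theorem \ref{Theorem-BB002}(2), the pointwise asymptotic applies with $f = \mathds 1_{[0, \Delta'_0)}$ (which is directly Riemann integrable and satisfies $\int f(t)(1+t)\,dt < \infty$), uniformly in $j$:
\[
\mathbb{P}\bigl(x + S_n \in [0, \Delta'_0) + y_j,\, \tau_x > n\bigr) = (1 + \varepsilon_n)\, \Delta'_0 \cdot \frac{2 y_j}{\sqrt{2\pi}\, \sigma^2 n}\, \phi^+\!\bigl(x/(\sigma\sqrt{n})\bigr),
\]
with $\varepsilon_n \to 0$.

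Summing the arithmetic progression gives
\[
\Delta'_0 \sum_{j=0}^{N-1} y_j = y(\Delta - r) + \tfrac{1}{2} (\Delta'_0)^2 N(N-1) = y\Delta + O(y\Delta'_0) + O(\Delta^2),
\]
whose two error terms have respective relative sizes $\Delta'_0/\Delta \leq \Delta'_0/\Delta_0$ and $O(\Delta/y) = o(1)$, uniformly in the parameters. Bounding $R_n$ the same way, using $y + N\Delta'_0 \leq 2y$ for $n$ large, yields a relative contribution of $O(\Delta'_0/\Delta_0)$. Combining these estimates gives, for every fixed $\Delta'_0 \in (0, \Delta_0]$,
\[
\limsup_{n\to\infty}\; \sup \left| \frac{\mathbb{P}(x + S_n \in [0, \Delta] + y,\, \tau_x > n)}{\Delta \cdot \frac{2 y}{\sqrt{2\pi}\, \sigma^2 n}\, \phi^+(x/(\sigma\sqrt{n}))} - 1 \right| \leq C\, \frac{\Delta'_0}{\Delta_0},
\]
with $C$ absolute and the supremum over the prescribed ranges of $(x, y, \Delta)$. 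Letting $\Delta'_0 \to 0$ concludes the proof.

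The main subtlety is that when $\Delta$ is close to its lower endpoint $\Delta_0$, one has $N = 1$ or $N = 2$, and a single application of Theorem \ref{Theorem-BB002}(2) with $\Delta'_0 = \Delta_0$ would deliver only the correct order of magnitude rather than a $1+o(1)$ asymptotic. This is precisely what forces the two-step limit procedure, since Theorem \ref{Theorem-BB002}(2) demands that the sub-interval length $\Delta'_0$ be a fixed positive constant, independent of $n$; we therefore cannot collapse the two limits into $\Delta'_0(n) \to 0$ as $n \to \infty$. The hypothesis $\Delta = o(y)$ is essential, since it renders the quadratic-in-$\Delta$ discretisation error $O(\Delta^2)$ negligible against the main term $y\Delta$; without it, the right-hand side of \eqref{ThmCaraLargeStartaabbaa} would need to be corrected by a term of order $\Delta^2/n \cdot \phi^+(\cdot)$.
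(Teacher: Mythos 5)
Your proof is correct, and it takes a genuinely different route from the paper's. The paper proves Theorem~\ref{Theorem-BB002bis} by going back inside the machinery: it applies the non-asymptotic bound~\eqref{theorem-n3/2 001_Large} of Theorem~\ref{theorem-n3/2-upper-lower-bounds_Large} directly with $f=\mathds 1_{[y,y+\Delta]}$, $g=\mathds 1_{[y-\ee,y+\Delta+\ee]}$, and then controls $\int_0^{\Delta+2\ee}V^*(u+y)\,du$ via Lemma~\ref{Lem_V_Ineq_aa}, mirroring the treatment of Theorem~\ref{Theorem-AA002bis} in~\eqref{BoundInteDelta01}--\eqref{BoundInteDelta02}. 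You instead treat Theorem~\ref{Theorem-BB002}(2) as a black box and derive the $\Delta$-uniform statement from the $y$-uniform one by tiling $[0,\Delta]+y$ with fixed-width windows $[0,\Delta_0')+y_j$, summing the arithmetic progression $\Delta_0'\sum_j y_j = y\Delta+O(y\Delta_0')+O(\Delta^2)$, and sending first $n\to\infty$ then $\Delta_0'\to 0$. The two enabling observations — that enlarging $\alpha_n$ to $2\alpha_n$ keeps the hypotheses of Theorem~\ref{Theorem-BB002}(2) intact and absorbs the shifted endpoints $y_j\leq y+\Delta\leq 2y$, and that $\Delta=o(y)$ makes the quadratic discretisation error $O(\Delta^2)$ negligible against $y\Delta$ — are exactly what is needed, and your remark about why the two limits cannot be collapsed (the target function's width must be fixed, independent of $n$) correctly identifies the only genuine constraint. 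The trade-off: the paper's route yields the uniform statement in one pass and stays entirely inside the $\ee$-envelope framework, while yours is more elementary once Theorem~\ref{Theorem-BB002}(2) is available and highlights that the $\Delta$-uniformity is a soft consequence of the $y$-uniformity.
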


\subsection{Random walks with drift}\label{Sect-Application}
In this section we consider the case when the random walk has non-zero drift. 
As before, assume  that on the probability space $\left( \Omega ,\mathscr{F},\mathbb{P}\right)$ 
the sequence $(X_i)_{i\geq 1}$ is independent identically distributed  
with $\bb E X_1 = \mu \in \bb R$ and  $\bb E X_1^2 = \sigma^2 \in (0, \infty)$. 
Let $S_n =\sum_{i=1}^{n}X_{i}$, $n \geq 1$. 
For any starting point $x\in \bb R_+: = [0,\infty)$,  define  
\begin{align*}
\tau_x 
= \inf \left\{ k\geq 1: x+S_{k} < 0 \right\} \quad \mbox{with} \   \inf\emptyset = \infty.   
\end{align*}

The case when random walk $(S_n)_{n\geq 1}$ has a negative drift, i.e. $\mu < 0$, 
was studied by 
Iglehart \cite{Igl74}, where  the exact asymptotic of the probability $\bb P(\tau_0 >n)$ has been established
under an exponential moment condition on $X_1$.  
Further results of the probability \eqref{Objective-proba001} can be found in Doney \cite{Don89}, Keener \cite{Kee92} and, 
under the assumption that $X_1$ is heavy tailed and has finite second moment condition 
in Bansaye and Vatutin \cite{BV13}. 
When 
the random variable $X_1$ is absolutely continuous with subexponential density, 
 Denisov, Vatutin and Wachtel \cite{DVW14} investigated 
 the asymptotic behavior of 
 \eqref{Objective-proba001} for fixed $x \geq 0$ and  various ranges of $y$.

All the results in Sections \ref{Sect-CLLT-AA} and \ref{Sect-CLLT-BB} have analogs for random walks with drift,
by using an exponential change of probability measure, under the following additional Cram\'er-type condition: 
\begin{conditionA}\label{ExponentialMoment}
 There exist constants $\lambda \in \bb R$ and  $\delta > 0$ 
 such that 
\begin{align*} 
\bb E  X_1  e^{\lambda X_1} = 0 
\quad
\mbox{and}
\quad
\bb E  |X_1|^{2 + \delta} e^{ \lambda X_1 }  < \infty. 
\end{align*}
\end{conditionA}
It is easy to see that $\bb E e^{\lambda X_1} \leq \bb E |X_1|^{2+\delta} \mathds1 _{\{|X_1| >1\}} e^{\lambda X_1}< \infty$.

Note that from condition \ref{ExponentialMoment} we have that $\lambda > 0$ ($\lambda = 0$, $\lambda < 0$) 
when $\mu < 0$ ($\mu = 0$, $\mu > 0$).
In the case when $\mu=0$ 
condition \ref{ExponentialMoment} becomes \ref{SecondMoment}.


As before, let $\left( S^*_{n}\right) _{n\geq 1}$ be the dual random walk: $S^*_{n}=-\sum_{i=1}^{n}X_{i}$. 
Denote by $\tau^*_x$ the dual exit time: $\tau^*_x = \inf \left\{ k\geq 1: x+S^*_{k} < 0 \right\}$
with $\inf\emptyset = \infty$.  
Let $\lambda \in I_{\mu}$ be from  condition \ref{ExponentialMoment}. 
Denote 
\begin{align*}
\Lambda(\lambda) = \log \bb E e^{\lambda X_1}  \quad  \mbox{and}  \quad  \sigma_{\lambda}^2  = \bb E  |X_1|^{2} e^{ \lambda X_1 }.
\end{align*}
Define a change of probability measure by setting 
\begin{align*} 
\bb P_{\lambda} (X_1 \in dx) = e^{\lambda x - \Lambda(\lambda)} \bb P(X_1 \in dx) 
\end{align*}
and let $\bb E_{\lambda}$ be the expectation corresponding to $\bb P_{\lambda}$. 
From condition \ref{ExponentialMoment} it follows that $\Lambda(\lambda) <0$ whenever $\lambda \neq 0$.  
For $x \geq 0$, let $V_{\lambda}(x) : = -\bb E_{\lambda} (S_{\tau_x})$ 
and $V_{\lambda}^*(x) : = -\bb E_{\lambda} (S_{\tau_x^*}^*)$
be  the harmonic functions of random walks $S_n$ and $S_n^*$
killed at $\tau_x$ and $\tau_x^*$ under the probability measure $\bb P_{\lambda}$, respectively.  
The functions $V_{\lambda}$ and $V_{\lambda}^*$ can be extended to the whole real line $\bb R$
in the same way as the harmonic function $V$. 

The results of this section are obtained from those in Sections \ref{Sect-CLLT-AA} and \ref{Sect-CLLT-BB}
by using an exponential change of probability measure.  
The key point in applying this change of measure is the presence of the target functions in 
Theorems \ref{Theorem-AA002} and \ref{Theorem-BB002}.

In the particular case when the random walk has negative drift, i.e.\, $\mu = \bb E X_1 < 0$ 
(in this case $\lambda > 0$ by condition \ref{ExponentialMoment}), 
we have the following result. 

\begin{theorem}\label{Thm-Iglehart001}
Assume \ref{A1} and \ref{ExponentialMoment} for some $\lambda > 0$. 
Let $(\alpha_n)_{n\geq 1}$ be any sequence of positive numbers satisfying
$\lim_{n\rightarrow \infty} \alpha_n =  0$. 
Then we have, as $n \to \infty$,  uniformly in $x \in [0, \alpha_n \sqrt{n}]$, 
\begin{align}\label{tauxNegativeDrift}
\bb P (\tau_x >n) \sim   \frac{ 2 V_{\lambda}(x) e^{ n \Lambda(\lambda) + \lambda x} }{ \sqrt{2\pi }  \sigma_{\lambda}^3  n^{3/2}  } 
    \int_{\bb R_+}  e^{-\lambda t}  V_{\lambda}^{\ast } (t) dt. 
\end{align}
\end{theorem}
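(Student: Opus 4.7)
The plan is to reduce Theorem \ref{Thm-Iglehart001} to the zero-mean asymptotic \eqref{AsymExponenFunaa} via the standard exponential tilt. Introduce the tilted probability $\bb P_\lambda$ whose Radon-Nikodym derivative on $\mathcal F_n$ equals $e^{\lambda S_n - n\Lambda(\lambda)}$. Under $\bb P_\lambda$, condition \ref{ExponentialMoment} gives $\bb E_\lambda X_1 = e^{-\Lambda(\lambda)} \bb E(X_1 e^{\lambda X_1}) = 0$ and $\bb E_\lambda |X_1|^{2+\delta} < \infty$, while the non-lattice property \ref{A1} is preserved since an absolutely continuous change of measure does not alter the support of $X_1$. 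Thus the walk $(S_n)$ fits the zero-mean framework of Section \ref{Sect-CLLT-AA} under $\bb P_\lambda$, with variance $\sigma_\lambda^2$ and harmonic functions $V_\lambda$, $V_\lambda^*$.

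Next, invert the tilt. Since $\{\tau_x > n\}$ is $\mathcal F_n$-measurable,
\[
\bb P(\tau_x > n) = \bb E_\lambda\bigl[e^{-\lambda S_n + n\Lambda(\lambda)};\, \tau_x > n\bigr] = e^{n\Lambda(\lambda) + \lambda x}\, \bb E_\lambda\bigl[e^{-\lambda(x+S_n)};\, \tau_x > n\bigr].
\]
The remaining $\bb P_\lambda$-expectation is exactly the object to which the zero-mean asymptotic \eqref{AsymExponenFunaa} applies with $a = \lambda > 0$: the target function $t \mapsto e^{-\lambda t}$ is continuous and exponentially decaying, hence directly Riemann integrable on $\bb R_+$, and trivially satisfies $\int_{\bb R_+} e^{-\lambda t}(1+t)^{\gamma}\,dt < \infty$ for every $\gamma > 1$, so the integrability hypothesis of Theorem \ref{Theorem-AA002}(1) is met. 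Applying that asymptotic under $\bb P_\lambda$ yields, uniformly in $x \in [0, \alpha_n \sqrt n]$,
\[
\bb E_\lambda\bigl[e^{-\lambda(x+S_n)};\, \tau_x > n\bigr] \sim \frac{2 V_\lambda(x)}{\sqrt{2\pi}\,\sigma_\lambda^3\, n^{3/2}} \int_{\bb R_+} e^{-\lambda t} V_\lambda^*(t)\, dt,
\]
and multiplication by $e^{n\Lambda(\lambda) + \lambda x}$ produces \eqref{tauxNegativeDrift}.

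The main obstacle, and the reason the target-function version of Theorem \ref{Theorem-AA002} is crucial rather than a bare asymptotic for $\bb P(\tau_x > n)$ under zero mean, is that after tilting back, the indicator on $\{\tau_x > n\}$ has been multiplied by the weight $e^{-\lambda(x+S_n)}$; one therefore needs an integral limit theorem that handles a nontrivial exponential test function evaluated at the endpoint $x+S_n$. Beyond that, everything is automatic: the uniformity in $x \in [0, \alpha_n \sqrt n]$ is exactly what Theorem \ref{Theorem-AA002} delivers, the harmonic functions $V_\lambda$ and $V_\lambda^*$ are finite and positive on $[0, \infty)$ by the second-moment theory under $\bb P_\lambda$, and the constants appearing in \eqref{tauxNegativeDrift} are the correct tilted versions of those appearing in \eqref{AsymExponenFunaa}.
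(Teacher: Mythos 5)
Your proposal is correct and follows essentially the same route as the paper: tilt to $\bb P_\lambda$ under which the walk has zero mean, finite $(2+\delta)$-th moment and remains non-lattice, rewrite $\bb P(\tau_x>n)=e^{n\Lambda(\lambda)+\lambda x}\,\bb E_\lambda[e^{-\lambda(x+S_n)};\tau_x>n]$, and apply the zero-mean target-function asymptotic \eqref{Asymn32Smallxaa}/\eqref{AsymExponenFunaa} with $f(t)=e^{-\lambda t}$. The only cosmetic difference is that the paper packages the tilt-and-apply step as a separate intermediate result (Theorem \ref{Thm_CLLT_Drift_Negative001}) valid for a general class of target functions and then specializes $f=\mathds 1_{[0,\infty)}$, whereas you invoke the $y=0$, $f(t)=e^{-\lambda t}$ case directly.
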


The asymptotic \eqref{tauxNegativeDrift} improves the results of 
Iglehart \cite[Theorem 2.1]{Igl74} and Doney \cite[Theorem II]{Don89}. 
Firstly,  our result \eqref{tauxNegativeDrift} holds uniformly in $x \in [0, \alpha_n \sqrt{n}]$, 
while \cite{Igl74} deals with the particular case $x =0$
and \cite{Don89} deals with fixed $x \geq 0$. 
Secondly, our moment condition \ref{ExponentialMoment} is weaker than that used in \cite{Igl74}. 


The following theorem complements the results of Iglehart \cite{Igl74} and Doney \cite{Don89} when $x$ is large:  

\begin{theorem}\label{Thm-Iglehart002}
Assume \ref{A1} and \ref{ExponentialMoment} for some $\lambda > 0$.  
Then, for any $\eta \geq 1$, we have, as $n \to \infty$,  
uniformly in $x \in [\eta^{-1} \sqrt{n}, \eta \sqrt{n}]$,  
\begin{align*}
\bb P (\tau_x >n) \sim   
\frac{2 e^{n \Lambda(\lambda) + \lambda  x}  }{ \sqrt{2\pi} \sigma_{\lambda}^2 n }  \phi^+ \left( \frac{x}{\sigma_{\lambda} \sqrt{n}} \right) 
    \int_{\bb R_+}  e^{-\lambda t}  V_{\lambda}^{\ast } (t) dt. 
\end{align*}
\end{theorem}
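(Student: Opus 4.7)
The plan is to reduce Theorem \ref{Thm-Iglehart002} to the centered case of Theorem \ref{Theorem-BB002}(1) via the Cramér exponential change of probability measure, applied with the exponentially decaying target function $f(t)=e^{-\lambda t}\mathds 1_{[0,\infty)}(t)$ and shift $y=0$. This is exactly the scheme advertised in the paragraph before Theorem \ref{Thm-Iglehart001}: ``the key point in applying this change of measure is the presence of the target functions.''

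Concretely, the tilting identity yields
$$
\bb P(\tau_x>n)=\bb E_{\lambda}\!\left(\mathds 1_{\{\tau_x>n\}}\,e^{-\lambda S_n+n\Lambda(\lambda)}\right)
= e^{n\Lambda(\lambda)+\lambda x}\,\bb E_{\lambda}\!\left(e^{-\lambda(x+S_n)};\,\tau_x>n\right).
$$
Under $\bb P_{\lambda}$ the increments $(X_i)$ are i.i.d., still non-lattice (so \ref{A1} holds), and condition \ref{ExponentialMoment} gives $\bb E_{\lambda}X_1=0$ together with $\bb E_{\lambda}|X_1|^{2+\delta}<\infty$, so \ref{SecondMoment} holds under $\bb P_{\lambda}$. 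Moreover the range $x\in[\eta^{-1}\sqrt{n},\eta\sqrt{n}]$ is exactly the regime covered by Theorem \ref{Theorem-BB002}(1). The chosen target $f(t)=e^{-\lambda t}\mathds 1_{[0,\infty)}(t)$ has support in $\bb R_+$, is directly Riemann integrable, and satisfies $\int_{\bb R_+}f(t)(1+t)^{\gamma}\,dt<\infty$ for every $\gamma>1$ thanks to its exponential decay. Since on $\{\tau_x>n\}$ one has $x+S_n\geq 0$, restricting $f$ to $\bb R_+$ loses nothing. Applying \eqref{LLT32LargeStartingaa} under $\bb P_{\lambda}$ therefore gives, uniformly in $x\in[\eta^{-1}\sqrt n,\eta\sqrt n]$,
$$
\bb E_{\lambda}\!\left(e^{-\lambda(x+S_n)};\,\tau_x>n\right)
\sim \frac{2}{\sqrt{2\pi}\,\sigma_{\lambda}^{2}\,n}\,
\phi^{+}\!\left(\frac{x}{\sigma_{\lambda}\sqrt n}\right)\int_{\bb R_+}e^{-\lambda t}\,V_{\lambda}^{\ast}(t)\,dt,
$$
and multiplying by $e^{n\Lambda(\lambda)+\lambda x}$ yields exactly the claimed asymptotic. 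The harmonic function appearing in the integral is $V_{\lambda}^{\ast}$ because the dual harmonic function associated with \eqref{LLT32LargeStartingaa} is computed under $\bb P_{\lambda}$, which matches the definition of $V_{\lambda}^{\ast}$ given just before the theorem.

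There is essentially no remaining obstacle: the uniformity in $x$ is inherited verbatim from Theorem \ref{Theorem-BB002}(1), and checking the hypotheses on $f$ is routine. The real work has been front-loaded into Theorem \ref{Theorem-BB002}(1): it is critical that the conditioned integral limit theorem admits a general directly Riemann integrable target with only the polynomial integrability $\int f(t)(1+t)^{\gamma}\,dt<\infty$, since after tilting the survival probability $\bb P(\tau_x>n)$ becomes an expectation of the non-indicator weight $e^{-\lambda(x+S_n)}$. Once this is granted, Theorem \ref{Thm-Iglehart002} is immediate.
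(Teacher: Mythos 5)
Your proposal is correct and is essentially the paper's own argument: the paper proves Theorem \ref{Thm-Iglehart002} by first stating the intermediate Theorem \ref{Thm_CLLT_Drift_Negative001} (the tilted version of Theorem \ref{Theorem-BB002}(1)) and then taking $f=\mathds 1_{[0,\infty)}$, which after the change of measure amounts to exactly your choice of target $e^{-\lambda t}\mathds 1_{[0,\infty)}(t)$ at $y=0$. The only cosmetic difference is that you specialize directly rather than through the uniform intermediate statement.
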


\subsection{Local limit theorems for the exit time}\label{Sect-LLT-exit-taux}
In this section we formulate two local limit theorems for the exit time $\tau_x$.

Our first result gives the asymptotics of the 
probability $\mathbb P \left( \tau_x  =  n  \right)$ when the random walk is driftless. 
Set
\begin{align}\label{Def-varkappa}
\varkappa := \int_{- \infty}^0 V^{\ast } (t) dt = \int_{\bb R_+}   \bb P(t + X_1 < 0)   V^{\ast } (t) dt. 
\end{align}
The second equality follows from the harmonicity property \eqref{Doob transf}. 
By using condition \ref{A1} and Markov's inequality, 
one can check that the second integral in \eqref{Def-varkappa} is strictly positive and finite; see Lemma \ref{Lem-Equ-Harmonic}. 

\begin{theorem} \label{Thm-LLT-taux-xsmall}
Assume \ref{A1} and \ref{SecondMoment}. 
Let $(\alpha_n)_{n\geq 1}$ be any sequence of positive numbers satisfying
$\lim_{n\rightarrow \infty} \alpha_n =  0$. 
Then we have, as $n \to \infty$,  uniformly in $x \in [0, \alpha_n \sqrt{n}]$, 
\begin{align}\label{Asym-tau-n-xsmall}
\mathbb P \left( \tau_x  =  n  \right)
\sim      \frac{ 2 \varkappa V(x) 
}{ \sqrt{2\pi }  \sigma^3  n^{3/2}  }.
\end{align}
Moreover, for any $\eta \geq 1$, it holds, as $n \to \infty$,  
uniformly in $x \in [\eta^{-1} \sqrt{n}, \eta \sqrt{n}]$,  
\begin{align}\label{Asym-tau-n-xlarge}
\mathbb P \left( \tau_x  =  n  \right)
\sim     \frac{2 \varkappa}{ \sqrt{2\pi} \sigma^2 n }  \phi^+ \left( \frac{x}{\sigma \sqrt{n}} \right). 
\end{align}
\end{theorem}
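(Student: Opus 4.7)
\medskip

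\textbf{Proof plan for Theorem \ref{Thm-LLT-taux-xsmall}.}
The strategy is to reduce the local probability $\mathbb{P}(\tau_x = n)$ to an expectation of the form treated by Theorems \ref{Theorem-AA002} and \ref{Theorem-BB002}, and then simply quote these asymptotics.

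First I would use the one-step decomposition together with the Markov property of $(S_k)$: on the event $\{\tau_x > n-1\}$ we have $x+S_{n-1}\geq 0$, and the event $\{\tau_x = n\}$ occurs iff in addition $x+S_{n-1}+X_n<0$. Since $X_n$ is independent of $\mathscr{F}_{n-1}$, this yields
\begin{align*}
\mathbb{P}(\tau_x = n)
= \mathbb{E}\bigl( g(x+S_{n-1});\, \tau_x > n-1 \bigr),
\quad g(t) := \mathbb{P}(t+X_1<0),\ t\geq 0,
\end{align*}
with $g$ extended by $0$ on $(-\infty,0)$.

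Second, I would verify that $g$ satisfies the hypotheses imposed on the target function in Theorems \ref{Theorem-AA002}(1) and \ref{Theorem-BB002}(1). The function $g$ is non-increasing, bounded by $1$, and $\int_{\mathbb{R}_+} g(t)\,dt = \mathbb{E}(X_1^-)<\infty$; a standard upper/lower sum argument shows that any such non-increasing positive integrable function on $\mathbb{R}_+$ is directly Riemann integrable. For the weighted integrability requirement, Markov's inequality together with \ref{SecondMoment} gives $\mathbb{P}(X_1<-t)\leq \mathbb{E}|X_1|^{2+\delta}\, t^{-(2+\delta)}$ for $t>0$, so
\begin{align*}
\int_{\mathbb{R}_+} g(t)(1+t)^{\gamma}\,dt < \infty
\end{align*}
for any $\gamma \in (1, 1+\delta)$, e.g.\ $\gamma = 1 + \delta/2$. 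Thus $g$ qualifies for both theorems.

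Third, I would apply Theorem \ref{Theorem-AA002}(1) with $f=g$, $y=0$ and $a=1$, on the index $n-1$ instead of $n$. For the required auxiliary sequence $(\beta_k)$ with $\beta_k\to 0$ and $k^{1/4}\beta_k\geq 1$, I would take $\beta_{n-1}=\max(\alpha_n, (n-1)^{-1/4})$, which still tends to $0$ and majorizes $\alpha_n$ on the relevant range $x\in[0,\alpha_n\sqrt{n}]$. This yields, uniformly in $x\in[0,\alpha_n\sqrt{n}]$,
\begin{align*}
\mathbb{E}\bigl( g(x+S_{n-1});\, \tau_x > n-1 \bigr)
\sim
\frac{2V(x)}{\sqrt{2\pi}\,\sigma^3\,(n-1)^{3/2}}
\int_{\mathbb{R}_+} g(t)\,V^{*}(t)\,dt.
\end{align*}
Replacing $(n-1)^{3/2}$ by $n^{3/2}$ (an $O(1/n)$ relative error) and using the second equality in \eqref{Def-varkappa} to identify the integral with $\varkappa$ gives \eqref{Asym-tau-n-xsmall}. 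The second assertion \eqref{Asym-tau-n-xlarge} follows in exactly the same way from Theorem \ref{Theorem-BB002}(1) with $f=g$, $y=0$, using the continuity of $\phi^+$ and the fact that $x/(\sigma\sqrt{n-1})-x/(\sigma\sqrt{n})\to 0$ uniformly for $x\in[\eta^{-1}\sqrt{n},\eta\sqrt{n}]$.

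There is no real obstacle, since the theorem is a clean corollary of the earlier results. The only points requiring a brief argument are the direct Riemann integrability of the monotone function $g$, the polynomial tail bound on $g$ coming from the $(2+\delta)$-moment assumption, and the bookkeeping step of translating asymptotics in $n-1$ to asymptotics in $n$ together with the choice of the auxiliary sequence $\beta_{n-1}$ compatible with the constraint $k^{1/4}\beta_k\geq 1$.
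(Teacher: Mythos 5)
Your proposal is correct and follows essentially the same route as the paper: the paper's proof (Section \ref{Sec: loc taux}) also reduces $\mathbb P(\tau_x=n+1)$ to $\mathbb E\bigl(\mathbb P(t+X_1<0)\big|_{t=x+S_n};\tau_x>n\bigr)$ via the Markov property, checks that $t\mapsto\mathbb P(t+X_1<0)\mathds 1_{\{t\ge 0\}}$ is a non-increasing integrable (hence directly Riemann integrable) function with the $(1+t)^{\gamma}$-moment bound from Markov's inequality, and then invokes Theorems \ref{Theorem-AA002} and \ref{Theorem-BB002} together with the identity \eqref{Def-varkappa}. Your version is in fact slightly cleaner on two bookkeeping points the paper glosses over — working directly at index $n-1$ rather than $n+1$, and noting the replacement $\alpha_n\mapsto\max(\alpha_n,n^{-1/4})$ needed to meet the hypothesis $n^{1/4}\alpha_n\geq 1$ in Theorem \ref{Theorem-AA002}.
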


For fixed $x \geq 0$, asymptotics of the probability 
$\mathbb P \left( \tau_x  =  n  \right)$ 
have been obtained for instance by  
 Eppel \cite{Eppel-1979},  Borovkov \cite{Bor70} and   Vatutin and Wachtel \cite{VatWacht09}.
A uniform version has been obtained by Doney \cite{Don12}. 
The asymptotic \eqref{Asym-tau-n-xsmall} is in accordance with 
Theorem 2.8 of \cite{GLL20}, which is stated for Markov chains with finite state space and for fixed $x \geq 0$.
However, the setting in \cite{GLL20} does not cover the general case of random walks in $\bb R$.

To the best of our knowledge, the explicit formula \eqref{Def-varkappa} for the constant $\varkappa$ in 
the asymptotics \eqref{Asym-tau-n-xsmall} and \eqref{Asym-tau-n-xlarge} 
has not been known in the literature. 
Our formulas \eqref{Asym-tau-n-xsmall} and \eqref{Asym-tau-n-xlarge} (under assumptions \ref{A1} and \ref{SecondMoment}) 
correct some misstatement 
in the analogous results in papers \cite[Theorems 7]{VatWacht09} and \cite[Theorem 2 (A)]{Don12}. 
Under the setting of our paper ($X_1$ is in the domain of attraction of the normal law), 
these results claim that, 
for any fixed $x \geq 0$, as $n \to \infty$,
\begin{align}\label{Result-Error}
\mathbb P \left( \tau_x  =  n  \right)
\sim      \frac{ V(x) }{ 2 \sqrt{2\pi }  \sigma  n^{3/2}  }.
\end{align}
Therefore, compared with \eqref{Asym-tau-n-xsmall},  
the factor $\frac{4 \varkappa}{\sigma^2}$ is missing in \eqref{Result-Error}. 
The same remark also applies to some other results, in particular to \cite[Theorems 10]{VatWacht09} and \cite[Theorem 2 (B)]{Don12}.

Our next result extends Theorem \ref{Thm-LLT-taux-xsmall} to the case of random walks with drift. 
Let 
\begin{align}\label{Def-varkappa-lamb}
\varkappa_{\lambda}  := \bb E e^{\lambda X_1} \int_{- \infty}^0  e^{-\lambda t}  V_{\lambda}^{\ast } (t) dt 
=  \int_{\bb R_+}   \bb P(t + X_1 < 0)  e^{-\lambda t}  V_{\lambda}^{\ast } (t) dt. 
\end{align}
The second equality will be proved in Lemma \ref{Lem-Equ-Harmonic} where we also show that 
the constant $\varkappa_{\lambda}$ is strictly positive and finite. 


\begin{theorem}\label{Thm-LLT-taux-xlarge}
Assume \ref{A1} and \ref{ExponentialMoment}. 
Let $(\alpha_n)_{n\geq 1}$ be any sequence of positive numbers satisfying
$\lim_{n\rightarrow \infty} \alpha_n =  0$. 
Then we have, as $n \to \infty$,  uniformly in $x \in [0, \alpha_n \sqrt{n}]$, 
\begin{align}\label{Asym-tau-n-xsmall-changemeas}
\mathbb P \left( \tau_x  =  n  \right)
\sim      \frac{ 2 \varkappa_{\lambda}  V_{\lambda}(x) e^{n \Lambda(\lambda) + \lambda x} }{ \sqrt{2\pi }  \sigma_{\lambda}^3  n^{3/2}  }.   
\end{align}
Moreover, for any $\eta \geq 1$, it holds, as $n \to \infty$,  
uniformly in $x \in [\eta^{-1} \sqrt{n}, \eta \sqrt{n}]$,  
\begin{align}\label{Asym-tau-n-xlarge-changemeas}
\mathbb P \left( \tau_x  =  n  \right)
\sim      \frac{2  \varkappa_{\lambda} e^{n \Lambda(\lambda) + \lambda  x}  }{ \sqrt{2\pi} \sigma_{\lambda}^2 n }  
      \phi^+ \left( \frac{x}{\sigma_{\lambda} \sqrt{n}} \right). 
\end{align}
\end{theorem}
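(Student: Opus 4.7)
The plan is to reduce $\mathbb P(\tau_x=n)$ to an integral-type asymptotic that can be handled by Theorems \ref{Theorem-AA002} and \ref{Theorem-BB002} after an exponential tilt. By conditioning on $\mathcal F_{n-1}$ and using that $\{\tau_x=n\}=\{\tau_x>n-1\}\cap\{x+S_{n-1}+X_n<0\}$, one obtains the one-step identity
\begin{equation*}
\mathbb P(\tau_x=n)=\mathbb E\!\left[g(x+S_{n-1});\tau_x>n-1\right], \qquad g(u):=\mathbb P(u+X_1<0),
\end{equation*}
and $x+S_{n-1}\geq 0$ on $\{\tau_x>n-1\}$ allows us to treat $g$ as supported on $\bb R_+$. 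Since $\{\tau_x>n-1\}\in\mathcal F_{n-1}$, the Cram\'er tilt associated with \ref{ExponentialMoment} then yields
\begin{equation*}
\mathbb P(\tau_x=n)=e^{(n-1)\Lambda(\lambda)+\lambda x}\,\mathbb E_\lambda\!\left[h(x+S_{n-1});\tau_x>n-1\right], \qquad h(u):=e^{-\lambda u}g(u)\mathds 1_{\bb R_+}(u).
\end{equation*}
Under $\bb P_\lambda$ the increments are centered and have a moment of order $2+\delta$, and non-latticeness is preserved, so the hypotheses of Theorems \ref{Theorem-AA002}(1) and \ref{Theorem-BB002}(1) are in force for $\bb P_\lambda$.

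I would then apply Theorem \ref{Theorem-AA002}(1) under $\bb P_\lambda$ with target function $h$ and $y=0$ to handle the regime $x\in[0,\alpha_n\sqrt n]$, and Theorem \ref{Theorem-BB002}(1) analogously in the regime $x\in[\eta^{-1}\sqrt n,\eta\sqrt n]$; replacing the resulting $(n-1)^{3/2}$ by $n^{3/2}$ contributes only a factor tending to $1$. To legitimize these applications one must verify that $h$ is directly Riemann integrable and that $\int_{\bb R_+} h(t)(1+t)^\gamma\,dt<\infty$ for some $\gamma>1$. When $\lambda>0$ the factor $e^{-\lambda t}$ yields exponential decay; when $\lambda\leq 0$ one uses the elementary inequality $\mathds 1\{X_1<-t\}\leq e^{\lambda(X_1+t)}\mathds 1\{X_1<-t\}$ together with the weighted moment $\bb E[|X_1|^{2+\delta}e^{\lambda X_1}]<\infty$ from \ref{ExponentialMoment} to deduce $h(t)\leq C\,t^{-(2+\delta)}$, which gives the desired weighted integrability for any $\gamma<1+\delta$. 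Direct Riemann integrability then follows from the monotone majorant away from the origin together with the a.e.\ continuity of $g$ on $(0,\infty)$.

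Finally, the constant $\int_{\bb R_+} h(t)V_\lambda^*(t)\,dt$ is identified with $\varkappa_\lambda$ by the harmonicity relation $V_\lambda^*(t)=\bb E_\lambda[V_\lambda^*(t-X_1)\mathds 1\{t-X_1\geq 0\}]$ valid for all $t\in\bb R$, combined with a Fubini argument; this identification is the content of Lemma \ref{Lem-Equ-Harmonic}, which also establishes the two equivalent representations of $\varkappa_\lambda$ in \eqref{Def-varkappa-lamb} and verifies that it is strictly positive and finite. The main obstacle is the tail control of $h$ when $\lambda<0$, where the exponential prefactor $e^{-\lambda t}$ grows and must be offset by the decay of $g$; once the moment-tilt bound above produces $h(t)=O(t^{-(2+\delta)})$ uniformly, the remaining steps are direct consequences of Theorems \ref{Theorem-AA002}(1) and \ref{Theorem-BB002}(1) together with Lemma \ref{Lem-Equ-Harmonic}.
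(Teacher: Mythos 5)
Your proposal is correct and follows essentially the same route as the paper's own proof: the one-step Markov decomposition $\mathbb P(\tau_x=n)=\mathbb E[g(x+S_{n-1});\tau_x>n-1]$ with $g(u)=\mathbb P(u+X_1<0)$, an exponential tilt to reduce to the driftless Theorems \ref{Theorem-AA002} and \ref{Theorem-BB002} with the target $h(u)=e^{-\lambda u}g(u)\mathds 1_{\{u\geq 0\}}$, the weighted-moment tail bound to secure direct Riemann integrability and $\int_{\bb R_+}h(t)(1+t)^\gamma dt<\infty$, and the identification of the constant via Lemma \ref{Lem-Equ-Harmonic}. The only cosmetic differences are that the paper shifts the index by $+1$ rather than $-1$ and packages the tilt as the intermediate Theorem \ref{Thm_CLLT_Drift_Negative001} instead of performing it inline, which amounts to the same computation.
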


The asymptotics \eqref{Asym-tau-n-xsmall-changemeas} and \eqref{Asym-tau-n-xlarge-changemeas} 
for random walks with drift are new to our knowledge.

The proof of Theorems \ref{Thm-LLT-taux-xsmall} and \ref{Thm-LLT-taux-xlarge}
is given in Section \ref{Sec: loc taux}.
Our proof is an elementary application of the local limit theorems with target functions, 
which is different from the proofs in \cite{Eppel-1979},  \cite{Bor70}, \cite{VatWacht09} and \cite{Don12}
based on more involved arguments.


\subsection{Method of the proof}
Usually to study the asymptotic behavior of the probability 
\eqref{Objective-proba001}
the Wiener-Hopf factorization is employed.
However for the proof of our results this technique does not seem appropriate.
In the  present paper we adopt an approach inspired by \cite{Den Wachtel 2011}.
To give a sketch of the proofs let us denote $f = \mathds 1_{y+[0, \Delta]}$ and let us
  assume first that in the expectation 
$\mathbb{E} \left( f(x+S_n); \tau_x >n\right)$
 the starting point $x\geq 0$ is near the 
boundary (in our setting the level $0$). 
Then splitting the trajectory of the walk $(x + S_n)_{n\geq 0}$ 
into two parts of length $k$ and $m$ with $k+m=n$ and using the Markov property after some transformations we can approximate 
the probability \eqref{Objective-proba001} as follows (see \eqref{JJJ-markov property} and 
\eqref{staring point for the lower-bound-001} for more details):
\begin{align} \label{methods-001}
\mathbb{E} \left( f(x+S_n); \tau_x >n\right) 
\approx \int_{\mathbb R_+}  \mathbb{E} f(t+S_{m}) \mathbb{P}\left( x+S_{k}\in dt,  \tau_x >k\right).
 \end{align} 
The expectation 
$\mathbb{E} f(t+S_{m})$ 
corresponding to the second part of the trajectory 
is handled by using the ordinary local limit theorem (Theorem \ref{LLT-general}), which gives
\begin{align} \label{methods-001a}
\mathbb{E} \left( f(x+S_{n}); \tau_x >n\right) \approx
 \int_{\bb R _{+}}  \varphi_n(t)  \mathbb{P}\left(  \frac{x+S_{k}}{\sigma \sqrt{k}}\in dt, \tau_{x}>k\right), 
\end{align}
where $ \varphi_n(t) := \int_{\mathbb R} f(\sigma\sqrt{k} s) \phi_{m/k} (t-s) ds$
and $\phi_{v}$ is the normal density of mean zero and variance $v>0$.
For the integral in \eqref{methods-001a}, which corresponds to the first part of the trajectory, 
we apply the conditioned integral limit theorem (Theorem \ref{Theor-IntegrLimTh}). 
After some elementary calculations, this leads to the approximation
\begin{align} \label{methodproof-002}
\mathbb{E} \left( f(x+S_{n}); \tau_x >n\right) \approx
\frac{2V(x)}{ \sqrt{2\pi k} \sigma}  \int_{\mathbb R} f(\sigma\sqrt{n} t)  \phi_{\delta_n}*\phi^+_{1-\delta_n}(t) dt,
\end{align}
where $\delta_n=\frac{m}{n}$ and $\phi^+_{v}$ is the Rayleigh density with scale parameter $\sqrt{v}$, $v>0$.
Using the convolution Lemma \ref{t-Aux lemma} as $\delta_n$ becomes small, we obtain, as $n\to \infty$,
\begin{align} \label{methodproof-003}
\mathbb{E} \left( f(x+S_{n}); \tau_x >n\right) \approx
\frac{2V(x)}{ \sqrt{2\pi} \sigma^2 n}  \int_{\mathbb R} f( t) \phi^+\left(\frac{t}{\sigma\sqrt{n}}\right) dt, 
\end{align}
from which we can deduce Theorem \ref{Theorem-AA001}.
Note that, the main term in the asymptotic \eqref{methodproof-003} becomes meaningful 
only when the support of the function moves to infinity as $n\to\infty$, 
which is the case for instance when $f = \mathds 1_{y+[0, \Delta]}$ and $y$ moves to $\infty$.
The use of the convolution step \eqref{methodproof-002} is the crucial point of our approach, which 
makes the difference with that of \cite{Den Wachtel 2011}.
It also clarifies the idea of the method and greatly simplifies the computations
compared to \cite{Den Wachtel 2011}. 

The case when  $f = \mathds 1_{y+[0, \Delta]}$ and both $x$ and $y$ are near the boundary 
(which is a particular case of Theorem \ref{Theorem-AA002}) is handled
in a different way.    
Using the Markov property we obtain the approximation 
\begin{align} \label{HHHHH-001}
\mathbb{E} \left( f(x+S_{n}); \tau_x >n\right) \approx 
 \int_{\mathbb R_+}   H_m(t) \mathbb{P}\left( x+S_{k}\in dt,  \tau_x >k\right),
\end{align} 
where $H_m(t)=\mathbb{E} \left( f(t+S_{m}); \tau_t  > m\right)$.
First, by the asymptotic \eqref{methodproof-003} 
applied with the function $f=H_m$ and $n=k$, 
we have
\begin{align*} 
\mathbb{E} \left( f(x+S_{n}); \tau_x >n\right) \approx 
\frac{2V(x)}{ \sqrt{2\pi }\sigma^2k}  \int_{\mathbb R_+}  H_m(t) 
  \phi^+\left(\frac{t}{\sigma\sqrt{k}}\right) dt,
\end{align*}  
from which by the reversibility we get 
\begin{align*} 
\mathbb{E} \left( f(x+S_{n}); \tau_x >n\right) \approx 
\frac{2V(x)}{\sqrt{2\pi }\sigma^2 k}
 \int_{\mathbb R_+} f(t)  \mathbb{E} \left[ \phi^+\left(\frac{t+S_{m}^*}{\sigma \sqrt{m}}\right); \tau_t^*  > m \right]  dt.
\end{align*} 
The expectation inside the integral (which corresponds to the second part of the trajectory)
is handled using the conditioned integral limit theorem 
(Theorem \ref{Theor-IntegrLimTh}) for the reversed walk 
$(t + S_n^*)_{n\geq 0}$, which after some elementary
transformations leads to the approximation
\begin{align} \label{methodproof-005}
\mathbb{E} \left( f(x+S_{n}); \tau_x >n\right) \approx
\frac{2V(x)}{ \sqrt{2\pi } \sigma^3 n^{3/2}}  \int_{\mathbb R} f( t) V^*(t) dt.
\end{align}
This gives the assertion \eqref{Asymn32Smallxaa} of Theorem \ref{Theorem-AA002}. 
It is worth mentioning that when handling the right hand side of \eqref{HHHHH-001}
there is a problem with the function $H_m$ which does not comply with the assumptions of 
Theorem \ref{Theorem-AA001}. 
We overcome this by using 
a special smoothing technique developed 
in Section \ref{subsec-auxiliary}.   

The corresponding assertion of Theorem \ref{Thm-Iglehart001} is obtained 
by applying the approximation \eqref{methodproof-005} to the random walk $(x + S_n)_{n\geq 0}$ 
under an exponential change of measure. At this stage a result with a target function $f$ is necessary. 

The case when the staring point $x$ is of order $\sqrt{n}$ is treated much in the same way, 
except that the product $\phi^+(t)V(x) $ is replaced by the function $\psi(t,x)$ defined by \eqref{Def-Levydens}.
In order to perform this program,  we establish 
effective local limit theorems for $S_n$ with target functions on $S_n$   
and with explicit remainders expressed in terms of the target function.
We will also establish conditioned integral limit theorems with a rate of convergence. 

Let us note that the method described above works also for sums of i.i.d. random variables taking values in lattices
and can also be applied for random walks in cones.  This last point is treated in a subsequent paper.  
We also would like to point out some other advantages of our method. 
With our approach we can avoid the use of the reversibility of the random walk  $(x + S_n)_{n\geq 0}$
(this requires a minor modification of the proof of the lower bound in section \ref{sec: proof lower bound}).
This is in contract to the previous work in the area \cite{Carav05, Don12, KV17, VatWacht09} where the reversibility is 
used essentially in the proof. 
Therefore with our approach we can extend Theorems \ref{Theorem-AA001}, \ref{Theorem-AA002},
\ref{Theorem-BB001} and \ref{Theorem-BB002}
to the case of  Markov random walks.  
We refer to Grama, Quint and Xiao \cite{GQX21} for a study on conditioned limit theorems for 
 hyperbolic dynamical systems and to Grama, Mentemeier and Xiao \cite{GMX21} where this type of results are 
applied for the study of branching products of random matrices.

\section{Effective limit theorems}

\subsection{Preliminary results} \label{subsec-auxiliary}




Let $f : \bb R \mapsto  \bb R_+$ be a non-negative Borel measurable function. 
Consider the upper and lower ladder functions: for $I_k = [k \delta, (k+1) \delta)$ with $\delta >0$, 
\begin{align*}
\overline f_{\delta}(u) = \sum_{k\in \bb Z}  \mathds 1_{I_k}(u)  \sup_{u' \in I_k} f(u'), \quad 
\underline f_{\delta}(u) = \sum_{k\in \bb Z}  \mathds 1_{I_k}(u)  \inf_{u' \in I_k} f(u'),  \quad  u \in \bb R.   
\end{align*}
The function $f$
is called directly Riemann integrable if 
$\int_{\bb R} \overline f_{\delta}(u) du <\infty$ for any $\delta>0$ small enough,
and 
\begin{align} \label{alternative_def_DRI001}
\lim_{\delta\to 0}  \int_{\bb R} \left( \overline f_{\delta}(u) - \underline f_{\delta}(u) \right) du = 0, 
\end{align}
here and below all the integrals are understood in the Lebesgue sense.
A Borel measurable function $f$ 
with values in $\bb R$ is directly Riemann integrable if 
both its positive and negative parts are directly Riemann integrable.
%
Since $\underline f_{\delta} \leq f \leq \overline f_{\delta}$, 
we have that \eqref{alternative_def_DRI001} is equivalent to the following property:
\begin{align} \label{alternative_def_DRI002}
\lim_{\delta\to 0} \int_{\bb R} \left( \overline f_{\delta}(u) - f(u)  \right) du =0,\quad 
\lim_{\delta\to 0} \int_{\bb R} \left( f(u) - \underline f_{\delta}(u)  \right) du =0. 
\end{align}
Every directly Riemann integrable function is necessarily integrable with respect to the Lebesgue measure on $\bb R$ 
 and vanishes at infinity. 
 But the converse may not hold because of the possible oscillations at infinity.
We refer to \cite[\S XI.1]{Fel64} and \cite[\S V.4]{Asm-2003}  for more details. 

Let $f, g:\mathbb R \mapsto \mathbb R_+$ 
be Borel measurable functions and $\ee>0$.
We say that $g$ $\ee$-dominates $f$ and we use notation $f \leq_{\ee} g$ if
\begin{align} \label{def_upper_envelope_001} f(u)  \leq  g(u+v), \ \forall u\in \bb R,\ \forall |v| \leq \ee.   
\end{align}
We say equivalently that $f$ $\ee$-minorates $g$  and we use notation $g \geq_{\ee} f$.
If $f\leq_{\ee} g$ we also say that $g$ is an upper $\ee$-envelope of $f$ 
or equivalently that $f$ is an lower $\ee$-envelope of $g$.
In the following we are interested in (Borel) measurable $\ee$-envelopes.
Any function $f$ has measurable upper and lower $\ee$-envelopes
for any $\ee>0$ sufficiently small.
To see this consider the upper and lower $\ee$-envelopes of $\overline f_{\delta}$ and $\underline f_{\delta}$,
\begin{align}\label{Def_f_deltaee}
\overline f_{\delta,\ee}(u)=\sup_{|v - u| \leq \ee} \overline f_{\delta}(v),  \quad 
\overline f_{\delta,-\ee}(u)=\inf_{|v - u| \leq \ee} \underline f_{\delta}(v), \quad  u \in \bb R. 
\end{align}
Then, obviously, it holds that
\begin{align} \label{two-sided bound for f 001}
\underline f_{\delta,-\ee} \leq_{\ee} \underline f_{\delta} \leq f \leq \overline f_{\delta}\leq_{\ee} \overline f_{\delta,\ee}.
\end{align} 
In addition, both $\overline f_{\delta,\ee}$ and $\underline f_{\delta,-\ee}$ are Borel measurable functions 
since  $\overline f_{\delta}$ and $\underline f_{\delta}$ take values in a countable set.

The following lemma shows that if $f$ is directly Riemann integrable then it
can be approximated by the help of the functions $\overline f_{\delta, \ee}$
and $\underline f_{\delta, -\ee}$.

\begin{lemma}\label{lemma-DRI-convergence}
Let $f:\bb R \mapsto \bb R_+$ be a directly Riemann integrable function. 
Then we have 
\begin{align*} 
\lim_{\delta \to 0} \lim_{\ee\to 0}  \int_{\bb R}| \overline f_{\delta, \ee} (u) -  f (u)  | du =0,
\quad 
\lim_{\delta \to 0} \lim_{\ee\to 0}  \int_{\bb R}| \underline f_{\delta, -\ee} (u) - f (u)  | du =0. 
\end{align*}
\end{lemma}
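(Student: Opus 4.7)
The plan is to use the sandwich \eqref{two-sided bound for f 001}, which gives $\underline f_{\delta,-\ee} \leq f \leq \overline f_{\delta,\ee}$, so the absolute values disappear:
\begin{align*}
\int_{\bb R} |\overline f_{\delta,\ee}(u) - f(u)| du = \int_{\bb R} (\overline f_{\delta,\ee}(u) - \overline f_\delta(u))\, du + \int_{\bb R} (\overline f_\delta(u) - f(u))\, du,
\end{align*}
and symmetrically $|\underline f_{\delta,-\ee} - f| = (f - \underline f_\delta) + (\underline f_\delta - \underline f_{\delta,-\ee})$. The second piece in each decomposition is independent of $\ee$ and converges to $0$ as $\delta \to 0$ by the equivalent formulation \eqref{alternative_def_DRI002} of direct Riemann integrability. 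So the core of the proof reduces to showing that, for each fixed $\delta > 0$, the first piece tends to $0$ as $\ee \to 0$.

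For the key estimate, I would fix $\delta > 0$, take $\ee < \delta$, and exploit the step-function structure of $\overline f_\delta$. Writing $c_k := \sup_{u' \in I_k} f(u')$, the function $\overline f_\delta$ equals $c_k$ on $I_k = [k\delta, (k+1)\delta)$. For $u$ in the interior stratum $(k\delta + \ee, (k+1)\delta - \ee)$ the neighborhood $[u-\ee, u+\ee]$ is contained in $I_k$, so $\overline f_{\delta,\ee}(u) = c_k = \overline f_\delta(u)$. The difference is therefore supported on boundary layers of width $\ee$ near each point $k\delta$; on $[k\delta, k\delta+\ee]$ the value of $\overline f_{\delta,\ee}$ equals $\max(c_{k-1}, c_k)$, and similarly on the right endpoint of $I_k$. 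A direct computation then yields
\begin{align*}
\int_{\bb R} (\overline f_{\delta,\ee}(u) - \overline f_\delta(u))\, du
= \ee \sum_{k\in\bb Z} \bigl[\max(0, c_{k-1} - c_k) + \max(0, c_{k+1} - c_k)\bigr]
\leq 2\ee \sum_{k\in\bb Z} c_k
= \frac{2\ee}{\delta} \int_{\bb R} \overline f_\delta(u)\, du.
\end{align*}
Since $\int_{\bb R} \overline f_\delta(u)\, du < \infty$ by direct Riemann integrability, letting $\ee \to 0$ at fixed $\delta$ kills this term. The same calculation with $\overline f_\delta, c_k, \max$ replaced by $\underline f_\delta, \underline c_k := \inf_{I_k} f$, and $\max(0, \underline c_k - \underline c_{k\pm 1})$ handles $\int(\underline f_\delta - \underline f_{\delta,-\ee})\, du$, again bounded by $(2\ee/\delta) \int \overline f_\delta\, du$.

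Combining the two steps in the stated order of limits finishes the proof: first send $\ee \to 0$ to eliminate the boundary-layer contribution (this is where we crucially need $\delta$ to be held fixed, since the bound is of the form $\ee/\delta$), and then send $\delta \to 0$ to kill the residual $\int(\overline f_\delta - f)\, du$ and $\int(f - \underline f_\delta)\, du$ via direct Riemann integrability. The main (mild) obstacle is the combinatorics of the boundary layers, which forces the restriction $\ee < \delta$ so that layers around adjacent grid points do not overlap and each interval contributes a clean $\ee$-length correction; beyond this technicality the argument is essentially elementary.
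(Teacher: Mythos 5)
Your proposal is correct, and it takes a genuinely different route to the key step than the paper does. Both arguments use the sandwich $\underline f_{\delta,-\ee}\le\underline f_\delta\le f\le\overline f_\delta\le\overline f_{\delta,\ee}$ to split $\int|\overline f_{\delta,\ee}-f|$ into $\int(\overline f_{\delta,\ee}-\overline f_\delta)$ plus $\int(\overline f_\delta - f)$, and both finish the $\delta\to 0$ step by invoking the equivalent DRI characterization \eqref{alternative_def_DRI002}. Where you diverge is the $\ee\to 0$ step: you compute the difference $\overline f_{\delta,\ee}-\overline f_\delta$ explicitly, observing that it is supported on $\ee$-layers around the grid points $k\delta$ and is bounded there by $c_{k\pm1}$, yielding the quantitative estimate $\int_{\bb R}(\overline f_{\delta,\ee}-\overline f_\delta)\,du\le\frac{2\ee}{\delta}\int_{\bb R}\overline f_\delta\,du$. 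The paper instead avoids any bookkeeping by noting that $\overline f_{\delta,\ee}\to\overline f_\delta$ pointwise at every continuity point of the step function (hence Lebesgue-a.e.), constructing the dominating function $g(u)=\overline f_\delta(u)+\overline f_\delta(u+\delta)+\overline f_\delta(u-\delta)$ on each cell $I_k$ (which integrates to $3\int\overline f_\delta<\infty$), and invoking dominated convergence. Your version is more elementary (no DCT) and produces an explicit $O(\ee/\delta)$ rate; the paper's version is shorter and does not require the adjacent-layer combinatorics. One small technicality in your write-up: for the two boundary layers inside a single cell $I_k$ (one at $k\delta$, one at $(k+1)\delta$) to be disjoint so that $\overline f_{\delta,\ee}(u)=\max(c_{k-1},c_k)$ and $\max(c_k,c_{k+1})$ respectively hold cleanly, you need $2\ee\le\delta$ rather than merely $\ee<\delta$; this is harmless since $\delta$ is fixed when $\ee\to 0$, but the stated restriction should be tightened.
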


\begin{proof}
By definition \eqref{Def_f_deltaee}, it holds that almost surely under the Lebesgue measure on $\bb R$, 
\begin{align} \label{AlmostSureCon}
\lim_{\ee\to 0} \overline f_{\delta,\ee} = \overline f_{\delta} \quad \mbox{and} \quad  
\lim_{\ee\to 0} \underline f_{\delta,-\ee} = \underline f_{\delta}.
\end{align}
Using the Lebesgue dominated convergence theorem, we conclude that for any $\delta>0$,
\begin{align} \label{conv to f_delta001}
\lim_{\ee\to 0}  \int_{\bb R}| \overline f_{\delta, \ee} (u) -  \overline f_{\delta}(u)  | du =0,
\quad 
\lim_{\ee\to 0}  \int_{\bb R}| \underline f_{\delta, -\ee} (u) - \underline f_{\delta} (u)  | du =0. 
\end{align}
For this it is enough to prove that there exist $\ee_0 \in (0,\delta]$ 
and a Lebesgue integrable function $g$ on $\bb R$ such that 
\begin{align} \label{AlmostSureCon2}
0\leq \sup_{\ee\in (0,\ee_0)}\underline f_{\delta,-\ee} \leq \sup_{\ee\in (0,\ee_0)}\overline f_{\delta,\ee} \leq g.  
\end{align} 
Indeed, for $k \in \bb Z$, let 
\begin{align*} 
g(u)= \overline f_{\delta} (u) + \overline f_{\delta} (u+\delta) +\overline f_{\delta} (u-\delta),  \quad  u \in [k \delta, (k+1) \delta ). 
\end{align*}
We see that $\overline f_{\delta,-\ee} (u) \leq g(u)$ for any $u\in \bb R$ and $\ee\in (0,\delta]$. 
In addition, 
\begin{align*} 
& \sum_{k=-\infty}^{\infty}  \mathds 1_{[k\delta,(k+1)\delta)}(u) g (u)
= \sum_{k=-\infty}^{\infty}  \mathds 1_{[k\delta,(k+1)\delta)}(u) \overline f_{\delta} (u) \\
& \qquad\quad  + \sum_{k=-\infty}^{\infty}  \mathds 1_{[k\delta,(k+1)\delta)}(u) \overline f_{\delta} (u+\delta) 
+  \sum_{k=-\infty}^{\infty}  \mathds 1_{[k\delta,(k+1)\delta)}(u) \overline f_{\delta} (u-\delta).  
\end{align*}
Since the right hand side is an integrable function, 
we get that the function $g$ is Lebesgue integrable, which shows \eqref{AlmostSureCon2}.

The assertion of the lemma follows from \eqref{conv to f_delta001} and \eqref{alternative_def_DRI002}. 
\end{proof}



Now we state some smoothing inequalities.  
For any integrable function $f: \bb R \mapsto \bb R$, its Fourier transform is defined by 
$\widehat{f}(t) = \int_{\bb R} e^{-itu} f(u) du$, $t \in \bb R$. 
Note that 
\begin{align}\label{IdentiFourier}
\frac{1}{2\pi} \int_{\bb R} e^{-itu}   \left( \frac{\sin \left( u/2 \right)}{u/2} \right)^2 du = (1 - |t|) \mathds 1_{\{ |t| \leq 1 \}},
\quad  t \in \bb R, 
\end{align}
where we use the convention that $\frac{\sin 0}{0} = 1$. 
We introduce the density function $\kappa$ by setting
\begin{align*} 
\kappa (u) = 
\left[ \int_{\bb R} \left( \frac{\sin \left( v/4 \right)}{v/4} \right)^4 dv \right]^{-1} 
\left( \frac{\sin \left( u/4 \right)}{u/4} \right)^4, 
\quad  u \in \bb R.
\end{align*}
By \eqref{IdentiFourier}, its Fourier transform $\widehat{\kappa}$ 
is a non-negative even function with support on $[-1, 1]$. 
For any $\ee >0$, we define the rescaled density function $\kappa_{\ee}$ by
\begin{align} \label{kernel-smoo-001}
\kappa_{\ee} (u) = \frac{1}{\ee} \kappa \left( \frac{u}{\ee} \right), \quad  u \in \bb R. 
\end{align}
Its Fourier transform $\widehat{\kappa}_{\ee}$ 
is supported on $[-\veps^{-1}, \veps^{-1}]$. 
Note also that there exists a constant $c>0$ such that for any $\ee > 0$, 
\begin{equation}\label{smbd001}
\int_{\abs{u} \geq \frac{1}{\ee}} \kappa (u) d u 
 \leq c \int_{\frac{1}{\ee}}^{\infty} \frac{1}{u^4} d u \leq c \ee^3.
\end{equation}

We shall make use of the following smoothing inequalities, whose proofs can be carried out in the same way as that 
of \cite[Lemma 5.2]{GLL20}.  
Below let 
\begin{align*}
f*g(u)=\int_{\bb R} f(u-v)g(v)dv, \quad  u \in \bb R, 
\end{align*}
be the convolution of $f$ and $g$, whenever the integral makes sense. 


\begin{lemma}\label{smoothing-lemma-001}
There exists a constant $c>0$ such that for any $\ee \in (0,1/2)$ and any integrable functions $f,g: \bb R \to \bb R_+$
with $f\leq_{\ee} g$, and any $u \in \bb R$, 
\begin{align*}
f(u) \leq  (1 + c \ee) g*\kappa_{\ee^2} (u), 
\quad 
g(u)  \geq f*\kappa_{\ee^2} (u) - \int_{\abs{v} > \ee} f \left( u- v \right) \kappa_{\ee^2} (v) d v.  
\end{align*}
\end{lemma}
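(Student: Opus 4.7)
The plan is to exploit two features of the mollifier $\kappa_{\ee^2}$: it is a probability density on $\bb R$ (since $\kappa$ is normalized to have integral one), and its tail on $\{|v|>\ee\}$ is already bounded in the excerpt by \eqref{smbd001} via the change of variables $w = v/\ee^2$, giving
\begin{equation*}
\int_{|v|>\ee} \kappa_{\ee^2}(v)\,dv \;=\; \int_{|w|>1/\ee} \kappa(w)\,dw \;\leq\; c\ee^{3}.
\end{equation*}
These two ingredients are what allow one to pass from the pointwise relation $f\leq_{\ee} g$ (namely $f(u)\leq g(u+v)$ for all $|v|\leq \ee$) to the convolution inequalities.

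For the first inequality, I would split the convolution $g*\kappa_{\ee^2}(u) = \int g(u-v)\kappa_{\ee^2}(v)\,dv$ into the contributions from $|v|\leq \ee$ and $|v|>\ee$, and discard the latter (it is $\geq 0$). Since $f\leq_{\ee} g$ implies in particular $f(u)\leq g(u-v)$ for all $|v|\leq \ee$, I would factor $f(u)$ out of the first piece to obtain
\begin{equation*}
g*\kappa_{\ee^2}(u) \;\geq\; f(u)\int_{|v|\leq \ee}\kappa_{\ee^2}(v)\,dv \;\geq\; f(u)(1-c\ee^{3}).
\end{equation*}
Inverting the factor $1-c\ee^{3}$ for $\ee\in(0,1/2)$ and absorbing the cube into a linear term yields $f(u)\leq (1+c'\ee)\,g*\kappa_{\ee^2}(u)$.

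For the second inequality, I would again split $f*\kappa_{\ee^2}(u)$ at the threshold $|v|=\ee$, writing
\begin{equation*}
f*\kappa_{\ee^2}(u) - \int_{|v|>\ee} f(u-v)\kappa_{\ee^2}(v)\,dv \;=\; \int_{|v|\leq\ee} f(u-v)\kappa_{\ee^2}(v)\,dv.
\end{equation*}
On the remaining piece, the relation $f\leq_{\ee} g$ applied with shift $+v$ (permissible because $|v|\leq \ee$) gives $f(u-v)\leq g(u)$ pointwise, so the integral is bounded above by $g(u)\int_{|v|\leq\ee}\kappa_{\ee^2}(v)\,dv\leq g(u)$, since $\kappa_{\ee^2}$ is a probability density.

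There is essentially no obstacle here beyond bookkeeping: the lemma is a soft consequence of the pointwise definition of $\ee$-domination and the probability-density property of $\kappa_{\ee^2}$. The only mildly delicate point is to keep the direction of the inequalities correct when inverting $(1-c\ee^{3})$ and to ensure that the constant $c$ in the conclusion is uniform in $\ee \in (0,1/2)$, which follows at once since $c\ee^{3}$ is bounded away from $1$ on this range.
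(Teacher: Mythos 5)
Your argument is correct, and since the paper itself does not spell out a proof but defers to \cite[Lemma 5.2]{GLL20}, there is no in-text argument to compare against; the route you take is the natural one and matches what one would expect that reference to do. The first inequality is handled by restricting the convolution $g*\kappa_{\ee^2}(u)$ to $|v|\leq\ee$, applying $f(u)\leq g(u-v)$ there, and inverting $\int_{|v|\leq\ee}\kappa_{\ee^2}(v)\,dv$; the second follows by recognizing that the stated right-hand side is exactly $\int_{|v|\leq\ee}f(u-v)\kappa_{\ee^2}(v)\,dv$ and applying $f(u-v)\leq g(u)$ for $|v|\leq\ee$ together with $\kappa_{\ee^2}$ being a probability density. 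Both steps are sound.

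One small correction to the justification at the end: you write that uniformity of the constant follows "since $c\ee^3$ is bounded away from $1$" on $(0,1/2)$. That is not automatic: the constant $c$ coming from \eqref{smbd001} is not known to be $<8$, so $c\ee^3$ need not be small near $\ee=1/2$. What is actually true, and suffices, is that the tail mass $\int_{|v|>\ee}\kappa_{\ee^2}(v)\,dv = \int_{|w|>1/\ee}\kappa(w)\,dw$ is monotone in $\ee$ and bounded above by the fixed number $\int_{|w|>2}\kappa(w)\,dw<1$ for all $\ee\in(0,1/2)$, because $\kappa$ puts positive mass on $[-2,2]$. Hence the denominator $\int_{|v|\leq\ee}\kappa_{\ee^2}(v)\,dv$ is bounded below by a positive absolute constant, and $\bigl(\int_{|v|\leq\ee}\kappa_{\ee^2}(v)\,dv\bigr)^{-1}-1 = \dfrac{\int_{|v|>\ee}\kappa_{\ee^2}(v)\,dv}{\int_{|v|\leq\ee}\kappa_{\ee^2}(v)\,dv} \leq c'\ee^3\leq c'\ee$, giving the claimed uniform constant. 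This is a cosmetic repair; the structure of your proof is otherwise complete and correct.
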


\begin{remark}\label{f is bounded if g is integr -001}
If $f \leq_{\ee} g$ and the function $g$ is integrable, then $f$ is bounded. 
Indeed, since $f \leq_{\ee} g$ and $g$ is an integrable function, 
by Lemma \ref{smoothing-lemma-001} we get $f \leq (1 + c \ee) g * \kappa_{\ee^2}$.
Since the Fourier transform of $g * \kappa_{\ee^2}$ is compactly supported on $[- \frac{1}{\ee^2}, \frac{1}{\ee^2}]$, 
by the Fourier inversion formula, 
the function $g * \kappa_{\ee^2}$ is bounded on $\bb R$, so that $f$ is bounded on $\bb R$. 
\end{remark}

\subsection{Local limit theorems with precise remainders} \label{subsec-nonasymptoticLLT}
The local limit theorem is a refinement of the central limit theorem and has been of considerable interest since 
the groundwork of Gnedenko \cite{Gned48},  Shepp \cite{{She64}} and  Stone \cite{Sto65} for sums of real-valued random variables. 
In this section we establish effective local limit theorems for the random walk $(S_n)_{n \geq 1}$ 
with target functions and explicit convergence rates,   
which will be an important ingredient to obtain precise errors terms in the conditioned local limit theorems. 

We introduce the necessary notation. 
The standard normal density is denoted by  $\phi (t) = \frac{1}{\sqrt{2\pi }} e^{-t^{2}/2},$ $t\in \bb R $.
Let $\Phi^+(t) = (1 - e^{-t^2/2}) \mathds 1_{\{ t \geq 0 \} }$ be the Rayleigh distribution function on $\bb R$. 
By an integrable function $f:\bb R \mapsto \bb R$ we mean a Borel measurable function 
whose Lebesgue integral $\| f \|_1 :=  \int_{\bb R} |f(x)| dx$ exists and is finite.
In the sequel $c$ denotes a positive constant and $c_{\alpha}, c_{\alpha, \beta}$ denote positive constants 
depending only on their indices.  
All these constants are subject to change theirs values every occurence.


\begin{theorem}
\label{Theor-Loc-cmpsupp}
Assume \ref{A1} and \ref{SecondMoment} for some constant $\delta \in (0,1]$. 
Let $K\subset \bb R$ be a compact set.
Then there exists a constant $c_K >0$
such that for any  
integrable function $f:\bb R\mapsto \bb R$ whose Fourier transform has a compact support contained in $K$ 
 and any $n\geq 1$,
\begin{equation*}
\left|  \mathbb{E} f(S_n) 
- \frac{1}{ \sigma \sqrt{n}} \int_{\bb R }  f(t)  \phi \left( \frac{t}{ \sigma \sqrt{n}}\right) dt \right| 
\leq   \frac{c_K}{n^{(1+\delta)/2}} \left\Vert f\right\Vert_1. 
\end{equation*}
\end{theorem}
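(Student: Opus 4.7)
The plan is to reduce the theorem, by Fourier inversion, to a uniform estimate for $|\varphi(t)^n - e^{-n\sigma^2 t^2/2}|$ on the compact set $K$, where $\varphi(t) = \bb E e^{itX_1}$ denotes the characteristic function of $X_1$. Since $\widehat f$ is continuous with compact support, it is integrable, and $f$ (being the inverse Fourier transform of an integrable function with compact support) is continuous and bounded, so the Fourier inversion formula holds pointwise. By Fubini (justified since $|\widehat f(t)|\leq \|f\|_1$ and $K$ is compact), one gets
\begin{align*}
\bb E f(S_n) &= \frac{1}{2\pi} \int_K \widehat f(t)\, \varphi(t)^n\, dt, \\
\frac{1}{\sigma \sqrt n} \int_{\bb R} f(t)\, \phi\!\left(\tfrac{t}{\sigma \sqrt n}\right) dt &= \frac{1}{2\pi} \int_{\bb R} \widehat f(t)\, e^{-n \sigma^2 t^2/2}\, dt,
\end{align*}
the second line by Plancherel applied to $f$ and the Gaussian density with variance $\sigma^2 n$. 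Using $|\widehat f(t)| \leq \|f\|_1$ and the fact that $\widehat f$ is supported in $K$, the difference to bound is controlled by $\tfrac{\|f\|_1}{2\pi} I_n(K)$ with $I_n(K) := \int_K |\varphi(t)^n - e^{-n\sigma^2 t^2/2}|\,dt$.

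To bound $I_n(K)$ I would split $K$ according to whether $|t|$ is small or not. On a neighborhood of $0$ I use the Taylor-plus-moment expansion: by condition \ref{SecondMoment} with $\delta\in(0,1]$,
\[
\varphi(t) = 1 - \frac{\sigma^2 t^2}{2} + r(t), \qquad |r(t)| \leq c\,|t|^{2+\delta},
\]
so there exists $\eta>0$ such that both $|\varphi(t)| \leq e^{-c t^2}$ and $1 - \sigma^2 t^2/2 \leq e^{-\sigma^2 t^2/2}$ hold for $|t|\leq \eta$. Combining the elementary inequality $|a^n - b^n| \leq n|a-b|\max(|a|,|b|)^{n-1}$ with $a = \varphi(t)$, $b = 1 - \sigma^2 t^2/2$, and the comparison between $(1-\sigma^2 t^2/2)^n$ and $e^{-n\sigma^2 t^2/2}$ issued from $n\log(1-\sigma^2 t^2/2) + n\sigma^2 t^2/2 = O(nt^4)$, I expect the classical bound
\[
\left|\varphi(t)^n - e^{-n\sigma^2 t^2/2}\right| \leq c\, n\, |t|^{2+\delta}\, e^{-c n t^2}, \qquad |t| \leq \eta.
\]
The change of variables $u = \sqrt n\, t$ then gives $\int_{|t|\leq \eta} \cdots dt \leq c\, n^{-(1+\delta)/2}\, \int_{\bb R} |u|^{2+\delta} e^{-c u^2}\, du$, which is of the claimed order.

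On the complementary range $\eta \leq |t| \leq R_K := \max_{s \in K}|s|$, the non-lattice assumption \ref{A1} combined with the continuity of $\varphi$ yields $\rho_K := \sup\{|\varphi(t)| : \eta \leq |t| \leq R_K\} < 1$, so that $|\varphi(t)^n| + e^{-n\sigma^2 t^2/2} \leq \rho_K^n + e^{-n \sigma^2 \eta^2/2}$, a contribution which is exponentially small in $n$ and therefore negligible compared with $n^{-(1+\delta)/2}$. Putting the two ranges together yields $I_n(K) \leq c_K\, n^{-(1+\delta)/2}$ with a constant depending on $K$ only through $R_K$, $\eta$ and $\rho_K$, which completes the proof.

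The main technical point is the small-$|t|$ estimate: everything hinges on extracting precisely the exponent $(1+\delta)/2$ from the $(2+\delta)$-moment, which is classical Berry-Esseen-type work but must be done cleanly since the exponent is sharp. The complementary large-$|t|$ piece is handled uniformly on the compact $K$ by the non-latticity of $X_1$ and causes no difficulty.
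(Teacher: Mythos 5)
Your argument is correct and is precisely the classical Fourier-analytic route that the paper itself invokes: the authors do not give a proof of Theorem \ref{Theor-Loc-cmpsupp}, stating only that it ``can be established following the standard approach based on the Fourier transform'' with references to Stone and Breuillard. Your reduction to $\int_K |\varphi(t)^n - e^{-n\sigma^2 t^2/2}|\,dt$ via Fourier inversion and Parseval, the small-$t$ Taylor bound $|\varphi(t)-1+\sigma^2 t^2/2|\leq c|t|^{2+\delta}$ leading after rescaling to the rate $n^{-(1+\delta)/2}$, and the exponentially small contribution from $\eta\leq |t|\leq R_K$ under non-latticity, together constitute exactly that standard argument.
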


Theorem \ref{Theor-Loc-cmpsupp} can be established following the standard approach based on the Fourier transform
(we refer for example to \cite{Sto65, Bre05}) and therefore its proof will not be detailed here.




In Theorem \ref{Theor-Loc-cmpsupp} the target function $f$ is actually assumed to be infinitely differentiable on $\bb R$
since its Fourier transform is compactly supported. 
Below we shall deduce from Theorem \ref{Theor-Loc-cmpsupp} 
the following bounds in the local limit theorem for integrable functions $f$ 
which are not necessarily smooth. 
In the formulation below we assume that the functions
$f,g: \bb R\mapsto \bb R_+$ are integrable and $f\leq_{\ee} g$. 
By Remark \ref{f is bounded if g is integr -001}, this implies that $f$ is bounded on $\bb R$.

 \begin{theorem} \label{LLT-general}
Assume \ref{A1} and \ref{SecondMoment} for some constant $\delta \in (0,1]$. 
Then, for any $\ee \in (0,\frac{1}{2})$, 
there exists a constant $c>0$ not depending on $\ee$ and a constant $c_{\ee} >0$ such that 
the following holds:

\noindent 1. For any  integrable functions $f,g: \bb R\mapsto \bb R_+$ satisfying $f\leq_{\ee} g$ and $n\geq 1$,
\begin{align}\label{LLT-general001}
  \mathbb{E} f (S_n) 
- \frac{ 1 + c \ee}{ \sigma \sqrt{n} }\int_{\bb R } g (t) \phi \left( \frac{t}{ \sigma \sqrt{n}}\right) dt 
\leq  \frac{c_{\ee}}{n^{ (1 + \delta)/2 }} \left\Vert g\right\Vert_1. 
\end{align}

\noindent  2. For any integrable functions $f,h: \bb R\mapsto \bb R_+$ satisfying $f\geq_{\ee} h$ and $n\geq 1$,
\begin{align}\label{LLT-general002}
  \mathbb{E} f (S_n) 
 - & \frac{ 1 }{ \sigma \sqrt{n} }
  \int_{\bb R }  \big[ h(t) -  c \ee f(t) \big] \phi \left( \frac{t}{ \sigma \sqrt{n}}\right) dt  
  \geq   -  \frac{c_{\ee}}{n^{ (1+\delta) /2 }}  \left\Vert f \right\Vert_1. 
\end{align}

\end{theorem}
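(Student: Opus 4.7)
My plan is to deduce Theorem~\ref{LLT-general} from the smooth-function version Theorem~\ref{Theor-Loc-cmpsupp} by convolving with the smoothing kernel $\kappa_{\ee^2}$, whose Fourier transform is supported in the fixed compact set $K_\ee = [-\ee^{-2}, \ee^{-2}]$. The smoothing inequalities of Lemma~\ref{smoothing-lemma-001} let us replace $f$ by $g * \kappa_{\ee^2}$ (resp.\ $h * \kappa_{\ee^2}$) at a multiplicative cost $1+c\ee$ plus a tail correction, after which Theorem~\ref{Theor-Loc-cmpsupp} is applicable with a constant $c_\ee$ depending only on $\ee$ (via the compact set $K_\ee$).

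\smallskip

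For Part 1, the first inequality of Lemma~\ref{smoothing-lemma-001} gives pointwise $f(u) \leq (1+c\ee)\, g*\kappa_{\ee^2}(u)$. Taking the expectation and applying Theorem~\ref{Theor-Loc-cmpsupp} to the smoothed function $g*\kappa_{\ee^2}$ yields
\begin{align*}
\mathbb{E} f(S_n) \leq (1+c\ee)\left[\frac{1}{\sigma\sqrt{n}} \int_{\bb R} (g*\kappa_{\ee^2})(t)\, \phi\!\left(\frac{t}{\sigma\sqrt{n}}\right) dt + \frac{c_\ee}{n^{(1+\delta)/2}} \| g \|_1 \right],
\end{align*}
where we used $\|g * \kappa_{\ee^2}\|_1 \leq \|g\|_1$ since $\kappa$ is a probability density. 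It remains to replace $g*\kappa_{\ee^2}$ by $g$ inside the Gaussian integral. By Fubini this amounts to estimating $\int g(s)[(\kappa_{\ee^2} * \phi_n) - \phi_n](s)\, ds$ with $\phi_n(t) = \phi(t/(\sigma\sqrt{n}))$; a first-order Taylor expansion of the smooth function $\phi_n$ together with the finite first moment $\int |u|\kappa(u)\,du < \infty$ (which follows from the $u^{-4}$ decay of $\kappa$) gives a difference of size $O(\ee^2 \|g\|_1/\sqrt{n})$, which is absorbed into $c_\ee \|g\|_1/n^{(1+\delta)/2}$ since $n\geq 1$ and $\delta \leq 1$.

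\smallskip

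For Part 2, the second inequality of Lemma~\ref{smoothing-lemma-001} gives, since $h \leq_\ee f$,
\begin{align*}
f(u) \geq h*\kappa_{\ee^2}(u) - \int_{|v|>\ee} h(u-v)\, \kappa_{\ee^2}(v)\, dv.
\end{align*}
The main term $\mathbb{E}(h*\kappa_{\ee^2})(S_n)$ is analyzed exactly as in Part 1, contributing $\frac{1}{\sigma\sqrt{n}}\int h(t)\phi(t/(\sigma\sqrt{n}))dt + O(c_\ee\|h\|_1/n^{(1+\delta)/2})$. For the deficit term, using the pointwise bound $h \leq f$ (take $v=0$ in $h \leq_\ee f$), interchanging expectation and the $v$-integral via Fubini, and applying Part 1 to the shifted function $f(\cdot - v)$ (with envelope itself), the deficit is controlled by a main Gaussian contribution of the form $\frac{1+c\ee}{\sigma\sqrt{n}}\int f(s)\int_{|v|>\ee}\phi_n(s+v)\kappa_{\ee^2}(v)\,dv\,ds$ plus a remainder $c_\ee\|f\|_1 \int_{|v|>\ee}\kappa_{\ee^2}(v)\,dv / n^{(1+\delta)/2}$. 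The tail bound \eqref{smbd001} shows $\int_{|v|>\ee}\kappa_{\ee^2}(v)\,dv \leq c\ee^3$, and a careful reshuffling of the inner integral (using the near-invariance of $\phi_n$ under shifts of size $\ee^2$) shows that this deficit is bounded by $\frac{c\ee}{\sigma\sqrt{n}}\int f(t)\phi(t/(\sigma\sqrt{n}))\,dt + c_\ee\|f\|_1/n^{(1+\delta)/2}$, matching the $c\ee f(t)$ correction appearing in the statement.

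\smallskip

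The main obstacle will be the deficit estimate in Part 2: one must show that the contribution of the kernel tail $\kappa_{\ee^2}\mathds 1_{|v|>\ee}$, when integrated against $\phi_n$, produces an error proportional to $\ee \int f(t)\phi_n(t)\,dt$ and not a worse factor of $\ee^{-1}$ or a loss of the Gaussian weight; this requires combining the polynomial tail decay of $\kappa$ with the uniform boundedness of $\phi_n$, and is where the recursive use of Part 1 (applied to shifted copies of $f$) becomes essential to convert pointwise estimates into the desired Gaussian-integral form.
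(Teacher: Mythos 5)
Your Part 1 is essentially the paper's argument with one substitution: where the paper invokes Lemma~\ref{Lem_Deconvolution} (which handles the deconvolution by comparing $\phi_{\sigma^2 n}\!*\kappa_{\ee^2}$ to $\sup_{|v|\le\ee}\phi_{\sigma^2 n}(\cdot+v)$), you use a one-step Taylor/Lipschitz bound $|\kappa_{\ee^2}*\phi_n-\phi_n|\le \|\phi_n'\|_\infty\int|v|\kappa_{\ee^2}(v)\,dv = O(\ee^2/\sqrt n)$. This works, but be careful with the absorption: the quantity you estimate is $\int g\,[\kappa_{\ee^2}*\phi_n-\phi_n]\,dt=O(\ee^2\|g\|_1/\sqrt n)$, and $\ee^2/\sqrt n$ is \emph{not} $O(n^{-(1+\delta)/2})$ for $\delta>0$ and $n$ large; you still have to multiply by the prefactor $\frac{1+c\ee}{\sigma\sqrt n}$ from \eqref{gener33-01}/Theorem~\ref{Theor-Loc-cmpsupp} before absorbing, so the error is $O(\ee^2\|g\|_1/n)\le c_\ee\|g\|_1/n^{(1+\delta)/2}$ using $\delta\le1$. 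As written your sentence skips this factor.

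Part 2 has a genuine gap. You bound the deficit $\int_{|v|>\ee}\mathbb E\,h(S_n-v)\,\kappa_{\ee^2}(v)\,dv$ by first using $h\le f$ pointwise and then ``applying Part~1 to the shifted function $f(\cdot-v)$ (with envelope itself).'' But $f\le_{\ee}f$ does \emph{not} hold for a general integrable $f$ (it would require $f(u)\le f(u+v)$ for all $|v|\le\ee$, i.e.\ $f$ locally constant on $\ee$-intervals), so Part~1 cannot be applied to $f$ with envelope $f$; the step as stated is undefined. The fix is to skip the passage $h\le f$ and apply Part~1 directly to $h(\cdot-v)$ with envelope $f(\cdot-v)$, which is legitimate precisely because $h\le_\ee f$ implies $h(\cdot-v)\le_\ee f(\cdot-v)$; the resulting bound is exactly the form you wrote down, so your final formula is correct even though your stated justification does not produce it. The paper sidesteps this issue differently, by introducing an intermediate function $\overline h$ with $h\le_{\ee/2}\overline h\le_{\ee/2}f$ and applying Part~1 to $\overline h(\cdot-v)$ with envelope $f(\cdot-v)$; your ``direct'' use of $h$ (once the envelope pair is corrected) is actually slightly more economical, since it avoids constructing $\overline h$, but the Taylor step then has to replace the paper's $(1-c\ee)$ factor coming from $\overline h*\kappa_{\ee^2}\ge(1+c\ee)^{-1}h$, which you do correctly in the main term.
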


In the proof of Theorem \ref{LLT-general} we shall make use of the following inequality. 

\begin{lemma}\label{Lem_Deconvolution}
There exists a constant $c>0$ such that for any $\ee \in (0, \frac{1}{2})$, $n \geq 1$ 
and any integrable function $g: \bb R \mapsto \bb R_+$, 
\begin{align*}
\int_{\bb R }g*\kappa_{\ee^2}\left( t\right) \phi \left( \frac{t}{ \sigma \sqrt{n}}\right) dt 
\leq  (1 + c \ee) \int_{\bb R}  g(t) \phi \left( \frac{t}{ \sigma \sqrt{n}}\right) dt + \frac{ c \ee }{ \sqrt{n} } \| g \|_1. 
\end{align*}
\end{lemma}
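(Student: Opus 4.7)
The plan is to reduce the claim to a pointwise bound on a smoothed Gaussian by Fubini's theorem. Since all integrands are non‑negative, interchanging the order of integration gives
\begin{align*}
\int_{\bb R} g*\kappa_{\ee^2}(t)\, \phi\!\left( \frac{t}{\sigma \sqrt{n}} \right) dt
= \int_{\bb R} g(u)\, \Phi_n(u)\, du,
\end{align*}
where, after the change of variable $s = t-u$,
\begin{align*}
\Phi_n(u) := \int_{\bb R} \kappa_{\ee^2}(s)\, \phi\!\left( \frac{u+s}{\sigma \sqrt{n}} \right) ds.
\end{align*}
It therefore suffices to majorise $\Phi_n(u)$ pointwise by $\phi(u/(\sigma\sqrt{n}))$ plus a uniform error term, and then to integrate against $g$.

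The key step is to exploit the fact that the standard Gaussian density $\phi$ is globally Lipschitz on $\bb R$ with constant $c_0 := \sup_{x}|\phi'(x)| = \phi(1) < \infty$, since $\phi'(x) = -x\phi(x)$ attains its maximum modulus at $x = \pm 1$. Applied with $x = u/(\sigma\sqrt n)$ and $y = (u+s)/(\sigma\sqrt n)$, this yields the one‑sided bound
\begin{align*}
\phi\!\left( \frac{u+s}{\sigma \sqrt{n}} \right)
\leq \phi\!\left( \frac{u}{\sigma \sqrt{n}} \right) + \frac{c_0\, |s|}{\sigma \sqrt{n}}
\end{align*}
uniformly in $u,s \in \bb R$. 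Integrating against the probability density $\kappa_{\ee^2}(s)\,ds$ gives
\begin{align*}
\Phi_n(u) \leq \phi\!\left( \frac{u}{\sigma \sqrt{n}} \right) + \frac{c_0}{\sigma \sqrt{n}} \int_{\bb R} |s|\, \kappa_{\ee^2}(s)\, ds.
\end{align*}
By the change of variables $v = s/\ee^2$ and the definition of $\kappa_{\ee^2}$, the remaining integral equals $\ee^2 \int_{\bb R} |v|\, \kappa(v)\, dv$; this last quantity is finite because the explicit formula for $\kappa$ gives the tail bound $\kappa(v) = O(v^{-4})$ as $|v| \to \infty$, so $|v|\kappa(v) = O(v^{-3})$ is integrable at infinity. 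Hence $\Phi_n(u) \leq \phi(u/(\sigma\sqrt{n})) + C\ee^2/\sqrt{n}$ for some universal constant $C$.

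It then remains to multiply this pointwise inequality by $g(u) \geq 0$ and to integrate in $u$. This produces the bound with coefficient $1$ (which is of course $\leq 1+c\ee$) in front of the Gaussian integral and error $C\ee^2 \|g\|_1/\sqrt{n} \leq c\ee\|g\|_1/\sqrt n$ for $\ee \in (0,1/2)$, giving the claim. There is no real obstacle to overcome here: the argument is a direct Lipschitz estimate on $\phi$ combined with a first‑moment estimate on the smoothing kernel, and the only point that genuinely uses the specific form of $\kappa$ is the finiteness of $\int|v|\kappa(v)\,dv$, which is guaranteed by the quartic decay of $\kappa$ built into its definition.
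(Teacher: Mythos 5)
Your proof is correct, and it is genuinely different from the one in the paper. The paper's proof goes through the smoothing inequality of Lemma~\ref{smoothing-lemma-001}: it first writes the left-hand side as $\sigma\sqrt n\int\phi_{\sigma^2 n}*\kappa_{\ee^2}(t)g(t)\,dt$, then majorises $\phi_{\sigma^2 n}*\kappa_{\ee^2}$ by the $\ee$-supremum $\psi_{\sigma^2 n}(t)=\sup_{|v|\leq\ee}\phi_{\sigma^2 n}(t+v)$ plus the tail contribution $\int_{|v|\geq\ee}\phi_{\sigma^2 n}(t-v)\kappa_{\ee^2}(v)\,dv$, and treats the two pieces separately (the bulk term via the explicit formula for $\psi_{\sigma^2 n}$, the tail term via the Lipschitz bound on $\phi$ and the moment estimates $\int_{|v|\geq\ee}\kappa_{\ee^2}\leq c\ee$, $\int_{|v|\geq\ee}|v|\kappa_{\ee^2}\leq c\ee^2$). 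You bypass the split entirely: a single global Lipschitz bound on $\phi$, followed by integration against the probability density $\kappa_{\ee^2}$, reduces the matter to one computation, namely that the full first absolute moment $\int_{\bb R}|s|\kappa_{\ee^2}(s)\,ds=\ee^2\int_{\bb R}|v|\kappa(v)\,dv$ is finite and $O(\ee^2)$ thanks to the quartic decay of $\kappa$. This is a fact the paper never records explicitly (it only uses truncated moments), and exploiting it buys you a sharper inequality: coefficient $1$ in front of the Gaussian integral rather than $1+c\ee$, and error $O(\ee^2/\sqrt n)$ rather than $O(\ee/\sqrt n)$, both of which of course still imply the stated bound for $\ee\in(0,1/2)$. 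The paper's route via the smoothing lemma has the advantage of uniformity with the machinery used in Theorem~\ref{LLT-general} (where the bulk/tail split is forced by the definition of $\ee$-envelopes), but for this particular lemma your direct argument is both simpler and stronger.
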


\begin{proof}
Denote by 
$\phi_{v}(x)=\frac{1}{\sqrt{2\pi v}}e^{-\frac{x^2}{2 v}}$, $x\in \mathbb R$ 
the normal density of variance $v$.  
By Fubini's theorem, we have
\begin{align*} 
\int_{\bb R }g*\kappa_{\ee^2}\left( t\right) \phi \left( \frac{t}{ \sigma \sqrt{n}}\right) dt 
&= \sigma \sqrt{n} \int_{\bb R } \left(  \int_{\bb R } g(v) \kappa_{\ee^2} ( t-v) dv \right)  \phi_{\sigma^2 n} \left( t\right) dt \notag\\
&= \sigma \sqrt{n} \int_{\bb R } 
   \left( \int_{\mathbb R}  \kappa_{\ee^2} ( t-v) \phi_{\sigma^2 n} \left( t\right) dt \right) g(v) dv \notag\\
&= \sigma \sqrt{n} \int_{\bb R } \phi_{ \sigma^2 n } * \kappa_{\ee^2} ( t) g(t) dt.
\end{align*}
Using the second inequality in Lemma \ref{smoothing-lemma-001} gives that for any $t\in \bb R$, 
\begin{align*} 
 \phi_{ \sigma^2 n } * \kappa_{\ee^2} (t) \leq \psi_{ \sigma^2 n }(t) 
    + \int_{\abs{v} \geq \ee} \phi_{ \sigma^2 n } \left( t- v \right) \kappa_{\ee^2} (v) d v,  
\end{align*}
where $\psi_{ \sigma^2 n }(t)=\sup_{|v|\leq\ee} \phi_{ \sigma^2 n }(t+v)$, $t \in \bb R$. 
Hence, using again Fubini's theorem, we get
\begin{align} \label{gener33-04}
& \int_{\bb R }g*\kappa_{\ee^2}\left( t\right) \phi \left( \frac{t}{ \sigma \sqrt{n}}\right) dt   \notag\\
& \leq  \sigma \sqrt{n}\int_{\bb R } \psi_{ \sigma^2 n }(t) g(t)  dt 
 + \int_{\abs{v} \geq \ee}  \left[ \int_{\bb R } \phi \left( \frac{t-v}{ \sigma \sqrt{n}} \right)   g(t) dt  \right]  \kappa_{\ee^2} (v) d v \notag\\
&=: J_1+J_2.
\end{align}
For $J_1$, by elementary calculations we see that
\begin{align} \label{gener33-05}
J_1&=  \frac{1}{\sqrt{2\pi}} 
 \left[ \int_{-\infty}^{-\ee} e^{-\frac{(t+\ee)^2}{ 2 \sigma^2 n }}  g(t)dt
 +   \int_{-\ee}^{\ee}   g(t) dt 
 +  \int_{\ee}^{\infty} e^{-\frac{(t-\ee)^2}{ 2 \sigma^2 n  }}  g(t)dt \right] \notag\\
& = \frac{1}{\sqrt{2\pi}} \Bigg[ \int_{\bb R}  e^{-\frac{ t^2}{ 2 \sigma^2 n  }}  g(t)dt
   + \int_{-\infty}^{-\ee}  \left( e^{-\frac{(t+\ee)^2}{ 2 \sigma^2 n }}-e^{-\frac{t^2}{ 2 \sigma^2 n }} \right)  g(t)dt \notag \\
& \qquad\qquad + \int_{\ee}^{\infty}  \left( e^{-\frac{(t-\ee)^2}{ 2 \sigma^2 n }}-e^{-\frac{t^2}{ 2 \sigma^2 n }} \right)  g(t)dt 
   + \int_{-\ee}^{\ee}  \left( e^{-\frac{t^2}{ 2 \sigma^2 n }}-1 \right)  g(t)dt  \Bigg] \notag \\
&\leq  \frac{1}{\sqrt{2\pi}} \int_{\bb R} e^{-\frac{ t^2}{ 2 \sigma^2 n }}  g(t)dt
+ \frac{c\ee}{\sqrt{n}} \| g \|_1.
\end{align}
For $J_2$, 
since the normal density $\phi$ is Lipschitz continuous on $\bb R$, we have 
\begin{align*}
\sup_{t \in \bb R} \left| \phi \left( \frac{t-v}{ \sigma \sqrt{n}} \right) -   \phi \left( \frac{t}{ \sigma \sqrt{n}} \right)  \right|  
\leq  c \frac{|v|}{\sqrt{n}}. 
\end{align*}
Since $\int_{\abs{v} \geq \ee} \kappa_{\ee^2} (v) d v \leq c \ee$
and $\int_{\abs{v} \geq \ee}  |v|  \kappa_{\ee^2} (v) d v \leq c \ee^2$,  it follows that
\begin{align}\label{gener33-06}
J_2  
& \leq  \int_{\abs{v} \geq \ee}  \left[ \int_{\bb R } \phi \left( \frac{t}{ \sigma \sqrt{n}} \right)   g(t) dt  \right]  \kappa_{\ee^2} (v) d v
  +  \frac{c}{\sqrt{n}} \|g\|_1    \int_{\abs{v} \geq \ee}  |v|  \kappa_{\ee^2} (v) d v   \notag\\
 &  \leq  c \ee  \int_{\bb R } \phi \left( \frac{t}{ \sigma  \sqrt{n}} \right)   g(t) dt 
    +   \frac{c \ee^2}{\sqrt{n}} \|g\|_1.  
\end{align}
Putting together \eqref{gener33-04}, \eqref{gener33-05} and \eqref{gener33-06}, 
the desired result follows. 
\end{proof}

\begin{proof}[Proof of Theorem \ref{LLT-general}]
Let $\ee\in (0,\frac{1}{4})$ and $f\leq_{\ee} g$. 
Note that the function $g*\kappa_{\ee^2}$ is continuous and integrable on $\bb R$
(the integrability follows from $\|g*\kappa_{\ee^2}\|_1 =\| g\|_1 \|\kappa_{\ee^2}\|_1= \| g\|_1$). 
By Lemma \ref{smoothing-lemma-001}, it holds that $f \leq  (1 + c\veps) g*\kappa_{\ee^2}$. 
Moreover, the support of the function $\widehat \kappa_{\ee^2}$ is contained in the set $[-\ee^{-2},\ee^{-2}].$ 
Therefore, by Theorem \ref{Theor-Loc-cmpsupp},  we have that
for any $n\geq 1$,
\begin{align} \label{gener33-01}
\mathbb{E} f\left(S_n \right)  
&\leq (1 + c\ee) \mathbb{E} g*\kappa_{\ee^2}\left(S_n \right)   \notag\\
& \leq   \frac{1 + c\ee}{\sigma \sqrt{n}}  
 \int_{\bb R }g*\kappa_{\ee^2}\left( t\right)  \phi \left( \frac{t}{ \sigma \sqrt{n}}\right) dt 
+ \frac{c_{\ee}}{n^{(1+\delta)/2}} \left\Vert g \right\Vert_1, 
\end{align}
where we used the fact that $\left\Vert g*\kappa_{\ee^2}\right\Vert_1=\left\Vert g\right\Vert_1$. 
By Lemma \ref{Lem_Deconvolution}, 
the upper bound \eqref{LLT-general001} follows.

Now we prove the lower bound \eqref{LLT-general002}. 
Set $\epsilon = \ee/2$ 
and let $\overline h: \bb R\mapsto \bb R_+$ be such that $h \leq_{\epsilon}  \overline h  \leq_{\epsilon}  f$. 
By Lemma \ref{smoothing-lemma-001}, 
we have
\begin{align} \label{Fstar001}
\mathbb{E} f\left(S_n \right)  
\geq 
\mathbb{E} \overline h * \kappa_{\epsilon^2} \left( S_n \right) - 
\int_{\abs{v} \geq \epsilon} \mathbb{E} \overline h \left( S_n -v\right) \kappa_{\epsilon^2} (v) d v.
\end{align}
For the first term, by Theorem \ref{Theor-Loc-cmpsupp} and the first inequality in Lemma \ref{smoothing-lemma-001},
\begin{align} \label{Fstar002}
 \mathbb{E} \overline h * \kappa_{\epsilon^2} \left( S_n \right)
& \geq  \frac{1}{ \sigma \sqrt{n}}  
   \int_{\bb R } \overline h * \kappa_{\epsilon^2}\left( t\right) \phi \left( \frac{t}{\sigma \sqrt{n}}\right) dt 
- \frac{c_{\epsilon}}{n^{(1+\delta)/2}}  \left\Vert \overline h \right\Vert_1   \notag\\
& \geq \frac{ 1- c \epsilon}{ \sigma \sqrt{n}}  \int_{\bb R }  h \left( t\right) \phi \left( \frac{t}{ \sigma \sqrt{n}}\right) dt 
   -  \frac{c_{\epsilon}}{n^{(1+\delta)/2}}  \left\Vert f \right\Vert_1. 
\end{align}
For the second term on the right hand side of \eqref{Fstar001},  
applying the upper bound \eqref{LLT-general001} gives that for any $v\in \bb R$,
\begin{align*}
\mathbb{E} \overline h \left( S_n -v\right) 
& \leq  \frac{ 1 + c \epsilon }{ \sigma \sqrt{n}}  \int_{\bb R }f (t) \phi \left( \frac{t + v}{ \sigma \sqrt{n}}\right) dt 
   +  \frac{c_{\epsilon}}{n^{(1 + \delta)/2}} \left\Vert f\right\Vert_1.  
\end{align*}
In the same way as in the proof of \eqref{gener33-06}, one has
\begin{align} \label{Fstar004}
& \int_{\abs{v} \geq \epsilon} \mathbb{E} \overline h \left(S_n -v\right) \kappa_{\epsilon^2} (v) d v   \notag\\  
&\leq \frac{ 1 + c \ee }{ \sigma \sqrt{n}}  
 \int_{\abs{v} \geq \epsilon}  \left[ \int_{\bb R }f (t) \phi \left( \frac{t + v}{ \sigma \sqrt{n}}\right) dt  \right] 
    \kappa_{\epsilon^2} (v) d v
  +  \frac{c_{\ee}}{n^{(1 + \delta)/2}} \left\Vert f\right\Vert_1  \notag\\
& \leq  \frac{ 1 + c \ee }{ \sigma \sqrt{n}}  c \epsilon  \int_{\bb R }f (t) \phi \left( \frac{t}{ \sigma  \sqrt{n}}\right) dt
   +  \frac{c \ee^2}{ n } \|f\|_1  +  \frac{c_{\ee}}{n^{(1 + \delta)/2}} \left\Vert f\right\Vert_1   \notag\\
&  \leq   \frac{ c \ee }{ \sigma \sqrt{n}}   \int_{\bb R }f (t) \phi \left( \frac{t}{ \sigma \sqrt{n}}\right) dt
   +  \frac{c_{\ee}}{n^{(1 + \delta)/2}} \left\Vert f\right\Vert_1,
\end{align}
where in the last inequality we used the fact that $\delta \in (0,1]$. 
By collecting the bounds \eqref{Fstar001}, \eqref{Fstar002}  and \eqref{Fstar004}, 
we get the lower bound \eqref{LLT-general002}. 
\end{proof}

%

\subsection{Conditioned integral limit theorems with rate of convergence}
The goal of this section is to formulate effective conditioned integral limit theorems
for the random walk $x + S_n$ with explicit rate of convergence
and dependence on the starting point $x$. 
To the best of our knowledge, these results have not yet been established in the literature
and besides the fact that we need them for establishing the main results of the paper, 
they are of independent interest.  

The following result is a conditioned integral limit theorem for small starting point $x = o(\sqrt{n})$. 
Recall that $\Phi^+(t) = (1 - e^{-t^2/2}) \mathds 1_{\{ t \geq 0 \} }$ is the Rayleigh distribution function on $\bb R$.

\begin{theorem}\label{Theor-IntegrLimTh} 
Assume \ref{SecondMoment}. 
Then, there exists $\ee_0>0$ such that for any $\ee\in(0, \ee_0)$, it holds 
uniformly in $n \geq 1$, $t \in  \bb R_+$ and $x\in [0, n^{1/2 - \ee}]$,
\begin{align}\label{CLLT-bound only 001}
\left| \mathbb{P} \left(  \frac{x+S_n }{ \sigma \sqrt{n}} \leq t,  \,  \tau_{x}>n\right) 
-  \frac{2  V(x) }{ \sigma \sqrt{2 \pi  n}}   \Phi^+(t)   \right| 
 \leq  c_\ee  \frac{1 + x }{ n^{1/2 + \ee  }  }. 
\end{align}
In particular, we have, uniformly in $n \geq 1$ and $x\in [0, n^{1/2 - \ee}]$, 
\begin{align}\label{CLLT-bound only 001b}
\left| \mathbb{P} \left(  \tau_{x}>n\right) 
-  \frac{2  V(x) }{ \sigma \sqrt{2 \pi  n}}  \right| 
 \leq  c_\ee  \frac{1 + x }{ n^{1/2 + \ee  }  }.  
\end{align}
Moreover, there exists a constant $c$ such that for any $x \geq 0$,  
\begin{align}\label{CLLT-bound only 002}
\mathbb{P} \big( \tau_{x} > n \big) \leq c\frac{1+x}{\sqrt{n}}.
\end{align}
\end{theorem}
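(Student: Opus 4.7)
The strategy is a two-scale decomposition of the trajectory, combined with a rough a priori bound on $\bb P(\tau_x > n)$ and strong approximation by Brownian motion. During an initial short phase of length $k$ the harmonic function $V$ governs the ``escape from the boundary,'' while over the remaining $n-k$ steps the walk is close to a killed Brownian motion whose one-dimensional distribution is explicit via the reflection principle. I would first establish the rough bound \eqref{CLLT-bound only 002}. The key ingredient is the martingale identity $V(x) = \bb E[V(x+S_n);\,\tau_x > n]$, a direct consequence of the harmonicity \eqref{Doob transf}. Together with the two-sided estimate $c_0(1+y) \leq V(y) \leq c(1+y)$ (valid under \ref{SecondMoment} via the renewal representation of $V$ and the finiteness of the mean overshoot of the descending ladder process), this yields
\begin{equation*}
c_0 \sqrt{n}\, \bb P\bigl(\tau_x > n,\; x+S_n \geq \sqrt{n}\bigr) \leq V(x) \leq c(1+x),
\end{equation*}
and the complementary piece $\bb P(\tau_x > n,\, x+S_n < \sqrt{n})$ is controlled by a standard inductive argument on $n$ using the Markov property at time $n/2$ and a quantitative lower bound on $\bb P(\tau_y > n/2)$ for $y$ of order $\sqrt{n}$.

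For the main asymptotic \eqref{CLLT-bound only 001} I would choose an intermediate time $k = \lfloor n^{1-\beta} \rfloor$ with $\beta > 0$ small and apply the Markov property at time $k$:
\begin{equation*}
\bb P\Bigl( \tfrac{x+S_n}{\sigma\sqrt{n}} \leq t,\; \tau_x > n \Bigr)
   = \int_{\bb R_+} \bb P(x+S_k \in dy,\; \tau_x > k)\, G_{n-k}(y),
\end{equation*}
where $G_m(y) := \bb P\bigl((y+\widetilde S_m)/(\sigma\sqrt{n}) \leq t,\; \widetilde\tau_y > m\bigr)$ with $(\widetilde S_j)$ an independent copy of the walk. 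Under the conditioning $\tau_x > k$, the typical height $y = x+S_k$ is of order $\sqrt{k} = o(\sqrt{n})$. I would then invoke the KMT-type strong approximation (polynomial rate under \ref{SecondMoment}) to replace $\widetilde S_j$ by $\sigma W_j$ for a standard Brownian motion $W$. For Brownian motion, the reflection principle gives the exact density
\begin{equation*}
\bb P\bigl( y + \sigma W_s \geq 0\; \forall\, s \leq m,\; y + \sigma W_m \in dz \bigr)
   = \frac{1}{\sigma\sqrt{m}}\, \psi\!\Bigl( \tfrac{z}{\sigma\sqrt{m}},\, \tfrac{y}{\sigma\sqrt{m}} \Bigr) dz,
\end{equation*}
with $\psi$ as in \eqref{Def-Levydens}. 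Taylor-expanding $\psi(s,u) = \frac{2u}{\sqrt{2\pi}}\, s e^{-s^2/2} + O(u^3)$ for small $u$ and performing the $z$-integration gives $G_m(y) = \frac{2y}{\sigma\sqrt{2\pi n}}\Phi^+(t) + O\bigl(y(k/n)/\sqrt{n}\bigr) + O\bigl(y^3/n^{3/2}\bigr)$. Finally, the renewal asymptotic $V(y) = y + O(1)$ as $y \to \infty$ together with the martingale identity $V(x) = \bb E[V(x+S_k);\,\tau_x > k]$ yields $\bb E[x+S_k;\,\tau_x > k] = V(x) + O\bigl((1+x)/\sqrt{k}\bigr)$, and integrating against $\bb P(x+S_k \in dy,\,\tau_x > k)$ produces the main term $\frac{2V(x)}{\sigma\sqrt{2\pi n}}\Phi^+(t)$. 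The estimate \eqref{CLLT-bound only 001b} follows by letting $t \to \infty$ in \eqref{CLLT-bound only 001}.

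The technical core is to obtain the \emph{explicit} polynomial rate $n^{-\ee}$ uniformly in $x \in [0, n^{1/2-\ee}]$ and in $t \in \bb R_+$. Three interlocking issues arise here. First, the KMT-type strong approximation under only a $(2+\delta)$-moment gives a polynomial (rather than logarithmic) error of size $n^{1/(2+\delta)}$, which forces a careful optimization of the exponent $\beta$ and ultimately fixes the admissible $\ee_0$. Second, the main term $\Phi^+(t) = 1 - e^{-t^2/2}$ vanishes quadratically as $t \to 0$, so uniformity in $t$ requires that the additive error vanish at $0$ at a matching rate; this is obtained through a $t$-dependent refinement of the $z$-integral rather than a pointwise-in-$t$ bound. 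Third, the error term $V(y) - y$ at infinity and the contribution of small values of $y$ to $\bb E[x+S_k;\,\tau_x > k]$ must be controlled using the rough estimate from the first step together with \ref{SecondMoment}. Reconciling these three sources of error simultaneously, rather than any one in isolation, is the main obstacle.
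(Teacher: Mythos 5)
Your decomposition at a fixed deterministic time $k = \lfloor n^{1-\beta}\rfloor$ followed by a Brownian comparison on the tail is a genuinely different route from the paper, whose proof splits instead at the \emph{random} time $\nu_n = \inf\{j : |x+S_j| > n^{1/2-\ee}\}$ (capped at $[n^{1-\ee}]$), so that at the moment the strong approximation is invoked the walk is guaranteed to have height exceeding $n^{1/2-\ee}$. That deterministic choice is the source of the one genuine extra difficulty in your sketch, and you also rely on a renewal estimate that is not available under the stated moment hypothesis.

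The explicit gap is the assertion ``$V(y) = y + O(1)$ as $y\to\infty$''. Since $V(y) - y = -\bb E(y + S_{\tau_y})$ is the mean overshoot of the walk below $0$ when started from $y$, boundedness of this quantity as $y \to \infty$ requires (by the renewal theorem applied to the descending ladder process) that the descending ladder height have a finite \emph{second} moment. Under \ref{SecondMoment} alone, with $\delta \in (0,1)$, this can fail: one only controls moments of order roughly $1+\delta$ of the ladder heights, so $V(y) - y$ can grow like a power of $y$. This matters because you use $|V(y) - y| \le C$ together with $V(x)=\bb E[V(x+S_k);\tau_x>k]$ to extract $\bb E[x+S_k;\tau_x>k] = V(x) + O\bigl((1+x)/\sqrt{k}\bigr)$. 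The paper sidesteps this with the weaker, purely polynomial two-sided bound of Lemma \ref{Lem_V_Ineq_aa}, namely $x \le V(x) \le (1 + c_\ee k_0^{-\ee})x + c_\ee k_0^{1/2-\ee}$, which is derived from the truncation in Lemma \ref{Lem-MKillTn} rather than from renewal theory. Your argument would go through if you replaced the $O(1)$ claim by this polynomial bound, at the price of a slightly worse exponent.

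A second, structural issue is that you underestimate the problem at small $y = x+S_k$. When you couple $\widetilde S_j$ with $\sigma W_j$ with error $m^{1/2-\gamma}$, the sandwich $\tau^{bm}_{y - m^{1/2-\gamma}} \le \widetilde\tau_y \le \tau^{bm}_{y+m^{1/2-\gamma}}$ is useless on the range $y \lesssim m^{1/2-\gamma}$: the lower shift pushes the Brownian start below the boundary. With the stopping-time split, $y = x+S_{\nu_n} > n^{1/2-\ee}$ holds by construction and the shift is automatically negligible relative to the starting height, which is exactly why the paper uses $\nu_n$. With a deterministic $k$ you must split the $y$-integral at a threshold of order $n^{1/2-\gamma}$ and show that the low-$y$ contribution to both the probability and to $\bb E[x+S_k;\tau_x>k]$ is $O((1+x)/n^{1/2+\ee})$; this works, but forces $\beta \lesssim 2\ee$ (so $k$ must be close to $n$), which in turn interacts with the Taylor error $O((y/\sqrt n)^3)$ and with the constraint $\gamma < \delta/(2(2+\delta))$ from Lemma \ref{FCLT}. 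None of this is impossible, but your third ``interlocking issue'' passage gestures at it without showing how the resulting constraints on $\beta$, $\gamma$, $\ee$ can all be met simultaneously; this is precisely the bookkeeping that the stopping-time decomposition is designed to avoid.

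Finally, small caveats: your derivation of \eqref{CLLT-bound only 001b} by letting $t\to\infty$ is fine; your rough bound strategy for \eqref{CLLT-bound only 002} is plausible but needs the inductive step spelled out (the paper obtains it directly from the $\nu_n$-decomposition in Lemma \ref{Lem_tau01}); and your Brownian reflection formula and the expansion $\psi(s,u) = \tfrac{2u}{\sqrt{2\pi}}s e^{-s^2/2} + O(u^3)$ are both correct as stated.
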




For large $x$, the following result complements 
Theorem \ref{Theor-IntegrLimTh}.
Recall that the function $\psi$ is defined by \eqref{Def-Levydens}. 

\begin{theorem}\label{CorCCLT}
Assume \ref{SecondMoment} with some $\delta>0$. 
Then, 
there exists a constant $c_{\delta} >0$ such that for any $n\geq 1$, $x > n^{\frac{1}{2} -  \frac{\delta}{ 2(3 + \delta) }  }$ and $t\in \bb R_+$,    
\begin{align} \label{CorCCLT02}
\left| \mathbb{P} \left(  \frac{x+S_n }{ \sigma \sqrt{n}} \leq  t, \tau_x >n\right) 
     -   \int_{0}^{t}  \psi \left( s, \frac{x}{\sigma \sqrt{n}}   \right)  ds    \right|
   \leq   c_{\delta}  n^{- \frac{\delta}{ 2(3 + \delta) } }.  
\end{align}
In particular, we have, uniformly in $n\geq 1$ and $x > n^{\frac{1}{2} -  \frac{\delta}{ 2(3 + \delta) }  }$, 
\begin{align} \label{CorCCLT02b}
\left| \mathbb{P} \left( \tau_x >n \right) 
     -\left( 2\Phi\left( \frac{x}{\sigma \sqrt{n}}   \right)-1 \right) \right| 
   \leq c_{\delta}  n^{- \frac{\delta}{ 2(3 + \delta) } }.  
\end{align}
\end{theorem}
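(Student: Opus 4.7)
The plan is to compare the killed random walk directly with a killed Brownian motion via a strong approximation, leveraging the fact that for Brownian motion the right-hand side of \eqref{CorCCLT02} is exact. Under \ref{SecondMoment}, Sakhanenko's strong approximation theorem provides, on a possibly enlarged probability space, a standard Brownian motion $(B_s)_{s\geq 0}$ coupled with the sequence $(X_i)_{i\geq 1}$ such that
\begin{align*}
\mathbb{P}\left( \max_{1\leq k\leq n} |S_k - \sigma B_k| > r \right) \leq \frac{c n}{r^{2+\delta}}
\end{align*}
for every $r>0$. On the Brownian side, the reflection principle gives exactly
\begin{align*}
\mathbb{P}\left( \min_{0\leq s\leq n}(x + \sigma B_s) \geq 0,\ \frac{x + \sigma B_n}{\sigma\sqrt{n}} \leq t \right) = \int_0^t \psi\left(s, \frac{x}{\sigma\sqrt{n}}\right) ds,
\end{align*}
so the task reduces to controlling the discrepancy between the random-walk and Brownian versions of this probability.

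On the coupling event $\Omega_r = \{\max_k|S_k - \sigma B_k| \leq r\}$, the walk event $E_S = \{\tau_x > n,\ (x+S_n)/(\sigma\sqrt{n}) \leq t\}$ and its Brownian analogue $E_B$ can disagree only if either the running minimum of $x+\sigma B_s$ lies in $[-r,r]$ or the endpoint $(x+\sigma B_n)/(\sigma\sqrt{n})$ lies in $[t - r/(\sigma\sqrt{n}),\, t + r/(\sigma\sqrt{n})]$. By the reflection-principle formulas, the density of the running minimum of $x+\sigma B_s$ on $[0,n]$ at level $v \leq x$ equals $\frac{2}{\sigma\sqrt{n}}\phi\big((v - x)/(\sigma\sqrt{n})\big)$, which is bounded by $\sqrt{2/\pi}/(\sigma\sqrt{n})$ uniformly in $x$; likewise the Gaussian density of the endpoint is bounded by $1/(\sigma\sqrt{2\pi n})$. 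Hence each of these bad events has probability at most $C r/\sqrt{n}$, giving
\begin{align*}
\left| \mathbb{P}(E_S) - \mathbb{P}(E_B) \right| \leq \mathbb{P}(\Omega_r^c) + \mathbb{P}(E_S \triangle E_B) \leq \frac{c n}{r^{2+\delta}} + \frac{C r}{\sqrt{n}}.
\end{align*}

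Optimizing in $r$ by choosing $r = n^{3/(2(3+\delta))}$ makes both terms comparable to $n^{-\delta/(2(3+\delta))}$, which yields \eqref{CorCCLT02}. It is worth noting that this optimal $r$ equals $n^{1/2 - \delta/(2(3+\delta))}$, exactly the threshold in the hypothesis on $x$; this condition ensures that the main term $\int_0^t \psi(s, x/(\sigma\sqrt{n})) ds$, which is of order $x/(\sigma\sqrt{n})$ when its argument is small, is at least of the order of the error, so that the bound is informative. The estimate \eqref{CorCCLT02b} follows by letting $t \to \infty$, using $\int_0^\infty \psi(s, x_n) ds = 2\Phi(x_n) - 1$. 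The main technical obstacle lies in the uniform-in-$x$ control of $\mathbb{P}(E_S \triangle E_B)$ in the middle step: one must verify that the density bounds on the Brownian minimum and endpoint hold with constants independent of the starting point $x$, which is the reason for insisting on the explicit reflection-principle identities rather than a softer functional CLT.
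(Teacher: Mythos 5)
Your argument is correct and rests on the same two ingredients as the paper's proof — Sakhanenko's strong approximation (Lemma~\ref{FCLT}) and L\'evy's explicit formula for a killed Brownian motion (Lemma~\ref{lemma tauBM}) — but it routes the coupling error through a genuinely different decomposition. The paper's Theorem~\ref{Theor-limit tau-large n-001CCLT} shifts the starting point up or down by the coupling radius $r=n^{1/2-\gamma}$, so that on the good coupling event the walk probability is sandwiched between Brownian probabilities started from $x\pm r$; the proof of Theorem~\ref{CorCCLT} then devotes the bulk of its length to estimating $\int_t^\infty\bigl[\psi\bigl(s,\frac{x}{\sigma\sqrt n}\pm\frac{1}{\sigma n^\gamma}\bigr)-\psi\bigl(s,\frac{x}{\sigma\sqrt n}\bigr)\bigr]\,ds$ via a case analysis on $s$. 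You instead bound the symmetric difference of the walk event and its Brownian analogue directly, noting that on the coupling event they can disagree only when the Brownian running minimum falls in $[-r,r]$ or the Brownian endpoint falls within $r$ of the level $t\sigma\sqrt n$, and then use that both relevant Brownian densities are bounded by $C/\sqrt n$ uniformly in $x$. That replaces the $\psi$-calculus by a short density argument, and, as you point out, it in fact yields \eqref{CorCCLT02} for all $x>0$: the threshold on $x$ is needed only to make the bound nonvacuous, not to make the proof go through, whereas the paper's shifted-start lower bound genuinely requires $x\geq n^{1/2-\gamma}$ so that $x_\gamma^-\geq 0$.

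One technical point to tidy: you state the coupling event as $\max_{1\leq k\leq n}|S_k-\sigma B_k|\leq r$, i.e.\ at integer times only, but $\{\tau_x^{bm}>n\}$ constrains the Brownian path at \emph{all} $s\in[0,n]$. In your Case A (walk survives, Brownian motion does not), survival of the walk at integer times together with the integer-time coupling only gives $x+\sigma B_k\geq -r$ at integers, not $\min_{0\leq s\leq n}(x+\sigma B_s)\geq -r$. Either phrase the coupling as $\sup_{0\leq s\leq n}|S_{\lfloor s\rfloor}-\sigma B_s|\leq r$ (which is how Lemma~\ref{FCLT} is stated) or add a separate Gaussian-tail bound for $\max_{1\leq k\leq n}\sup_{s\in[k-1,k]}|B_s-B_{k-1}|$, which is of order $ne^{-cr^2}$ and hence negligible at the working scale $r=n^{3/(2(3+\delta))}$.
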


Combining the bounds \eqref{CLLT-bound only 001} and \eqref{CorCCLT02},
 one can deduce the following:
\begin{corollary}\label{Cor-CCLT-Optimal}
Assume \ref{SecondMoment}. 
Then there exists $\ee>0$ such that for any sequence of positive numbers $(\alpha_n)_{n \geq 1}$ 
satisfying $\lim_{n \to \infty} \alpha_n = 0$, 
 it holds 
 uniformly in $n \geq 1$, $t \in  \bb R_+$ and $x\in [0, \alpha_n \sqrt{n}]$,  
\begin{align} \label{C-CLTalphan}
 \left| \mathbb{P} \left(  \frac{x+S_n }{\sigma \sqrt{n}} \leq t,  \tau_{x}>n\right) - 
\frac{2V(x)}{ \sigma \sqrt{2\pi n}} \Phi^+(t)\right| 
\leq c_{\ee} \left(  \alpha_n  + n^{-\ee} \right) \frac{1+x}{n^{1/2}}.
\end{align}
In addition, there exists $\ee \in (0, \frac{1}{2})$ such that for any $\beta \in (0,  \frac{1}{2} - \ee)$,  
uniformly in $n \geq 1$, $t \in  \bb R_+$ and $x\geq n^{\beta}$, 
\begin{align} \label{CorCCLT02bis}
\left| \mathbb{P} \left(  \frac{x+S_n }{ \sigma \sqrt{n}} \leq  t, \tau_x >n\right) 
     -   \int_{0}^{t}  \psi \left( s, \frac{x}{\sigma \sqrt{n}}   \right)  ds    \right|
   \leq  c_\ee  \frac{1 + \min\{x, n^{1/2-\ee}\} }{ n^{1/2 + \ee \beta  }  }.
\end{align}
\end{corollary}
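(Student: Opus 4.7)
The plan is to deduce both assertions of the corollary by combining Theorems \ref{Theor-IntegrLimTh} and \ref{CorCCLT} and interpolating the two approximating densities $\frac{2V(x)\Phi^+(t)}{\sigma\sqrt{2\pi n}}$ and $\int_0^t \psi(s, x/(\sigma\sqrt{n}))\, ds$ in the overlap region $x\to\infty$ with $x/\sqrt{n}\to 0$. I will fix $\ee > 0$ small enough so that $\ee \leq \ee_0$ (from Theorem \ref{Theor-IntegrLimTh}), $\ee \leq \ee^*/2$ where $\ee^* := \delta/(2(3+\delta))$ (the exponent in Theorem \ref{CorCCLT}), and $\ee < 1/5$; in particular $\ee(1+\beta) \leq \tfrac{3}{2}\ee \leq \ee^*$ for every admissible $\beta \in (0, \tfrac{1}{2}-\ee)$.

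\emph{Matching lemma.} The key auxiliary estimate will be that uniformly in $t \in \bb R_+$ and $x \geq 0$,
\begin{align*}
\left| \int_0^t \psi\!\left(s, \frac{x}{\sigma\sqrt{n}}\right) ds  -  \frac{2V(x)\Phi^+(t)}{\sigma\sqrt{2\pi n}} \right|
\leq c\left(\frac{x^3}{n^{3/2}} + \frac{1}{\sqrt{n}}\right).
\end{align*}
I will prove it by Taylor-expanding $\psi(s, u) = \tfrac{2u}{\sqrt{2\pi}}\, s e^{-s^2/2} + O(u^3 s^3 e^{-s^2/2})$ at $u = 0$, integrating against $ds$ on $[0, t]$ to obtain $\int_0^t \psi(s,u)\, ds = \tfrac{2u}{\sqrt{2\pi}} \Phi^+(t) + O(u^3)$, substituting $u = x/(\sigma\sqrt{n})$, and then invoking the classical renewal fact $V(x) - x = O(1)$ as $x \to \infty$ under \ref{SecondMoment} --- a consequence of Blackwell's renewal theorem for the renewal function of the strict ascending ladder heights of $(S_n)$, to which $V$ is proportional.

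\emph{Proof of \eqref{C-CLTalphan}.} I will split $[0, \alpha_n \sqrt{n}]$ at $n^{1/2-\ee}$. On $[0, n^{1/2-\ee}]$ the estimate \eqref{CLLT-bound only 001} immediately yields error $c_\ee(1+x)/n^{1/2+\ee} = c_\ee n^{-\ee}(1+x)/\sqrt{n}$, absorbed into the target. On $(n^{1/2-\ee}, \alpha_n\sqrt{n}]$ --- non-empty only if $\alpha_n > n^{-\ee}$ --- I will combine Theorem \ref{CorCCLT} with the matching lemma, producing a total error of order $n^{-\ee^*} + x^3/n^{3/2} + 1/\sqrt{n}$. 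Each piece is absorbed into $c_\ee \alpha_n(1+x)/\sqrt{n}$: for the cubic term use $(x/\sqrt{n})^3 = (x/\sqrt{n})^2 (x/\sqrt{n}) \leq \alpha_n^2(x/\sqrt{n}) \leq \alpha_n (1+x)/\sqrt{n}$; for $1/\sqrt{n}$ use $\alpha_n(1+x) \geq n^{-\ee} n^{1/2-\ee} = n^{1/2-2\ee} \geq 1$ thanks to $\ee < 1/4$; for $n^{-\ee^*}$ use the same lower bound on $\alpha_n(1+x)$ together with $\ee \leq \ee^*/2$.

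\emph{Proof of \eqref{CorCCLT02bis}.} I will split $[n^\beta, \infty)$ at $n^{1/2-\ee}$. On $[n^\beta, n^{1/2-\ee}]$ apply \eqref{CLLT-bound only 001} (giving error $c_\ee(1+x)/n^{1/2+\ee} \leq c_\ee(1+x)/n^{1/2+\ee\beta}$ since $\ee \geq \ee\beta$) and then the matching lemma; the matching errors $1/\sqrt{n}$ and $x^3/n^{3/2}$ are both dominated by $c_\ee(1+x)/n^{1/2+\ee\beta}$ via elementary exponent comparisons using $x \geq n^\beta$ and $x \leq n^{1/2-\ee} \leq n^{1/2-\ee\beta/2}$ (the latter since $\beta < 2$). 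On $(n^{1/2-\ee}, \infty)$ apply Theorem \ref{CorCCLT} directly; the resulting error $c_\delta n^{-\ee^*}$ is dominated by $c_\ee n^{-\ee(1+\beta)} \leq c_\ee(1 + \min\{x, n^{1/2-\ee}\})/n^{1/2+\ee\beta}$ thanks to $\ee(1+\beta) \leq \ee^*$. The only non-routine step in the whole argument is the matching lemma, and the genuine technical input is the renewal-theoretic fact $V(x) - x = O(1)$; the additive constant in this expansion contributes an unavoidable $\Theta(1/\sqrt{n})$ term to the matching error that must be checked against the target in each regime, which is what forces the final choice of $\ee$.
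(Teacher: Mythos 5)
Your overall plan --- interpolating the two conditioned integral CLTs (Theorems \ref{Theor-IntegrLimTh} and \ref{CorCCLT}) through a matching estimate between $\int_0^t\psi\bigl(s,\tfrac{x}{\sigma\sqrt n}\bigr)\,ds$ and $\tfrac{2V(x)}{\sigma\sqrt{2\pi n}}\Phi^+(t)$ in the overlap where $x$ is between $n^{1/2-\ee}$ and $\alpha_n\sqrt n$ --- is the same interpolation the paper carries out, the Taylor expansion of $\psi$ in its second argument is correct, and the exponent bookkeeping across the two regime splits checks out. The genuine gap is the renewal-theoretic input to your matching lemma: the claim that $V(x)-x=O(1)$ as $x\to\infty$ under \ref{SecondMoment} is false in general. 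The function $V$ is proportional to the renewal function $U$ of the strict ascending ladder heights $H$ of $(S_n)$, and the constant-offset refinement $U(x)-x/\bb E H\to \bb E H^2/(2(\bb E H)^2)$ requires $\bb E H^2<\infty$; Blackwell's theorem only gives the increment asymptotics $U(x+h)-U(x)\to h/\bb E H$ and says nothing about a bounded offset. By the classical moment equivalences for ladder heights of a mean-zero, finite-variance walk, $\bb E H^2<\infty$ is equivalent to $\bb E (X_1^+)^3<\infty$, which is not implied by $\bb E|X_1|^{2+\delta}<\infty$ when $\delta<1$, and when $\bb E H^2=\infty$ one has $U(x)-x/\bb E H\to\infty$. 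So the $\Theta(1/\sqrt n)$ contribution you flag as ``unavoidable'' is actually not available at all in the regime the statement covers.

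The gap is repairable by replacing $V(x)-x=O(1)$ with the estimate the paper establishes in Lemma \ref{Lem_V_Ineq_aa}: for any $\ee\in\bigl(0,\tfrac{\delta}{2(2+\delta)}\bigr]$ and any integer $k_0\geq 1$, $x\leq V(x)\leq (1+c_\ee k_0^{-\ee})x+c_\ee k_0^{1/2-\ee}$. Optimizing $k_0\asymp x^2$ gives $0\leq V(x)-x\leq c_\ee x^{1-2\ee}$, i.e.\ $\bigl(V(x)-x\bigr)/(1+x)\leq c_\ee x^{-2\ee}$. Carrying this through your matching lemma, the error becomes $c\bigl(x^3/n^{3/2}+x^{1-2\ee}/\sqrt n\bigr)$, and the absorption steps all go through after shrinking the output exponent: on $[n^\beta,n^{1/2-\ee}]$ the bound $x\geq n^\beta$ gives $x^{1-2\ee}/\sqrt n\leq c_\ee n^{-2\ee\beta}(1+x)/\sqrt n\leq c_\ee(1+x)/n^{1/2+\ee\beta}$, and on $(n^{1/2-\ee},\alpha_n\sqrt n]$ it yields $c_\ee n^{-(\ee-2\ee^2)}(1+x)/\sqrt n$, which is acceptable since the corollary only asserts existence of some positive exponent. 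This is essentially what the paper's proof does, except that it produces one-sided bounds on $I=\int_0^t\psi(\cdot)\,ds$ separately, using only $x\leq V(x)$ for the upper bound and Lemma \ref{Lem_V_Ineq_aa} with $k_0=n$ for the lower (on the restricted range $x\geq n^{1/2-\ee}$), rather than packaging them as a symmetric matching lemma; the substance is the same.
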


Some comments on the precision of the above results 
seem to be appropriate.
Nagaev \cite{Nag69, Nag70} showed that, under the assumption that $\beta_3=\bb E (|X_1|^3) < \infty$ 
(which corresponds to $\delta=1$), 
 the remainder term in \eqref{CorCCLT02b} can be improved, namely it is of the order $n^{-1/2}$: uniformly in $x\geq 0$,
 \begin{align} \label{CorCCLT02c}
\left| \mathbb{P} \left( \tau_x >n \right) 
     -\left( 2\Phi\left( \frac{x}{\sigma \sqrt{n}}   \right)-1 \right) \right| 
   \leq c (\beta_3)^2 n^{- 1/2  }.  
\end{align}
Aleskevicene \cite{Aleskev73} improved the upper bound in \eqref{CorCCLT02c} to $c \beta_3 n^{- 1/2  }$. 

Note however that Nagaev's bound \eqref{CorCCLT02c} makes sense only when $x=x_n\to\infty$ as $n\to\infty$. 
For $x$ in compact sets it is not precise, 
since the remainder term $O(n^{-1/2})$ is of the same order as the main term 
$2\Phi ( \frac{x}{\sigma \sqrt{n}} ) - 1$.
In fact, for any fixed $x\geq 0$ 
the right asymptotic of the probability $\mathbb{P} \left( \tau_x >n \right)$
is not given by \eqref{CorCCLT02c} but by \eqref{CLLT-bound only 001b}. 
Borovkov \cite[Theorem 6]{Borovk62} 
obtained precise asymptotics of the above probabilities under the stronger condition that $X_1$ has exponential moment
and under additional assumption that the distribution of $X_1$ has an absolutely continuous 
component.
The results of Borovkov are stated in terms of some infinite series, which makes the comparison with ours 
tedious.  For instance, we could not deduce \eqref{CorCCLT02bis} from \cite{Borovk62}.

Note that the uniformity in $t$ and $x$ is crucial for establishing 
our main theorems in Section \ref{Sect-Mean0}. 

The proofs of Theorems \ref{Theor-IntegrLimTh}, \ref{CorCCLT} and Corollary \ref{Cor-CCLT-Optimal}
 will be given in Appendix \ref{Sect-Appendix}. 
 


\section{Conditioned concentration bounds near the boundary} \label{sec-proof Th CondLLT Version1}


The goal of this section is to establish Theorems \ref{Theorem-AA001} and
\ref{Theorem Delta-002}.

\subsection{Formulation of the result}



In the proof of Theorems \ref{Theorem-AA001} and \ref{Theorem Delta-002}, 
we need a refined version of the conditioned local limit theorem formulated below, 
which is stated as upper and lower bounds.

\begin{theorem} \label{t-B 002}   
Assume \ref{A1} and  \ref{SecondMoment}. 
Then, there exist constants $c>0$ and $\ee_0 >0$ such that for any $\ee \in (0, \ee_0)$, 
one can find a constant $c_{\ee} >0$ such that 
for any sequence of positive numbers $(\alpha_{n})_{n\geq 1}$ satisfying $\lim_{n\to \infty} \alpha_{n} = 0$, 
the following holds uniformly in $x\in [0, \alpha_n \sqrt{n}]$ and $n\geq 1$: 

\noindent 1. For any integrable functions $f,g: \bb R \mapsto \bb R_+$ satisfying $f\leq_{\ee}g$,  
\begin{align}\label{eqt-B 001} 
&  \mathbb{E}\left( f\left(x+S_n \right) ;\tau_x >n\right)  
 \leq   (1 +  c \ee)   \frac{2V(x) }{\sqrt{2\pi } \sigma^2 n}
\int_{\bb{R}_{+}}  g(t) \phi^+ \left( \frac{t}{ \sigma \sqrt{n}} \right)  dt   \notag\\
& \quad  + c \ee^{1/4} \frac{V(x)}{n} \int_{\mathbb R} g(t) \phi \left( \frac{t}{ \ee^{1/4} \sigma  \sqrt{n} } \right) dt   
+  c_{\ee}  \left(  \alpha_n  + n^{-\ee} \right) \frac{V(x)}{ n } \left\|  g \right\|_1.
\end{align}

\noindent 2. For any integrable functions $f,g,h: \bb R \mapsto \bb R_+$ satisfying $h\leq_{\ee}f\leq_{\ee}g$,
\begin{align} \label{eqt-B 002}
  \mathbb{E}\left( f\left(x+S_n \right) ;\tau_x >n\right)   
& \geq    \frac{2 V(x) }{\sqrt{2\pi } \sigma^2 n}
 \int_{\bb{R}_{+}}  h(t)  \phi^+ \left( \frac{t}{ \sigma \sqrt{n}} \right)   dt   \notag\\
&   -  c \ee^{1/12}  \frac{V(x) }{n}    
   \int_{\bb R}  g(t)  \left[ \phi \left( \frac{t}{ \sigma \sqrt{n}} \right)  +  \phi^+ \left( \frac{t}{ \sigma \sqrt{n}} \right)   \right]  dt  \notag\\
 &\quad      -  c_{\ee}  \left(  \alpha_n  + n^{-\ee} \right) \frac{V(x)}{ n }  \left\|  g \right\|_1.  
\end{align}
\end{theorem}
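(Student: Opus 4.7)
The approach I would take follows the heuristic sketched in Section~1.6: split the trajectory of $(x+S_j)_{j\leq n}$ at an intermediate time $k=n-m$ with $m \asymp \ee^{1/2}\,n$ (exponent to be optimized later), treat the short second segment of length $m$ by the local limit theorem with a target function (Theorem~\ref{LLT-general}), and the first segment of length $k$ by the conditioned integral limit theorem (Theorem~\ref{Theor-IntegrLimTh}). The upgrade over the pure heuristic is that I must track the multiplicative factors $1+c\ee$ coming from the envelopes $h\leq_{\ee}f\leq_{\ee}g$ and replace one-sided heuristic approximations by explicit two-sided bounds with remainders compatible with those stated in \eqref{eqt-B 001}--\eqref{eqt-B 002}.

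For the upper bound \eqref{eqt-B 001}, the Markov property at time $k$ gives $\mathbb{E}(f(x+S_n);\tau_x>n) \leq \mathbb{E}[F(x+S_k);\tau_x>k]$, where $F(t):=\mathbb{E} f(t+S_m')$ and $S_m'$ is an independent copy of $S_m$. Since $f(t+\cdot)\leq_{\ee}g(t+\cdot)$, the bound \eqref{LLT-general001} yields
\begin{equation*}
F(t) \leq \frac{1+c\ee}{\sigma\sqrt{m}} \int_{\bb R} g(v)\,\phi\!\left(\frac{v-t}{\sigma\sqrt{m}}\right) dv + \frac{c_{\ee}\|g\|_1}{m^{(1+\delta)/2}}.
\end{equation*}
The residual term, multiplied by $\mathbb{P}(\tau_x>k)\leq c\,V(x)/\sqrt{k}$ from \eqref{CLLT-bound only 002}, is of lower order and fits into the last error of \eqref{eqt-B 001}. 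Substituting the main term back and applying Fubini reduces the problem, for each fixed $v$, to estimating $\mathbb{E}[\phi((v-x-S_k)/(\sigma\sqrt{m}));\tau_x>k]$. I would treat this by integration by parts against the distribution function of $(x+S_k)/(\sigma\sqrt{k})$ on $\{\tau_x>k\}$ using the uniform rate \eqref{CLLT-bound only 001}, exploiting the fact that $s\mapsto\phi((v-\sigma\sqrt{k}s)/(\sigma\sqrt{m}))$ is unimodal with total variation bounded by an absolute constant. After a change of variables, the main term takes the form
\begin{equation*}
\frac{2V(x)}{\sqrt{2\pi}\,\sigma^2 n}\int g(v)\,\bigl(\phi_{\delta_n}\ast\phi^+_{1-\delta_n}\bigr)\!\left(\frac{v}{\sigma\sqrt{n}}\right) dv, \qquad \delta_n := m/n,
\end{equation*}
and Lemma~\ref{t-Aux lemma} (or a direct Gaussian-times-Rayleigh computation) provides the pointwise comparison $\phi_{\delta_n}\ast\phi^+_{1-\delta_n}(s)\leq(1+c\ee)\phi^+(s)+c\ee^{1/4}\phi(s/\ee^{1/4})$, which produces the first two main terms of \eqref{eqt-B 001}.

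The lower bound \eqref{eqt-B 002} follows the same scheme with inequality \eqref{LLT-general002} in place of \eqref{LLT-general001}, but the Markov splitting loses the constraint $\tau_{x+S_k}'>m$ that must now be recovered from below. I plan to deal with this by restricting to the event $\{x+S_k\geq A_n\}$ with threshold $A_n$ of order $\ee^{-1/4}\sigma\sqrt{m}$; on this event, Doob's maximal inequality gives $\mathbb{P}(\min_{j\leq m}S_j'<-A_n)\leq c\sigma^2 m/A_n^2=O(\ee^{1/2})$, so after multiplication by the main-term density this contributes the auxiliary Gaussian error $\int g(t)\phi(t/(\sigma\sqrt{n}))\,dt$ in \eqref{eqt-B 002}. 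The complementary event $\{x+S_k<A_n\}$ is controlled directly by \eqref{CLLT-bound only 001}, since the Rayleigh distribution function $\Phi^+$ is of order $(A_n/(\sigma\sqrt{k}))^2$ near the origin. The main obstacle is the balancing of four small parameters --- envelope gap $\ee$, segment length $m$, truncation threshold $A_n$, and the rate $n^{-\ee}$ from \eqref{CLLT-bound only 001} --- so that the three error terms combine into the exact form stated; the exponent $1/12$ in \eqref{eqt-B 002} emerges as the best tradeoff in this optimization, while the less stringent exponent $1/4$ in \eqref{eqt-B 001} reflects the absence of the lost-constraint correction.
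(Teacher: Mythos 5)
Your upper-bound argument follows the same route as the paper's: split at $k=n-m$ with $m=[\ee^{1/2}n]$, apply the Markov property, estimate the length-$m$ segment via the LLT \eqref{LLT-general001}, integrate by parts against the conditioned distribution function using Corollary~\ref{Cor-CCLT-Optimal}, and absorb the convolution $\phi_{\delta_n}\ast\phi^{+}_{1-\delta_n}$ via Lemma~\ref{t-Aux lemma}. That part is fine.

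The lower-bound argument has a genuine gap. You decompose $\mathbb{E}(f(x+S_n);\tau_x>n)$ by dropping the constraint $\tau_{x+S_k}>m$ on a piece, then propose to control the lost part
$$\int_{A_n}^{\infty}\mathbb{E}\bigl(f(t+S_m');\,\tau_t\leq m\bigr)\,\mathbb{P}(x+S_k\in dt,\ \tau_x>k)$$
by the Doob/Kolmogorov bound $\mathbb{P}(\min_{j\leq m}S_j'<-A_n)\leq c\sigma^2 m/A_n^{2}$ and then ``multiplying by the main-term density''. This does not go through: the event $\{\tau_t\leq m\}$ and the terminal value $t+S_m'$ are correlated, so you cannot pass from a probability bound to a bound on $\mathbb{E}(f(t+S_m');\tau_t\leq m)$ of the required form $c\,\ee^{1/12}\cdot\frac{V(x)}{n}\int g\,\phi(t/(\sigma\sqrt n))\,dt$. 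In particular, a crude bound $\|f\|_\infty\,\mathbb{P}(\tau_t\leq m)$ produces an error of order $\ee^{1/2}\,V(x)/\sqrt n$, which is \emph{not} comparable to the target error (it is missing the factor $\int g\,\phi\,dt\,/\sqrt n$ and is off by a power of $n$). There is also a calibration problem in the complementary piece: with $m\asymp\ee^{1/2}n$ your threshold $A_n\asymp\ee^{-1/4}\sigma\sqrt m\asymp\sigma\sqrt n$, so $\Phi^{+}(A_n/(\sigma\sqrt k))\asymp\Phi^{+}(1)$ is of order one, not small, which defeats the intent of ``controlling $\{x+S_k<A_n\}$ because $\Phi^+$ is small near the origin''.

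The paper handles this differently and instructively. It writes $I_n=I_{n,1}-I_{n,2}$ with $I_{n,2}$ the lost part, and for the delicate range $t>\ee^{1/6}\sqrt n$ introduces $L(t)=\mathds 1_{\{t>\ee^{1/6}\sqrt n\}}\mathbb{E}(f(t+S_m);\tau_t\leq m)$ together with an $\ee$-envelope $M$. It then applies the already-established upper bound \eqref{eqt-B 001} to $K_2=\mathbb{E}(L(x+S_k);\tau_x>k)$ \emph{self-referentially}, reducing the problem to bounding $\int M(t)\phi^{+}(t/(\sigma\sqrt k))\,dt$. At this point the duality identity of Lemma~\ref{lemma-duality-lemma-2_Cor} rewrites the integral in terms of the time-reversed walk, after which the Fuk--Nagaev inequality (Lemma~\ref{Lem_FukNagaev}) and the strong coupling of Lemma~\ref{FCLT} yield the bound; the exponent $1/12$ comes from a H\"older step ($\mathbb{P}^{1/2}$ applied to a probability of order $\ee^{1/6}$), not from the heuristic four-parameter optimization you anticipate. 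You would either need to replicate this self-referential-plus-duality scheme, or, if you prefer to avoid duality, replace Doob's inequality by a genuine reflection-principle/coupling argument that controls $\mathbb{P}(\tau_t\leq m\mid t+S_m'=y)$ uniformly in the relevant range --- in which case you end up importing the same FCLT tool the paper uses and lose nothing of its structure.
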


Note that the bounds in Theorem \ref{t-B 002}
 become effective when the support of the target function $f$ moves to infinity with an appropriate rate. 
As a consequence of this theorem, 
in Section \ref{SectProofThm1} we shall prove Theorems \ref{Theorem-AA001} and \ref{Theorem Delta-002}. 

\subsection{Duality identities}
In this section we establish two duality identities for the random walk $x + S_n$ jointly with the exit time $\tau_x$.
These identities are simple consequence of the fact that the Lebesgue measure is translation invariant. 
Recall that $S_0 = 0$ and $S_n=\sum_{i=1}^{n} X_i$ for $n \geq 1$, and its dual random walk is 
given by $S_j^*= S_{n-j} - S_n$ for $0 \leq j \leq n.$

\begin{lemma} \label{lemma-duality-lemma-2_Cor}
For any $n\geq 1$ and any bounded measurable functions $g, h: \bb R \mapsto \mathbb R$, 
we have
\begin{align} \label{eq-duality-lemma-002_Cor}
\int_{\mathbb R_+}  h(x) \mathbb E   g(x+S_n) \mathds 1_{\{ \tau_x > n  \}} dx
=\int_{\mathbb R_+}  g(y) \mathbb E  h(y+S_n^*)  \mathds 1_{\{ \tau_y^* > n \}} dy 
\end{align}
and 
\begin{align} \label{eq-duality-lemma-002_Cor_less}
& \int_{\mathbb R}  h(x) \mathbb E   g(x+S_n)  \mathds 1_{\{ \tau_x \leq  n  \}} dx 
 = \int_{\mathbb R}  g(y) \mathbb E  h(y+S_n^*)  \mathds 1_{ \{ \tau_y^* \leq n \}} dy.
\end{align}
\end{lemma}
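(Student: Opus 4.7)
The plan is to derive both identities from three elementary ingredients: Fubini's theorem, translation invariance of the Lebesgue measure under the shift $x \mapsto x + S_n(\omega)$, and the distributional invariance of an i.i.d.\ random walk under time reversal. The latter follows from the fact that $(X_n, X_{n-1}, \ldots, X_1)$ has the same joint law as $(X_1, X_2, \ldots, X_n)$, so that the reversed partial sums $S^{(r)}_j := \sum_{i=1}^j X_{n+1-i} = S_n - S_{n-j}$ satisfy $(S^{(r)}_0, \ldots, S^{(r)}_n) \stackrel{d}{=} (S_0, \ldots, S_n)$. The dual walk in the statement is $S_n^* = -S_n$, so $y + S_n^* = y - S_n$.

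For \eqref{eq-duality-lemma-002_Cor}, I would first apply Fubini to move the Lebesgue integral inside the expectation, and then, for each fixed realization, substitute $y = x + S_n$. The joint event $\{x \geq 0\} \cap \{\tau_x > n\}$ becomes $x + S_k \geq 0$ for $k = 0, 1, \ldots, n$, which in the new variable reads $y \geq S_n - S_k$ for $k = 0, 1, \ldots, n$; via the relation $S_n - S_{n-j} = S^{(r)}_j$ this is the same as $y \geq S^{(r)}_j$ for $j = 0, 1, \ldots, n$, and in particular forces $y \geq 0$. Taking expectation and replacing $S^{(r)}$ by $S$ via the distributional identity, the event becomes $\{y + S_k^* \geq 0 : k = 0, 1, \ldots, n\}$, which for $y \geq 0$ is precisely $\{\tau_y^* > n\}$. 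Relabeling produces the right-hand side.

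For \eqref{eq-duality-lemma-002_Cor_less}, I would split $\mathds 1_{\{\tau_x \leq n\}} = 1 - \mathds 1_{\{\tau_x > n\}}$ (and likewise for $\tau_y^*$) and verify each piece separately. The unconditional piece
\[
\int_{\mathbb R} h(x) \mathbb E g(x+S_n)\, dx = \int_{\mathbb R} g(y) \mathbb E h(y+S_n^*)\, dy
\]
is immediate from Fubini and the shift $y = x + S_n$ (no boundary constraint is active). For the conditional piece carrying $\mathds 1_{\{\tau_x > n\}}$ with integration over the whole real line, I would repeat the argument used for \eqref{eq-duality-lemma-002_Cor}, tracking how the constraints at the endpoints transform, and then subtract the two pieces to obtain \eqref{eq-duality-lemma-002_Cor_less}.

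The only delicate point is the bookkeeping of the endpoint constraints at $k = 0$ and $k = n$ under the change of variables; all the other steps are a mechanical application of Fubini, translation invariance of Lebesgue measure, and time reversal, so no serious analytic obstacle is expected.
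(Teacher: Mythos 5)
Your proof of \eqref{eq-duality-lemma-002_Cor} is correct and is exactly the paper's route: Fubini to move the Lebesgue integral inside the expectation, the shift $y = x + S_n$, identification of the transformed constraints with those of the reversed partial sums, and the distributional identity relating them to the dual walk $S^*$.

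For \eqref{eq-duality-lemma-002_Cor_less} the point you flag as ``delicate bookkeeping'' is in fact a real gap in the plan as stated. Over all of $\bb R$, the event $\{\tau_x > n\}$ is the $n$-fold constraint $\{x + S_k \geq 0 : k = 1, \ldots, n\}$, with \emph{no} constraint at $k = 0$. Under $y = x + S_n$ this becomes $y \geq S_n - S_k$ for $k = 1, \ldots, n$, i.e.\ after time reversal $y + S_j^* \geq 0$ for $j = 0, \ldots, n-1$; this differs from the wanted event $\{\tau_y^* > n\}$ over $\bb R$ (constraints at $j = 1, \ldots, n$) by a shift of one index. Consequently the intermediate identity
\[
\int_{\bb R} h(x)\,\mathbb E\bigl[g(x+S_n)\,\mathds 1_{\{\tau_x>n\}}\bigr]\,dx
=\int_{\bb R} g(y)\,\mathbb E\bigl[h(y+S_n^*)\,\mathds 1_{\{\tau_y^*>n\}}\bigr]\,dy
\]
that you would prove and then subtract is \emph{false} for the literal definition $\tau_x=\inf\{k\geq 1:\, x+S_k<0\}$ applied to $x<0$; already a one-step example with two-point increments shows the two sides differ. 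The identity \eqref{eq-duality-lemma-002_Cor_less} itself, and the subtraction argument, are correct only under the implicit convention that $\{\tau_x\leq n\}$ holds automatically for $x<0$, so that $\mathds 1_{\{\tau_x\leq n\}}=1-\mathds 1_{\{x\geq 0,\ \tau_x>n\}}$. This is exactly what the paper's proof does: it subtracts the $\bb R_+$-integrated identity \eqref{eq-duality-lemma-002_Cor}, not a whole-line conditional identity, from the unconditional identity over $\bb R$. Under that reading your conditional piece over $\bb R$ reduces verbatim to \eqref{eq-duality-lemma-002_Cor} and your subtraction then goes through; but the claim that ``no serious analytic obstacle is expected'' undersells what the endpoint tracking actually reveals.
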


\begin{proof}
By a change of variable $x + S_n = y$, we get
\begin{align*}
& \int_{\mathbb R_+}  h(x) \mathbb E   g(x+S_n) \mathds 1_{\{ \tau_x > n  \}} dx  \notag\\
& = \int_{\mathbb R}  h(x) \mathbb E   g(x+S_n) \mathds 1_{\{ x \geq 0, \,  x+S_1 \geq 0,  \, 
      \ldots, \, x + S_{n-1} \geq 0, \, x+S_n \geq 0  \}} dx  \notag \\
& =  \int_{\mathbb R}  g(y)  \mathbb E  h(y - S_n) 
   \mathds 1_{\{ y - S_n \geq 0, \,  y - S_n +S_1 \geq 0, \,  \ldots, \, y - S_n + S_{n-1} \geq 0, \,  y  \geq 0  \}} dy  \notag \\
& =  \int_{\mathbb R}  g(y)  \mathbb E  h(y + S_n^*) 
   \mathds 1_{\{ y + S_n^* \geq 0, \,  y + S_{n-1}^*  \geq 0, \,  \ldots, \,  y + S_1^* \geq 0, \,  y \geq 0  \}} dy  \notag\\
& =  \int_{\mathbb R_+}  g(y) \mathbb E  h(y+S_n^*)  \mathds 1_{\{ \tau_y^* > n \}} dy,   
\end{align*}
which concludes the proof of \eqref{eq-duality-lemma-002_Cor}.

Using again the change of variable $x + S_n = y$ gives
\begin{align}\label{Eq_dual_mmm}
\int_{\mathbb R}  h(x) \mathbb E   g(x+S_n)   dx
=\int_{\mathbb R}  g(y) \mathbb E  h(y+S_n^*)    dy.
\end{align}
Taking the difference between \eqref{Eq_dual_mmm} and \eqref{eq-duality-lemma-002_Cor}, 
we get \eqref{eq-duality-lemma-002_Cor_less}. 
\end{proof}

\subsection{Auxiliary statements}
We state several auxiliary statements which will be used in the proofs of the main results.
Recall that
$\phi_{v}(x)=\frac{1}{\sqrt{2\pi v}}e^{-\frac{x^2}{2 v}}$, $x\in \mathbb R$   
is the normal density of mean $0$ and variance $v$ 
and that $\phi^+_{v}(x)=\frac{x}{v} e^{-\frac{x^2}{2 v}} \mathds 1_{\{x \geq 0\}}$, $x\in \mathbb R$
is the Rayleigh density with scale parameter $\sqrt{v}$.
It holds that $\phi_1(x) = \phi(x)$ and $\phi^+_1(x) = \phi^+(x)$, $x\in \mathbb R$. 
The following lemma shows that when $v$ is small, the convolution $\phi_{v} * \phi^+_{1-v}$ 
behaves like the Rayleigh density $\phi^+$.
\begin{lemma} \label{t-Aux lemma}
For any $v \in (0,1/2]$ and $x\geq 0$, it holds
\begin{align} \label{SmoothingBoundsRayleigh}
\sqrt{1-v} \phi^+(x) 
\leq  \phi_{v} * \phi^+_{1- v}(x) 
\leq  \sqrt{1-v} \phi^+(x) +  \sqrt{v}  e^{ -\frac{x^2}{2v} }.   
\end{align}
\end{lemma}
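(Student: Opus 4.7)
The plan is to evaluate the convolution $\phi_v * \phi_{1-v}^+(x)$ in closed form and then derive the two bounds from a simple monotonicity argument plus Mills' inequality. To start, I would write
\begin{align*}
\phi_v * \phi_{1-v}^+(x)
= \int_0^{\infty} \frac{1}{\sqrt{2\pi v}}\, e^{-(x-y)^2/(2v)} \cdot \frac{y}{1-v}\, e^{-y^2/(2(1-v))}\, dy,
\end{align*}
combine the two quadratic exponents and complete the square in $y$. Using the identity $(1-v)(x-y)^2+vy^2 = (y-(1-v)x)^2+v(1-v)x^2$, the exponent factorizes as $-x^2/2 - (y-(1-v)x)^2/(2v(1-v))$, pulling the $e^{-x^2/2}$ outside the integral.

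Next I would substitute $u = y-(1-v)x$ and split $y = u+(1-v)x$ into two pieces. The part with $u$ integrates elementarily to $v(1-v) e^{-(1-v)x^2/(2v)}$, and the part with $(1-v)x$ is a Gaussian tail equal to $\sqrt{2\pi v(1-v)}\,\Phi(\sqrt{(1-v)/v}\,x)$. After simplification the exponents $-x^2/2 - (1-v)x^2/(2v)$ collapse to $-x^2/(2v)$, yielding the exact identity
\begin{align*}
\phi_v * \phi_{1-v}^+(x)
= \sqrt{\tfrac{v}{2\pi}}\, e^{-x^2/(2v)} + \sqrt{1-v}\, \phi^+(x)\, \Phi\!\left(\sqrt{\tfrac{1-v}{v}}\, x\right).
\end{align*}

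The upper bound in \eqref{SmoothingBoundsRayleigh} now follows at once from $\Phi \le 1$ and $1/\sqrt{2\pi}\le 1$. For the lower bound I would rearrange the identity as
\begin{align*}
\phi_v * \phi_{1-v}^+(x) - \sqrt{1-v}\,\phi^+(x)
= \sqrt{\tfrac{v}{2\pi}}\, e^{-x^2/(2v)} - \sqrt{1-v}\,\phi^+(x)\bigl(1-\Phi(\sqrt{(1-v)/v}\,x)\bigr).
\end{align*}
For $x=0$ both sides vanish, and for $x>0$ the standard Mills' ratio bound $1-\Phi(t) \le \phi(t)/t$ applied with $t = \sqrt{(1-v)/v}\,x$ gives $\sqrt{1-v}\,x e^{-x^2/2}(1-\Phi(t)) \le \sqrt{v/(2\pi)}\, e^{-x^2/(2v)}$, making the displayed difference nonnegative.

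The only non-routine step is recognizing that Mills' ratio exactly cancels the extra Gaussian term in the identity, which explains why the constant $\sqrt{1-v}$ in the lower bound is tight. Everything else reduces to completing the square and tracking constants, so I do not anticipate a substantive obstacle.
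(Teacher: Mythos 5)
Your proof is correct: the closed-form identity
\begin{align*}
\phi_v * \phi_{1-v}^+(x)
= \sqrt{\tfrac{v}{2\pi}}\, e^{-x^2/(2v)} + \sqrt{1-v}\, \phi^+(x)\, \Phi\!\left(\sqrt{\tfrac{1-v}{v}}\, x\right)
\end{align*}
checks out, and the upper bound from $\Phi\le 1$ and the lower bound via Mills' ratio are both sound. The paper's proof starts from the same completed square but stops one step earlier: it writes $\phi_v * \phi^+_{1-v}(x) = I_1 + I_2$ with $I_1 = \sqrt{1-v}\,\phi^+(x)$ the full Gaussian mean and $I_2$ the remaining integral over $\{z>x\}$ of a manifestly nonnegative integrand, so the lower bound is immediate from $I_2\ge 0$, with no need for Mills' inequality. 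For the upper bound the paper bounds $I_2$ by dropping the nonnegative subtracted term in $(w - x\sqrt{(1-v)/v})\le w$, which is equivalent to your $\Phi\le 1$. So the two arguments share the substance, but yours takes a small detour: you fully integrate to the $\Phi$ form and then invoke Mills' inequality to recover the sign of the correction, while the paper sees that sign directly from the integrand before ever integrating. Your route does buy a tidy exact formula that makes the tightness of the constant $\sqrt{1-v}$ transparent.
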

\begin{proof}
By definition, we have that for any $x\geq 0$, 
\begin{align*}
\phi_{v} * \phi^+_{1-v}(x) 
& = \int_{\bb R}  \phi_{v}(z) \phi^+_{1-v}(x-z) dz  \notag\\  
& = \frac{1}{\sqrt{1-v}}  \int_{\bb R } \frac{x-z}{\sqrt{2\pi v (1-v)}}
      e^{ -\frac{z^2}{2 v}  -\frac{(x - z)^2}{2(1-v)} } \mathds 1_{\{ x-z \geq 0 \}}   dz. 
\end{align*}
Since
\begin{align*} 
  \frac{z^2}{2 v}  + \frac{(x-z)^2}{2(1-v)} = \frac{x^2}{2}  + \frac{(z-x v)^2}{2v(1-v)},
\end{align*} 
 we get
\begin{align*}
\phi_{v} * \phi^+_{1-v}(x)   
& = \frac{1}{\sqrt{1-v}} e^{ -\frac{x^2}{2} } \int_{\bb R } \frac{ x - z }{\sqrt{2\pi v(1-v)}}
    e^{ -\frac{(z-xv )^2}{2v (1-v)} }  \mathds 1_{\{ x-z \geq 0 \}}  dz   \notag\\ 
&  = : I_1 + I_2,   
\end{align*}
where 
\begin{align*}
I_1 & = \frac{1}{\sqrt{1-v}} e^{ -\frac{x^2}{2} } 
    \int_{\bb R } \frac{x - z}{\sqrt{2\pi v(1-v)}}  e^{ -\frac{(z-xv )^2}{2v (1-v)} }   dz 
      =  \sqrt{1-v} \,  x e^{ -\frac{x^2}{2} },   \notag \\ 
I_2 &  =  \frac{1}{\sqrt{1-v}} e^{ -\frac{x^2}{2} } \int_{\bb R } \frac{z - x}{\sqrt{2\pi v(1-v)}}
    e^{ -\frac{(z-xv )^2}{2v (1-v)} }  \mathds 1_{\{ x-z < 0 \}}  dz.   
\end{align*}
Hence the first inequality in \eqref{SmoothingBoundsRayleigh} holds since $I_2 \geq 0$. 

To prove the second inequality in \eqref{SmoothingBoundsRayleigh}, it remains to give an upper bound for $I_2$. 
By a change of variable,  we have 
\begin{align*}
I_2 & = \sqrt{\frac{v}{2 \pi}} e^{ -\frac{x^2}{2} } 
  \int_{x \sqrt{\frac{1-v}{v}}}^{\infty}  \left( w - x \sqrt{\frac{1-v}{v}} \right)  e^{ -\frac{w^2}{2} }  dw   \notag\\
& \leq  \sqrt{\frac{v}{2 \pi}} e^{ -\frac{x^2}{2} } 
  \int_{x \sqrt{\frac{1-v}{v}}}^{\infty}   w   e^{ -\frac{w^2}{2} }  dw 
  =  \sqrt{\frac{v}{2 \pi}}  e^{- \frac{x^2}{2v}}, 
\end{align*}
which finishes the proof of the second inequality in \eqref{SmoothingBoundsRayleigh}. 
\end{proof}


The following Fuk-Nagaev  inequality can be found in \cite{FN71, Nag79}. 
\begin{lemma}\label{Lem_FukNagaev}
Let  $\bb E X_1 = 0$ and  $\bb E (X_1^{2}) = \sigma^2  < \infty.$
Then, for any $u, v >0$, 
\begin{align*}
\bb P \left(\max_{1 \leq k \leq n} |S_k| > u \right) 
\leq  2 \exp \left[ \frac{u}{v}  \left( 1 + \log \frac{n}{uv} \right)  \right]   + n \bb P \left(|X_1| > v \right). 
\end{align*}
\end{lemma}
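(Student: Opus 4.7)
The plan is to prove this Fuk--Nagaev-type maximal inequality by the classical combination of truncation, an exponential moment bound, and Doob's submartingale inequality. First I would dispose of large jumps: let $B = \{\max_{1 \leq i \leq n} |X_i| > v\}$, which by the union bound satisfies $\bb P(B) \leq n \bb P(|X_1| > v)$, accounting for the second summand in the stated inequality. On the complementary event, the walk coincides with the truncated walk $\tilde S_k = \sum_{i=1}^{k} \tilde X_i$, where $\tilde X_i := X_i \mathds 1_{\{|X_i| \leq v\}}$, so it suffices to bound $\bb P(\max_{k \leq n} |\tilde S_k| > u)$. Splitting $|\tilde S_k|$ into its positive and negative parts introduces the factor $2$ in front of the exponential term.

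Second, for the one-sided bound $\bb P(\max_{k \leq n} \tilde S_k > u)$ I would invoke the exponential Markov / Doob submartingale argument. Since $\bb E X_1 = 0$, we have $\bb E \tilde X_1 = -\bb E X_1 \mathds 1_{\{|X_1| > v\}}$; whether this is positive or negative, the key point is that for each $\lambda > 0$ the process $(e^{\lambda \tilde S_k})_{k \geq 0}$ is (a submartingale or can be compared with one), so Doob's maximal inequality gives
\begin{equation*}
\bb P\Bigl(\max_{k \leq n} \tilde S_k > u\Bigr)
\leq e^{-\lambda u}\,\bb E e^{\lambda \tilde S_n}
= e^{-\lambda u}\,\bigl(\bb E e^{\lambda \tilde X_1}\bigr)^n.
\end{equation*}
To control $\bb E e^{\lambda \tilde X_1}$ I would use $|\tilde X_1| \leq v$ together with the elementary bound $e^{y} \leq 1 + y + y^2 \frac{e^{\lambda v}-1-\lambda v}{(\lambda v)^2}$ for $|y| \leq \lambda v$, combined with a suitable accounting of the (small, negative) drift $\bb E \tilde X_1$ absorbed into the $n \bb P(|X_1|>v)$ term or controlled by $\bb E X_1^2 = \sigma^2$. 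This yields a bound of the form $\bb E e^{\lambda \tilde X_1} \leq \exp\bigl((e^{\lambda v} - 1 - \lambda v)\bigr)$ (after using the crude estimate $\bb E \tilde X_1^2 \leq v^2$, which is all that is needed to match the stated form without a $\sigma^2$ factor).

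Third, I would optimize $\lambda > 0$. Plugging the exponential moment bound into the Doob inequality gives
\begin{equation*}
\bb P\Bigl(\max_{k \leq n} \tilde S_k > u\Bigr)
\leq \exp\bigl[-\lambda u + n(e^{\lambda v} - 1 - \lambda v)\bigr],
\end{equation*}
and a clean way to read off the desired form is to set $e^{\lambda v} = 1 + u/(nv)$, equivalently $\lambda v = \log(1 + u/(nv))$. Straightforward manipulation then rearranges the exponent into $-\frac{u}{v}\log(1+\frac{u}{nv}) + n\bigl(\frac{u}{nv} - \log(1 + \frac{u}{nv})\bigr)$, which, after using $1+a \geq a$ and combining logarithmic terms, is bounded by $\frac{u}{v}(1 + \log \frac{n}{uv})$. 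Repeating the argument with $-X_i$ in place of $X_i$ handles the other side and produces the constant $2$.

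The principal technical nuisance will be the bookkeeping around the truncation drift $\bb E \tilde X_1$: one must check that the term it contributes either is absorbed into the large-jump probability $n \bb P(|X_1|>v)$ or is dominated by the exponential exponent uniformly in $\lambda$. The final algebraic step of extracting exactly the form $\frac{u}{v}(1+\log\frac{n}{uv})$ from the optimized exponent is deterministic calculus, so I do not expect it to be a substantive obstacle — the real care is only in ensuring the truncated Laplace transform is handled with the right sign and that Doob's inequality is applied to a genuine submartingale.
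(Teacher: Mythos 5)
Be aware that the paper offers no proof of this lemma; it simply records the inequality and cites Fuk--Nagaev \cite{FN71} and Nagaev \cite{Nag79}, so there is no internal argument to compare against. Your outline --- truncate, dispose of large jumps via a union bound, apply an exponential-moment estimate with Doob/optional stopping to the truncated walk, and optimize the exponential parameter --- is indeed the classical route to Fuk--Nagaev bounds, and the accounting that produces $n\,\bb P(|X_1|>v)$ and the factor $2$ is fine.

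Two details you flag as ``nuisances'' are, however, genuine gaps as written. First, the truncation drift. With the two-sided truncation $\tilde X_i=X_i\mathds 1_{\{|X_i|\le v\}}$ one has $\bb E\tilde X_1=-\bb E X_1\mathds 1_{\{|X_1|>v\}}$, whose sign is not determined by centering; when it is positive, the contribution $n\lambda\,\bb E\tilde X_1$ to the exponent is not absorbed by anything in your display, and it is not negligible. The standard remedy is one-sided truncation $\hat X_i=X_i\mathds 1_{\{X_i\le v\}}$ for the upper tail: then $\hat X_i\le v$ and $\hat X_i\le X_i$, so $\bb E\hat X_1\le 0$ (the drift only helps), while $\hat S_k=S_k$ on the good event. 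One should also note that $(e^{\lambda\hat S_k})_k$ need not be a submartingale --- its one-step ratio $\bb E e^{\lambda\hat X_1}$ can be $<1$ --- but normalizing by $(\bb E e^{\lambda\hat X_1})^k$ and applying optional stopping gives $\bb P(\max_{k\le n}\hat S_k>u)\le e^{-\lambda u}\max\bigl(1,\bb E e^{\lambda\hat X_1}\bigr)^n$, which suffices. Second, your arithmetic does not produce the stated exponent. With the crude bound $\bb E\tilde X_1^2\le v^2$ and your choice $e^{\lambda v}=1+u/(nv)$, the exponent simplifies to $\frac uv\bigl(1-\log(1+\tfrac u{nv})\bigr)\le\frac uv\bigl(1+\log\tfrac{nv}{u}\bigr)$: the logarithm's argument is $nv/u$, not $n/(uv)$. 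These differ by $\log v^2$, and the difference is not cosmetic: in the regime $u\asymp v\asymp\sqrt n$ in which the lemma is actually applied in this paper (in the display following \eqref{Proba_Holder}), $\log(nv/u)\asymp\log n\to+\infty$ and your bound is vacuous, whereas $\log(n/(uv))$ stays bounded. To obtain a usable exponent keep the sharper variance bound $\bb E\hat X_1^2\le\sigma^2$ (which one-sided truncation forces on you anyway, since $\hat X_i$ is only bounded above); the same optimization then yields $\frac uv\bigl(1+\log\tfrac{n\sigma^2}{uv}\bigr)$, which is scale-covariant under $X_i\mapsto cX_i$, $u\mapsto cu$, $v\mapsto cv$. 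The printed form $\log\tfrac{n}{uv}$ is \emph{not} scale-covariant, so it almost certainly omits a $\sigma^2$; you should not try too hard to reproduce it literally.
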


Let $\left( B_{t}\right)_{t\geq 0}$ be a standard Brownian motion on the
probability space $\left(\Omega, \mathscr{F}, \mathbb{P} \right).$ 
For any $x \geq 0$, define the exit time
\begin{equation*}
\tau_{x}^{bm} = \inf \left\{ t \geq 0: x + \sigma B_{t} < 0 \right\}. 
\end{equation*}
The following well known formulas are due to Levy \cite{Levy37} (Theorem 42.I, pp.194-195).

\begin{lemma}\label{lemma tauBM} 
For any $x \geq 0$, $n\geq 1$ and $0 \leq a < b \leq \infty $, 
\begin{align}\label{levy001b}
\bb P \left( x +  \sigma B_{n} \in \left[a, b\right],  \tau_{x}^{bm}>n \right) 
=  \frac{1}{\sigma \sqrt{ n} } \int_{a}^{b}  \psi \left( \frac{s}{\sigma \sqrt{n}},\frac{x}{\sigma \sqrt{n}} \right)
ds. 
\end{align}
In particular, by taking $a = 0$ and $b = \infty$, for any $x \geq 0$ and $n\geq 1$, 
\begin{align}\label{levy001a}
\bb P \left( \tau_{x}^{bm} > n \right) 
= \bb P \left( \sigma \inf_{0 \leq u \leq n} B_{u} \geq -x \right) 
= \frac{2}{\sigma \sqrt{2\pi n} } \int_{0}^x  e^{ - \frac{s^{2}}{2 \sigma^2 n} } ds.   
\end{align}
\end{lemma}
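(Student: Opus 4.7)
The plan is to derive both identities from the classical reflection principle for Brownian motion, which reduces the exit-time probabilities to ordinary Gaussian integrals.

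First I would set $\alpha = x/(\sigma\sqrt{n})$ and rescale so that the process $(x+\sigma B_t)_{t\ge 0}$ starting at $x\ge 0$ is equivalent to a Brownian motion $W_t = B_t/\sqrt{n}$ (up to the scale factor $\sigma\sqrt{n}$) starting at $0$, with the barrier at level $-x/\sigma$. Let $T = \inf\{t\ge 0 : B_t = -x/\sigma\}$, so that $\{\tau_x^{bm} \le n\} = \{T \le n\}$. The reflection principle applied at the first hitting time $T$ states: if we define the reflected path $\widetilde B_s = B_s$ for $s\le T$ and $\widetilde B_s = -2x/\sigma - B_s$ for $s>T$, then $\widetilde B$ is again a standard Brownian motion. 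Hence for any Borel set $A\subset\mathbb R$,
\begin{equation*}
\mathbb P\bigl(T\le n,\ B_n \in A\bigr) = \mathbb P\bigl(T\le n,\ B_n \in -2x/\sigma - A\bigr).
\end{equation*}

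For the first formula \eqref{levy001b}, I would take $0\le a<b\le\infty$ and observe that $\{x+\sigma B_n\in[a,b]\} = \{B_n\in[(a-x)/\sigma,(b-x)/\sigma]\}$. The reflected target set is $[(-b-x)/\sigma,(-a-x)/\sigma]$, which lies strictly below $-x/\sigma$, so on that event $T\le n$ automatically. Subtracting gives
\begin{align*}
\mathbb P\bigl(\tau_x^{bm}>n,\ x+\sigma B_n\in[a,b]\bigr)
&= \mathbb P\bigl(x+\sigma B_n\in[a,b]\bigr) - \mathbb P\bigl(x+\sigma B_n\in[-b,-a]\bigr) \\
&= \frac{1}{\sigma\sqrt n}\int_a^b\!\left[\phi\!\left(\frac{s-x}{\sigma\sqrt n}\right) - \phi\!\left(\frac{s+x}{\sigma\sqrt n}\right)\right]ds,
\end{align*}
using that $\phi$ is even. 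Recognising the bracketed quantity as $\sqrt{2\pi}\,\psi(s/(\sigma\sqrt n), x/(\sigma\sqrt n))$ from definition \eqref{Def-Levydens} concludes \eqref{levy001b}.

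For \eqref{levy001a}, I would specialize to $a=0$, $b=\infty$, write $\alpha = x/(\sigma\sqrt n)$, and change variables $u = s/(\sigma\sqrt n)$ to get
\begin{equation*}
\mathbb P(\tau_x^{bm}>n) = \int_0^\infty\!\left[\phi(u-\alpha)-\phi(u+\alpha)\right]du.
\end{equation*}
Splitting the two integrals via $v=u-\alpha$ and $v=u+\alpha$ gives $\Phi(\alpha) - (1-\Phi(\alpha)) = 2\Phi(\alpha)-1$; a final change of variable back to the $s$-scale turns this into $\frac{2}{\sigma\sqrt{2\pi n}}\int_0^x e^{-s^2/(2\sigma^2 n)}\,ds$, which is the stated formula. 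The equivalent form involving $\inf_{0\le u\le n}\sigma B_u \ge -x$ is just the definition of $\tau_x^{bm}$.

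There is no real obstacle here; the only point requiring a tiny bit of care is the justification that, after reflection, the interval $[(-b-x)/\sigma,(-a-x)/\sigma]$ lies below the barrier $-x/\sigma$ so that the condition $T\le n$ becomes redundant — this is exactly what makes the reflection-principle identity collapse into an unconditional Gaussian probability. Everything else is an elementary Gaussian integral manipulation.
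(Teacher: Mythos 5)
The paper itself does not prove Lemma~\ref{lemma tauBM}: it cites it as a classical formula of L\'evy (Theorem~42.I of \cite{Levy37}), so there is no internal proof to compare yours against. Your derivation via the reflection principle is the standard textbook route and, at the level of ideas, is correct: reflecting at the first hitting time $T$ of the barrier $-x/\sigma$, noting that the reflected target interval $[-(b+x)/\sigma,-(a+x)/\sigma]$ lies on or below the barrier when $a\geq 0$ (so the constraint $T\leq n$ is automatic on that event), and then subtracting two unconditioned Gaussian probabilities is exactly how one gets the two-term density $\psi$; the specialisation $a=0$, $b=\infty$ and a change of variables then recovers \eqref{levy001a}.

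There is, however, a small but real arithmetic slip at the very last step. With the paper's convention $\phi(t)=\frac{1}{\sqrt{2\pi}}e^{-t^2/2}$, the density of $x+\sigma B_n$ at $s$ is $\frac{1}{\sigma\sqrt{n}}\,\phi\!\left(\frac{s-x}{\sigma\sqrt{n}}\right)$, so your displayed identity
\begin{align*}
\mathbb P\bigl(\tau_x^{bm}>n,\ x+\sigma B_n\in[a,b]\bigr)
= \frac{1}{\sigma\sqrt n}\int_a^b\!\left[\phi\!\left(\frac{s-x}{\sigma\sqrt n}\right) - \phi\!\left(\frac{s+x}{\sigma\sqrt n}\right)\right]ds
\end{align*}
is correct. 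But the bracketed integrand equals $\psi\!\left(\frac{s}{\sigma\sqrt n},\frac{x}{\sigma\sqrt n}\right)$ exactly, not $\sqrt{2\pi}\,\psi$; recall from \eqref{Def-Levydens} that $\psi$ already carries the $\frac{1}{\sqrt{2\pi}}$ prefactor, which is precisely the one built into $\phi$. As written, your identification would introduce a spurious factor $\sqrt{2\pi}$ and contradict \eqref{levy001b}; drop that factor and the proof is complete.
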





We need the following functional central limit theorem due to Sakhanenko \cite{Sak06},
which allows us to couple out the random walk with the Brownian motion. 

\begin{lemma}\label{FCLT}
Assume  \ref{SecondMoment} with some $\delta>0$. 
Then there exists a   
construction of the random walk $(S_n)_{n\geq 0}$ on the initial probability space
together with a continuous time Brownian motion $(B_t)_{t\geq 0}$ such that for any 
$\gamma \in (0,  \frac{\delta}{2(2+\delta)})$ and $n\geq 1$,
\begin{align*}
\bb{P} \left( \sup_{0\leq t\leq 1} \left| S_{[nt]} - \sigma B_{nt} \right| > n^{1/2 - \gamma}  \right)
 \leq  \frac{c_{\gamma}}{ n^{ \delta/2 - (2+\delta) \gamma} }, 
\end{align*}
where $c_{\gamma}$ is a constant depending only on $\gamma$.
\end{lemma}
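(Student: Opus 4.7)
The plan is to reduce the stated coupling bound to Sakhanenko's discrete strong approximation theorem and then handle the continuous-time interpolation separately. Sakhanenko's 2006 paper establishes that under $\mathbb{E}|X_1|^{2+\delta}<\infty$ (with $\mathbb{E} X_1=0$), one can build the random walk $(S_k)_{k\geq 0}$ and a Brownian motion $(B_t)_{t\geq 0}$ on a common probability space so that, for some constant $C_\delta>0$ and every $n\geq 1$,
\begin{equation*}
\mathbb{E}\,\max_{0\leq k\leq n}|S_k-\sigma B_k|^{2+\delta}\;\leq\; C_\delta\, n.
\end{equation*}
So the first step would simply be to quote this strong approximation in the form above.

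Next I would apply Markov's inequality to pass from the moment bound to a tail bound on the discrete maximum. For $\gamma\in\bigl(0,\tfrac{\delta}{2(2+\delta)}\bigr)$ and $n\geq 1$, this gives
\begin{equation*}
\mathbb{P}\!\left(\max_{0\leq k\leq n}|S_k-\sigma B_k|>\tfrac{1}{2}n^{1/2-\gamma}\right)
\leq \frac{2^{2+\delta}C_\delta\, n}{n^{(1/2-\gamma)(2+\delta)}}
= \frac{c_\gamma}{n^{\delta/2-(2+\delta)\gamma}},
\end{equation*}
using the identity $(1/2-\gamma)(2+\delta)-1=\delta/2-(2+\delta)\gamma$. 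The constraint on $\gamma$ is exactly what makes this exponent positive.

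To pass from the discrete to the continuous supremum, I would write $k=[nt]$ and decompose
\begin{equation*}
S_{[nt]}-\sigma B_{nt}=(S_k-\sigma B_k)+\sigma(B_k-B_{nt}),
\end{equation*}
so it remains to control $\sigma\,\max_{0\leq k\leq n-1}\sup_{0\leq s\leq 1}|B_{k+s}-B_k|$. By the reflection principle (or scaling plus Gaussian tail bounds), each $\sup_{0\leq s\leq 1}|B_{k+s}-B_k|$ has a sub-Gaussian tail, and a union bound over $k\leq n-1$ together with the elementary estimate $\mathbb{P}(\sup_{[0,1]}|B|>u)\leq 4e^{-u^2/2}$ yields
\begin{equation*}
\mathbb{P}\!\left(\max_{0\leq k\leq n-1}\sup_{0\leq s\leq 1}|B_{k+s}-B_k|>\tfrac{1}{2\sigma}n^{1/2-\gamma}\right)
\leq 4n\,e^{-n^{1-2\gamma}/(8\sigma^2)},
\end{equation*}
which is super-polynomially small in $n$ and is therefore absorbed into $c_\gamma/n^{\delta/2-(2+\delta)\gamma}$ after enlarging the constant. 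Combining the two bounds through a union bound yields the stated inequality.

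The only non-routine step is the input from Sakhanenko: the key obstacle the original reference had to overcome is producing a coupling that achieves the optimal rate $\mathbb{E}\max_{k\leq n}|S_k-\sigma B_k|^{2+\delta}\lesssim n$ under a bare $(2+\delta)$-moment assumption without any smoothness on the law of $X_1$. Once that black box is accepted, the remainder of the argument is a straightforward Markov inequality plus a Brownian oscillation estimate, so I would present it as a short derivation and refer the reader to \cite{Sak06} for the hard part.
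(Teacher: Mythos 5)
Your approach is essentially the same as the paper's, which simply invokes Sakhanenko~\cite{Sak06} for this lemma and gives no further derivation; the decomposition $S_{[nt]}-\sigma B_{nt}=(S_{[nt]}-\sigma B_{[nt]})+\sigma(B_{[nt]}-B_{nt})$ together with a Brownian oscillation estimate and a union bound is the standard way to pass from the coupling at integer times to the continuous-time statement, and your computation of the exponent $(1/2-\gamma)(2+\delta)-1=\delta/2-(2+\delta)\gamma$ is correct.

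The one inaccuracy is the form in which you quote Sakhanenko's result. What Sakhanenko actually proves is a tail-probability bound, of the type
\begin{equation*}
\mathbb{P}\left(\max_{0\leq k\leq n} |S_k - \sigma B_k| > x\right) \leq \frac{C\, n\, \mathbb{E}|X_1|^{2+\delta}}{x^{2+\delta}}, \qquad x>0,
\end{equation*}
not the moment bound $\mathbb{E}\max_{0\leq k\leq n}|S_k-\sigma B_k|^{2+\delta}\leq C_\delta n$. The moment bound does not follow from the tail bound: the integral $\int_0^\infty (2+\delta)\,x^{1+\delta}\min\!\left(1,\, Cn\,x^{-(2+\delta)}\right)dx$ diverges logarithmically, and such a moment estimate is not established under a bare $(2+\delta)$-moment hypothesis. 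This is harmless for your argument, because the tail bound is exactly what you need: substituting $x=n^{1/2-\gamma}$ into it gives $c_\gamma\, n^{-(\delta/2-(2+\delta)\gamma)}$ at once, so the Markov-inequality step should simply be removed and replaced by a direct appeal to Sakhanenko's tail estimate. The interpolation error and its super-polynomially small probability are handled correctly.
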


Note that $\delta$ can be greater than $1$ in Lemma \ref{FCLT}.

\subsection{Proof of the upper bound}\label{Sec-UpperBound}
The goal of this section is to prove the upper bound \eqref{eqt-B 001} of Theorem \ref{t-B 002}.


Let $\veps >0$ be a sufficiently small constant. With $\delta = \sqrt{\veps}$, set
$m=\left[ \delta n \right]$ and $k = n-m.$ 
Note that for $n$ such that $n > n_0(\ee):= \frac{4}{\ee} $, we have
$
\frac{1}{2}\delta^{1/2} 
\leq \frac{m}{k} 
\leq 
\frac{\delta}{1-\delta}.
$ 
It suffices to prove \eqref{eqt-B 001} only for sufficiently large $n>n_0(\ee)$, where $n_0(\ee)$ depends on $\ee$; 
otherwise the bound becomes trivial.
We start by using the Markov property to get that 
for any starting point $x \in \mathbb R_+$, 
\begin{align} \label{JJJ-markov property}
I_n(x): & =\mathbb{E} \left( f(x+S_n ); \tau_x >n\right) \notag \\  
&= \int_{\mathbb R_+}  \mathbb{E} \left( f(t+S_{m}); \tau_t >m\right) \mathbb{P}\left( x+S_{k}\in dt,  \tau_x >k\right)
 \notag \\
&\leq \int_{\mathbb R_+}  \mathbb{E}  f(t+S_{m}) \mathbb{P}\left( x+S_{k}\in dt,  \tau_x >k\right).
\end{align}
By the local limit theorem (Theorem \ref{LLT-general}), 
for any  integrable function $g: \bb R\mapsto \bb R_+$ satisfying $g \geq_{\ee} f$, 
there exist constants $c, c_{\ee} >0$ such that for any $t \in \bb R$,  
\begin{align} \label{JJJJJ-1111-001}
 \mathbb E   f ( t+S_{m}) \leq 
(1 + c\ee)  \int_{\mathbb R} g(s)  
\frac{1}{ \sigma \sqrt{m} }\phi \left( \frac{t-s}{ \sigma \sqrt{m}}\right) ds 
    +   \frac{c_{\ee}}{n^{1/2 + \ee }}   \left\Vert  g \right\Vert _{1}. 
\end{align}
From \eqref{JJJJJ-1111-001} and the bound \eqref{CLLT-bound only 002}, 
we get that for any $x \in \mathbb R_+$,
\begin{align} \label{JJJ004}
I_{n}(x)    
& \leq  (1+ c\ee) J_n(x) 
   +  \frac{c_{\ee}}{n^{ 1/2 + \ee }}   \left\Vert  g \right\Vert _{1} \mathbb{P}\left(\tau_{x}>k\right)  \notag\\
&  \leq  (1+ c\ee)  J_n(x) +  c_{\ee} \frac{ V(x) }{ n^{ 1 + \ee } }  \left\Vert g \right\Vert_{1},  
\end{align}
where for brevity we set
\begin{align} \label{JJJ006}
J_n(x) 
: & = \int_{\bb R _{+}}   
\left[ \int_{\mathbb R} g(s)  
   \frac{1}{\sigma \sqrt{m} }\phi \left( \frac{t-s}{\sigma \sqrt{m}}\right) ds  \right] 
   \mathbb{P}\left( x+S_{k}\in dt, \tau_{x}>k\right).  
\end{align}
By a change of variable, it follows that 
\begin{align*} 
J_n(x) 
& = \int_{\bb R _{+}}  \left[ \int_{\mathbb R} g(s)  \frac{1}{\sigma \sqrt{m} }
   \phi \left( \frac{t \sigma \sqrt{k}-s}{\sigma \sqrt{m}}\right) ds   \right]
      \mathbb{P}\left( \frac{x+S_{k}}{ \sigma \sqrt{k}}\in dt, \tau_{x}>k\right) \notag\\
&= \int_{\bb R _{+}}  \varphi_n(t)  \mathbb{P}\left(  \frac{x+S_{k}}{\sigma \sqrt{k}}\in dt, \tau_{x}>k\right), 
\end{align*}
where 
\begin{align} \label{JJJ006b}
 \varphi_n(t) &:= \int_{\mathbb R} g(\sigma \sqrt{k} s) \frac{1}{\sqrt{m/k}}\phi \left( \frac{t-s}{\sqrt{m/k}}\right) ds.
\end{align}
Using integration by parts and the conditioned integral limit theorem (Corollary \ref{Cor-CCLT-Optimal}),  
we claim that uniformly in $x\in [0, \alpha_n \sqrt{n}]$, 
\begin{align} \label{New-ApplCondLT-002}
& \left| J_{n}(x)  -   \frac{2V(x)}{ \sigma \sqrt{2\pi k}}  \int_{\bb R _{+}}  \varphi_n(t) \phi^+(t) dt \right| 
 \leq  c_{\ee}  \left(  \alpha_n  + n^{-\ee} \right) \frac{V(x)}{ n } \left\|  g \right\|_1.
\end{align}
 Indeed, since the function $t \mapsto \varphi_n(t)$ is differentiable on $\bb R$ and vanishes as $t \to \pm \infty$, 
using integration by parts, we get that for any $x \in \mathbb R_+$,
\begin{align} \label{ApplCondLT-001}
J_{n}(x)
& = \int_{\bb R _{+}}  \varphi'_n(t) \ \mathbb{P}\left(  \frac{x+S_{k}}{\sigma  \sqrt{k}} > t, \tau_{x}>k\right)  dt.  
 \end{align}
Applying the conditioned integral limit theorem (Corollary \ref{Cor-CCLT-Optimal}) gives that uniformly in $x\in [0, \alpha_n \sqrt{n}]$, 
\begin{align} \label{C-th-ee001}
 \left|  \mathbb{P} \left(  \frac{x+S_{k}}{\sigma \sqrt{k}} > t,  \tau_{x}>k\right) - 
\frac{2V(x)}{ \sigma \sqrt{2\pi k}} (1-\Phi^+(t))   \right|
 \leq  c_{\ee}  \left(  \alpha_n  + n^{-\ee} \right) \frac{V(x)}{ n^{1/2} }, 
\end{align}
which, together with \eqref{ApplCondLT-001}, implies that uniformly in $x\in [0, \alpha_n \sqrt{n}]$, 
\begin{align} \label{ApplCondLT-002}
& \left| J_{n}(x)  -   \frac{2V(x)}{  \sigma \sqrt{2\pi k}}  \int_{\bb R _{+}}  \varphi'_n(t) (1- \Phi^+(t))dt \right|   \notag\\
 &  \leq  c_{\ee}  \left(  \alpha_n  + n^{-\ee} \right) \frac{V(x)}{ n^{1/2} } \int_{\bb R _{+}}  | \varphi'_n(t) | dt.
\end{align}
From \eqref{JJJ006b} and a change of variable, it is easy to see that
\begin{align} \label{JJJ-20001}
\int_{\bb R _{+}}  | \varphi'_n(t) | dt 
& \leq  \int_{\bb R _{+}}  \left[ \int_{\mathbb R}  g (\sigma \sqrt{k} s)  
   \left| \phi'\left( \frac{ t -s }{\sqrt{m/k}}\right) \right|  \frac{ds}{\sqrt{m/k}}  \right]
\frac{dt}{\sqrt{m/k}} \notag\\
& = \int_{\bb R _{+}}  \left[ \int_{\mathbb R}  g (\sigma \sqrt{m} s)   \left| \phi'\left( t -s\right) \right| ds \right] dt  
 \leq \frac{c}{\sqrt{m}} \Vert g \Vert_{1}. 
\end{align}
Using integration by parts, we have 
\begin{align} \label{inegr by parts-999-01}
\int_{\bb R _{+}}  \varphi'_n(t) (1- \Phi^+(t))dt
=\int_{\bb R _{+}}  \varphi_n(t) \phi^+ ( t) dt.
\end{align}
Putting together 
\eqref{ApplCondLT-002}, \eqref{JJJ-20001} and \eqref{inegr by parts-999-01}, 
we obtain \eqref{New-ApplCondLT-002}.
%

Combining \eqref{New-ApplCondLT-002} with \eqref{JJJ004}, 
we get that uniformly in $x\in [0, \alpha_n \sqrt{n}]$, 
\begin{align}
\label{JJJ201aaa}
I_n(x) 
& \leq  (1 + c \ee)   \frac{2V(x)}{ \sigma  \sqrt{2\pi k}}  \int_{\bb R _{+}}  \varphi_n(t) \phi^+(t) dt 
 + c_{\ee}  \left(  \alpha_n  + n^{-\ee} \right) \frac{V(x)}{ n } \left\|  g \right\|_1. 
\end{align}
Denote $\delta_n=\frac{m}{n}$. 
By the definition of $\varphi_n$ (see \eqref{JJJ006b}), a change of variable and Fubini's theorem,
we derive that
\begin{align*}
& \int_{\bb R _{+}}  \varphi_n(t) \phi^+(t) dt \notag\\
&= \int_{\bb R _{+}}  \left[ \int_{\mathbb R} g(\sigma \sqrt{k} s) \frac{1}{\sqrt{m/k}}
     \phi \left( \frac{s-t}{\sqrt{m/k}}\right) ds  \right]  \phi^+(t) dt  \notag\\
&= \int_{\bb R _{+}}  \left[ \int_{\mathbb R} g(\sigma \sqrt{n}s' ) 
 \frac{1}{\sqrt{m/k}}\phi \left( \frac{s'-t'}{\sqrt{m/n}}\right) \frac{ds'}{\sqrt{k/n}}  \right]
    \phi^+ \left( \frac{t'}{\sqrt{k/n}} \right) \frac{dt'}{\sqrt{k/n}}  \notag\\
& =  \int_{\mathbb R} g(\sigma \sqrt{n} s')  \phi_{\delta_n}*\phi^+_{1-\delta_n}(s') ds' 
 = \frac{1}{ \sigma \sqrt{n}}
\int_{\mathbb R} g( t) \phi_{\delta_n}*\phi^+_{1-\delta_n}\left(\frac{t}{ \sigma \sqrt{n}} \right) dt \notag\\
& \leq \frac{\sqrt{k}}{\sigma n}  \int_{\mathbb R} g( t) \phi^+\left(\frac{t}{\sigma \sqrt{n}} \right) dt
   +  \frac{ \sqrt{m}}{\sigma n} \int_{\mathbb R} g( t)  e^{- \frac{t^2}{2 \sigma^2 m}} dt,
\end{align*}
where for the last line we applied the upper bound in Lemma \ref{t-Aux lemma} with $v = \delta_n$ 
and the fact that $1-\delta_n=\frac{k}{n}$. 
Substituting this bound into \eqref{JJJ201aaa},
we get 
\begin{align*}
&   I_{n}(x)  -  (1 + c \ee)  \frac{2V(x)}{\sqrt{2 \pi} \sigma^2 n}   \int_{\bb R}  g(t) \phi^+ \left( \frac{t}{\sigma \sqrt{n}} \right) dt 
   \notag\\
 & \qquad\qquad  \leq   c \ee^{1/4} \frac{V(x)}{n} \int_{\mathbb R} g( t)  e^{- \frac{t^2}{2 \sigma^2 \ee^{1/2} n}} dt   
+  c_{\ee}  \left(  \alpha_n  + n^{-\ee} \right) \frac{V(x)}{ n } \left\|  g \right\|_1, 
\end{align*} 
which concludes the proof of the upper bound \eqref{eqt-B 001}.  

\subsection{Proof of the lower bound} \label{sec: proof lower bound}
In this section we establish the lower bound \eqref{eqt-B 002}. 

Let us keep the notation used in the proof of the upper bound \eqref{eqt-B 001}. 
By the Markov property we have
\begin{align} \label{staring point for the lower-bound-001} 
I_n(x) 
& = \int_{\mathbb R_+}  \mathbb E   f ( t +S_{m}) \mathbb{P}\left( x+S_{k}\in dt,  \tau_x >k\right)   \notag\\
 & \qquad -  \int_{\mathbb R_+}  \mathbb{E} \left( f(t+S_{m}); \tau _{t} \leq m\right)  
     \mathbb{P}\left( x+S_{k}\in dt, \tau_x >k\right)  \notag\\
& = : I_{n,1}(x) - I_{n,2}(x). 
\end{align}

\textit{Lower bound of $I_{n,1}(x)$. }
Using the local limit theorem \eqref{LLT-general002} leads to 
\begin{align}\label{Pf_SmallStarting_Firstthm}
\mathbb{E}f\left( t + S_{m} \right) 
& \geq  \frac{ 1}{\sigma \sqrt{m} } \int_{\bb R }h (s) \phi \left( \frac{s - t}{\sigma \sqrt{m}}\right) ds   \notag\\
&  \quad  -  \frac{ c \ee }{\sigma \sqrt{m}}   \int_{\bb R }f (s) \phi \left( \frac{s - t}{\sigma \sqrt{m}}\right) ds
 -  \frac{c_{\ee}}{n^{ 1/2 + \ee }}  \left\Vert f \right\Vert_1. 
\end{align}
Proceeding in the same way as in the estimate of $J_n(x)$ defined by \eqref{JJJ006}, 
(using the lower bound in \eqref{SmoothingBoundsRayleigh} instead of the upper one, 
so that the second term on the right hand side of \eqref{eqt-B 001} does not appear), 
one has, uniformly in $x\in [0, \alpha_n \sqrt{n}]$,  
\begin{align*}
& \frac{ 1 }{\sigma \sqrt{m} } \int_{\mathbb R_+}  \left[ \int_{\bb R }h (s) \phi \left( \frac{s - t}{\sigma \sqrt{m}}\right) ds  \right]
   \mathbb{P}\left( x+S_{k}\in dt,  \tau_x >k\right)    \notag\\
& \geq  \frac{2V(x)}{\sqrt{2 \pi} \sigma^2  n} 
    \int_{\bb R}  h (t) \phi^+ \left( \frac{t}{\sigma \sqrt{n}} \right) dt 
    -  c_{\ee}  \left(  \alpha_n  + n^{-\ee} \right) \frac{V(x)}{ n }  \left\|  h \right\|_1 
\end{align*}
and  
\begin{align*}
&  \frac{ 1 }{\sigma \sqrt{m} }  
 \int_{\mathbb R_+}  \left[ \int_{\bb R } f (s) \phi \left( \frac{s - t}{\sigma \sqrt{m}}\right) ds  \right] 
   \mathbb{P}\left( x+S_{k}\in dt,  \tau_x >k\right)    \notag\\
& \leq  \frac{2V(x)}{\sqrt{2 \pi} \sigma^2  n} 
    \int_{\bb R}  f (t) \phi^+ \left( \frac{t}{\sigma \sqrt{n}} \right) dt 
    -  c_{\ee}  \left(  \alpha_n  + n^{-\ee} \right) \frac{V(x)}{ n }  \left\|  f \right\|_1. 
\end{align*}
By \eqref{CLLT-bound only 002}, 
these bounds together with \eqref{Pf_SmallStarting_Firstthm}  yield that uniformly in $x\in [0, \alpha_n \sqrt{n}]$, 
\begin{align}\label{Lower_In_lll}
 I_{n,1}(x)  
 & \geq  \frac{2V(x)}{\sqrt{2 \pi} \sigma^2  n} 
    \int_{\bb R} \Big[  h(t) - c \ee f (t) \Big] \phi^+ \left( \frac{t}{\sigma \sqrt{n}} \right) dt   \notag\\
 & \quad   -  c_{\ee}  \left(  \alpha_n  + n^{-\ee} \right) \frac{V(x)}{ n }   \left\|  f \right\|_1. 
\end{align}

\textit{Upper bound of $I_{n,2}(x)$. }
Splitting the integral in this term into two parts according to 
whether the value of $t$ is less or larger than $\ee^{1/6} \sqrt{n}$, 
we have 
\begin{align} \label{KKK-111-001}
I_{n,2}(x) & =\int_{\mathbb R_+} \left[ \int_{\mathbb R}  f(u) 
\mathbb{P}\left( t+S_{m}\in du, \tau_t \leq m  \right)  \right]
\mathbb{P}\left( x+S_{k}\in dt, \tau_x > k  \right) \notag \\
&= K_1 + K_2,
\end{align}
where
\begin{align*} 
K_1&=\int_{0}^{ \ee^{1/6}\sqrt{n}} 
 \left[ \int_{\mathbb R}  f(u)  \mathbb{P}\left( t+S_{m}\in du, \tau_t \leq m  \right)  \right]
\mathbb{P}\left( x+S_{k}\in dt,  \tau_x > k  \right), \\
K_2&=\int_{\ee^{1/6}\sqrt{n}}^\infty 
\left[ \int_{\mathbb R}  f(u) \mathbb{P}\left( t+S_{m}\in du, \tau_t \leq m  \right)  \right]
\mathbb{P}\left( x+S_{k}\in dt,  \tau_x > k  \right).
\end{align*}
For $K_1$, we use the local limit theorem (Theorem \ref{LLT-general}) 
and the bound \eqref{CLLT-bound only 002} of Theorem \ref{Theor-IntegrLimTh} to get 
\begin{align}\label{CaraOrder12BoundK1}
K_1 
&  \leq  \int_{0}^{\ee^{1/6}\sqrt{ n}} 
\left[ \int_{\mathbb R}  f(u) \mathbb{P}\left( t+S_{m}\in du  \right)    \right]
\mathbb{P}\left( x+S_{k}\in dt,   \tau_x > k  \right)   \notag\\
& \leq  K_{11}   + \frac{c_{\ee}}{n^{ 1/2 + \ee}}  \left\Vert g\right\Vert_1   \mathbb{P} \left( \tau_x > k  \right)  \notag\\
& \leq K_{11} + c_{\ee} \frac{ V(x)}{ n^{ 1 + \ee} }  \left\Vert g \right\Vert_1,   
\end{align}
where 
\begin{align*}
K_{11} = \int_{0}^{\ee^{1/6}\sqrt{ n}}  \left[ \int_{\bb R} g(u)  \frac{1}{\sigma \sqrt{m}} 
    \phi \left( \frac{u-t}{\sigma \sqrt{m}} \right) du \right] 
    \mathbb{P}\left( x+S_{k}\in dt,   \tau_x > k  \right). 
\end{align*}
By Fubini's theroem, we have $K_{11} =  \int_{\bb R} g(u)  J (u)   du$, 
where 
\begin{align*}
J (u) =  \int_{0}^{\ee^{1/6}\sqrt{ n}}  F_u (t)
    \mathbb{P}\left( x+S_{k}\in dt ;   \tau_x > k  \right),  \quad  u \in \bb R, 
\end{align*}
and
\begin{align*}
F_u (t) =  \frac{1}{\sigma \sqrt{m}} \phi \left( \frac{u-t}{\sigma \sqrt{m}}   \right),
\quad  t \in [0, \ee^{1/6}\sqrt{ n}].  
\end{align*}
Then
\begin{align}\label{CaraOrder12BoundK1aa}
J (u)
& =  \int_{0}^{\infty}  F_u (t)
    \mathbb{P}\left( x+S_{k}\in dt,  x + S_k \leq \ee^{1/6}\sqrt{ n},  \tau_x > k  \right)    \notag\\
& =  \int_{0}^{\infty}  F_u' (t)  \bb P  \left( x + S_k \in [0, \ee^{1/6}\sqrt{ n}], x + S_k > t, \tau_x > k \right)  dt   \notag\\
& =  \int_{0}^{ \ee^{1/6}\sqrt{ n }  }  
F_u' (t)  \bb P  \left( \frac{x + S_k}{\sigma \sqrt{k}} 
    \in \left[ \frac{t}{\sigma \sqrt{k}}, \frac{\ee^{1/6} \sqrt{n}}{\sigma \sqrt{k}}  \right], 
  \tau_x > k \right)  dt.  
\end{align}
Applying the conditioned integral limit theorem (Corollary \ref{Cor-CCLT-Optimal}), 
we get that uniformly in $t \in \bb R_+$ and $x\in [0, \alpha_n \sqrt{n}]$, 
\begin{align*}
& \Bigg| \bb P  \left(\frac{ x + S_k }{\sigma \sqrt{k} }  \in  
  \left[ \frac{t}{\sigma\sqrt{k}},  \frac{\ee^{1/6} \sqrt{n}}{\sigma \sqrt{k}} \right],  \tau_x > k \right)
    \notag\\
& \qquad  -  \frac{2V(x)}{ \sigma \sqrt{2\pi k}}  
    \left[ \Phi^+ \left( \frac{\ee^{1/6} \sqrt{n}}{\sigma \sqrt{k}}  \right)   - \Phi^+ \left( \frac{t}{\sigma \sqrt{k}} \right)  \right]   \Bigg|   
     \leq  c_{\ee} \left(  \alpha_n  + n^{-\ee} \right) \frac{V(x)}{n^{1/2}}.
\end{align*}
Hence, using the fact that $F_u (\ee^{1/6}\sqrt{ n }) - F_u (0) \leq \frac{1}{\sqrt{m}} \leq \frac{c_{\ee} }{\sqrt{n}}$ gives
\begin{align}\label{Bound-Ju8}
J (u)  
& \leq   \frac{2V(x)}{\sigma \sqrt{2\pi k}}     \int_{0}^{ \ee^{1/6}\sqrt{ n }  }  F_u' (t)
\left[ \Phi^+ \left( \frac{\ee^{1/6} \sqrt{n}}{\sigma \sqrt{k}}  \right)   - \Phi^+ \left( \frac{t}{\sigma \sqrt{k}} \right)  \right] dt  \notag\\
& \quad  + c_{\ee} \left(  \alpha_n  + n^{-\ee} \right) \frac{V(x)}{n}.
\end{align}
Using integration by parts and the fact that $F_u(0) \geq 0$,  we have
\begin{align}\label{Bound-Ju9}
& \int_{0}^{ \ee^{1/6}\sqrt{ n }  }  F_u' (t)
\left[ \Phi^+ \left( \frac{\ee^{1/6} \sqrt{n}}{\sigma \sqrt{k}}  \right)   - \Phi^+ \left( \frac{t}{\sigma \sqrt{k}} \right)  \right] dt  \notag\\
& = -  F_u(0) \Phi^+ \left( \frac{\ee^{1/6} \sqrt{n}}{\sigma \sqrt{k}}  \right)  
   +  \frac{1}{\sigma \sqrt{k}} \int_{0}^{ \ee^{1/6}\sqrt{ n }  }  F_u \left(t\right) \phi^+ \left( \frac{t}{\sigma \sqrt{k}} \right)  dt  \notag\\
& \leq  \frac{1}{\sigma \sqrt{k}} \int_{0}^{ \ee^{1/6}\sqrt{ n }  }   F_u \left(t\right) \phi^+ \left( \frac{t}{\sigma \sqrt{k}} \right)  dt
  = :  H(u). 
\end{align}
Elementary calculations give
\begin{align*}
H(u)  
& =  \frac{1}{\sigma \sqrt{k}} \int_{0}^{ \ee^{1/6}\sqrt{ n} } \frac{1}{\sigma \sqrt{2 \pi m}} 
   e^{- \frac{(u-t)^2}{2 \sigma^2 m}} \frac{t}{\sigma \sqrt{k}} e^{- \frac{t^2}{2 \sigma^2 k}} dt   \notag\\
 & = \frac{1}{\sigma \sqrt{k}} e^{- \frac{u^2}{2 \sigma^2 n}}  
   \int_{0}^{ \ee^{1/6}\sqrt{ n} }   \frac{1}{\sigma^2 \sqrt{2 \pi m}}  \frac{t}{\sqrt{k}}
    e^{ - \frac{n}{2 \sigma^2 mk} (t - \frac{ku}{n})^2 }  dt. 
\end{align*}
By a change of variable $t \sqrt{\frac{n}{\sigma^2 mk}}  = z$, it follows that
\begin{align*}
H(u)   & =  \frac{1}{\sigma \sqrt{k}}  e^{- \frac{u^2}{2 \sigma^2 n}}    \int_{0}^{ \ee^{1/6} n/\sqrt{\sigma^2 mk} }  
    \frac{z}{\sigma^2 \sqrt{2 \pi n}}   \sqrt{\frac{\sigma^2  mk}{n}}  
    e^{ - \frac{1}{2} (z- \sqrt{\frac{k}{\sigma^2 nm}} u)^2 }  dz   \notag\\
 & \leq  \frac{1}{\sigma \sqrt{k}}  e^{- \frac{u^2}{2 \sigma^2 n}}   \int_{0}^{ \ee^{1/6} n/\sqrt{\sigma^2 mk} }   
   \frac{z}{\sigma^2 \sqrt{2 \pi n}}   \sqrt{\frac{\sigma^2 mk}{n}}   dz  \notag\\
 & \leq  \frac{c \ee^{1/3}}{\sqrt{k}} \frac{n}{\sqrt{mk}} e^{- \frac{u^2}{2\sigma^2 n}}    
   \leq  \frac{c \ee^{1/12}}{\sqrt{k}}  \phi \left( \frac{u}{ \sigma \sqrt{n}} \right), 
\end{align*}
which, together with \eqref{Bound-Ju8} and \eqref{Bound-Ju9}, implies that
\begin{align*}
J (u)  
& \leq  c \ee^{1/12} \frac{ V(x)}{ n }   \phi \left( \frac{u}{ \sigma \sqrt{n}} \right)  
  +   c_{\ee} \left(  \alpha_n  + n^{-\ee} \right) \frac{V(x)}{n}.
\end{align*}
Since $K_{11} =  \int_{\bb R} g(u)  J (u)   du$, we obtain 
\begin{align}\label{K1-final bound}
K_{11}  
\leq   c \ee^{1/12} \frac{ V(x)}{ n }   \int_{\bb R} g(t)  \phi \left( \frac{t}{ \sigma \sqrt{n}} \right)  dt  
  +  c_{\ee} \left(  \alpha_n  + n^{-\ee} \right) \frac{V(x)}{n}  \|g\|_1.  
\end{align}


We proceed to give an upper bound for $K_{2}$, which can be rewritten as
\begin{align} 
K_{2} = \int_{\bb R}  L(t)  \mathbb{P}\left( x+S_{k}\in dt, \tau_x > k  \right),  \label{K2-b01c-001}
\end{align}
where, for $t \in \bb R$, 
\begin{align} \label{Bytheduality-001}
L(t) : =  \mathds 1_{\{ t> \ee^{1/6}\sqrt{n}  \}}  \mathbb{E}  (f(t + S_{m}); \tau_{t} \leq m ). 
\end{align}
%
%
%
%
%
%
%
The function $t \mapsto L(t)$ is integrable on $\bb R$ since  $f$ is integrable on $\bb R$.
Denote 
\begin{align} \label{DefM88}
M(t) :=  
\mathds 1_{\{ t+ \ee > \ee^{1/6}\sqrt{n}  \}}
\mathbb{E}  \left( g(t + S_{m});   \tau_{t-\ee} \leq m   \right),  \quad  t \in \bb R.   
\end{align}
Then 
$L \leq_{\ee} M$ since $f \leq_{\ee} g$. 
Using the upper bound \eqref{eqt-B 001} of  Theorem \ref{t-B 002} 
and the fact that $\left\|  M \right\|_1 \leq  \left\|  g \right\|_1$,  we obtain that 
uniformly in $x\in [0, \alpha_n \sqrt{n}]$, 
\begin{align}\label{eqt-A 001_Lower}
K_{2} 
& \leq    (1 + c \ee)   \frac{2V(x) }{\sqrt{2\pi } \sigma^2 k}
\int_{\bb{R}_{+}} M \left(t\right) \phi^+ \left( \frac{t}{\sigma \sqrt{k}} \right)  dt    \notag\\
& \quad +  c \ee^{1/4} \frac{V(x)}{n} \int_{\mathbb R} M( t)  e^{- \frac{t^2}{2 \sigma^2 \ee^{1/2} n}} dt 
 +  c_{\ee}  \left(  \alpha_n  + n^{-\ee} \right) \frac{V(x)}{ n }  \left\|  g \right\|_1.  
\end{align}
For the first term, in view of \eqref{DefM88}, we use the duality formula (Lemma \ref{lemma-duality-lemma-2_Cor})
and the fact that the function $\phi^+$ is bounded on $\bb R$ to derive that 
\begin{align*}
& \int_{\bb{R}_{+}} M \left(t\right) \phi^+ \left( \frac{t}{\sigma \sqrt{k}} \right)  dt   \notag\\
& =   \int_{\bb{R}_{+}}  
\mathbb{E}  \left( g(t + S_{m});   \tau_{t-\ee} \leq m   \right)
  \mathds 1_{\{ t+ \ee > \ee^{1/6}\sqrt{n}  \}} \phi^+ \left( \frac{t}{ \sigma \sqrt{k}} \right)  dt   \notag\\
& =  \int_{\bb{R}_{+}}  
\mathbb{E}  \left( g(t + \ee + S_{m});   \tau_{t} \leq m   \right)
  \mathds 1_{\{ t+ 2\ee > \ee^{1/6}\sqrt{n}  \}}   \phi^+ \left( \frac{t + \ee}{\sigma \sqrt{k}} \right)  dt  \notag\\
& =  \int_{\bb{R}_{+}}   g(t + \ee)  
 \bb E \left[ \phi^+ \left( \frac{t + S_m^* + \ee}{\sigma \sqrt{k}} \right); 
           t + S_m^* + 2\ee > \ee^{1/6}\sqrt{n}, \tau_t^* \leq m  \right]  dt  \notag\\ 
& \leq  c \int_{\bb{R}_{+}}   g(t + \ee)  
 \bb P \left(  t + S_m^* > \frac{1}{2} \ee^{1/6}\sqrt{n}, \tau_t^* \leq m  \right)  dt =: c(J_1 + J_2), 
\end{align*}
where 
\begin{align}
J_1  & =  \int_{0}^{\ee^{1/4} \sqrt{n}}    g(t + \ee)  
 \bb P \left(  t + S_m^*  > \frac{1}{2}  \ee^{1/6}\sqrt{n}, \tau_t^* \leq m  \right) dt,   \label{CLLT-J1-6}\\
J_2 & =  \int_{\ee^{1/4}  \sqrt{n} }^{\infty}   g(t + \ee)  
 \bb P \left(  t + S_m^*  > \frac{1}{2}  \ee^{1/6}\sqrt{n}, \tau_t^* \leq m  \right)  dt.   \label{CLLT-J2-6}
\end{align}
For $J_1$, observe that on the event $\{ \tau_t^* \leq m \}$, 
the inequality $t + S_m^* > \frac{1}{2}  \ee^{1/6}\sqrt{n}$ implies that there exists $0 \leq j \leq m$
such that $S_m^* - S_j^* > \frac{1}{2}  \ee^{1/6}\sqrt{n}$. 
Since $S_j^*= S_{m-j} - S_m$, it follows that $- S_{m-j} > \frac{1}{2}  \ee^{1/6}\sqrt{n}$ for some $0 \leq j \leq m - 1$
and hence
\begin{align}\label{Proba_Holder}
& \bb P \left(  t + S_m^* > \frac{1}{2}  \ee^{1/6}\sqrt{n}, \tau_{t}^* \leq m  \right)  
 \leq  \bb P \left(  \max_{1 \leq j \leq m}  |S_j| \geq   \frac{1}{2} \ee^{1/6}\sqrt{n},  \tau_{t}^* \leq m  \right). 
\end{align}
Noting that $m= [ \ee^{1/2} n ]$, 
we use the Fuk-Nagaev inequality (Lemma \ref{Lem_FukNagaev}) 
with $n$ replaced by $m$ and $u = v= \frac{1}{2} \ee^{1/6}\sqrt{ n}$, and condition \ref{SecondMoment} to get
\begin{align}\label{Proba_002}
\bb P \left(  \max_{1 \leq j \leq m}  |S_j| \geq   \frac{1}{2} \ee^{1/6}\sqrt{n}  \right)
& \leq  c  \ee^{1/6}  
 + m \bb P(|X_1| > v)  
 \leq  c  \ee^{1/6}  +  \frac{c_{\ee}}{n^{2\ee}}, 
\end{align}
which, together with \eqref{CLLT-J1-6} and \eqref{Proba_Holder}, implies that 
\begin{align}\label{Bound_J1}
J_1 \leq   c  \ee^{1/6}  \int_{0}^{ \ee^{1/4} \sqrt{n} }    g(t + \ee)  dt
    +  \frac{c_{\ee}}{n^{2\ee}} \|g\|_1.
\end{align}
For $J_2$, we use \eqref{Proba_Holder}, \eqref{Proba_002} and H\"older's inequality to get
\begin{align}\label{Proba_Holder02}
& \bb P \left(  t + S_m^* > \frac{1}{2} \ee^{1/6}\sqrt{n}, \tau_{t}^* \leq m  \right)   \notag\\
& \leq \bb P^{1/2} \left(  \max_{1 \leq j \leq m}  |S_j| \geq   \frac{1}{2} \ee^{1/6}\sqrt{n}   \right) 
   \bb P^{1/2} \left(  \tau_{t}^* \leq m  \right)   \notag\\
& \leq  \left( c  \ee^{1/12}  +  \frac{c_{\ee}}{n^{\ee}} \right)
   \bb P^{1/2} \left( t + \min_{1 \leq j \leq m }  S_j  < 0 \right).  
\end{align}
Denote  
\begin{align}\label{Def_A_k_aa}
A_m = \left\{ \sup_{0 \leq t \leq 1} \left| S_{[tm]} -  \sigma B_{tm} \right| \leq  m^{1/2 - 2 \ee}  \right\} 
\end{align}
and by $A_m^c$ its complement. 
By the functional central limit theorem (Lemma \ref{FCLT})
and the identity \eqref{levy001a} of Lemma \ref{lemma tauBM}, we obtain  that for any $t \geq \ee^{1/4} \sqrt{n}$,  
\begin{align*}
\bb P \left( t + \min_{1 \leq j \leq m } S_j < 0 \right)
& \leq   \bb P \left( t + \min_{1 \leq j \leq m } S_j < 0,  A_m  \right) 
   +  \frac{c_{\ee}}{ n^{2 \ee} }  \notag\\
& \leq   \bb P \left( t - m^{1/2 - 2 \ee} + \sigma \min_{1 \leq j \leq m }  B_j < 0  \right) 
   +  \frac{c_{\ee}}{ n^{2 \ee} }  \notag\\
& =  \frac{2}{\sigma \sqrt{2\pi m} }  \int_{ t - m^{1/2 - 2 \ee} }^{\infty}  e^{ - \frac{s^{2}}{2 \sigma^2 m} } ds 
  +  \frac{c_{\ee}}{ n^{2 \ee} }   \notag\\
 & \leq   c  \int_{ \frac{t}{2 \sigma \sqrt{m}} }^{\infty}  e^{ - \frac{u^{2}}{2} } du 
  +  \frac{c_{\ee}}{ n^{2 \ee} }   \notag\\
& \leq  c   e^{- \frac{t^2}{8 \sigma^2 \sqrt{\ee} n}}  +  \frac{c_{\ee}}{ n^{2 \ee} }, 
\end{align*}
where in the last line we used the inequality $\int_a^{\infty} e^{- \frac{u^2}{2}} du \leq \frac{1}{a} e^{- \frac{a^2}{2}}$ for $a >0$,
and the fact that $\frac{\sqrt{m}}{t} = \frac{ \sqrt{[\ee n]} }{t} \leq 1$ for $t \geq \ee^{1/4} \sqrt{n}$. 
Hence, 
\begin{align*}
\bb P^{1/2} \left( t + \min_{1 \leq j \leq m }  S_j  < 0 \right)
\leq   c   e^{- \frac{t^2}{16 \sigma^2 \sqrt{\ee} n}}  +  \frac{c_{\ee}}{ n^{\ee} }.  
\end{align*}
This, together with \eqref{CLLT-J2-6} and \eqref{Proba_Holder02}, implies that 
\begin{align}\label{Bound_J2}
J_2 
& \leq  c  \ee^{1/12}  \int_{ \ee^{1/4} \sqrt{n} }^{\infty}   g(t + \ee)    e^{- \frac{t^2}{16 \sigma^2 \sqrt{\ee} n}}  dt
      +  \frac{c_{\ee}}{n^{\ee}} \|g\|_1. 
\end{align}
Putting \eqref{Bound_J1} and \eqref{Bound_J2} together 
and using the fact that $e^{\frac{t^2}{16 \sigma^2 \sqrt{\ee} n}} \leq c$ for any $t \in [0, \ee^{1/4} \sqrt{n}]$,  
we have 
\begin{align}\label{BoundMt99}
& \int_{\bb{R}_{+}} M (t) \phi^+ \left( \frac{t}{\sigma \sqrt{k}} \right)  dt   \notag\\
& \leq   c  \ee^{1/12}  
 \left[ \int_{0}^{ \ee^{1/4} \sqrt{n}}    g(t + \ee)  dt
    +  \int_{ \ee^{1/4} \sqrt{n} }^{\infty}   g(t + \ee)    e^{- \frac{t^2}{16 \sigma^2 \sqrt{\ee} n}}  dt  \right] 
     +  \frac{c_{\ee}}{n^{\ee}}  \|g\|_1      \notag\\
& \leq   c  \ee^{1/12} \int_{\bb R_+}  g(t + \ee)    e^{- \frac{t^2}{16 \sigma^2 \sqrt{\ee} n}}  dt 
     +  \frac{c_{\ee}}{n^{\ee}} \|g\|_1.   
\end{align}
For the second term on the right hand side of \eqref{eqt-A 001_Lower}, 
following the same proof of \eqref{BoundMt99} and using the fact that $e^{- \frac{t^2}{2 \sigma^2 \ee^{1/2} n}} \leq 1$,
one has
\begin{align}\label{Boundexp999}
\int_{\mathbb R} M( t)  e^{- \frac{t^2}{2 \sigma^2 \ee^{1/2} n}} dt 
 \leq  c  \ee^{1/12} \int_{\bb R_+}  g(t + \ee)    e^{- \frac{t^2}{16 \sigma^2 \sqrt{\ee} n}}  dt 
     +  \frac{c_{\ee}}{n^{\ee}} \|g\|_1.  
\end{align}
Implementing \eqref{BoundMt99} and \eqref{Boundexp999} into \eqref{eqt-A 001_Lower} gives 
\begin{align}\label{LLL2-bbb004}
K_{2} 
 & \leq  c  \ee^{1/12}  \frac{V(x)}{n}    \int_{\bb R_+}  g(t + \ee)    e^{- \frac{t^2}{16 \sigma^2 \sqrt{\ee} n}}  dt 
    +   c_{\ee}  \left(  \alpha_n  + n^{-\ee} \right) \frac{V(x)}{ n }  \|g\|_1. 
\end{align}
From 
\eqref{KKK-111-001}, \eqref{K1-final bound} and \eqref{LLL2-bbb004},
and using the fact that $e^{- \frac{t^2}{16 \sigma^2 \sqrt{\ee} n}} \leq c \phi ( \frac{t}{ \sigma \sqrt{n}} )$, we obtain
\begin{align*}
I_{n,2}(x) 
& \leq  c  \ee^{1/12} \frac{V(x) }{n}  \int_{\bb R} [g(t) + g(t + \ee)]  \phi \left( \frac{t}{ \sigma \sqrt{n}} \right)  dt   \notag\\
 & \qquad     +  c_{\ee}  \left(  \alpha_n  + n^{-\ee} \right) \frac{V(x)}{ n }  \|g\|_1   \notag\\
& \leq  c  \ee^{1/12} \frac{V(x) }{n}  \int_{\bb R} g(t)  \phi \left( \frac{t}{ \sigma \sqrt{n}} \right)  dt 
      +  c_{\ee}  \left(  \alpha_n  + n^{-\ee} \right) \frac{V(x)}{ n }  \|g\|_1,  
\end{align*}
where in the last line we used the Lipschitz continuity of $\phi$. 
Combining this with \eqref{staring point for the lower-bound-001} and \eqref{Lower_In_lll}, and using the fact that $f \leq_{\ee} g$,
we conclude the proof of the lower bound \eqref{eqt-B 002}. 


\subsection{Proof of Theorems \ref{Theorem-AA001} and \ref{Theorem Delta-002}} \label{SectProofThm1}
In this section we show how to deduce Theorems \ref{Theorem-AA001} and \ref{Theorem Delta-002}  from Theorem \ref{t-B 002}. 


\begin{proof}[Proof of Theorem \ref{Theorem-AA001}]
We first prove that \eqref{demoCLLT-003_Targetaa} holds 
uniformly in $x \in [0, n^{1/2 - \ee}]$ and $y \in [\eta \sqrt{n}, \sigma \sqrt{q n \log n}]$. 
Without loss of generality, we assume that the target function $f$ is non-negative.   
Since $f \leq_{\ee} \overline f_{\delta,\ee}$ with $\overline f_{\delta,\ee}$ defined by \eqref{Def_f_deltaee}, 
applying the upper bound \eqref{eqt-B 001} of Theorem \ref{t-B 002} with $g = \overline f_{\delta,\ee}$ and $\alpha_n = n^{- \ee}$, 
we derive that for any $x \in [0, n^{1/2 - \ee}]$ and  $y \in \bb R_+$, 
\begin{align*}
&   \mathbb E \left( f(x+S_n - y); \tau_x > n \right)   
 \leq   (1 + c \ee)   
   \frac{2 V(x) }{\sqrt{2\pi }  \sigma^2  n }  J_n,   
\end{align*}
where 
\begin{align*}
J_n =  \int_{-\ee}^{\infty}  \overline f_{\delta,\ee} (t)     
\left[  \phi^+  \left( \frac{t + y}{ \sigma \sqrt{n}} \right)   +  \phi \left( \frac{t + y}{ \ee^{1/4} \sigma \sqrt{n}} \right)
    +  \frac{ c_{\ee} }{ n^{\ee} } \right]   dt. 
\end{align*}
Since the functions $\phi^+$ and $\phi$ are Lipschitz continuous on $\bb R$, there exists a constant $c>0$ such that for any
$t \geq -\ee$ and $y \in \bb R$,
\begin{align*}
\left| \phi^+ \left( \frac{t + y}{ \sigma  \sqrt{n}} \right) -   \phi^+ \left( \frac{y}{ \sigma \sqrt{n}} \right)  \right|
& \leq   c \frac{ |t| }{\sqrt{n}},   \notag\\
\left|  \phi \left( \frac{t + y}{ \ee^{1/4} \sigma \sqrt{n}} \right) 
      -  \phi \left( \frac{y}{ \ee^{1/4} \sigma \sqrt{n}} \right) \right|
& \leq   c \frac{ |t| }{\ee^{1/4} \sqrt{n}}, 
\end{align*}
which implies that 
\begin{align*}
J_n  & \leq  \left[  \phi^+  \left( \frac{y}{ \sigma  \sqrt{n}} \right)  
           + \phi \left( \frac{y}{ \ee^{1/4} \sigma \sqrt{n}} \right)   +   \frac{ c_{\ee} }{ n^{\ee} }    \right]
  \int_{-\ee}^{\infty}  \overline f_{\delta,\ee}(t)    dt   \notag\\
& \quad  +  \frac{c}{\ee^{1/4} \sqrt{n}} \int_{-\ee}^{\infty} \overline f_{\delta,\ee}(t)  |t|  dt.      
\end{align*}
Note that for any fixed $\eta >0$, there exists a constant $c_{\eta} >0$ such that 
uniformly in $y \in [ \eta \sqrt{n}, \sigma \sqrt{q n \log n}]$, 
\begin{align} \label{Inequa-phi-phiplus}
n^{-q/2} \leq  c_{\eta}  \phi^+  \left( \frac{y}{ \sigma  \sqrt{n}} \right),
\quad 
\phi \left( \frac{y}{ \ee^{1/4} \sigma \sqrt{n}} \right)  
\leq  c_{\eta}  \exp \Big\{ - \frac{\eta^2}{ 4 \sqrt{\ee} \sigma^2 }  \Big\} \phi^+  \left( \frac{y}{ \sigma  \sqrt{n}} \right).    
\end{align}
It follows that  uniformly in $y \in [\eta \sqrt{n}, \sigma \sqrt{q n \log n}]$,
\begin{align*}
J_n & \leq   \phi^+  \left( \frac{y}{ \sigma  \sqrt{n}} \right) 
   \left( 1  +  c_{\eta} \exp \Big\{ - \frac{\eta^2}{ 4 \sqrt{\ee} \sigma^2 }  \Big\}  +  \frac{c_{\ee, \eta}}{ n^{ \ee - q/2} }  \right) 
    \int_{-\ee}^{\infty} \overline f_{\delta,\ee}(t)     dt   \notag\\
& \quad  +   \phi^+  \left( \frac{y}{ \sigma  \sqrt{n}} \right) 
    \frac{c_{\eta}}{ \ee^{1/4}  n^{1/2 - q/2}}  \int_{-\ee}^{\infty}  \overline f_{\delta,\ee}(t)   |t| dt.  
\end{align*}
Therefore, we get that uniformly in $x \in [0, n^{1/2 - \ee}]$ and $y \in [\eta \sqrt{n}, \sigma \sqrt{q n \log n}]$,
\begin{align*}
 \limsup_{n \to \infty}  
\frac{ \mathbb E \left( f(x+S_n - y); \tau_x > n \right)    }{  \frac{2 V(x) }{\sqrt{2\pi } \sigma^2 n }    
   \phi^+  \left( \frac{y}{ \sigma \sqrt{n}} \right)  }    
 \leq  \left[ 1 + c_{\eta}  \exp \Big\{ - \frac{\eta^2}{ 4 \sqrt{\ee} \sigma^2 }  \Big\}   \right] 
       \int_{-\ee}^{\infty} \overline f_{\delta,\ee}(t)  dt. 
\end{align*}
This proves the upper bound by
taking first $\ee \to 0$ and then $\delta \to 0$, and using Lemma \ref{lemma-DRI-convergence}.  
The proof of the lower bound can be carried out in the same way 
using \eqref{eqt-B 002}.

In a similar way, one can also use Theorem \ref{t-B 002}  to prove that
 \eqref{demoCLLT-003_Targetaa} holds uniformly in $x \in [0, \alpha_n \sqrt{n}]$ and 
 $y \in [\eta \sqrt{n}, \eta^{-1} \sqrt{n}]$. 
The proof of  Theorem \ref{Theorem-AA001} is complete. 
\end{proof}


We will show in Lemma \ref{Theorem harmonic func} that the harmonic function $V$ satisfies $x \leq  V(x) \leq c(1+x)$ for any $x\geq 0$. 
To prove Theorem \ref{Theorem Delta-002} we need the following refinement of this bound.

\begin{lemma}\label{Lem_V_Ineq_aa}
Assume \ref{SecondMoment} for some $\delta >0$.  
Then, for any  $\ee \in  (0, \frac{\delta}{2(2 + \delta)}]$, 
there exists a constant $c_{\ee} >0$ such that 
for any $x \geq 0$ and $k_0 \geq 1$, 
\begin{align*}
x \leq  V(x) \leq  \left(1 +  \frac{ c_\ee }{ k_0^{  \ee} } \right)  x + c_{\ee} k_0^{1/2 - \ee}. 
\end{align*}
\end{lemma}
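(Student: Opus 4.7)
The lower bound $x \leq V(x)$ is immediate: since $(S_n)$ is a mean-zero, finite-variance random walk, it is recurrent, so $\tau_x < \infty$ almost surely, and $-(x+S_{\tau_x}) > 0$ pathwise on this event yields $V(x) = -\mathbb{E} S_{\tau_x} = x + \mathbb{E}[-(x+S_{\tau_x})] \geq x$.

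For the upper bound, I would start from the harmonicity identity $V(x) = \mathbb{E}[V(x+S_{k_0}); \tau_x > k_0]$ and, via the strong Markov property at time $k_0$, obtain the decomposition
\begin{equation*}
V(x) - x \, = \, \mathbb{E}[-(x+S_{\tau_x}); \tau_x \leq k_0] + \mathbb{E}[V(x+S_{k_0}) - (x+S_{k_0}); \tau_x > k_0].
\end{equation*}
This reduces matters to bounding the two overshoot-type contributions.

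The early-exit summand is handled pathwise: since $x+S_{\tau_x - 1} \geq 0$ and $X_{\tau_x} < 0$, one has $-(x+S_{\tau_x}) \leq |X_{\tau_x}| \leq \max_{1 \leq i \leq k_0}|X_i|$, and the moment assumption \ref{SecondMoment} yields
\begin{equation*}
\mathbb{E}\max_{1 \leq i \leq k_0}|X_i| \leq \bigl(k_0\,\mathbb{E}|X_1|^{2+\delta}\bigr)^{1/(2+\delta)} \leq c\, k_0^{1/(2+\delta)} \leq c_\ee k_0^{1/2-\ee}
\end{equation*}
as soon as $\ee \leq \delta/(2(2+\delta))$ as in the hypothesis.

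The late-exit summand is the more delicate one. Combining the crude bound $V(y) \leq c(1+y)$ from Lemma \ref{Theorem harmonic func} with the probability estimate $\bb P(\tau_x > k_0) \leq c(1+x)/\sqrt{k_0}$ from \eqref{CLLT-bound only 002} and the elementary bound $\mathbb{E}(x+S_{k_0})^- \leq \mathbb{E}|S_{k_0}| \leq \sigma\sqrt{k_0}$, one exploits the optional stopping identity $\mathbb{E}[x+S_{k_0}; \tau_x > k_0] = x - \mathbb{E}[x+S_{\tau_x}; \tau_x \leq k_0]$ (applied to the martingale $S_n$ at the bounded stopping time $\tau_x \wedge k_0$) to extract the leading-order $x$-contribution exactly, so that the residual error is of order $(c_\ee/k_0^\ee)\, x + c_\ee k_0^{1/2-\ee}$, which absorbs into the right-hand side of the lemma.

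The main obstacle is matching the exact $x$-coefficient: Lemma \ref{Theorem harmonic func} only provides $V(y) \leq c(1+y)$ with a constant $c \geq 1$, whereas the lemma demands a near-unit coefficient $1 + c_\ee/k_0^\ee$ on $x$. This is overcome by the refined (renewal-theoretic) fact, valid under the paper's non-lattice and second-moment framework, that $V(y) - y$ is uniformly bounded on $\bb R_+$—equivalently, the expected overshoot $\mathbb{E}[-(y+S_{\tau_y})]$ is bounded uniformly in $y \geq 0$ because the stationary overshoot distribution has finite mean under \ref{SecondMoment}. With this at hand, substituting $V(y) \leq y + C$ on the event $\{\tau_x > k_0\}$ together with the optional stopping identity above yields $V(x) \leq (1 + Cc/\sqrt{k_0})\, x + c_\ee k_0^{1/2-\ee} + Cc/\sqrt{k_0}$, which for $\ee \leq 1/2$ fits the required form after absorbing constants into $c_\ee$.
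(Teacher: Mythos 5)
Your lower bound and the treatment of the early-exit term are correct; the latter is Lemma~\ref{Lem-MKillTn} in disguise (the exponent $1/(2+\delta)\leq 1/2-\ee$ whenever $\ee\leq\delta/(2(2+\delta))$, exactly as you compute). The gap is in the pivotal claim you invoke for the late-exit term, namely that $V(y)-y=\mathbb{E}\bigl[-(y+S_{\tau_y})\bigr]$ is bounded uniformly on $\bb R_+$. This is \emph{not} a consequence of~\ref{SecondMoment} when $\delta<1$. The quantity $V(y)-y$ is the mean overshoot over level $y$ of the strict descending-ladder renewal process; by renewal theory it stays bounded as $y\to\infty$ exactly when the ladder height $H$ has finite second moment, and for centered walks $\mathbb{E} H^2<\infty$ is equivalent to $\mathbb{E}(X_1^-)^3<\infty$ (where $X_1^-=\max(-X_1,0)$), which is strictly stronger than $\mathbb{E}\lvert X_1\rvert^{2+\delta}<\infty$ for $\delta\in(0,1)$. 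When $\mathbb{E} H^2=\infty$ the mean residual life diverges, so $V(y)-y\to\infty$, the substitution $V(y)\leq y+C$ is unavailable, and your single-step decomposition collapses back to the very obstacle you diagnosed with the crude bound $V(y)\leq c(1+y)$. A secondary point: your use of \eqref{CLLT-bound only 002} would also be circular, since that bound is proved downstream of the present lemma; the paper's proof of this lemma uses no exit-time probability at all.

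The paper's argument avoids any control on $V(y)-y$ by working entirely at the level of the pre-limit quantities $V_n(x)=\mathbb{E}(x+S_n;\tau_x>n)$. Inside the proof of Lemma~\ref{z+Mn killed Tz}, splitting at the stopping time $\nu_n=\inf\{k\geq1:\lvert x+S_k\rvert>n^{1/2-\ee}\}$ and using that $(x+S_n)\mathds 1_{\{\tau_x>n\}}$ is a submartingale yields the recursion
\begin{equation*}
V_n(x)\;\leq\;\bigl(1+c_\ee\, n^{-\ee}\bigr)\,V_{[n^{1-\ee}]}(x)\;+\;c_\ee\,(1+x)\,e^{-c_\ee n^\ee}.
\end{equation*}
Iterating via \cite[Lemma A.1]{GLL18} down to an arbitrary level $k_0\leq n$ and then applying $V_{k_0}(x)\leq x+c\,k_0^{1/2-\ee}$ (precisely your early-exit overshoot bound, from Lemma~\ref{Lem-MKillTn}) gives \eqref{Pf_Recur_V}; letting $n\to\infty$ through \eqref{Def_V_002} proves the lemma. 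The accumulated multiplicative losses along the chain $n,n^{1-\ee},n^{(1-\ee)^2},\ldots,k_0$ compound to only $1+c_\ee'/k_0^{\ee}$, which is how the near-unit coefficient on $x$ is obtained without any bound on the overshoot beyond the one you already established.
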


\begin{proof}[Proof of Theorem \ref{Theorem Delta-002}]
We first prove that \eqref{demoCLLT-003aa} holds 
 uniformly in $x \in [0, n^{1/2 - \ee}]$,  $y \in [\eta \sqrt{n}, \sigma \sqrt{q n \log n}]$
and $\Delta \in [\Delta_0,   n^{1/2 - \ee} ]$. 
Using the upper bound \eqref{eqt-B 001} 
with $f = \mathds 1_{[y, y + \Delta]}$, $g = \mathds 1_{[y-\ee, y + \Delta + \ee]}$
and $\alpha_n = n^{-\ee}$, we have, uniformly in $x \in [0, n^{1/2 - \ee}]$, 
\begin{align*}
&   \mathbb{P} \left( x+S_n  \in [0, \Delta] + y, \,  \tau_x > n \right)  
 \leq   (1 + c \ee)   
   \frac{2V \left( x\right) }{\sqrt{2\pi }  \sigma^2  n }  J_n,  
\end{align*}
where 
\begin{align*}
J_n =  \int_{-\ee}^{\Delta + \ee}   
\left[  \phi^+  \left( \frac{t + y}{ \sigma \sqrt{n}} \right)   +  \phi \left( \frac{t + y}{ \ee^{1/4} \sigma \sqrt{n}} \right)
    +  \frac{ c_{\ee} }{ n^{\ee} } \right]   dt. 
\end{align*}
Since the functions $\phi^+$ and $\phi$ are Lipschitz continuous on $\bb R$, there exists a constant $c>0$ 
such that for any $\Delta \in [\Delta_0, n^{1/2 - \ee}]$, 
$t \in [-\ee, \Delta + \ee]$ and $y \in [\eta \sqrt{n}, \sigma \sqrt{q n \log n}]$,
\begin{align*}
\left| \phi^+ \left( \frac{t + y}{ \sigma  \sqrt{n}} \right) -   \phi^+ \left( \frac{y}{ \sigma \sqrt{n}} \right)  \right|
& \leq   c \frac{ |t| }{\sqrt{n}}  \leq  \frac{ c }{n^{\ee}},     \notag\\
\left|  \phi \left( \frac{t + y}{ \ee^{1/4} \sigma \sqrt{n}} \right) 
      -  \phi \left( \frac{y}{ \ee^{1/4} \sigma \sqrt{n}} \right) \right|
& \leq   c \frac{ |t| }{\ee^{1/4} \sqrt{n}}  \leq   \frac{ c }{\ee^{1/4} n^{\ee}}. 
\end{align*}
This implies that there exists a constant $c_{\eta} >0$ such that
 uniformly in $y \in [\eta \sqrt{n}, \sigma \sqrt{q n \log n}]$ and $\Delta \in [\Delta_0, n^{1/2 - \ee}]$, 
\begin{align*}
J_n & \leq   (\Delta + 2\ee)
\left[ \phi^+  \left( \frac{y}{ \sigma \sqrt{n}} \right)  
   +   \phi \left( \frac{y}{ \ee^{1/4} \sigma \sqrt{n}} \right) + \frac{c}{\ee^{1/4} n^{\ee} }  \right]
      \notag\\
& \leq  \Delta  (1 + c \ee)   \phi^+  \left( \frac{y}{ \sigma  \sqrt{n}} \right) 
   \left( 1  +  c_{\eta} \exp \left\{ - \frac{\eta^2}{ 4 \sqrt{\ee} \sigma^2 }  \right\}  
        +  \frac{c_{\eta}}{ \ee^{1/4} n^{ \ee - q/2} }  \right), 
\end{align*}
where in the last inequality we used \eqref{Inequa-phi-phiplus} and the fact that $\Delta_0 >0$ is a fixed constant. 
Therefore, we get that uniformly in $x \in [0, n^{1/2 - \ee}]$, $y \in [\eta \sqrt{n}, \sigma \sqrt{q n \log n}]$
and $\Delta \in [\Delta_0, n^{1/2 - \ee}]$, 
\begin{align*}
\limsup_{n \to \infty}  
\frac{ \mathbb{P} \left( x+S_n  \in [0, \Delta] + y, \,  \tau_x > n \right)   }{  \Delta \frac{2V \left( x\right) }{\sqrt{2\pi } \sigma^2 n }    
   \phi^+  \left( \frac{y}{ \sigma \sqrt{n}} \right)    }  
   \leq   1 + c_{\eta} \exp \left\{ - \frac{\eta^2}{ 4 \sqrt{\ee} \sigma^2 }  \right\}. 
\end{align*}
Since $\ee >0$ can be arbitrary  small, this proves the upper bound. 
The proof of the lower bound can be carried out in the same way 
using \eqref{eqt-B 002}. 

Similarly, one can use Theorem \ref{t-B 002} to prove that
 the asymptotic \eqref{demoCLLT-003aa} holds uniformly in $x \in [0, \alpha_n \sqrt{n}]$,  
 $y \in [\eta \sqrt{n}, \eta^{-1} \sqrt{n}]$ and $\Delta \in [\Delta_0,   \alpha_n \sqrt{n} ]$. 
Hence the proof Theorem \ref{Theorem Delta-002} is complete. 
\end{proof}

\section{Conditioned local limit theorem near the boundary}

The aim of this section is to establish Theorems \ref{Theorem-AA002} and \ref{Theorem-AA002bis}. 
%

\subsection{A non-asymptotic conditioned local limit theorem} \label{Proof of LLT with rate n3/2}

The following bounds will be used the proofs of
Theorems \ref{Theorem-AA002} and \ref{Theorem-AA002bis}.
It is an easy consequence of the duality formula (Lemma \ref{lemma-duality-lemma-2_Cor}) 
and bounds of the exit time for the dual random walk. 

\begin{lemma}\label{lemma Qng integr}
Assume \ref{SecondMoment}.  
Let $g: \bb R \to  \bb R_{+}$ be a measurable function
satisfying $ \int_{\bb R_+} g(y) (1 + y) dy <\infty$.
Then there exists a constant $c$ such that 
\begin{align*}
\sup_{n\geq 1} \sqrt{n} \int_{\bb R _{+}}  \bb E ( g(x+S_n); \tau_x >n)  dx 
\leq c  \int_{\bb R_+} g(y) (1 + y) dy. 
\end{align*}
Moreover, 
\begin{align*}
\lim_{n \to \infty} \sqrt{n} \int_{\bb R _{+}}  \bb E ( g(x+S_n); \tau_x >n)  dx
= \frac{2}{ \sigma \sqrt{2 \pi }} \int_{\bb R_+} g(y) V^*(y) dy.
\end{align*}
\end{lemma}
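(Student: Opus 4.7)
The plan is to use the duality identity of Lemma \ref{lemma-duality-lemma-2_Cor} with $h$ being the indicator of $\bb R_+$ to swap the roles of $x$ and $y$, and then invoke the uniform bounds on the exit time of the dual walk furnished by Theorem \ref{Theor-IntegrLimTh}.

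First I would observe that by Fubini's theorem and the same change of variable $x + S_n = y$ used in the proof of Lemma \ref{lemma-duality-lemma-2_Cor} (applied with the unbounded but positive function $h \equiv 1$ on $\bb R_+$, which is legitimate since both sides are non-negative), we have the identity
\begin{align*}
\int_{\bb R_+}  \bb E ( g(x+S_n); \tau_x >n)  dx
= \int_{\bb R_+} g(y)  \bb P( \tau_y^* > n)  dy.
\end{align*}
Alternatively one can first apply the lemma with $h = \mathds 1_{[0,N]}$ and let $N \to \infty$ by monotone convergence on both sides. This reduces the problem entirely to estimating the tail of the dual exit time $\tau_y^*$.

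Next I would apply Theorem \ref{Theor-IntegrLimTh} to the dual random walk $(S_n^*)_{n \geq 1}$, which satisfies the same assumption \ref{SecondMoment}. The bound \eqref{CLLT-bound only 002} gives a constant $c$ such that $\bb P(\tau_y^* > n) \leq c (1+y)/\sqrt{n}$ for all $y \geq 0$ and $n \geq 1$. Multiplying by $\sqrt{n}$ and integrating against $g$ immediately yields the first assertion
\begin{align*}
\sup_{n \geq 1} \sqrt{n}\int_{\bb R_+}  g(y) \bb P(\tau_y^* > n) dy
\leq c \int_{\bb R_+} g(y) (1+y) dy < \infty.
\end{align*}

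For the limit assertion, the asymptotic \eqref{CLLT-bound only 001b} (applied to the dual walk) yields the pointwise convergence
$\sqrt{n}\, \bb P(\tau_y^* > n) \to \tfrac{2 V^*(y)}{\sigma \sqrt{2\pi}}$ for every $y \geq 0$.
The uniform domination $\sqrt{n}\, \bb P(\tau_y^* > n) \leq c(1+y)$ together with the hypothesis $\int_{\bb R_+} g(y)(1+y)\, dy < \infty$ lets me invoke the Lebesgue dominated convergence theorem to pass the limit inside the integral, giving
\begin{align*}
\lim_{n\to\infty}\sqrt{n}\int_{\bb R_+} g(y) \bb P(\tau_y^*>n)\, dy
= \frac{2}{\sigma \sqrt{2\pi}} \int_{\bb R_+} g(y) V^*(y)\, dy,
\end{align*}
which combined with the duality identity is exactly the second assertion. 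The only point requiring any care is the finiteness of the limiting integral, which is automatic from the bound $V^*(y) \leq c(1+y)$ (the dual analog of the inequality recorded in Lemma \ref{Lem_V_Ineq_aa}). There is no real obstacle here: the proof is a one-step reduction via duality followed by a straightforward application of the conditioned integral limit theorem together with dominated convergence.
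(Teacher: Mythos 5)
Your proof is correct and follows the same route as the paper: apply the duality identity of Lemma \ref{lemma-duality-lemma-2_Cor} with $h \equiv 1$ to rewrite the integral as $\int_{\bb R_+} g(y)\,\bb P(\tau_y^* > n)\,dy$, then invoke \eqref{CLLT-bound only 002} for the uniform bound and \eqref{CLLT-bound only 001b} together with dominated convergence for the limit. (The only minor slip is that the function needing truncation to extend the duality lemma beyond bounded functions is $g$, not $h$, but this is easily handled by monotone convergence and does not affect the argument.)
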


\begin{proof}
Using Lemma \ref{lemma-duality-lemma-2_Cor} with $h = 1$, we get that for any $n \geq 1$, 
\begin{align*}
\int_{\bb R _{+}}  \bb E ( g(x+S_n) ; \tau_x >n)  dx
& = \int_{\bb R _{+}} g(y)   \mathbb{P} \big( \tau_{y}^{\ast } > n \big) dy.
\end{align*}
The conclusion follows from \eqref{CLLT-bound only 001b}, 
 \eqref{CLLT-bound only 002} of Theorem \ref{Theor-IntegrLimTh} 
and the Lebesgue dominated convergence theorem.   
\end{proof}

Theorems \ref{Theorem-AA002} and \ref{Theorem-AA002bis}
 will be deduced from the following more general statement 
in which we establish upper and lower bounds for the joint law of the random walk $x+S_n $ and of the event $\{\tau_x >n\}$.

\begin{theorem} \label{theorem-n3/2-upper-lower-bounds}
Assume \ref{A1} and  \ref{SecondMoment}. 
Then, there exist constants $c>0$ and $\ee_0 >0$ such that for any $\ee \in (0, \ee_0)$
and any sequence of positive numbers $(\alpha_n)_{n \geq 1}$ satisfying $\lim_{n \to \infty} \alpha_n = 0$, 
one can find a constant $c_{\ee} >0$ such that 
uniformly in $x\in [0,  \alpha_n \sqrt{n}]$ and $n\geq 1$,  the following holds: 

\noindent 1. For any measurable functions $f, g: \bb R \mapsto \bb R_+$ satisfying $f \leq_{\ee} g$ and 
$\int_{\bb R_+} g(t-\ee) (1+t) dt < \infty$, 
\begin{align}\label{theorem-n3/2 001}
  \mathbb{E}\left( f\left(x+S_n \right) ;\tau_x >n\right)  
& \leq   \frac{2V(x) }{\sqrt{2\pi } \sigma^3 n^{3/2} }   \int_{0}^{  \alpha_n \sqrt{n} }  g (t-\ee)   V^*(t) dt   \notag\\
&   \quad  +  \left( c \sqrt{\ee} +  c_{\ee} \alpha_n + \frac{ c_{\ee} }{ n^{\ee} }  \right)  \frac{V(x)}{n^{3/2}}  
  \int_{\bb R_+} g(t-\ee) (1+t) dt  \notag\\
& \quad  +  c  \frac{V(x) }{n}  \int_{  \alpha_n \sqrt{n} }^{\infty}    g (t - \ee)  dt.   
 \end{align}
 
\noindent 2. For any measurable functions $f, g, h: \bb R \mapsto \bb R_+$ 
satisfying $h \leq_{\ee} f \leq_{\ee} g$ and $\int_{\bb R_+} g(t-\ee) (1+t) dt < \infty$, 
\begin{align}\label{theorem-n3/2 002}
  \mathbb{E}\left( f \left(x+S_n \right) ;\tau_x >n\right)  
&   \geq   \frac{2 V(x)}{\sqrt{2\pi} \sigma^3 n^{3/2}}   \int_{0}^{ \alpha_n \sqrt{n} }   h (t + \ee) V^* (t) dt   \notag\\
 &  \quad -  \left(  c\ee^{1/12} +  c_{\ee} \alpha_n + \frac{ c_{\ee} }{ n^{\ee} }  \right)  \frac{V(x)}{n^{3/2}}  
     \int_{\bb R_+} g(t-\ee) (1+t) dt  \notag\\
& \quad   - c  \ee^{1/12} \frac{V(x) }{n}  \int_{ \alpha_n \sqrt{n} }^{\infty}   h (t - \ee)  dt.   
 \end{align}
\end{theorem}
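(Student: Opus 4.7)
The plan is to reduce the problem to Theorem \ref{t-B 002} applied on a long initial segment of length $k = n-m$, with a short terminal segment of length $m = [\sqrt{\veps}\, n]$ absorbed into a new target function, and then to compute the leading integral by duality (Lemma \ref{lemma-duality-lemma-2_Cor}) via the dual walk. By the Markov property at time $k$,
\begin{equation*}
\mathbb{E}(f(x+S_n);\tau_x>n) = \int_{\bb R_+} H_m(t) \, \mathbb{P}(x+S_k \in dt, \tau_x > k), \qquad H_m(t) := \mathbb{E}(f(t+S_m); \tau_t > m).
\end{equation*}
Using the monotonicity $\{\tau_{s'}>m\}\supset\{\tau_{s''}>m\}$ for $s'\geq s''$ together with the hypothesis $h\leq_{\veps} f\leq_{\veps} g$, one verifies the envelope relations
\begin{equation*}
\underline{H}_m(t) := \mathbb{E}(h(t+S_m); \tau_{t-\veps}>m) \;\leq_{\veps}\; H_m \;\leq_{\veps}\; \overline{H}_m(t) := \mathbb{E}(g(t+S_m); \tau_{t+\veps}>m),
\end{equation*}
and the duality identity combined with \eqref{CLLT-bound only 002} gives the $L^1$-bound $\|\overline{H}_m\|_1 \leq c\,m^{-1/2}\int g(y-\veps)(1+y)\,dy$ (and analogously for $\underline{H}_m$).

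Next, I would apply Theorem \ref{t-B 002} to $\mathbb{E}(H_m(x+S_k);\tau_x>k)$ with the envelope pair $(H_m,\overline{H}_m)$, noting that since $k\asymp n$ the starting-point condition transfers with a rescaled $\alpha_n' \sim \alpha_n$. The auxiliary remainders coming from Theorem \ref{t-B 002} are controlled using the $L^1$-estimate above together with the trivial inequality $\phi\leq c$; they contribute to the second remainder term in \eqref{theorem-n3/2 001}. The leading term $\frac{2V(x)}{\sqrt{2\pi}\sigma^2 k}\int\overline{H}_m(t)\phi^+(t/\sigma\sqrt{k})\,dt$ is then reshaped: the substitution $s=t+\veps$ converts $\overline{H}_m(t)$ into $\mathbb{E}(g(s-\veps+S_m);\tau_s>m)$, and Lemma \ref{lemma-duality-lemma-2_Cor} turns the integral into
\begin{equation*}
\int_{\bb R_+} g(y-\veps)\,\mathbb{E}\!\left[\phi^+\!\left(\tfrac{(y+S_m^*-\veps)^+}{\sigma\sqrt{k}}\right);\,\tau_y^*>m\right]dy.
\end{equation*}

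The final step is to estimate the inner expectation by splitting at $\alpha_n\sqrt{n}$. For $y\leq \alpha_n\sqrt{n}$ the quantity $(y+S_m^*)/\sigma\sqrt{k}$ is of order $\sqrt{m/k}=\veps^{1/4}$ on $\{\tau_y^*>m\}$, so the Taylor expansion $\phi^+(s) = s - s^3/2 + O(s^5)$ is legitimate; combining it with Theorem \ref{Theor-IntegrLimTh} applied to the dual walk (whose conditioned law of $(y+S_m^*)/\sigma\sqrt{m}$ converges to the Rayleigh distribution of mean $\sqrt{\pi/2}$) yields the asymptotic $V^*(y)/(\sigma\sqrt{k})$, with an $O(\sqrt{\veps})$ relative error coming from the cubic correction (controlled via the third moment of the Rayleigh law) and an additive $O(c_\veps n^{-1/2-\veps})$ error from the remainder in Theorem \ref{Theor-IntegrLimTh}. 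Multiplying by $2V(x)/(\sqrt{2\pi}\sigma^2 k)$ and integrating against $g(y-\veps)$ over $[0,\alpha_n\sqrt{n}]$ produces the main term of \eqref{theorem-n3/2 001} together with the claimed $c\sqrt{\veps}\,V(x)n^{-3/2}\int g(y-\veps)(1+y)\,dy$ remainder. For $y>\alpha_n\sqrt{n}$, the uniform bound $\phi^+\leq c$ yields directly the tail error $cV(x)n^{-1}\int_{\alpha_n\sqrt{n}}^{\infty}g(t-\veps)\,dt$. The lower bound \eqref{theorem-n3/2 002} is established by the same blueprint with $(\underline{H}_m,H_m)$ in place of $(H_m,\overline{H}_m)$, invoking the lower assertions of Theorem \ref{t-B 002} and Theorem \ref{Theor-IntegrLimTh}. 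The principal obstacle I anticipate is the bookkeeping of the multiple error sources (from Theorem \ref{t-B 002}, the duality substitution, the Taylor expansion of $\phi^+$, and the remainder of Theorem \ref{Theor-IntegrLimTh}) so that they combine into the announced $(c\sqrt{\veps}+c_\veps\alpha_n+c_\veps n^{-\veps})$ form; this requires sharp third-moment control of the conditioned dual walk uniformly in $y\in[0,\alpha_n\sqrt{n}]$.
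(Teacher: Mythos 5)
Your Markov decomposition (split at time $k = n-m$, absorb the block of length $m$ into an $\veps$-envelope pair $\underline{H}_m \leq_\veps H_m \leq_\veps \overline{H}_m$, apply Theorem \ref{t-B 002} to the outer expectation, pass to the dual walk by Lemma \ref{lemma-duality-lemma-2_Cor}) matches the paper's architecture. However, the choice $m = [\sqrt{\veps}\,n]$ — lifted from the proof of Theorem \ref{t-B 002}, where it is indeed used — does not work here, and this is a genuine gap rather than a bookkeeping annoyance.

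Concretely, the secondary remainder from Theorem \ref{t-B 002}, applied to the outer walk of length $k$, is of the form
\begin{equation*}
c\,\veps^{1/4}\,\frac{V(x)}{k}\int_{\bb R}\overline H_m(t)\,\phi\!\left(\frac{t}{\veps^{1/4}\sigma\sqrt{k}}\right)dt.
\end{equation*}
After the duality exchange and the conditioned integral limit theorem for the dual walk of length $m$, the inner $\phi$-average contributes a factor $\int_0^\infty \phi\!\left(u\sqrt{m}/(\veps^{1/4}\sqrt{k})\right)\phi^+(u)\,du$; with $m = [\sqrt{\veps}\,n]$ and $k\asymp n$ the scale $\sqrt{m}/(\veps^{1/4}\sqrt{k}) \asymp 1$, so this integral is of order one, and the whole term is
\begin{equation*}
\asymp \veps^{1/4}\cdot \frac{1}{k}\cdot\frac{1}{\sqrt{m}}\int g(t-\veps)\,V^*(t)\,dt \asymp \frac{1}{n^{3/2}}\int g(t-\veps)\,V^*(t)\,dt,
\end{equation*}
with no residual smallness in $\veps$: it is the same size as the main term, so \eqref{theorem-n3/2 001} cannot be deduced. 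More generally, with $m = [\veps^a n]$, the Taylor correction to $\phi^+$ contributes a relative error $\asymp \veps^a$, while the secondary remainder contributes $\asymp \veps^{3/4 - 3a/2}$ for $a< 1/2$ and $\asymp \veps^{1/4 - a/2}$ for $a \geq 1/2$; optimizing gives $a = 3/10$ and a best achievable error of order $\veps^{3/10}$, strictly worse than the claimed $\sqrt{\veps}$. Your two ingredients — the expansion $\phi^+(s) = s - s^3/2 + O(s^5)$, which requires $m\ll k$, and the $\sqrt{\veps}$-smallness of the secondary remainder, which requires $m \asymp k$ — pull in opposite directions and cannot be reconciled by any choice of $m$.

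The paper's resolution is to take the balanced split $m = [n/2]$, $k = n - m$, under which the secondary term does acquire the extra $\sqrt{\veps}$, and then \emph{not} to Taylor-expand at all. Instead it integrates by parts and uses the exact algebraic identity $\sqrt{k/m}\int_0^\infty \phi^+(u)\,\phi^+\!\left(u\sqrt{k/m}\right)du = \frac{k\sqrt{2\pi m}}{2\,n^{3/2}}$ (valid for any $m,k$ with $m+k=n$), which recovers the precise leading constant with no $\veps$-dependent loss. A side-benefit of that route is that it sidesteps the uniform third-moment control of $(y+S_m^*)$ on $\{\tau_y^*>m\}$ that your Taylor argument would require and which is not directly available from Theorem \ref{Theor-IntegrLimTh} alone.
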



\begin{proof}
We first establish the upper bound \eqref{theorem-n3/2 001}.   
Set $m=\left[ n/2 \right]$ and $k = n-m.$ 
By the Markov property, we have for any $x \in \mathbb R_+$, 
\begin{align} \label{JJJ-markov property_aa}
I_n(x) : = \mathbb{E}  \left( f (x+S_n );\tau_x >n\right)  
 = \int_{\mathbb R_+} I_m(t) \mathbb{P}\left( x+S_{k}\in dt,  \tau_x >k\right).  
\end{align}
When $x<0$, let $I_n(x)=0$ for any $n \geq 1$. 
Now we are going to find an $\ee$-domination for the function $t \mapsto I_m(t)$ 
given on the right hand side of \eqref{JJJ-markov property_aa}. 
Since $f\leq_{\ee} g$, it holds that for any $t\in \bb R$ and $|v|\leq \ee$, 
\begin{align}\label{DefImtbb}  
I_m(t) =  \mathbb E  \left( f \left( t+S_{m} \right) ;\tau _{t}>m\right) \leq H_m(t+v), 
\end{align}
where 
\begin{align} \label{DefHmt}  
H_m(t) := \mathbb E  \left( g \left( t+S_{m} \right) ;\tau _{t+\ee}>m\right)  \mathds 1_{ \{t \geq- \ee\} },  \quad  t \in \bb R. 
\end{align}
It is easy to see that $I_m \leq_{\ee} H_m$ and that both $I_m$ and $H_m$ are integrable on $\bb R$. 
Thus we can apply the upper bound \eqref{eqt-B 001} of Theorem \ref{t-B 002} to obtain
\begin{align}\label{UpperBoundhhn32}
& I_n(x) 
  \leq   (1 + c \ee)  \frac{2V(x) }{\sqrt{2\pi } \sigma^2 k}
 \int_{\bb{R}_{+}} H_{m} \left(t\right) 
 \phi^+\left( \frac{t}{\sigma \sqrt{k}} \right) dt    \notag\\
 & \quad +  c \ee^{1/4} \frac{V(x)}{n} \int_{\mathbb R} H_m( t)  \phi \left( \frac{t}{ \ee^{1/4} \sigma  \sqrt{k} } \right) dt   
  +   c_{\ee}  \left(  \alpha_n  + n^{-\ee} \right) \frac{V(x)}{ n }\left\Vert H_{m} \right\Vert_{1}  \notag\\
&  =: (1 + c \ee) J_1 + J_2 + J_3.
\end{align}
\textit{Bound of $J_1$.} 
We use a change of variable 
and the duality formula (Lemma \ref{lemma-duality-lemma-2_Cor}) to get
\begin{align}\label{BoundJ1rr}
J_1 
&= \frac{2V(x) }{\sqrt{2\pi } \sigma^2 k}
\int_{\bb R_+} \mathbb E ( g(t+ S_m); \tau_{t+\ee} >m) 
\phi^+\left(\frac{t}{ \sigma \sqrt{k}}\right)  dt   \notag\\
&= \frac{2V(x) }{\sqrt{2\pi } \sigma^2  k}
\int_{\bb R_+} \mathbb E ( g(t + S_m-\ee); \tau_{t} >m) \phi^+\left(\frac{ t -\ee}{\sigma \sqrt{k}} \right) dt  \notag\\
& =  \frac{2V(x) }{\sqrt{2\pi } \sigma^2  k}  
\int_{\bb R_+}   g (t-\ee)   
 \bb E \left[ \phi^+\left( \frac{t + S_m^* - \ee }{\sigma \sqrt{k}} \right);  \tau_{t}^* > m  \right]  dt. 
\end{align}
Then we split this integral into two parts: $J_1 = J_{11} + J_{12}$, where 
\begin{align}
J_{11}  & = \frac{2V(x) }{\sqrt{2\pi } \sigma^2 k}  
\int_{  \alpha_n \sqrt{n}  }^{\infty}    g (t-\ee)   
  \bb E \left[ \phi^+\left( \frac{t + S_m^* - \ee }{\sigma \sqrt{k}} \right);  \tau_{t}^* > m  \right]  dt,   \label{DefJ11rr}\\
J_{12}  & = \frac{2V(x) }{\sqrt{2\pi } \sigma^2  k}  
\int_{0}^{ \alpha_n \sqrt{n} }   g (t-\ee)   
  \bb E \left[ \phi^+\left( \frac{t + S_m^* - \ee }{\sigma \sqrt{k}} \right);  \tau_{t}^* > m  \right]  dt.   \label{DefJ12rr}
\end{align}
\textit{Bound of $J_{11}$.}
Since the function $\phi^+$ is bounded on $\bb R$, 
we easily get 
\begin{align}\label{BoundJ11Order32}
J_{11}  \leq  c  \frac{V(x) }{n}  \int_{ \alpha_n \sqrt{n} }^{\infty}    g (t-\ee)  dt.  
\end{align}
\textit{Bound of $J_{12}$.}
Since the function $\phi^+$ is differentiable on $\bb R_+$, using integration by parts leads to
\begin{align}\label{ExpectaPhiUpp}
& \bb E \left[ \phi^+\left( \frac{t + S_m^* - \ee }{\sigma \sqrt{k}} \right);  \tau_{t}^* > m  \right]   \notag\\
& =  \int_{\bb R_+} (\phi^+)'(u)  \bb P  \left( \frac{t + S_m^* - \ee }{\sigma \sqrt{k}} > u, \,  \tau_{t}^* > m \right) du   \notag\\
& =  \int_{\bb R_+} (\phi^+)'(u)  
  \bb P  \left( \frac{t + S_m^* }{\sigma \sqrt{m}} >  u_{k,m,\ee}, \, \tau_{t}^* > m \right) du,  
\end{align}
where $u_{k,m,\ee} = \frac{ u \sigma \sqrt{k} +  \ee}{\sigma \sqrt{m}}$. 
Applying the conditioned integral limit theorem \eqref{C-CLTalphan} of Corollary \ref{Cor-CCLT-Optimal}
(with $S_m$, $\tau_t$ and $V(t)$ replaced by $S_m^*$, $\tau_t^*$ and $V^*(t)$, respectively),  
we get that uniformly in $t \in [0, \alpha_n \sqrt{n}]$ and $u \geq 0$,  
\begin{align*}
& \left|  \bb P  \left( \frac{t + S_m^* }{\sigma \sqrt{m}} 
     >   u_{k,m,\ee}, \, \tau_{t}^* > m \right)  
-  \frac{2V^*(t)}{ \sigma \sqrt{2\pi m}} \left( 1 -  \Phi^+ \left( u_{k,m,\ee} \right) \right)  \right|   \notag\\
&  \leq   c_{\ee} \left(  \alpha_n  + n^{-\ee} \right)   \frac{V^*(t)}{ n^{1/2} }. 
\end{align*}  
Substituting this into \eqref{ExpectaPhiUpp} and using the fact that $\int_{\bb R_+} |(\phi^+)'(u)| du < \infty$, 
we obtain that uniformly in $t \in [0, \alpha_n \sqrt{n}]$, 
\begin{align}\label{J12hh01}      
\Bigg| 
& \bb E \left[ \phi^+\left( \frac{t + S_m^* - \ee }{\sigma \sqrt{k}} \right);  \tau_{t}^* > m)  \right]   \notag\\
&  -  \frac{2V^*(t)}{\sigma \sqrt{2\pi m}}  \int_{\bb R_+} (\phi^+)'(u)   
    \left[ 1 -  \Phi^+ \left(  u_{k,m,\ee} \right) \right]  du 
 \Bigg|   \leq  c_{\ee} \left(  \alpha_n  + n^{-\ee} \right)  \frac{V^*(t)}{ n^{1/2} }. 
\end{align}
Since $m=\left[ n/2 \right]$, $k = n-m$ and  $u_{k,m,\ee} = \frac{ u \sigma \sqrt{k} +  \ee}{\sigma \sqrt{m}}$, 
using integration by parts and the fact that $|(\phi^+)'| \leq c$ for some constant $c>0$, we get
\begin{align}\label{J12hhrr02}
\int_{\bb R_+} (\phi^+)'(u)   \left( 1 -  \Phi^+ \left(  u_{k,m,\ee} \right) \right)  du  
&   =  \sqrt{\frac{k}{m}} \int_{\bb R_+}  \phi^+(u)  \phi^+  \left(  u_{k,m,\ee}  \right) du  \notag\\
& \leq  \sqrt{\frac{k}{m}}  \int_{\bb R_+}  \phi^+(u)   \phi^+  \left(  \frac{u \sqrt{k}}{\sqrt{m}}  \right) du  +  \frac{c \ee}{\sqrt{n}}. 
\end{align}
By a change of variable and the fact that $k + m = n$, we see that
\begin{align*} 
& \sqrt{\frac{k}{m}} \int_{\bb R_+}  \phi^+(u)   \phi^+  \left(  \frac{u \sqrt{k}}{\sqrt{m}}  \right) du
 = \frac{1}{\sqrt{m}}  \int_{\bb{R}_{+}} 
    \phi^+ \left( \frac{y}{\sqrt{k}} \right) \phi^+ \left( \frac{y}{\sqrt{m}} \right)  dy  \notag\\
&=  \frac{1}{\sqrt{m}}  \int_{\bb{R}_{+}} \frac{y^2}{\sqrt{km}}e^{-\frac{ny^2}{2km}}  dy    
  = \frac{k \sqrt{m} }{n^{3/2}}\int_{\bb{R}_{+}} y^2e^{-y^2/2}  dy  
  =  \frac{k \sqrt{2\pi  m}}{ 2 n^{3/2}} .
\end{align*}
Therefore, combining this with \eqref{DefJ12rr}, \eqref{J12hh01}    and \eqref{J12hhrr02}, 
we obtain that uniformly in $t \in [0, \alpha_n \sqrt{n}]$, 
\begin{align} \label{BoundJ12Order32}
J_{12}
 &  \leq \frac{2 V(x) }{\sqrt{2\pi } \sigma^2 k}  \int_{0}^{ \alpha_n \sqrt{n}  }   g (t-\ee)   \notag\\
&  \qquad\qquad \quad \times  \left[ \frac{ 2 V^* (t) }{ \sigma \sqrt{2\pi m} }  \frac{ k \sqrt{2\pi  m} }{ 2 n^{3/2}}  
       +    \left( c \frac{\ee}{\sqrt{n}} + c_{\ee} \alpha_n  + c_{\ee} n^{-\ee} \right) \frac{V^* (t) }{\sqrt{n}}  \right]   dt \notag\\
 &  \leq  \left( 1 +   c_{\ee} \alpha_n  +  c_{\ee} n^{-\ee} \right)  \frac{ 2 V (x)}{\sqrt{2\pi} \sigma^3 n^{3/2}}    
    \int_{0}^{ \alpha_n \sqrt{n} }   g (t-\ee) V^* (t) dt.   
\end{align}
\textit{Bound of $J_2$.} The estimate of $J_2$ (cf.\ \eqref{UpperBoundhhn32}) follows the same lines as that of $J_1$. 
Proceeding in the same way as the proof of \eqref{BoundJ1rr}, \eqref{DefJ11rr} and \eqref{DefJ12rr},  one has $J_2 = J_{21} + J_{22}$, where
\begin{align*}
J_{21}  & =  c \ee^{1/4} \frac{V(x)}{n}
\int_{  \alpha_n \sqrt{n} }^{\infty}    g (t-\ee)   
  \bb E \left[ \phi \left( \frac{t + S_m^* - \ee }{\ee^{1/4} \sigma \sqrt{k}} \right);  \tau_{t}^* > m \right]  dt,   \notag\\
J_{22}  & =  c \ee^{1/4} \frac{V(x)}{n}
\int_{0}^{ \alpha_n \sqrt{n} }   g (t-\ee)   
  \bb E \left[ \phi \left( \frac{t + S_m^* - \ee }{\ee^{1/4} \sigma \sqrt{k}} \right);  \tau_{t}^* > m  \right]  dt.   
\end{align*}
\textit{Bound of $J_{21}$.}  As in \eqref{BoundJ11Order32}, since the function $\phi$ is bounded on $\bb R$, we have 
\begin{align}\label{BoundJ21Order32rr}
J_{21}  \leq  c \ee^{1/4}  \frac{V(x) }{n}  \int_{ \alpha_n \sqrt{n} }^{\infty}    g (t-\ee)  dt.  
\end{align}
\textit{Bound of $J_{22}$.}
Similarly to \eqref{ExpectaPhiUpp}, it holds that
\begin{align*}
 \bb E \left[ \phi \left( \frac{t + S_m^* - \ee }{\ee^{1/4} \sigma \sqrt{k}} \right);  \tau_{t}^* > m  \right]   
 =  \int_{\bb R_+}  \phi'(u)  
  \bb P  \left( \frac{t + S_m^* }{\sigma \sqrt{m}} >  \tilde{u}_{k,m,\ee}, \, \tau_{t}^* > m \right) du,  
\end{align*}
where $\tilde{u}_{k,m,\ee} = \frac{ \ee^{1/4} \sigma \sqrt{k} u  +  \ee}{\sigma \sqrt{m}}$. 
Following the proof of \eqref{J12hh01}, we derive that  uniformly in $t \in [0, \alpha_n \sqrt{n}]$, 
\begin{align*}
& \bb E \left[ \phi \left( \frac{t + S_m^* - \ee }{\ee^{1/4} \sigma \sqrt{k}} \right);  \tau_{t}^* > m  \right]   \notag\\
& \leq   \frac{2V^*(t)}{\sigma \sqrt{2\pi m}}  \int_{\bb R_+}  \phi'(u)   
    \left[ 1 -  \Phi^+ \left( \tilde{u}_{k,m,\ee} \right) \right]  du 
 +  c_{\ee} \left(  \alpha_n  + n^{-\ee} \right)  \frac{V^*(t)}{ n^{1/2} }   \notag\\
&   =   \frac{2V^*(t)}{\sigma \sqrt{2\pi m}}   \ee^{1/4} \sqrt{\frac{k}{m}}
   \int_{\bb R_+}  \phi(u)  \phi^+  \left(  \tilde{u}_{k,m,\ee} \right) du 
     +  c_{\ee} \left(  \alpha_n  + n^{-\ee} \right)  \frac{V^*(t)}{ n^{1/2} }   \notag\\
& \leq  \left( c \ee^{1/4} + c_{\ee}  \alpha_n  +  c_{\ee} n^{-\ee}  \right)  \frac{V^*(t)}{\sqrt{n}},  
\end{align*}
which implies that 
\begin{align}\label{BoundJ22Order32rr}
J_{22} \leq   \left( c \sqrt{\ee} + c_{\ee}  \alpha_n  +  c_{\ee} n^{-\ee}  \right)
   \frac{V(x)}{ n^{3/2} }   \int_{0}^{ \alpha_n \sqrt{n} }   g (t-\ee) V^*(t)  dt. 
\end{align}
\textit{Bound of $J_3$.}
By \eqref{DefHmt}, a change of variable and Lemma \ref{lemma Qng integr}, we have 
\begin{align} \label{BoundHmsmallx}
\left\Vert H_{m} \right\Vert_{1} 
&=\int_{\bb R_+} \mathbb E  \left( g \left( t-\ee+S_{m} \right) ;\tau _{t}>m\right) dt 
\leq \frac{c}{\sqrt{n}} \int_{\bb R_+} g(t-\ee) (1+t) dt.  
\end{align}
Since $m=[n/2]$, it follows from \eqref{UpperBoundhhn32} and \eqref{BoundHmsmallx} that
\begin{align}\label{BoundJ2Order32}
J_3 
&\leq  c_{\ee}  \left(  \alpha_n  + n^{-\ee} \right) \frac{V(x)}{ n^{3/2} } \int_{\bb R_+} g(t-\ee) (1+t) dt.
\end{align}
Combining \eqref{BoundJ11Order32}, \eqref{BoundJ12Order32},  \eqref{BoundJ21Order32rr}, 
\eqref{BoundJ22Order32rr} and \eqref{BoundJ2Order32}
concludes the proof of the upper bound \eqref{theorem-n3/2 001} of the theorem.

We next sketch the proof of the lower bound \eqref{theorem-n3/2 002}.
Similarly to \eqref{UpperBoundhhn32}, we use the lower bound \eqref{eqt-B 002} of Theorem \ref{t-B 002} to get
that uniformly in $x\in [0, \alpha_n \sqrt{n}]$, 
\begin{align}\label{LowerBoundhhn32}
  I_n(x) 
 & \geq   \frac{2 V(x) }{\sqrt{2\pi } \sigma^2 k}
 \int_{\bb{R}_{+}}  L_{m} \left(t\right) 
 \phi^+\left( \frac{t}{\sigma \sqrt{k}} \right) dt    \notag\\
 & \quad  -  c \ee^{1/12}  \frac{V(x)}{n}  \int_{\mathbb R}  H_m( t)  
   \left[ \phi \left( \frac{t}{ \sigma \sqrt{k}} \right)  +  \phi^+ \left( \frac{t}{ \sigma \sqrt{k}} \right)   \right] dt   \notag\\
& \quad  -    c_{\ee}  \left(  \alpha_n  + n^{-\ee} \right) \frac{V(x)}{ n } \left\Vert  H_m \right\Vert_{1}  \notag\\
&  =:  K_1 + K_2 + K_3,  
\end{align}
where $L_m \leq_{\ee}  I_m \leq_{\ee} H_m$ with $H_m$ defined by \eqref{DefHmt} and  
\begin{align}\label{DefLmt}
L_m(t) := \mathbb E  \left( h \left( t + S_{m} \right); \tau _{t - \ee} > m \right)  \mathds 1_{ \{t \geq \ee\} },  \quad  t \in \bb R.   
\end{align}
\textit{Bound of $K_1$.} 
In the same way as in the proof of \eqref{BoundJ1rr}, 
using the duality formula (Lemma \ref{lemma-duality-lemma-2_Cor}), one has
\begin{align*}
K_1  & =  \frac{2 V(x) }{\sqrt{2\pi } \sigma^2  k}  
\int_{\bb R_+}   h (t + \ee)   
 \bb E \left[ \phi^+\left( \frac{t + S_m^* + \ee }{\sigma \sqrt{k}} \right);  \tau_{t}^* > m  \right]  dt  \notag\\
 & \geq  \frac{2 V(x) }{\sqrt{2\pi } \sigma^2  k}  
 \int_{0}^{ \alpha_n \sqrt{n} }     h (t + \ee)   
 \bb E \left[ \phi^+\left( \frac{t + S_m^* + \ee }{\sigma \sqrt{k}} \right);  \tau_{t}^* > m  \right]  dt. 
\end{align*}
Following the proof of \eqref{BoundJ12Order32}, we get that uniformly in $x \in [0, \alpha_n \sqrt{n}]$, 
\begin{align*}
K_1 \geq  \left( 1 - c_{\ee} \alpha_n  -  c_{\ee} n^{-\ee} \right)  \frac{2 V(x)}{\sqrt{2\pi} \sigma^3 n^{3/2}}  
    \int_{0}^{ \alpha_n \sqrt{n} }   h (t + \ee) V^* (t) dt.   
\end{align*}
\textit{Bound of $K_2$.} Following the proof of \eqref{BoundJ21Order32rr} and \eqref{BoundJ22Order32rr}, 
one can check that 
\begin{align*}
K_2 
& \geq  - c  \ee^{1/12} \frac{V(x) }{n}  \int_{ \alpha_n \sqrt{n} }^{\infty}   h (t - \ee)  dt   \notag\\ 
& \quad     -  \left( c \ee^{1/12} + c_{\ee}  \alpha_n  +  c_{\ee} n^{-\ee}  \right) 
   \frac{V(x)}{ n^{3/2} }   \int_{0}^{ \alpha_n \sqrt{n} }   g (t-\ee) V^*(t)  dt. 
\end{align*}
\textit{Bound of $K_3$.}  From \eqref{BoundJ2Order32} we see that
\begin{align*}
K_3  \geq  -  c_{\ee}  \left(  \alpha_n  + n^{-\ee} \right) \frac{V(x)}{ n^{3/2} }  \int_{\bb R_+} g(t-\ee) (1+t) dt.
\end{align*}
Collecting the above bounds for $K_1$, $K_2$ and $K_3$,
and using the fact that $h(t + \ee) \leq f(t) \leq g(t-\ee)$ for any $t \in \bb R$, we get the lower bound \eqref{theorem-n3/2 002}. 
\end{proof}

\subsection{Proof of Theorems \ref{Theorem-AA002} and \ref{Theorem-AA002bis}} \label{SecProofThm2}


\begin{proof}[Proof of Theorem \ref{Theorem-AA002}]
We first give a proof of \eqref{Asymn32Smallxaa}. 
By the assumption $\int_{\bb R_+}  f(t) (1 + t)^{\gamma} dt < \infty$ for some constant $\gamma >1$, 
the asymptotic \eqref{Asymn32Smallxaa} is a consequence of Theorem \ref{theorem-n3/2-upper-lower-bounds}, 
Lemma \ref{lemma-DRI-convergence} and the Lebesgue dominated convergence theorem.

We next give a proof of \eqref{demoCLLT-003_Targetbbaa}. 
Applying the upper bound \eqref{theorem-n3/2 001} of Theorem \ref{theorem-n3/2-upper-lower-bounds}
with $g = \overline f_{\delta,\ee}$ (cf.\ \eqref{Def_f_deltaee})
and $\alpha_n$ replaced by $2 \alpha_n$, 
we get that uniformly in $x \in [0, \alpha_n \sqrt{n}]$ and $y \in [\alpha_n^{-1}, \alpha_n \sqrt{n}]$, 
\begin{align}\label{n32Inequaa}
& \mathbb E \left( f(x+S_n - y); \tau_x > n \right)        \notag\\
&   \leq    \frac{2 V \left( x\right) }{\sqrt{2\pi }  \sigma^3  n^{3/2} }  
    \int_{0}^{ 2 \alpha_n \sqrt{n} }  \overline f_{\delta,\ee} (t - y - \ee)   V^*(t) dt    \notag\\
& \quad  +   \left(  c \sqrt{\ee} +  c_{\ee} \alpha_n + \frac{ c_{\ee} }{ n^{\ee} }  \right)
    \frac{V(x)}{ n^{3/2} }  
    \int_{ \bb R_+ }  \overline f_{\delta,\ee} (t - y - \ee)   (1 + t) dt   \notag\\
&  \quad  +  c  \frac{V(x) }{n}  
  \int_{ 2 \alpha_n \sqrt{n} }^{\infty}   \overline f_{\delta,\ee} (t - y - \ee)  dt. 
\end{align}
We claim that as $n \to \infty$, it holds uniformly in $y \in [\alpha_n^{-1}, \alpha_n \sqrt{n}]$ that 
\begin{align}
 \int_{0}^{ 2 \alpha_n \sqrt{n} }  \overline f_{\delta,\ee} (t - y - \ee)     V^*(t) dt
& \sim  y    \int_{-\ee}^{ \infty }  \overline f_{\delta,\ee} (u)   du,   \label{Pfn32Equiv01} \\
  \int_{ \bb R_+ }  \overline f_{\delta,\ee} (t - y - \ee)   (1 + t) dt  
 &  \sim  y   \int_{-\ee}^{ \infty }  \overline f_{\delta,\ee} (u)   du,    \label{Pfn32Equiv02} \\
  \int_{ 2 \alpha_n \sqrt{n} }^{\infty}   \overline f_{\delta,\ee} (t - y - \ee)   dt
 &  \sim  \frac{y}{\sqrt{n}} o(1).       \label{Pfn32Equiv03}
\end{align}
Indeed, by a change of variable and the fact that the function $\overline f_{\delta,\ee}$ is supported on $[-\ee, \infty)$,  we have
\begin{align*}
\int_{0}^{ 2 \alpha_n \sqrt{n} }  \overline f_{\delta,\ee} (t - y - \ee)     V^*(t) dt  
& =  \int_{-\ee}^{ 2 \alpha_n \sqrt{n} - y - \ee }  \overline f_{\delta,\ee} (u)   V^*(u + y + \ee) du   \notag\\
& =   \int_{-\ee}^{ \infty }  \overline f_{\delta,\ee} (u)     V^*(u + y + \ee) du    \notag\\
& \quad  -  \int_{ 2 \alpha_n \sqrt{n} - y - \ee }^{ \infty }  \overline f_{\delta,\ee} (u)   V^*(u + y + \ee) du.
\end{align*}
Since $V^*(t)/t \to \infty$ as $t \to \infty$ 
and $\int_{-\ee}^{ \infty }  \overline f_{\delta,\ee} (u)   (1 + u) du < \infty$, 
by the Lebesgue dominated convergence theorem,  as $n \to \infty$, the first integral is equivalent to 
$y  \int_{-\ee}^{ \infty }  \overline f_{\delta,\ee} (u) du$. 
The second integral is negligible compared with the first one, using the inequality $t \leq V^*(t) \leq c(1 + t)$. 
Hence \eqref{Pfn32Equiv01} holds and the relation \eqref{Pfn32Equiv02} can be proved in the same way. 
For \eqref{Pfn32Equiv03}, by a change of variable and the fact that $\int_{\bb R_+}  \overline f_{\delta,\ee} (u)  (1 + u) du < \infty$
and $y \in [\alpha_n^{-1}, \alpha_n \sqrt{n}]$ with $\alpha_n \sqrt{n} \to \infty$ as $n \to \infty$, 
\begin{align*}
\int_{ 2 \alpha_n \sqrt{n} }^{\infty}   \overline f_{\delta,\ee} (t - y - \ee)  dt
& \leq   \int_{ \alpha_n \sqrt{n} - \ee }^{\infty}   \overline f_{\delta,\ee} (u)  du  \notag\\
& \leq   \frac{1}{\alpha_n \sqrt{n}}  
   \int_{ \alpha_n \sqrt{n} - \ee }^{\infty}   \overline f_{\delta,\ee} (u)  (1 + u) du  \notag\\
 &  \leq   \frac{y}{ \sqrt{n}}  o(1), 
\end{align*}
which shows \eqref{Pfn32Equiv03}. 

Therefore, from \eqref{n32Inequaa}, \eqref{Pfn32Equiv01}, \eqref{Pfn32Equiv02} and \eqref{Pfn32Equiv03}, we get
that uniformly in $x \in [0, \alpha_n \sqrt{n}]$ and $y \in  [\alpha_n^{-1}, \alpha_n \sqrt{n}]$,
\begin{align*}
\limsup_{n \to \infty}  
\frac{ \mathbb E \left( f(x+S_n - y); \tau_x > n \right)    }{  \frac{2 y V \left( x\right) }{\sqrt{2\pi } \sigma^3 n^{3/2} }  }    
  \leq  \left( 1 + c \sqrt{\ee}    \right)  
      \int_{-\ee}^{ \infty }  \overline f_{\delta,\ee} (u)  du, 
\end{align*}
which implies the desired upper bound by taking first $\ee \to 0$ and then $\delta \to 0$, and using Lemma \ref{lemma-DRI-convergence}. 
The lower bound can be proved in the same way by using 
\eqref{theorem-n3/2 002} of Theorem \ref{theorem-n3/2-upper-lower-bounds}. 
The proof of \eqref{demoCLLT-003_Targetbbaa} is complete. 
\end{proof}


\begin{proof}[Proof of Theorem \ref{Theorem-AA002bis}]
Without loss of generality, we assume that $\alpha_n \geq n^{-\ee}$, so that $\alpha_n \sqrt{n} \geq \Delta$, 
where $\Delta \in [\Delta_0,  o(y) ]$. 
Applying \eqref{theorem-n3/2 001} with $f = \mathds 1_{[y, y + \Delta]}$, $g = \mathds 1_{[y-\ee, y + \Delta+\ee]}$
and with $\alpha_n$ replaced by $3 \alpha_n$, 
and noticing that the last integral in \eqref{theorem-n3/2 001} vanishes, 
we deduce that uniformly in $x \in [0, \alpha_n \sqrt{n}]$ and $y \in [\alpha_n^{-1}, \alpha_n \sqrt{n}]$,  
\begin{align}\label{pf-thm4-upper}
& \mathbb P \left( x + S_n \in [0, \Delta] + y, \,  \tau_x >n\right)   
   \leq    \frac{2 V \left( x\right) }{\sqrt{2\pi }  \sigma^3  n^{3/2} }  
    \int_{0}^{ \Delta + 2\ee }    V^*(u + y) du   \notag\\
& \qquad\qquad\qquad  +   \left(  c \sqrt{\ee} + c_{\ee} \alpha_n + \frac{ c_{\ee} }{ n^{\ee} }  \right)
 \frac{V(x)}{ n^{3/2} }   \int_{0}^{ \Delta + 2\ee }  (u + y + 1) du.    
\end{align}
Applying Lemma \ref{Lem_V_Ineq_aa} with $k_0 = (\frac{1}{\alpha_n})^{ \frac{1}{1 - 2 \ee}}$,  
we get that uniformly in $y \in [\alpha_n^{-1}, \alpha_n \sqrt{n}]$ and $\Delta \in [\Delta_0, o(y)]$, 
\begin{align}\label{BoundInteDelta01}
\int_{0}^{ \Delta + 2\ee }   V^*(u + y) du  
& \leq   \left(1 +  c_\ee  \alpha_n^{ \frac{\ee}{1 - 2 \ee}} \right)  \int_{0}^{ \Delta + 2\ee }   (u + y )  du
   +  \frac{c_{\ee} }{\sqrt{\alpha_n}}  (\Delta + 2\ee)     \notag\\
& \leq   \left(1 +  c_\ee  \alpha_n^{ \frac{\ee}{1 - 2 \ee}} \right)  y  (\Delta + 2\ee)
   +   \frac{c_{\ee} }{\sqrt{\alpha_n}}  (\Delta + 2\ee)     \notag\\
& \sim  y  (\Delta + 2\ee) 
   \leq  \left( 1 + c \ee \right)  y  \Delta,
\end{align}
where in the last line we used the fact that $\frac{1}{ \sqrt{\alpha_n} } = o(y)$ and $\Delta_0 >0$ is a fixed constant. 
Similarly, one can check that uniformly in $y \in [\alpha_n^{-1}, \alpha_n \sqrt{n}]$ and $\Delta \in [\Delta_0, o(y)]$, 
\begin{align}\label{BoundInteDelta02}
\int_{0}^{ \Delta + 2\ee }     (u + y + 1) du
\leq  \left( 1 + c \ee \right)  y \Delta. 
\end{align}
Therefore,  substituting \eqref{BoundInteDelta01} and \eqref{BoundInteDelta02} into \eqref{pf-thm4-upper}, 
we obtain that uniformly in $x \in [0, \alpha_n \sqrt{n}]$,  $y \in [\alpha_n^{-1}, \alpha_n \sqrt{n}]$
and $\Delta \in [\Delta_0,  o(y) ]$, 
\begin{align*}
\limsup_{n \to \infty}  
\frac{ \mathbb P \left( x + S_n \in [0, \Delta] + y, \,  \tau_x >n \right)  }{ \Delta \frac{2 y V(x) }{\sqrt{2\pi }  \sigma^3 n^{3/2} }  }
  \leq  \left( 1 + c \sqrt{\ee} \right),
\end{align*}
which concludes the proof of the upper bound by letting $\ee \to 0$. 
The lower bound can be obtained in a similar way by using \eqref{theorem-n3/2 002}.
\end{proof}

\section{Conditioned concentration bounds far from the boundary}

The goal of this section is to establish Theorems \ref{Theorem-BB001} and \ref{Theorem-BB001-Delta}. 

\subsection{Formulation of the result}
Denote 
\begin{align*}
\psi^+(s,x) =  \frac{1}{\sqrt{2\pi}} \left( e^{-\frac{(s-x)^2}{2}}- e^{-\frac{(s+x)^2}{2}} \right) \mathds 1_{\{s \geq 0\}},   \quad  s, x \in \bb R. 
\end{align*}
Note that $\psi^+(s,x) = \psi(s,x)$ for any $s \geq 0$ and $x \in \bb R$, 
where $\psi$ is defined by \eqref{Def-Levydens}. 
The following result is a conditioned local limit theorem of order $n^{-1}$ with large starting point $x$.

\begin{theorem} \label{t-C 002}
Assume \ref{A1} and  \ref{SecondMoment}. 
Then, for any $\eta > 0$, there exist constants $c, \ee_0 >0$ such that for any $\ee \in (0, \ee_0)$, 
one can find a constant $c_{\ee} >0$ such that 
uniformly in $x\in [n^{1/2 - \ee}, \eta \sqrt{n}]$ and $n\geq 1$,  the following holds: 


\noindent 1. For any integrable functions $f,g: \bb R \mapsto \bb R_+$ satisfying $f\leq_{\ee}g$, 
\begin{align}\label{eqt-C 001}
& \mathbb{E}\left( f(x + S_n); \tau_x >n\right)   
 \leq  \frac{1 + c\ee}{ \sigma \sqrt{n} }  
  \int_{\bb R}   g (s)  \psi \left( \frac{ s }{\sigma \sqrt{n}}, \frac{ x }{\sigma \sqrt{n}} \right)  ds   \notag\\
& \qquad\qquad\quad  + \frac{1 + c\ee}{ \sigma \sqrt{n} }  
   \int_{\bb R}   g (s)  \left[ 1 - \Phi \left( \frac{s}{\sigma \ee^{1/4} \sqrt{n}} \right) \right]  ds
      +     \frac{c_{\ee}}{  n^{ 1/2 + \ee }}  \| g \|_1. 
\end{align}

\noindent 2. For any integrable functions $f,g,h: \bb R \mapsto \bb R_+$ satisfying $h\leq_{\ee}f\leq_{\ee}g$,
\begin{align} \label{eqt-C 002}
&  \mathbb{E}\left( f(x + S_n); \tau_x >n\right)  
  \geq  \frac{1}{\sigma \sqrt{n}}  \int_{\bb R}  \big[ h (s) - c \ee f(s) \big]
    \psi \left( \frac{s}{\sigma \sqrt{n}}, \frac{x}{\sigma \sqrt{n}}  \right)  ds  \notag\\
 & \qquad\qquad\qquad  -  \frac{1}{ \sigma \sqrt{n} }  \int_{\bb R}   h (s) 
     \left[ 1 - \Phi \left( \frac{s}{\sigma \ee^{1/4} \sqrt{n}} \right) \right]   ds  \notag\\
& \qquad\qquad    
   -  c  \frac{\ee^{1/12}}{\sqrt{n}}    \int_{\bb R}  \big[ g(t) + g(t + \ee) \big]  \phi  \left( \frac{t}{ \sigma \sqrt{n}} \right)   dt  
  -  \frac{c_{\ee} }{ n^{ 1/2 + \ee } } \| g \|_1.  
\end{align}
\end{theorem}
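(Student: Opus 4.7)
The plan is to adapt the Markov-splitting scheme used for Theorem \ref{t-B 002} to the far-from-boundary regime, replacing the small-$x$ conditioned integral limit theorem (Theorem \ref{Theor-IntegrLimTh}) with its large-$x$ counterpart \eqref{CorCCLT02bis} from Corollary \ref{Cor-CCLT-Optimal}. Set $m=[\sqrt{\ee}\,n]$ and $k=n-m$. For the upper bound, the Markov property gives
\begin{align*}
I_n(x) := \mathbb{E}(f(x+S_n);\tau_x>n) \leq \int_{\bb R_+} \mathbb{E} f(t+S_m)\,\mathbb{P}(x+S_k\in dt,\tau_x>k).
\end{align*}
The local limit upper bound \eqref{LLT-general001}, applied to $\mathbb{E} f(t+S_m)$, replaces the inner expectation by a Gaussian smoothing of $g$ at scale $\sigma\sqrt{m}$ plus a remainder of size $O(n^{-(1+\delta)/2})\|g\|_1$. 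Using $\mathbb{P}(\tau_x>k)=O(1)$ from \eqref{CLLT-bound only 002} (uniformly on $x\leq\eta\sqrt{n}$) to absorb the $\mathbb{P}(\tau_x>k)$ factor, this remainder contributes at most $c_\ee n^{-1/2-\ee}\|g\|_1$.

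The main term $J_n(x)$ takes, after a change of variables, the form
\begin{align*}
J_n(x) = \int_{\bb R_+} \varphi_n(t)\,\mathbb{P}\!\left(\tfrac{x+S_k}{\sigma\sqrt{k}}\in dt,\tau_x>k\right),
\end{align*}
with $\varphi_n$ a smoothed rescaled version of $g$. Integration by parts together with \eqref{CorCCLT02bis}---which applies precisely on the range $x\in[n^{1/2-\ee},\eta\sqrt{n}]$---lets us substitute the conditioned law by the density $\psi(\cdot,x/(\sigma\sqrt{k}))$ with a uniform error of order $n^{-1/2-\ee\beta}$ for some $\beta=\beta(\ee)>0$. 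Undoing the rescaling yields the approximation
\begin{align*}
J_n(x) \approx \frac{1}{\sigma\sqrt{n}}\int_{\bb R_+} g(s)\,\phi_v * \psi^+_{1-v}\!\left(\tfrac{s}{\sigma\sqrt{n}},\tfrac{x}{\sigma\sqrt{n}}\right) ds,
\end{align*}
where $v=m/n\asymp\sqrt{\ee}$, $\phi_v$ is the centered normal density of variance $v$, and $\psi^+_{1-v}(\cdot,x)=\psi_{1-v}(\cdot,x)\mathds 1_{\bb R_+}$.

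The key technical step is a convolution identity playing the role of Lemma \ref{t-Aux lemma}. Using the image representation $\psi_{1-v}(z,x)=\phi_{1-v}(z-x)-\phi_{1-v}(z+x)$ and completing the square in $z$, a direct computation gives
\begin{align*}
\phi_v * \psi^+_{1-v}(\cdot,x)(s) = \psi(s,x) - \phi(s-x)\Phi\!\left(-\tfrac{(1-v)s+vx}{\sqrt{v(1-v)}}\right) + \phi(s+x)\Phi\!\left(-\tfrac{(1-v)s-vx}{\sqrt{v(1-v)}}\right).
\end{align*}
Since $v(1-v)\asymp\sqrt{\ee}$, the two corrections are Gaussian-tail quantities at scale $\sqrt{v(1-v)}\asymp\ee^{1/4}$; after undoing the rescaling $s\mapsto s/(\sigma\sqrt{n})$ they produce exactly the $\sigma^{-1}n^{-1/2}\int g(s)[1-\Phi(s/(\sigma\ee^{1/4}\sqrt{n}))]\,ds$ contribution in \eqref{eqt-C 001}. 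The lower bound \eqref{eqt-C 002} is obtained along the same lines: write $I_n(x)=I_{n,1}(x)-I_{n,2}(x)$ with $I_{n,2}$ the contribution from $\{\tau_t\leq m\}$; apply the lower LLT \eqref{LLT-general002} to $I_{n,1}$ and bound $I_{n,2}$ by combining a Fuk--Nagaev estimate (Lemma \ref{Lem_FukNagaev}) for oscillations of the walk with the duality identity of Lemma \ref{lemma-duality-lemma-2_Cor} and the upper bound \eqref{eqt-C 001} just established.

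The main obstacle is obtaining uniformity in $x\in[n^{1/2-\ee},\eta\sqrt{n}]$ in the convolution step. Unlike the Rayleigh density $\phi^+$, the function $\psi(\cdot,x)$ is not a probability density on $\bb R_+$, and $\psi_{1-v}(z,x)$ is \emph{negative} for $z<0<x$, so the truncation error in passing from $\psi_{1-v}$ to $\psi^+_{1-v}$ is a difference of two Gaussian tails whose sign can flip near $s=0$. Controlling this cancellation with the right dependence on $x$ forces the calibration $v=\sqrt{\ee}$, which matches the tail scale $\sqrt{v(1-v)}\asymp\ee^{1/4}$ with the $1-\Phi(\cdot/(\sigma\ee^{1/4}\sqrt{n}))$ term in \eqref{eqt-C 001}--\eqref{eqt-C 002}.
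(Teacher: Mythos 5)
Your proposal is correct and follows essentially the same route as the paper: the same splitting $m=[\sqrt{\ee}\,n]$, $k=n-m$, the same LLT-plus-large-$x$-conditioned-integral-limit-theorem pipeline on $x\in[n^{1/2-\ee},\eta\sqrt n]$, with your explicit convolution formula being the open form of the paper's Lemma \ref{convol-phi-psi-001} (producing the same $1-\Phi$ correction at scale $\ee^{1/4}\sqrt n$), and the lower bound handled by the same $I_{n,1}-I_{n,2}$ split with duality and Fuk--Nagaev. One small correction to your commentary: the truncation defect $\int_0^\infty\phi_v(s-z)\psi_{1-v}(z,x)\,dz-\psi(s,x)=\int_0^\infty\phi_v(s+z)\psi_{1-v}(z,x)\,dz$ is nonnegative for $x>0$, so it does not sign-flip near $s=0$; this one-sidedness is precisely what yields the clean bound $\leq 1-\Phi(s/\sqrt v)$ in the paper.
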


\subsection{Proof of the upper bound} 

In this section we prove the upper bound \eqref{eqt-C 001} of Theorem \ref{t-C 002}.
For any $v>0$,  we denote
\begin{align} \label{Def-psi-v}
\psi_{v}(s,x)=\frac{1}{\sqrt{2\pi v}}
   \left( e^{-\frac{(s-x)^2}{2v}}- e^{-\frac{(s+x)^2}{2v}} \right),  \quad  s, x \in \bb R. 
\end{align} 
Then $\psi_{1} = \psi$ with $\psi$  defined by \eqref{Def-Levydens}. 
We shall make use of  the following convolution result. 
Recall that $\phi_{v}(z)=\frac{1}{\sqrt{2\pi v}}e^{-\frac{z^2}{2 v}}$, $z\in \mathbb R$,
and $\Phi$ is the standard normal distribution function on $\bb R$. 

\begin{lemma} \label{convol-phi-psi-001}
For any $v \in (0,1)$ and $s, x \in \bb R$, we have
\begin{align}\label{ConvoNormalLevy01}
\int_{\bb R} \phi_{v}(s-z) \psi_{1-v}(z, x) dz 
= \psi(s,x). 
\end{align}
In addition, for any $v \in (0,1/4]$ and $s, x \in \bb R$, we have
\begin{align} \label{ConvoNormalLevy02}
\left| \int_{0}^{\infty} \phi_{v}(s-z) \psi_{1-v} (z, x) dz -  \psi (s,x)  \right|  \leq  1 - \Phi \left( \frac{s}{\sqrt{v}} \right).
\end{align} 
\end{lemma}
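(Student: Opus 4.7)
The plan is to split the lemma into its two parts and handle them separately: Part 1 is a one-line convolution computation, and Part 2 reduces to Part 1 together with a uniform bound on $\psi_{1-v}$.

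For Part 1, I would write, using the definition \eqref{Def-psi-v},
\[
  \psi_{1-v}(z,x) = \phi_{1-v}(z-x) - \phi_{1-v}(z+x),
\]
and then invoke the classical Gaussian convolution identity
\[
  \int_{\bb R} \phi_v(s-z)\,\phi_{1-v}(z-\mu)\,dz = \phi_1(s-\mu), \qquad \mu\in\bb R,
\]
applied with $\mu=\pm x$. By linearity this yields $\phi_1(s-x)-\phi_1(s+x) = \psi_1(s,x) = \psi(s,x)$, which is the claim \eqref{ConvoNormalLevy01}. This step is routine and presents no obstacle.

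For Part 2, I would use Part 1 to rewrite the quantity inside the absolute value as
\[
  \int_0^\infty \phi_v(s-z)\,\psi_{1-v}(z,x)\,dz - \psi(s,x) \;=\; -\int_{-\infty}^0 \phi_v(s-z)\,\psi_{1-v}(z,x)\,dz.
\]
The triangle inequality then bounds the absolute value by $\int_{-\infty}^0 \phi_v(s-z)\,|\psi_{1-v}(z,x)|\,dz$. Since $\psi_{1-v}$ is the difference of two non-negative Gaussian densities each bounded by $(2\pi(1-v))^{-1/2}$, one has the uniform estimate
\[
  |\psi_{1-v}(z,x)| \leq \frac{1}{\sqrt{2\pi(1-v)}}, \qquad z,x\in\bb R.
\]
For $v\in(0,1/4]$, $1-v\geq 3/4 > 1/(2\pi)$, hence this bound is at most $\sqrt{2/(3\pi)}<1$, so in particular $|\psi_{1-v}(z,x)|\leq 1$. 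The remaining integral is explicit: substituting $u=s-z$,
\[
  \int_{-\infty}^0 \phi_v(s-z)\,dz = \int_s^\infty \phi_v(u)\,du = 1 - \Phi\!\left(\tfrac{s}{\sqrt{v}}\right),
\]
which is exactly the right-hand side of \eqref{ConvoNormalLevy02}.

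The argument is entirely elementary and no step is delicate. The only point worth recording is the verification that $(2\pi(1-v))^{-1/2}\leq 1$, which is what forces a restriction on $v$; the hypothesis $v\leq 1/4$ in the lemma is far stronger than strictly necessary and is presumably chosen for convenience and compatibility with the regime in which \eqref{ConvoNormalLevy02} is later applied.
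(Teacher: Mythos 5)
Your proposal is correct. For the second inequality \eqref{ConvoNormalLevy02} your argument is essentially identical to the paper's: both split off the tail $\int_{-\infty}^0\phi_v(s-z)\psi_{1-v}(z,x)\,dz$, bound $\psi_{1-v}$ by its peak value $(2\pi(1-v))^{-1/2}<1$ (valid once $v\leq 1/4$, and indeed once $1-v>1/(2\pi)$, as you observe), and evaluate the remaining Gaussian integral to get $1-\Phi(s/\sqrt v)$; the paper first flips the tail to $\int_{\bb R_+}\phi_v(s+z)\psi_{1-v}(z,x)\,dz$ via the oddness of $\psi_{1-v}(\,\cdot\,,x)$, which is cosmetically different but amounts to the same change of variable.

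For the first identity \eqref{ConvoNormalLevy01} you take a genuinely slicker route. The paper's proof represents $\psi_{1-v}(z,x)=\frac{1}{\sqrt{2\pi(1-v)}}\int_{-x}^x\frac{y-z}{1-v}e^{-(y-z)^2/(2(1-v))}\,dy$, applies Fubini, and completes the square inside a double integral. You instead decompose $\psi_{1-v}(z,x)=\phi_{1-v}(z-x)-\phi_{1-v}(z+x)$ and invoke the Gaussian semigroup identity $\phi_v*\phi_{1-v}(\cdot-\mu)=\phi_1(\cdot-\mu)$ term by term. Your version is shorter and more transparent, and reuses a standard fact rather than redoing a completion-of-squares computation; the paper's version is self-contained but longer. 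Both give the same conclusion, and there is no gap in either.
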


\begin{proof}
Since for any $z, x \in \bb R$,
\begin{align*} 
\psi_{1-v}(z, x) = \frac{1}{\sqrt{2 \pi (1-v)}} 
   \int_{-x}^{x}\frac{y-z}{1-v} e^{-\frac{(y-z)^2}{2 (1-v)}} dy, 
\end{align*} 
by Fubini's theorem, we derive that for any $s \in \bb R$, 
\begin{align*} 
& \int_{\bb R} \phi_{v}(s-z) \psi_{1-v}(z, x) dz  =  \int_{\bb R} \phi_{v}(z) \psi_{1-v}(s-z,x) dz    \notag\\
& = \int_{\bb R} \frac{1}{\sqrt{2\pi v}}  e^{-\frac{z^2}{2v}}  \frac{1}{\sqrt{2 \pi (1-v)}}
\left[ \int_{-x}^{x} \frac{y-s+z}{1-v} e^{-\frac{(y-s+z)^2}{2(1-v)}} d y \right] dz  \notag\\
& = \frac{1}{\sqrt{2 \pi} (1-v)} \int_{-x}^{x}  \bigg[ \int_{\bb R}  \frac{z+y-s}{\sqrt{2\pi v (1-v) }}   
 e^{-\frac{z^2}{2 v} -\frac{(z+ y-s)^2}{2(1-v)}} dz \bigg] dy. 
\end{align*} 
Since
\begin{align*} 
\frac{z^2}{2v} + \frac{(z + y-s)^2}{2(1-v)} = \frac{(z+(y-s)v)^2}{2 v (1-v)} + \frac{(y-s)^2}{2}, 
  \end{align*} 
we obtain
\begin{align*} 
& \int_{\bb R} \phi_{v}(s-z) \psi_{1-v}(z, x) dz  \notag\\
&= \frac{1}{\sqrt{2 \pi } (1-v) }  \int_{-x}^{x} \bigg[ \int_{\bb R}  \frac{z + y-s}{\sqrt{2\pi v (1-v) }}   
e^{-\frac{(z+(y-s) v)^2}{2 v(1- v)}} dz \bigg] e^{-\frac{(y-s)^2}{2}} dy  \notag\\ 
&= \frac{1}{\sqrt{2 \pi } (1-v) } 
    \int_{-x}^{x} \bigg[ -(y-s)v + (y-s) \bigg] e^{-\frac{(y-s)^2}{2}} dy  \notag\\ 
&=  \frac{1}{\sqrt{2\pi}} \left( e^{-\frac{(s-x)^2}{2}}- e^{-\frac{(s+x)^2}{2}} \right) =  \psi(s,x), 
\end{align*} 
which ends the proof of \eqref{ConvoNormalLevy01}.

To prove \eqref{ConvoNormalLevy02}, using \eqref{ConvoNormalLevy01} and the fact that $\psi_{1-v}(z, x) = - \psi_{1-v}(-z, x)$, 
we get that for any $s \in \bb R$, 
\begin{align*}
& \int_{0}^{\infty} \phi_{v}(s-z) \psi_{1-v}(z, x) dz   \notag\\
& = \int_{\bb R} \phi_{v}(s-z) \psi_{1-v}(z, x) dz -  \int_{-\infty}^0 \phi_{v}(s-z) \psi_{1-v}(z, x) dz   \notag\\
& = \psi (s,x) + \int_{\bb R_+}  \phi_{v}(s + z)  \psi_{1-v}(z, x) dz. 
\end{align*}
Since $\psi_{1-v}(z, x) < 1$ for any $v \in (0, 1/4]$, 
and $\int_{\bb R_+}  \phi_{v}(s + z) dz = 1 - \Phi (\frac{s}{\sqrt{v}})$ for $s \in \bb R$,
the inequality \eqref{ConvoNormalLevy02} follows. 
\end{proof}

Now we give a proof of the upper bound \eqref{eqt-C 001} of Theorem \ref{t-C 002}. 

\begin{proof}[Proof of \eqref{eqt-C 001}]
As in the proof of \eqref{eqt-B 001}, 
let $\veps > 0$ be a sufficiently small constant and $\delta = \sqrt{\veps}$, 
and set $m=\left[ \delta n \right]$ and $k = n-m$. 
It suffices to prove \eqref{eqt-C 001} only for sufficiently large $n$, 
otherwise the bound becomes trivial. 
Similarly to the proof of \eqref{JJJ-markov property}, \eqref{JJJJJ-1111-001}, \eqref{JJJ004}, \eqref{JJJ006} and \eqref{ApplCondLT-001}, 
one can use the Markov property and the local limit theorem (Theorem \ref{LLT-general}) to get
\begin{align} \label{Bound_In101}
I_{n}(x) : = \mathbb{E} \left( f (x+S_n ); \tau_x >n \right) 
& \leq  (1 + c \ee) J_{n}(x)  +  \frac{c_{\ee}}{ n^{1/2 + \ee} }   \left\Vert  g \right\Vert_{1} \mathbb{P}\left(\tau_{x}>k\right)  \notag\\
& \leq   (1 + c \ee) J_{n}(x)  +  \frac{c_{\ee}}{ n^{1/2 + \ee} }   \left\Vert  g \right\Vert_{1},  
\end{align}
where 
\begin{align} 
J_{n}(x) &: = \int_{\bb R _{+}}  \varphi'_n(t) 
    \mathbb{P}\left(  \frac{x+S_{k}}{\sigma \sqrt{k}} > t, \tau_{x}>k\right) dt,  \label{Bound_Jnx01}  \\
\varphi_n(t) 
& := \int_{\mathbb R} g (\sigma \sqrt{k} s) \frac{1}{\sqrt{m/k}}\phi \left( \frac{t-s}{\sqrt{m/k}}\right) ds.  \label{Bound_In102}
\end{align}
By the conditioned integral limit theorem (Theorem \ref{CorCCLT}), 
we have that for any $x \geq n^{1/2 - \ee }$,  
$t \geq 0$ and $n\geq 1$, 
\begin{align} \label{CLT002var1_Second}
&  \left| \mathbb{P} \left(  \frac{x+S_{k}}{\sigma \sqrt{k}} > t, \tau_{x}>k\right) - 
 \int_{t}^{\infty}   \psi \left( s, \frac{x}{\sigma \sqrt{k}} \right)  ds   \right|
     \leq  \frac{c_{\ee} }{ n^{\ee} }.
\end{align}
From \eqref{Bound_Jnx01} and \eqref{CLT002var1_Second}, using integration by parts we derive that
\begin{align} \label{ApplCondLT-002_largex} 
J_{n}(x)  -  
\int_{\bb R _{+}}  \varphi_n(t)  \psi \left( t, \frac{x}{\sigma \sqrt{k}} \right)  dt  
& \leq   \frac{c_{\ee} }{ n^{\ee} } \int_{\bb R _{+}}  | \varphi'_n (t) | dt  
 \leq  \frac{c_{\ee} }{ n^{1/2 + \ee} }  \| g \|_1,
\end{align}
where in the last inequality we used the bound \eqref{JJJ-20001}. 
By the definition of $\varphi_n$ (cf.\  \eqref{Bound_In102}), Fubini's theorem and a change of variable, we get
\begin{align} \label{Bound-large-x-ii}
&  \int_{\bb R _{+}}  \varphi_n(t) 
     \psi \left( t, \frac{x}{\sigma \sqrt{k}} \right)  dt    \notag\\
&=   \int_{\bb R _{+}}  \left[ \int_{\mathbb R} g (\sigma \sqrt{k} s) \frac{1}{\sqrt{m/k}}\phi \left( \frac{s-t}{\sqrt{m/k}}\right)    
   \psi \left( t, \frac{x}{\sigma \sqrt{k}} \right)     ds  \right] dt  \notag\\
&=   \int_{\bb R }   g  (\sigma \sqrt{n} s)
 \left[ \int_{\mathbb R_+} \frac{1}{\sqrt{m/n}}\phi \left( \frac{s - t}{\sqrt{m/n}}\right)   
\frac{1}{\sqrt{k/n}}   \psi \left( \frac{t}{\sqrt{k/n}}, \frac{ x }{\sigma \sqrt{k}} \right) dt \right] ds  \notag\\
  &=   \int_{\bb R }   g  (\sigma \sqrt{n} s)
\left[ \int_{\mathbb R_+}  \phi_{\delta_n} \left( s - t \right)   
  \psi_{1 - \delta_n} \left( t, \frac{ x }{\sigma \sqrt{n}} \right)  dt  \right] ds,   
\end{align}
where we denoted $\delta_n=\frac{m}{n}$. 
By \eqref{ConvoNormalLevy02}, 
we have that for any $s \in \bb R$, 
\begin{align*}
\int_{\mathbb R_+}  \phi_{\delta_n} \left( s - t \right)   
  \psi_{1 - \delta_n} \left( t, \frac{ x }{\sigma \sqrt{n}} \right) d t
  \leq  \psi \left( s, \frac{ x }{\sigma \sqrt{n}} \right) + 1 - \Phi \left( s \sqrt{\frac{n}{m}} \right).
\end{align*}
Thus, substituting this into \eqref{Bound-large-x-ii},  
we get that for any $x \geq n^{1/2 - \ee }$, 
\begin{align}\label{UpperBound_Paris1}
 \int_{\bb R_+ }  \varphi_n(t)  \psi \left( t, \frac{ x }{\sigma \sqrt{k}} \right)  dt 
 &  \leq  \int_{\bb R }  g  (\sigma \sqrt{n} s)  
    \left[ \psi \left( s, \frac{ x }{\sigma \sqrt{n}} \right) + 1 - \Phi \left( s \sqrt{\frac{n}{m}} \right) \right]  ds \notag\\
 & =  \frac{1}{ \sigma \sqrt{n} }  \int_{\bb R }   g (s)  \psi \left( \frac{ s }{\sigma \sqrt{n}}, \frac{ x }{\sigma \sqrt{n}} \right)  ds   
   \notag\\
 & \quad  +  \frac{1}{ \sigma \sqrt{n} }  \int_{\bb R}   g (s)  \left[ 1 - \Phi \left( \frac{s}{\sigma \sqrt{m}} \right) \right]  ds. 
\end{align}
Putting together \eqref{Bound_In101}, \eqref{ApplCondLT-002_largex} and \eqref{UpperBound_Paris1},
and recalling that $m=[ \ee^{1/2} n ]$, 
we conclude the proof of the upper bound \eqref{eqt-C 001}  of Theorem \ref{t-C 002}. 
\end{proof}
\subsection{Proof of the lower bound} 

The aim of this section is to establish the lower bound \eqref{eqt-C 002} 
of Theorem \ref{t-C 002}.

\begin{proof}[Proof of \eqref{eqt-C 002}]
Let us keep the notation used in the proof of the upper bound \eqref{eqt-C 001}. 
Recall that 
\begin{align}\label{TypeD_LL-Decompose_In}  
I_n(x)&:=  \int_{\mathbb R_+}  \mathbb E   f ( t +S_{m}) \mathbb{P}\left( x+S_{k}\in dt, \tau_x >k\right)   \notag\\
 & \qquad -  \int_{\mathbb R_+}  \mathbb{E} \left( f (t+S_{m}); \tau _{t} \leq m\right)  
     \mathbb{P}\left( x+S_{k}\in dt,  \tau_x >k\right)  \notag\\
& = : I_{n,1}(x) - I_{n,2}(x). 
\end{align}

\textit{Lower bound of $I_{n,1}(x)$.}
By \eqref{Pf_SmallStarting_Firstthm}, we have
\begin{align}\label{TypeD_Fstar002}
\mathbb{E}f\left( t + S_{m} \right) 
& \geq  \frac{ 1 }{\sigma \sqrt{m} }\int_{\bb R } \big[ h (s) - c \ee f(s)  \big] \phi \left( \frac{s - t}{\sigma \sqrt{m}}\right) ds  
  -  \frac{c_{\ee}}{ n^{1/2 + \ee} }  \left\Vert f \right\Vert_1. 
\end{align}
Proceeding in the same way as the proof of the upper bound \eqref{eqt-C 001}
(replacing the function $g$ by $h - c \ee f$ and using the fact that $h \leq_{\ee} f$),  
one has 
\begin{align}\label{LowerSecondCara001}
& \int_{\mathbb R_+}  \left[  \frac{ 1 }{\sigma \sqrt{m} }
      \int_{\bb R }  \big[ h (s) - c \ee f(s)  \big]  \phi \left( \frac{s - t}{\sigma \sqrt{m}}\right) ds \right]   
    \mathbb{P}\left( x+S_{k}\in dt,  \tau_x >k\right)   \notag\\
& \geq   \frac{1}{ \sigma \sqrt{n} }  \int_{\bb R}  \big[ h (s) - c \ee f(s) \big]
    \psi \left( \frac{s}{ \sigma \sqrt{n} }, \frac{x}{\sigma \sqrt{n}}  \right)  ds    \notag\\
&\quad -  \frac{1}{ \sigma \sqrt{n} }  \int_{\bb R}  h (s)
     \left[ 1 - \Phi \left( \frac{s}{\sigma \ee^{1/4} \sqrt{n}} \right) \right]   ds  -  \frac{c_{\ee}}{ n^{1/2 + \ee} } \| f \|_1.  
\end{align}
Combining \eqref{TypeD_Fstar002} and \eqref{LowerSecondCara001},
we derive that for any $x \geq n^{1/2 - \ee }$, 
\begin{align}\label{TypeD_Bound_In1}
I_{n,1}(x)  
& \geq   \frac{1}{ \sigma \sqrt{n} }  \int_{\bb R}  \big[ h (s) - c \ee f(s) \big]
    \psi \left( \frac{s}{ \sigma \sqrt{n} }, \frac{x}{\sigma \sqrt{n}}  \right)  ds    \notag\\
&\quad -  \frac{1}{ \sigma \sqrt{n} }  \int_{\bb R}  h (s) 
     \left[ 1 - \Phi \left( \frac{s}{\sigma \ee^{1/4} \sqrt{n}} \right) \right]   ds  -  \frac{c_{\ee}}{ n^{1/2 + \ee} } \| f \|_1. 
\end{align}

\textit{Upper bound of $I_{n,2}(x)$.}
It has been shown in \eqref{KKK-111-001} that 
\begin{align} \label{TypeE_KKK-111-001}
I_{n,2}(x) 
&= K_1 + K_2,  
\end{align}
where $K_1$ and $K_2$ are given by \eqref{KKK-111-001}. 
For $K_1$, it is shown in \eqref{CaraOrder12BoundK1} and \eqref{CaraOrder12BoundK1aa} that 
\begin{align} \label{K1iii}
K_1 
& \leq  \int_{\bb R} g(u) J (u) du  + c_{\ee} \frac{V(x)}{ n^{1 + \ee} }  \left\Vert g \right\Vert_1,   
\end{align}
where,  with  $F_u (t) =  \frac{1}{\sigma \sqrt{m}} \phi ( \frac{u-t}{\sigma \sqrt{m}} )$, 
\begin{align*}
 J (u) 
& =  \int_{0}^{ \ee^{1/6}\sqrt{ n }  }  
F_u' (t)  \bb P  \left( \frac{x + S_k}{\sigma \sqrt{k}} \in \left[ \frac{t}{\sigma \sqrt{k}},  \frac{\ee^{1/6}}{\sigma} \sqrt{ \frac{n}{k} } \right], 
  \tau_x > k \right)  dt.  
\end{align*}
Using Theorem \ref{CorCCLT},  we get that for any $x \geq n^{1/2 - \ee }$, 
\begin{align*}
& \bb P  \left( \frac{x + S_k}{\sigma \sqrt{k}} \in \left[ \frac{t}{\sigma \sqrt{k}},  \frac{\ee^{1/6}}{\sigma} \sqrt{ \frac{n}{k} } \right],  \tau_x > k \right)  
 \leq   G(t)   +  \frac{c_{\ee} }{ n^{ \ee } }.  
\end{align*}
where 
\begin{align*}
G(t) =   \int_{ \frac{t}{\sigma \sqrt{k}} }^{ \frac{\ee^{1/6}}{\sigma} \sqrt{ \frac{n}{k} }  } 
   \psi \left( s, \frac{x}{\sigma \sqrt{k} }   \right) ds.  
\end{align*}
Since $G( \ee^{1/6}\sqrt{ n } ) = 0$ and $F_u ( 0 ),  G( 0 ) \geq 0$,  
it follows that 
\begin{align}\label{BoundKu88}
J (u) 
& \leq  F_u ( \ee^{1/6}\sqrt{ n } )   G( \ee^{1/6}\sqrt{ n } )  -  F_u ( 0 )   G( 0 )   \notag\\
& \quad  +  \frac{1}{\sigma \sqrt{k}}   \int_{0}^{ \ee^{1/6}\sqrt{ n }  }   F_u (t)   
     \psi \left( \frac{t}{\sigma \sqrt{k}}, \frac{x}{\sigma \sqrt{k} }   \right) dt  
      +  \frac{c_{\ee} }{ n^{ \ee } }  \int_{0}^{ \ee^{1/6}\sqrt{ n }  }   |F_u'(t)|  dt  \notag\\
& \leq  \frac{1}{\sigma \sqrt{k}}   \int_{0}^{ \ee^{1/6}\sqrt{ n }  }   F_u (t)   
     \psi \left( \frac{t}{\sigma \sqrt{k}}, \frac{x}{\sigma \sqrt{k} }   \right) dt 
       +  \frac{c_{\ee} }{ n^{ 1/2 + \ee } }, 
\end{align}
where in the last line we used the fact that $\int_{0}^{ \ee^{1/6}\sqrt{ n }  }   |F_u'(t)|  dt \leq \frac{c}{\sqrt{m}}$. 
By elementary calculations,  we see that
\begin{align*}
\mathcal J(u): & = \int_{0}^{ \ee^{1/6}\sqrt{ n }  }  
    F_u \left(t\right)  \psi \left( \frac{t}{\sigma \sqrt{k}}, \frac{x}{\sigma \sqrt{k} }   \right)   dt   \notag\\
& =  \frac{1}{2 \pi} \int_{0}^{ \ee^{1/6}\sqrt{ n} } \frac{1}{\sigma \sqrt{ m}} e^{- \frac{(u-t)^2}{2 \sigma^2 m}} 
   \left(  e^{- \frac{(t-x)^2}{2 \sigma^2 k}}  -  e^{- \frac{(t+x)^2}{2 \sigma^2 k}}   \right) dt   \notag\\
& = \frac{1}{2 \pi}  e^{- \frac{u^2}{2 \sigma^2 m} - \frac{x^2}{2 \sigma^2 k} }  
  \int_{0}^{ \ee^{1/6}\sqrt{ n} }   \frac{1}{\sigma \sqrt{ m}}  
     e^{- \frac{n t^2}{2 \sigma^2 mk }  + \frac{tu}{\sigma^2 m}}  e^{\frac{tx}{ \sigma^2 k }}  \left(1  - e^{ - \frac{2 tx}{ \sigma^2 k }} \right)  dt. 
\end{align*}
Since $x \in [n^{1/2 - \ee}, \eta \sqrt{n}]$ and $t \in [0, \ee^{1/6}\sqrt{ n} ]$, 
we have $e^{\frac{tx}{\sigma^2  k }} \leq c$ for some constant $c>0$.
Using the fact that $e^{- \frac{x^2}{2 \sigma^2 k} } \leq 1$
and the inequality $1 - e^{-t} \leq t$, $t \geq 0$, we get
\begin{align*}
\mathcal J(u) 
& \leq  c    e^{- \frac{u^2}{2 \sigma^2 m} } 
  \int_{0}^{ \ee^{1/6}\sqrt{ n} }   \frac{t}{ \sqrt{k m}}  
     e^{- \frac{n t^2}{2 \sigma^2 mk }  + \frac{tu}{\sigma^2  m}}     dt  \notag\\
& =  c  e^{- \frac{u^2}{2 \sigma^2 m}}  e^{ \frac{k u^2}{2 \sigma^2 mn}}  
   \int_{0}^{ \ee^{1/6}\sqrt{ n} }     \frac{t}{ \sqrt{km}}
    e^{ - \frac{n}{2 \sigma^2 mk} (t - \frac{ku}{n})^2 } dt  \notag\\
 & =  c   e^{- \frac{u^2}{2 \sigma^2 n}}  
   \int_{0}^{ \ee^{1/6}\sqrt{ n} }   \frac{t}{ \sqrt{k m}}
    e^{ - \frac{n}{2 \sigma^2 mk} (t - \frac{ku}{n})^2 }  dt. 
\end{align*}
By a change of variable $\frac{\sqrt{n}}{\sigma \sqrt{mk}} t  = z$, it follows that 
\begin{align*}
\mathcal J(u)  & \leq  c  e^{- \frac{u^2}{2 \sigma^2 n}}   \frac{ \sqrt{km}}{n}   \int_{0}^{ \ee^{1/6} n/\sqrt{\sigma^2 mk} }   z     
    e^{ - \frac{1}{2} (z- \frac{\sqrt{k}}{\sigma  \sqrt{nm}} u)^2 }  dz   \notag\\
 & \leq  c  e^{- \frac{u^2}{2 \sigma^2 n}}  \frac{ \sqrt{km}}{n}   \int_{0}^{ \ee^{1/6} n/\sqrt{\sigma^2 mk} }   z  dz  
  \leq  c  \ee^{1/3}  \frac{n}{\sqrt{km }}   e^{- \frac{u^2}{2 \sigma^2 n}}     \leq  c  \ee^{1/12}   e^{- \frac{u^2}{2 \sigma^2 n}}.  
\end{align*}
In view of \eqref{BoundKu88}, this implies that
\begin{align*}
J (u)  
& \leq  c  \frac{\ee^{1/12}}{\sqrt{n}}   e^{- \frac{u^2}{2 \sigma^2 n}}  
    +  \frac{c_{\ee} }{ n^{ 1/2 + \ee } }.  
\end{align*}
Implementing this bound into \eqref{K1iii}, we obtain
\begin{align}\label{BoundK11hh}
K_{1} 
\leq   c  \frac{\ee^{1/12}}{\sqrt{n}}    \int_{\bb R} g(t)  e^{- \frac{t^2}{2 \sigma^2 n}}  dt  
  +  \frac{c_{\ee} }{ n^{ 1/2 + \ee } }  \|g\|_1.  
\end{align}

We proceed to give an upper bound for $K_{2}$ (cf.\ \eqref{K2-b01c-001}). 
Applying the upper bound \eqref{eqt-C 001}  of Theorem \ref{t-C 002},  we obtain 
\begin{align}\label{TypeE_eqt-A 001_Lower}
K_{2}  
& \leq   \frac{ 1 + c \ee }{\sigma \sqrt{  k }}  
   \int_{\bb R_+}   M(s)  \psi \left( \frac{s}{\sigma \sqrt{k}}, \frac{ x }{\sigma \sqrt{k}} \right)   ds  \notag\\
& \quad + \frac{ 1 + c \ee }{\sigma \sqrt{  k }}  \int_{\bb R_+}   M(s)  \left[ 1 - \Phi \left( \frac{s}{\sigma \ee^{1/4} \sqrt{k}} \right) \right]   ds  
    +   \frac{c_{\ee} }{ n^{ 1/2 + \ee } }   \| M \|_{1},  
\end{align} 
where $s \mapsto M(s)$ is defined by \eqref{DefM88}. 
For the first term, applying the duality formula 
(\eqref{eq-duality-lemma-002_Cor_less} of Lemma \ref{lemma-duality-lemma-2_Cor}), we obtain 
\begin{align*}
&   \int_{\bb R_+}   M(s)  \psi \left( \frac{s}{\sigma \sqrt{k}}, \frac{ x }{\sigma \sqrt{k}} \right)   ds   
  \notag\\
& =   \int_{\bb{R}_{+}}  
\mathbb{E}  \left( g(s + S_{m});   \tau_{s-\ee} \leq m   \right)
  \mathds 1_{\{ s+ \ee > \ee^{1/6}\sqrt{n}  \}} 
   \psi \left( \frac{s}{\sigma \sqrt{k}}, \frac{ x }{\sigma \sqrt{k}} \right)   ds   \notag\\
& =  \int_{\bb{R}_{+}}  
\mathbb{E}  \left( g(t + \ee + S_{m});   \tau_{t} \leq m   \right)
  \mathds 1_{\{ t+ 2\ee > \ee^{1/6}\sqrt{n}  \}}   
  \psi \left( \frac{t + \ee}{\sigma \sqrt{k}}, \frac{ x }{\sigma \sqrt{k}} \right) dt  \notag\\
& =  \int_{\bb{R}_{+}}   g(t + \ee)  
 \bb E \left[ \psi \left( \frac{t + S_m^* + \ee}{\sigma \sqrt{k}}, \frac{ x }{\sigma \sqrt{k}} \right); 
     t + S_m^* + 2\ee > \ee^{1/6}\sqrt{n}, \tau_t^* \leq m  \right]  dt. 
\end{align*}
Since the function $(t, x) \mapsto \psi(t,x)$ is bounded on $\bb R \times \bb R$, it follows that 
\begin{align}\label{BoundInteMs01}
& \int_{\bb R_+}   M(s)  \psi \left( \frac{s}{\sigma \sqrt{k}}, \frac{ x }{\sigma \sqrt{k}} \right) ds    \notag\\
& \leq c \int_{\bb{R}_{+}}   g(t + \ee)  
 \bb P \left(  t + S_m^*  > \frac{1}{2} \ee^{1/6}\sqrt{n}, \tau_t^* \leq m  \right)  dt   \notag\\
 & \leq   c  \ee^{1/12} \int_{\bb R_+}  g(t + \ee)    e^{- \frac{t^2}{16 \sigma^2 \sqrt{\ee} n}}  dt 
     +  \frac{c_{\ee}}{n^{\ee}} \|g\|_1, 
\end{align}
where the last inequality holds due to the estimates of \eqref{CLLT-J1-6}, \eqref{CLLT-J2-6} and \eqref{BoundMt99}. 
Following the same proof of \eqref{BoundInteMs01} and using the fact that $1 - \Phi(\cdot)$ is bounded on $\bb R$, one has
\begin{align}\label{BoundInteMs02}
 \int_{\bb R_+}   M(s)  \left[ 1 - \Phi \left( \frac{s}{\sigma \ee^{1/4} \sqrt{k}} \right) \right]   ds 
 \leq  c  \ee^{1/12} \int_{\bb R_+}  g(t + \ee)    e^{- \frac{t^2}{16 \sigma^2 \sqrt{\ee} n}}  dt 
     +  \frac{c_{\ee}}{n^{\ee}} \|g\|_1.  
\end{align}
Implementing \eqref{BoundInteMs01} and \eqref{BoundInteMs02} into \eqref{TypeE_eqt-A 001_Lower} 
and using the fact that $\|M\|_1 \leq \|g\|_1$, we get 
\begin{align*}
K_2  \leq  c   \frac{\ee^{1/12} }{\sqrt{n}} \int_{\bb R_+}  g(t + \ee)    e^{- \frac{t^2}{16 \sigma^2 \sqrt{\ee} n}}  dt 
     +  \frac{c_{\ee}}{n^{1/2 + \ee}} \|g\|_1.  
\end{align*}
Combining this with \eqref{TypeE_KKK-111-001} and \eqref{BoundK11hh}, we derive that 
\begin{align*}
I_{n,2}(x) 
&  \leq   c  \frac{\ee^{1/12}}{\sqrt{n}}    \int_{\bb R} \Big[ g(t) + g(t + \ee) \Big]  e^{- \frac{t^2}{2 \sigma^2 n}}  dt  
  +  \frac{c_{\ee} }{ n^{ 1/2 + \ee } }   \|g\|_1.  
\end{align*}
This, together with \eqref{TypeD_Bound_In1}, 
finishes the proof of the lower bound \eqref{eqt-C 002}. 
\end{proof}

\subsection{Proof of Theorems \ref{Theorem-BB001} and \ref{Theorem-BB001-Delta}}
In this section we prove Theorems \ref{Theorem-BB001} and \ref{Theorem-BB001-Delta} by making use of 
Theorem \ref{t-C 002}.

\begin{proof}[Proof of Theorem \ref{Theorem-BB001}]
Without loss of generality, we assume that the target function $f$ is non-negative.

Since $f \leq_{\ee} \overline f_{\delta,\ee}$ with $\overline f_{\delta,\ee}$ defined by \eqref{Def_f_deltaee}, 
applying the upper bound \eqref{eqt-C 001} of Theorem \ref{t-C 002} with $g = \overline f_{\delta,\ee}$, 
we derive that
\begin{align*}
I_n: & =  \mathbb{E} \left( f( x + S_n - y); \, \tau_x > n \right)     \notag\\
 & \leq   \frac{ 1 +  c \ee }{\sigma \sqrt{ n }  }    
  \int_{\bb{R} }  \overline f_{\delta,\ee} (t)     
   \psi \left( \frac{t + y}{\sigma \sqrt{n}}, \frac{ x }{\sigma \sqrt{n}}  \right)    dt   \notag\\
 & \quad  +  \frac{ 1 +  c \ee }{\sigma \sqrt{ n }  }   
    \int_{\bb R}   \overline f_{\delta,\ee} (t)
  \left[ 1 - \Phi \left( \frac{t + y}{\sigma \ee^{1/4} \sqrt{n}} \right) \right]  dt 
    +  \frac{c_{\ee}}{  n^{ 1/2 + \ee }}  
    \int_{\bb R}  \overline f_{\delta,\ee} (t)    dt.  
\end{align*}
Since the function $\psi (\cdot, x)$ is Lipschitz continuous on $\bb R$, 
by elementary calculations, there exists a constant $c>0$ such that for any $t\in \bb R$,
$x \in [\eta^{-1} \sqrt{n}, \eta \sqrt{n}]$  and $y \in [\eta \sqrt{n}, \sigma \sqrt{q n \log n}]$, 
\begin{align}
&\left| \psi \left( \frac{t + y}{\sigma  \sqrt{n}}, \frac{ x }{\sigma \sqrt{n}}  \right) -   
       \psi \left( \frac{y}{\sigma  \sqrt{n}}, \frac{ x }{\sigma  \sqrt{n}}  \right)  \right|
 \leq   c \frac{ |t| }{\sqrt{n}},     \label{Inequa-Psi01} \\
&  1 - \Phi \left( \frac{t + y}{\sigma \ee^{1/4} \sqrt{n}} \right)  
  \leq  e^{- c/ \sqrt{\ee}}  \psi \left( \frac{y}{\sigma \sqrt{n}}, \frac{ x }{\sigma \sqrt{n}}  \right),  
      \label{Inequa-Psi02} \\
&  \frac{c_{\ee}}{ n^{\ee} } 
 \leq  \frac{c_{\ee}}{ n^{\ee - q} }   
    \psi \left( \frac{y}{\sigma  \sqrt{n}}, \frac{ x }{\sigma  \sqrt{n}}  \right),  \label{Inequa-Psi03}
\end{align}
which implies that 
\begin{align*}
I_n & \leq  \frac{ 1 +  c \ee  }{\sigma \sqrt{ n }  }      
 \psi \left( \frac{y}{\sigma  \sqrt{n}}, \frac{ x }{\sigma  \sqrt{n}}  \right)
  \left( 1 +  \frac{c_{\ee}}{ n^{\ee - q} }  \right)
  \int_{\bb{R} }  \overline f_{\delta,\ee} (t)   dt  
  +   \frac{ c }{ n }    
  \int_{\bb{R} }  \overline f_{\delta,\ee} (t)   |t| dt. 
\end{align*}
Therefore,  uniformly in $x \in [\eta^{-1} \sqrt{n}, \eta \sqrt{n}]$ and $y \in [\eta \sqrt{n}, \sigma \sqrt{q n \log n}]$,
\begin{align*}
& \limsup_{n \to \infty}  
\frac{ \mathbb E \left( f(x+S_n - y); \tau_x > n \right)    }{  \frac{1 }{ \sigma \sqrt{n} }    
   \psi \big( \frac{y}{\sigma  \sqrt{n}}, \frac{ x }{\sigma  \sqrt{n}}  \big) }   
   \leq  (1 + c \ee )   \int_{-\ee}^{\infty} \overline f_{\delta,\ee}(t)   dt,   
\end{align*}
which proves the upper bound by
taking first $\ee \to 0$ and then $\delta \to 0$, and using Lemma \ref{lemma-DRI-convergence}.  
The proof of the lower bound can be carried out in the same way 
using the lower bound \eqref{eqt-C 002} together with the fact that 
there exists a constant $c>0$ such that 
uniformly in $x \in [\eta^{-1} \sqrt{n}, \eta \sqrt{n}]$ and $y \in [\eta \sqrt{n}, \sigma \sqrt{q n \log n}]$,
\begin{align*}
\left| \phi \left( \frac{t + y}{\sigma  \sqrt{n}}  \right) -   
       \phi \left( \frac{y}{\sigma  \sqrt{n}}  \right)  \right|
 \leq   c \frac{ |t| }{\sqrt{n}},    \quad 
  \phi \left( \frac{y}{\sigma \sqrt{n}}  \right)  
  \leq  c \psi \left( \frac{y}{\sigma  \sqrt{n}}, \frac{ x }{\sigma  \sqrt{n}}  \right). 
\end{align*}
The proof of Theorem \ref{Theorem-BB001} is complete.
\end{proof}


\begin{proof}[Proof of Theorem \ref{Theorem-BB001-Delta}]
Using \eqref{eqt-C 001} of Theorem \ref{t-C 002} with $f = \mathds 1_{[y, y + \Delta]}$
and $g = \mathds 1_{[y -\ee, y + \Delta + \ee]}$, 
we derive that uniformly in $x \in [ \eta^{-1} \sqrt{n}, \eta \sqrt{n}]$, 
\begin{align*}
I_n: & = \mathbb{P} \left( x + S_n \in [0, \Delta] + y, \tau_x > n \right)     \notag\\
& \leq   \frac{ 1 +  c \ee }{ \sigma \sqrt{ n } }  
   \int_{-\ee}^{\Delta + \ee} 
   \psi \left( \frac{t + y}{\sigma \sqrt{n}},   \frac{ x }{\sigma \sqrt{n}}    \right)    dt   
     \notag\\
& \quad +  \frac{ 1 +  c \ee }{\sigma \sqrt{ n }  }    
    \int_{-\ee}^{\Delta + \ee}   
       \left[ 1 - \Phi \left( \frac{t + y}{\sigma \ee^{1/4} \sqrt{n}} \right) \right]  dt 
         +  \frac{c_{\ee}}{  n^{1/2 + \ee} }   (\Delta + 2\ee). 
\end{align*}
Since the function $(s, x) \mapsto \psi(s,x)$ is Lipschitz continuous on $\bb R \times \bb R$, 
there exists a constant $c>0$ such that for any $\Delta \in [\Delta_0, n^{1/2 - \ee}]$,  
$t \in [-\ee, \Delta + \ee]$,  $x \in [ \eta^{-1} \sqrt{n}, \eta \sqrt{n}]$ and $y \in [\eta \sqrt{n}, \sigma \sqrt{q n \log n}]$,
\begin{align}\label{Inequpsiaa}
\left|   \psi \left( \frac{t + y}{\sigma \sqrt{n}}, \frac{ x }{\sigma \sqrt{n}}   \right) 
   -    \psi \left( \frac{y}{\sigma \sqrt{n}}, \frac{ x }{\sigma \sqrt{n}}  \right)  \right|
\leq  c \frac{t}{\sigma \sqrt{n}} \leq  \frac{ c }{ n^{\ee} }, 
\end{align}
which, together with the fact that $\Delta + 2\ee \leq  (1 + c \ee) \Delta$, implies that 
\begin{align*}
\int_{-\ee}^{\Delta + \ee}  
   \psi \left( \frac{t + y}{\sigma \sqrt{n}}, \frac{ x }{\sigma \sqrt{n}}  \right)    dt
& \leq   \Delta  (1 + c \ee)   \left[ \psi \left( \frac{y}{\sigma \sqrt{n}}, \frac{ x }{\sigma \sqrt{n}}  \right) 
   +  \frac{ c }{n^{\ee}}  \right]. 
\end{align*}
Similarly, using the inequality \eqref{Inequa-Psi02}, we get
\begin{align*}
\int_{-\ee}^{\Delta + \ee}   
       \left[ 1 - \Phi \left( \frac{t + y}{\sigma \ee^{1/4} \sqrt{n}} \right) \right]  dt
 \leq  \Delta  (1 + c \ee)   e^{- c/ \sqrt{\ee}}  \psi \left( \frac{y}{\sigma  \sqrt{n}}, \frac{ x }{\sigma  \sqrt{n}}  \right). 
\end{align*}
Therefore, taking into account \eqref{Inequa-Psi03}, 
it follows that uniformly in $\Delta \in [\Delta_0, n^{1/2 - \ee}]$,  
$x \in [ \eta^{-1} \sqrt{n}, \eta \sqrt{n}]$ and $y \in [\eta \sqrt{n}, \sigma \sqrt{q n \log n}]$,
\begin{align}\label{Uppern32Largex01}
I_n & \leq  \Delta \frac{1 + c \ee}{ \sigma \sqrt{n} }
\left[  \psi \left( \frac{y}{\sigma \sqrt{n}}, \frac{ x }{\sigma \sqrt{n}}  \right)   
    +   \frac{c}{n^{\ee}}  + \frac{c_{\ee}}{ n^{\ee} }  \right]   \notag\\
& \leq   \Delta \frac{1 + c \ee}{ \sigma \sqrt{n} }
      \psi \left( \frac{y}{\sigma \sqrt{n}}, \frac{ x }{\sigma \sqrt{n}}  \right)
     \left( 1  +  \frac{c_{\ee}}{ n^{\ee - q} }  \right), 
\end{align}
which implies that 
uniformly in $\Delta \in [\Delta_0, n^{1/2 - \ee}]$,  
$x \in [ \eta^{-1} \sqrt{n}, \eta \sqrt{n}]$ and $y \in [\eta \sqrt{n}, \sigma \sqrt{q n \log n}]$,
\begin{align*}
\limsup_{n \to \infty}  
\frac{ I_n }{    
    \frac{ \Delta }{\sigma \sqrt{n}}   
    \psi \left( \frac{y}{\sigma \sqrt{n}}, \frac{ x }{\sigma \sqrt{n}}  \right) }  
    \leq  1 + c \ee. 
\end{align*}
Since $\ee >0$ can be arbitrary  small, this proves the upper bound. 

For the lower bound, using \eqref{eqt-C 002}
with $f = \mathds 1_{[y, y + \Delta]}$, $g = \mathds 1_{[y -\ee, y + \Delta + \ee]}$
and $h = \mathds 1_{[y + \ee, y + \Delta - \ee]}$, 
we get that uniformly in $x \in [ \eta^{-1} \sqrt{n}, \eta \sqrt{n}]$, 
\begin{align*}
I_n & \geq    \frac{ 1 }{\sigma \sqrt{ n }  }    
  \int_{\ee}^{\Delta - \ee} 
   \psi \left( \frac{t + y}{\sigma \sqrt{n}},   \frac{ x }{\sigma \sqrt{n}}    \right)    dt   
    -   \frac{ c \ee }{\sqrt{ n }  }   
  \int_{0}^{\Delta} 
   \psi \left( \frac{t + y}{\sigma \sqrt{n}},   \frac{ x }{\sigma \sqrt{n}}    \right)    dt   \notag\\
 & \quad  -  \frac{ c }{ \sqrt{n}  }    
  \int_{0}^{\Delta}   
   \left[ 1 - \Phi \left( \frac{t + y}{\sigma \ee^{1/4} \sqrt{n}} \right) \right]   dt  
     -    \frac{c \ee^{1/12}}{\sqrt{n}}  
    \int_{-2 \ee}^{\Delta + \ee}    \phi \left( \frac{t + y}{\sigma \sqrt{n}} \right) dt   \notag\\
& \quad   -  \frac{c_{\ee}}{  n^{ 1/2 + \ee }}  (\Delta + 2\ee)   \notag\\
 & = : I_{n,1} - I_{n,2} - I_{n,3} - I_{n,4} - I_{n, 5}. 
\end{align*}
For $I_{n,1}$ and $I_{n,2}$, using \eqref{Inequpsiaa} and proceeding in the same way as \eqref{Uppern32Largex01}, 
we get that uniformly in $\Delta \in [\Delta_0, n^{1/2 - \ee}]$, 
\begin{align*}
I_{n,1} 
& \geq  \Delta  \frac{1 - c \ee }{ \sigma \sqrt{n} } 
      \psi \left( \frac{y}{\sigma \sqrt{n}}, \frac{ x }{\sigma \sqrt{n}}  \right)
     \left( 1  +  \frac{c_{\ee}}{ n^{\ee - q} }  \right),    \notag\\
I_{n,2} 
&  \leq   \Delta  \frac{c \ee }{ \sqrt{n} }  
      \psi \left( \frac{y}{\sigma \sqrt{n}}, \frac{ x }{\sigma \sqrt{n}}  \right)
     \left( 1  +  \frac{c_{\ee}}{ n^{\ee - q} }  \right).
\end{align*}
For $I_{n,3}$,  using the inequality \eqref{Inequa-Psi02} yields that 
\begin{align*}
I_{n,3} \leq  \Delta \frac{c e^{- c/ \sqrt{\ee}} }{ \sqrt{n} }
      \psi \left( \frac{y}{\sigma \sqrt{n}}, \frac{ x }{\sigma \sqrt{n}}  \right).
\end{align*}
For $I_{n,4}$,  by \eqref{Inequa-Psi03}, 
 there exists a constant $c>0$ such that for any $\Delta \in [\Delta_0, n^{1/2 - \ee}]$,  
$t \in [-2 \ee, \Delta + \ee]$,  $x \in [ \eta^{-1} \sqrt{n}, \eta \sqrt{n}]$ and $y \in [\eta \sqrt{n}, \sigma \sqrt{q n \log n}]$,
\begin{align*}
  \phi \left( \frac{t + y}{\sigma \sqrt{n}}   \right) 
&  \leq   \phi \left( \frac{y}{\sigma \sqrt{n}}  \right)  +   \frac{ c }{ n^{\ee} }  
 \leq  \psi \left( \frac{y}{\sigma \sqrt{n}}, \frac{ x }{\sigma \sqrt{n}}  \right)
   \left( 1  +  \frac{c_{\ee}}{ n^{\ee - q} }  \right).  
\end{align*}
This, together with the fact that $\Delta + 3 \ee \leq (1 + c \ee) \Delta$,  implies that 
\begin{align*}
I_{n,4} \leq  \Delta \frac{c \ee^{1/12} }{ \sqrt{n} }
      \psi \left( \frac{y}{\sigma \sqrt{n}}, \frac{ x }{\sigma \sqrt{n}}  \right)
     \left( 1  +  \frac{c_{\ee}}{ n^{\ee - q} }  \right).
\end{align*}
For $I_{n,5}$, using again $\Delta + 2 \ee \leq (1 + c \ee) \Delta$ and  \eqref{Inequa-Psi03}, we derive that 
uniformly in $\Delta \in [\Delta_0, n^{1/2 - \ee}]$, 
$x \in [ \eta^{-1} \sqrt{n}, \eta \sqrt{n}]$ and $y \in [\eta \sqrt{n}, \sigma \sqrt{q n \log n}]$,
\begin{align*}
I_{n,5} & \leq  \Delta \frac{c_{\ee}}{  n^{ 1/2 + \ee }} 
  \leq   \Delta \frac{c_{\ee}}{ n^{1/2 + \ee - q} }    \psi \left( \frac{y}{\sigma \sqrt{n}}, \frac{ x }{\sigma \sqrt{n}}  \right).  
\end{align*}
Putting together the above bounds for $I_{n,1}$,  $I_{n,2}$, $I_{n,3}$, $I_{n,4}$ and $I_{n, 5}$, 
letting first $n \to \infty$ and then $\ee \to 0$, we obtain the desired lower bound. 
The proof of Theorem \ref{Theorem-BB001-Delta} is complete.  
\end{proof}

\section{Conditioned local limit theorem far from the boundary}

The goal of this section is to establish 
Theorems \ref{Theorem-BB002}, \ref{Theorem-BB002bis}, \ref{Thm-Iglehart001}, \ref{Thm-Iglehart002},
\ref{Thm-LLT-taux-xsmall} and \ref{Thm-LLT-taux-xlarge}.  
Recall that $\phi^+_{v}(s) = \frac{s}{v} e^{-\frac{s^2}{2 v}} \mathds 1_{\{s \geq 0\}}$, $s\in \mathbb R$
is the Rayleigh density with scale parameter $\sqrt{v}$,
and $\psi_v$ is defined by \eqref{Def-psi-v}.
The following lemma will be used to prove Theorems \ref{Theorem-BB002} and \ref{Theorem-BB002bis}.  

\begin{lemma}\label{Lem_Densities_Raileigh_Levy}
For any  $x \geq 0$ and $v \in (0,1)$,  we have 
\begin{align*}
\int_{\bb R_+}    \phi^+_v (s)   \psi_{1-v} \left(  s,  x \right)  ds
=  \sqrt{v} \phi^+ (x). 
\end{align*}
\end{lemma}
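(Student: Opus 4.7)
The plan is to exploit a symmetry: extend $\phi^+_v$ to all of $\bb R$ by the formula $\widetilde\phi_v(s):=\frac{s}{v}e^{-s^2/(2v)}$, which is an odd function of $s$. Since $\psi_{1-v}(\cdot,x)$ is also odd in its first argument, the product $\widetilde\phi_v(s)\psi_{1-v}(s,x)$ is even, and therefore
\begin{align*}
\int_{\bb R_+}\phi^+_v(s)\psi_{1-v}(s,x)\,ds
=\tfrac12\int_{\bb R}\widetilde\phi_v(s)\psi_{1-v}(s,x)\,ds.
\end{align*}
This removes the boundary at $0$ and reduces the problem to two full-line Gaussian integrals, one for each term in the definition of $\psi_{1-v}$.

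Next I would complete the square in each of these two integrals. For the first term, using the identity
\begin{align*}
-\frac{s^2}{2v}-\frac{(s-x)^2}{2(1-v)}=-\frac{(s-vx)^2}{2v(1-v)}-\frac{x^2}{2},
\end{align*}
the inner Gaussian becomes $\frac{1}{\sqrt{2\pi(1-v)}}e^{-(s-x)^2/(2(1-v))}e^{-s^2/(2v)}=\sqrt{v}\,e^{-x^2/2}\cdot\frac{1}{\sqrt{2\pi v(1-v)}}e^{-(s-vx)^2/(2v(1-v))}$. Multiplying by $s/v$ and integrating, one recognises the first moment of a Gaussian of mean $vx$, giving $\sqrt{v}\,e^{-x^2/2}\cdot\frac{vx}{v}=\sqrt{v}\,x\,e^{-x^2/2}$. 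The second term, obtained by replacing $x$ with $-x$, yields $-\sqrt{v}\,x\,e^{-x^2/2}$ by the same computation. Subtracting and halving gives $\sqrt{v}\,x\,e^{-x^2/2}\mathds{1}_{\{x\ge0\}}=\sqrt{v}\,\phi^+(x)$, as desired.

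There is really no hard step here; the whole argument is a one-line symmetry reduction followed by a standard square-completion. The only thing to be slightly careful about is the placement of the factor $\sqrt{v}$ arising from the mismatch between the normaliser $\frac{1}{\sqrt{2\pi(1-v)}}$ of $\psi_{1-v}$ and the normaliser $\frac{1}{\sqrt{2\pi v(1-v)}}$ of the Gaussian obtained after completing the square, which is what ultimately produces the stated prefactor $\sqrt{v}$ on the right-hand side. Nothing here depends on $x\ge0$ beyond the definition of $\phi^+$, and the case $x=0$ follows trivially since both sides vanish (the integrand is identically zero when $x=0$).
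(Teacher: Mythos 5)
Correct, but the route genuinely differs from the paper's. The paper's proof stays on $\bb R_+$ throughout: after completing the square under the integral sign it arrives at half-line Gaussian integrals $\int_{-vx}^{\infty}$ and $\int_{vx}^{\infty}$ together with a residual term $v(1-v)e^{-vx^2/(2(1-v))}$ that has to cancel between the two pieces, and only at the very end invokes the evenness of $e^{-t^2/(2v(1-v))}$ to reassemble a full-line integral. You exploit the same underlying symmetry from the outset: extending $\phi^+_v$ to the odd function $\widetilde\phi_v$ and using that $\psi_{1-v}(\cdot,x)$ is odd in its first argument (a fact the paper itself records and uses in the proof of Lemma~\ref{convol-phi-psi-001}), the integrand becomes even on $\bb R$, so the half-line integral is half a full-line integral, which then reduces directly to two first moments of Gaussians after completing the square. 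Both arguments rest on the same square-completion identity; yours avoids the half-line bookkeeping and the cancellation of the $v(1-v)$ residuals entirely, and makes the mechanism visible at the first step, which is cleaner. The sign and factor-of-two arithmetic in your last step checks out: the full-line integral is $\sqrt{v}\,xe^{-x^2/2}-\bigl(-\sqrt{v}\,xe^{-x^2/2}\bigr)=2\sqrt{v}\,xe^{-x^2/2}$, and halving gives $\sqrt{v}\,\phi^+(x)$.
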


\begin{proof}
Notice that
\begin{align*}
& \int_{\bb R_+}    \phi^+_v (s)   \psi_{1-v} \left(  s,  x \right)  ds   \notag\\
& =  \frac{1}{\sqrt{2 \pi (1-v)}} \int_{\bb R_+}    \frac{s}{ v }  e^{- \frac{s^2}{2 v}}
 \left( e^{ - \frac{ (s - x)^2 }{2 (1-v)}  } -   e^{ - \frac{ (s + x)^2 }{2 (1-v)}  }  \right)  ds     \notag\\
& =  \frac{1}{v \sqrt{2 \pi (1-v)}}  e^{- \frac{ x^2 }{2 (1-v)} }  
   \int_{\bb R_+}   s e^{- \frac{s^2}{2 v(1-v)}}  \left( e^{ \frac{s x}{1-v} } - e^{ -\frac{s x}{1-v} } \right)  ds. 
\end{align*}
For the first integral, we have that for any $x \geq 0$, 
\begin{align}\label{IntegralIdenaa}
&   \int_{\bb R_+}   s e^{- \frac{s^2}{2 v(1-v)} + \frac{s x}{1-v} }   ds
 =  e^{\frac{ v x^2}{2(1-v)} }  \int_{\bb R_+} s e^{- \frac{ (s - vx)^2 }{ 2 v(1-v) } }  ds  \notag\\
& =  e^{\frac{ v x^2}{2(1-v)} }
  \int_{ -  vx }^{\infty}  \left( t +  vx \right) e^{- \frac{t^2}{2 v(1-v)} } dt  \notag\\
& =  e^{\frac{ v x^2}{2(1-v)} }  \left[  v(1-v)  e^{- \frac{ v x^2}{2 (1-v)} } 
   +   vx \int_{ - vx }^{\infty}  e^{- \frac{t^2}{2 v(1-v)} }  dt \right]. 
\end{align}
For the second integral, by replacing $x$ with $-x$, we get
\begin{align}\label{IntegralIdenbb}
 \int_{\bb R_+}   s e^{- \frac{s^2}{2 v(1-v)} - \frac{s x}{1-v} }   ds
 =  e^{\frac{ v x^2}{2(1-v)} }  \left[  v(1-v)  e^{- \frac{ v x^2}{2 (1-v)} } 
   -   vx \int_{ vx }^{\infty}  e^{- \frac{t^2}{2 v(1-v)} }  dt \right].      
\end{align}
Therefore, taking the difference between \eqref{IntegralIdenaa} and \eqref{IntegralIdenbb}, we obtain
\begin{align*}
&  \int_{\bb R_+}    \phi^+_v (s)   \psi_{1-v} \left(  s,  x \right)  ds   \notag\\
&  =   \frac{1}{v \sqrt{2 \pi (1-v)}}  e^{- \frac{ x^2 }{2} } 
 \left(  vx \int_{ - vx }^{\infty}  e^{- \frac{t^2}{2 v(1-v)} }  dt  
      +   vx \int_{ vx }^{\infty}  e^{- \frac{t^2}{2 v(1-v)} }  dt  \right)   \notag\\
 & =  \frac{1}{\sqrt{2 \pi  (1-v)}} x e^{- \frac{ x^2 }{2} }     \int_{\bb R} e^{- \frac{t^2}{2 v(1-v)} }  dt   
  = \sqrt{v} \phi^+ (x), 
\end{align*}
which ends the proof of the lemma. 
\end{proof}

\subsection{A non-asymptotic conditioned local limit theorem} 

\begin{theorem} \label{theorem-n3/2-upper-lower-bounds_Large}
Assume \ref{A1} and  \ref{SecondMoment}. 
Then, for any $\eta >0$, there exist constants $c>0$ and $\ee_0 >0$ such that for any $\ee \in (0, \ee_0)$
and any sequence of positive numbers $(\alpha_n)_{n \geq 1}$ satisfying $\lim_{n \to \infty} \alpha_n = 0$, 
one can find a constant $c_{\ee} >0$ such that 
uniformly in $x\in [n^{1/2 - \ee}, \eta \sqrt{n}]$ and $n\geq 1$,  the following holds: 

\noindent 1.  For any measurable functions $f, g: \bb R \mapsto \bb R_+$ satisfying $f \leq_{\ee} g$ and 
$\int_{\bb R_+} g(t-\ee) (1+t) dt < \infty$, 
\begin{align}\label{theorem-n3/2 001_Large}
 & \mathbb{E} \left( f( x + S_n);  \tau_x > n \right)   \notag\\
& \leq   \frac{2}{\sigma^2 \sqrt{2\pi} n }  
 \left[ \phi^+ \left( \frac{x}{\sigma \sqrt{n}} \right)  +   \left( c \ee^{1/4} +  c_{\ee} \alpha_n  \right)   \right]
  \int_{0}^{ \alpha_n \sqrt{n}  }   g (t-\ee) V^* (t) dt   \notag\\
& \quad +  \frac{c }{\sqrt{n}}  \int_{  \alpha_n \sqrt{n} }^{\infty}    g (t-\ee)  dt
   + \frac{ c_{\ee} }{n^{1 +\ee }} \int_{\bb R_+} g(t-\ee) (1+t) dt. 
 \end{align}
 
\noindent 2.  For any measurable functions $f, g, h: \bb R \mapsto \bb R_+$ 
satisfying $h \leq_{\ee} f \leq_{\ee} g$ and $\int_{\bb R_+} g(t-\ee) (1+t) dt < \infty$, 
\begin{align}\label{theorem-n3/2 002_Large}
&  \mathbb{E}\left( f \left( x + S_n \right); \tau_x > n \right)   \notag\\
& \geq  \frac{2}{\sigma^2 \sqrt{2\pi} n }  
 \left[  \phi^+ \left( \frac{x}{\sigma \sqrt{n}} \right) -  \left( c \ee +  c_{\ee} \alpha_n \right) \right]  
  \int_{0}^{ \alpha_n \sqrt{n}  }   h (t+\ee)   V^*(t) dt   \notag \\ 
 & \quad  -  \frac{c }{\sqrt{n}}  \int_{  \alpha_n \sqrt{n} }^{\infty}    g (t-\ee)  dt
  -  \left( \frac{c \ee^{1/12} }{ n } +  \frac{ c_{\ee} }{n^{1 + \ee }}  \right)   \int_{\bb R_+} g(t-\ee) (1+t) dt. 
 \end{align}
\end{theorem}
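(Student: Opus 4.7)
The plan is to mimic the proof of Theorem \ref{theorem-n3/2-upper-lower-bounds}, replacing the near-boundary concentration bound (Theorem \ref{t-B 002}) by its far-from-the-boundary counterpart (Theorem \ref{t-C 002}), and replacing the convolution Lemma \ref{t-Aux lemma} by Lemma \ref{Lem_Densities_Raileigh_Levy} which accounts for the factor $\phi^+(x/(\sigma\sqrt{n}))$ in the main term. Set $m=[n/2]$, $k=n-m$, and by the Markov property write
\begin{align*}
I_n(x):=\mathbb{E}(f(x+S_n);\tau_x>n)=\int_{\bb{R}_+}I_m(t)\,\mathbb{P}(x+S_k\in dt,\tau_x>k)
\end{align*}
with $I_m(t):=\mathbb{E}(f(t+S_m);\tau_t>m)$. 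Exactly as in the proof of Theorem \ref{theorem-n3/2-upper-lower-bounds}, introduce the upper $\ee$-envelope $H_m(t):=\mathbb{E}(g(t+S_m);\tau_{t+\ee}>m)\,\mathds 1_{\{t\geq-\ee\}}$, so that $I_m\leq_{\ee}H_m$, and by Lemma \ref{lemma Qng integr}, $\|H_m\|_1\leq (c/\sqrt{n})\int_{\bb{R}_+}g(t-\ee)(1+t)\,dt$.

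Next, apply the upper bound \eqref{eqt-C 001} of Theorem \ref{t-C 002} to $\mathbb{E}(H_m(x+S_k);\tau_x>k)$, which is legitimate since $x$ lies in a suitable range with respect to $k\sim n/2$. This yields $I_n(x)\leq (1+c\ee)(J_1+J_2)+\frac{c_{\ee}}{n^{1/2+\ee}}\|H_m\|_1$, where
\begin{align*}
J_1=\frac{1}{\sigma\sqrt{k}}\int_{\bb{R}}H_m(s)\,\psi\!\left(\frac{s}{\sigma\sqrt{k}},\frac{x}{\sigma\sqrt{k}}\right)ds,
\end{align*}
and $J_2$ carries the auxiliary Gaussian tail $1-\Phi(\cdot/(\sigma\ee^{1/4}\sqrt{k}))$. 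Using the duality formula (Lemma \ref{lemma-duality-lemma-2_Cor}) and a change of variable, transform $J_1$ into $J_1=\frac{1}{\sigma\sqrt{k}}\int_{\bb{R}_+}g(t-\ee)\,\mathbb{E}[\psi((t+S_m^*-\ee)/(\sigma\sqrt{k}),x/(\sigma\sqrt{k}));\tau_t^*>m]\,dt$, which is then split at $t=\alpha_n\sqrt{n}$ into $J_{11}+J_{12}$. The tail piece $J_{11}$ is bounded by the uniform boundedness of $\psi$, giving the third term in \eqref{theorem-n3/2 001_Large}.

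For the main piece $J_{12}$, with $t\in[0,\alpha_n\sqrt{n}]$ the dual walk has a small starting point; by integration by parts combined with the conditioned integral limit theorem (Corollary \ref{Cor-CCLT-Optimal}) for the dual walk, the inner expectation equals $\frac{2V^*(t)}{\sigma\sqrt{2\pi m}}\cdot\sqrt{k/m}\int_0^\infty\phi^+(u)\,\psi(u\sqrt{m/k}+o(1),x/(\sigma\sqrt{k}))\,du$ up to an error of order $c_{\ee}(\alpha_n+n^{-\ee})V^*(t)/\sqrt{n}$. Two successive changes of variables identify the inner integral with $\sqrt{k/n}\int_0^\infty\phi_{m/n}^+(u)\,\psi_{k/n}(u,x/(\sigma\sqrt{n}))\,du$, at which point Lemma \ref{Lem_Densities_Raileigh_Levy} with $v=m/n$ yields $\sqrt{m/n}\,\phi^+(x/(\sigma\sqrt{n}))$. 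Collecting factors, the inner expectation is $\frac{2V^*(t)\sqrt{k}}{\sigma\sqrt{2\pi}\,n}\phi^+(x/(\sigma\sqrt{n}))$, and after multiplying by the $1/(\sigma\sqrt{k})$ prefactor this is precisely the main term in \eqref{theorem-n3/2 001_Large}. The auxiliary term $J_2$ is handled by the same scheme, the factor $\ee^{1/4}$ in front absorbing the contribution into the error $(c\ee^{1/4}+c_{\ee}\alpha_n)$. The lower bound \eqref{theorem-n3/2 002_Large} is established symmetrically, using \eqref{eqt-C 002} with the lower $\ee$-envelope $L_m(t):=\mathbb{E}(h(t+S_m);\tau_{t-\ee}>m)\mathds 1_{\{t\geq\ee\}}$; the negative correction $c\ee^{1/12}$-term in \eqref{eqt-C 002} produces the $c\ee^{1/12}$ factor in \eqref{theorem-n3/2 002_Large}.

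The main obstacle is the bookkeeping for Step three: one must verify that the $m/k/n$ rescalings interact in exactly the right way for Lemma \ref{Lem_Densities_Raileigh_Levy} to apply with parameter $v=m/n$ and deliver the clean factor $\phi^+(x/(\sigma\sqrt{n}))$ in the leading term. In addition, the extra summand $\int g(s)[1-\Phi(s/(\sigma\ee^{1/4}\sqrt{n}))]\,ds$ in the bounds \eqref{eqt-C 001}--\eqref{eqt-C 002} has no analog in the near-boundary case and must be shown to contribute only to the error, for which Mills-type estimates on $1-\Phi$ together with the splitting at $\alpha_n\sqrt{n}$ suffice.
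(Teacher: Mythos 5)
Your proposal follows essentially the same route as the paper's own proof: the same Markov-property split at $m=[n/2]$, the same $\ee$-envelope $H_m$, the same invocation of Theorem \ref{t-C 002} in place of Theorem \ref{t-B 002}, the same duality transformation and splitting at $\alpha_n\sqrt{n}$, and the same use of Corollary \ref{Cor-CCLT-Optimal} plus the convolution identity of Lemma \ref{Lem_Densities_Raileigh_Levy} with $v = m/n$ to produce the factor $\phi^+(x/(\sigma\sqrt{n}))$ in the leading term. The bookkeeping you flag as the "main obstacle" works out exactly as you predict, and the extra Gaussian-tail term from \eqref{eqt-C 001}--\eqref{eqt-C 002} is indeed absorbed into the $(c\ee^{1/4} + c_\ee\alpha_n)$ error and the tail integral, as in the paper.
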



\begin{proof}
We first prove the upper bound \eqref{theorem-n3/2 001_Large}.
Set $m=\left[ n/2 \right]$ and $k = n-m.$   
It is shown in \eqref{JJJ-markov property_aa}, \eqref{DefImtbb}  and \eqref{DefHmt} that 
\begin{align*} 
I_n(x) : = \mathbb{E}  \left( f (x+S_n );\tau_x >n\right)  
 = \int_{\mathbb R_+} I_m(t) \mathbb{P}\left( x+S_{k}\in dt, \tau_x >k\right)
\end{align*}
and $I_m \leq_{\ee} H_m$, where 
\begin{align*} 
H_m(t) := \mathbb E  \left( g \left( t+S_{m} \right) ;\tau _{t+\ee}>m\right)  \mathds 1_{ \{t \geq- \ee\} },  \quad t \in \bb R. 
\end{align*}
By the upper bound \eqref{eqt-C 001} of Theorem \ref{t-C 002}, 
we get that uniformly in $x\in [n^{1/2 - \ee}, \eta \sqrt{n}]$, 
\begin{align}\label{UpperBoundn32Largex}
I_n(x) 
\leq  (1 + c \ee)  \left( J_1 + J_2 + J_3 \right),
\end{align}
where 
\begin{align*}
& J_1 =  \frac{1}{\sigma \sqrt{k}} \int_{ \bb{R} } H_{m} (t)  \psi \left(\frac{t}{\sigma \sqrt{k}}, \frac{x}{\sigma \sqrt{k}} \right) dt,  \notag\\
& J_2 = \frac{1}{\sigma \sqrt{k}}
 \int_{\bb{R}} H_{m} (t)  \left[ 1 - \Phi \left( \frac{t}{\sigma \ee^{1/4} \sqrt{k}} \right) \right]  dt,   \quad
 J_3 = \frac{c_{\ee}}{  n^{ 1/2 + \ee} }  \left\Vert H_{m} \right\Vert_{1}. 
\end{align*}
\textit{Bound of $J_1$.} 
As in \eqref{BoundJ1rr}, \eqref{DefJ11rr} and \eqref{DefJ12rr}, using a change of variable 
and the duality formula (Lemma \ref{lemma-duality-lemma-2_Cor}), we get
\begin{align}\label{Seconthm-Decom-J1}
J_1 
&= \frac{1}{\sigma \sqrt{k}}  \int_{- \ee}^{\infty}  \mathbb E ( g(t+ S_m); \tau_{t+\ee} >m) 
\psi \left(\frac{t}{\sigma \sqrt{k}}, \frac{x}{\sigma \sqrt{k}} \right)  dt  \notag\\
&= \frac{1}{\sigma \sqrt{k}} \int_{\bb R_+} 
   \mathbb E ( g(t + S_m-\ee); \tau_{t} >m) 
   \psi \left(\frac{ t -\ee}{\sigma \sqrt{k}},  \frac{x}{\sigma \sqrt{k}}  \right) dt  \notag\\
& =  \frac{1}{\sigma \sqrt{k}}  \int_{\bb R_+}   g (t-\ee)   
 \bb E \left[ \psi \left( \frac{t + S_m^* - \ee }{\sigma \sqrt{k}},  \frac{x}{\sigma \sqrt{k}}  \right);  \tau_{t}^* > m  \right]  dt  \notag\\
& = : J_{11} + J_{12}, 
\end{align}
where 
\begin{align*}
J_{11} & = \frac{1}{\sigma \sqrt{k}}  \int_{ \alpha_n \sqrt{n} }^{ \infty }   g (t-\ee)   
 \bb E \left[ \psi \left( \frac{t + S_m^* - \ee }{\sigma \sqrt{k}},  \frac{x}{\sigma \sqrt{k}}  \right);  \tau_{t}^* > m  \right]  dt,   \notag\\
J_{12}  & = \frac{1}{\sigma \sqrt{k}}  \int_{0}^{  \alpha_n \sqrt{n} }   g (t-\ee)   
 \bb E \left[ \psi \left( \frac{t + S_m^* - \ee }{\sigma \sqrt{k}},  \frac{x}{\sigma \sqrt{k}}  \right);  \tau_{t}^* > m  \right]  dt.  
\end{align*}
\textit{Bound of $J_{11}$.}
Since the function $\psi (\cdot, \cdot)$ is bounded on $\bb R \times \bb R$,  
we get 
\begin{align}\label{BoundJ11Order32Largex}
J_{11}  \leq  \frac{c }{\sqrt{n}}  \int_{ \alpha_n \sqrt{n} }^{\infty}    g (t-\ee)  dt.  
\end{align}
\textit{Bound of $J_{12}$.}
For brevity we denote $\psi'_1 (u, x) = \frac{d}{du} \psi(u,x)$. 
Following the proof of \eqref{ExpectaPhiUpp} and \eqref{J12hh01}, 
from the conditioned integral limit theorem \eqref{C-CLTalphan} of Corollary \ref{Cor-CCLT-Optimal}
and the fact that $\int_{\bb R_+} | \psi'_1 ( u,  \frac{x}{\sigma \sqrt{k}} )  | du < \infty$, 
one has that uniformly in $t \in [0, \alpha_n \sqrt{n}]$, 
\begin{align*}
\bigg| 
& \bb E \left[ \psi \left( \frac{t + S_m^* - \ee }{\sigma \sqrt{k}},  \frac{x}{\sigma \sqrt{k}}  \right);  \tau_{t}^* > m)  \right]   \notag\\
& -  \frac{2V^*(t)}{\sigma \sqrt{2\pi m}}  \int_{\bb R_+}  \psi'_1 \left( u,  \frac{x}{\sigma \sqrt{k}} \right) 
    \left( 1 -  \Phi^+ (  u_{k,m,\ee} ) \right)  du 
 \bigg|    \leq  c_{\ee} \left(  \alpha_n  + n^{-\ee} \right)  \frac{V^*(t)}{ n^{1/2} },
\end{align*}
where $u_{k,m,\ee} = \frac{ u \sigma \sqrt{k} +  \ee}{\sigma \sqrt{m}}$. 
Using integration by parts and the fact that the function $(\phi^+)'$ is Lipschitz on $\bb R_+$
and $\int_{\bb R_+} \psi ( u,  \frac{x}{\sigma \sqrt{k}} ) du < \infty$ uniformly in $x \in [n^{1/2 - \ee}, \eta \sqrt{n}]$, 
we get
\begin{align*}
& \int_{\bb R_+} \psi'_1 \left( u,  \frac{x}{\sigma \sqrt{k}} \right)
    \left( 1 -  \Phi^+ (  u_{k,m,\ee} ) \right)  du  \notag\\
&   =  \sqrt{\frac{k}{m}} \int_{\bb R_+}  \psi \left( u,  \frac{x}{\sigma \sqrt{k}} \right) 
  \phi^+  (  u_{k,m,\ee} ) du   \notag\\
 & \leq  \sqrt{\frac{k}{m}}  \int_{\bb R_+}  \psi \left( u,  \frac{x}{\sigma \sqrt{k}} \right)   
    \phi^+ \left( u \frac{\sqrt{k}}{\sqrt{m}} \right) du  +  \frac{c \ee}{\sqrt{n}}   \notag\\
 & =    \sqrt{\frac{n}{m}}  \int_{\bb R_+}  \psi \left( \frac{s}{\sqrt{k/n}},  \frac{x}{\sigma \sqrt{k}} \right)   
    \phi^+ \left(  \frac{s}{\sqrt{m/n}} \right) ds  +  \frac{c \ee}{\sqrt{n}}        \notag\\
 & =   \sqrt{\frac{k}{n}}  \int_{\bb R_+}  \psi_{\frac{k}{n}} \left( s,  \frac{x}{\sigma \sqrt{n}} \right)   
    \phi^+_{\frac{m}{n}} (s) ds  +  \frac{c \ee}{\sqrt{n}}        \notag\\
&  =  \frac{\sqrt{km}}{n}  \phi^+ \left( \frac{x}{\sigma \sqrt{n}} \right)  +  \frac{c \ee}{\sqrt{n}}, 
\end{align*}
where in the last equality we used Lemma \ref{Lem_Densities_Raileigh_Levy}.
Thus,  we deduce that
\begin{align} \label{BoundJ12Order32_Large}
J_{12}
 &  \leq \frac{2}{\sigma^2 \sqrt{2\pi} n }  
 \left[ \phi^+ \left( \frac{x}{\sigma \sqrt{n}} \right)  +  c_{\ee} \left(   \alpha_n  +  n^{-\ee} \right)   \right]
  \int_{0}^{ \alpha_n \sqrt{n}  }   g (t-\ee) V^* (t) dt. 
\end{align}
\textit{Bound of $J_2$.} In the same way as in \eqref{Seconthm-Decom-J1}, 
using a change of variable 
and the duality formula (\eqref{eq-duality-lemma-002_Cor} of Lemma \ref{lemma-duality-lemma-2_Cor}), one has
\begin{align}\label{Pf-Def-J2-yy}
J_2
 =  \frac{1}{\sigma \sqrt{k}}  \int_{\bb R_+}   g (t-\ee)   
 \bb E \left[ 1 - \Phi \left( \frac{t + S_m^* - \ee }{ \sigma \ee^{1/4} \sqrt{k} }  \right);  \tau_{t}^* > m  \right]  dt. 
\end{align}
We decompose $J_2$ into two parts: $J_2 = J_{21} + J_{22}$, where 
\begin{align*}
J_{21} & = \frac{1}{\sigma \sqrt{k}}  \int_{ \alpha_n \sqrt{n} }^{ \infty }   g (t-\ee)   
 \bb E \left[ 1 - \Phi \left( \frac{t + S_m^* - \ee  }{ \sigma \ee^{1/4} \sqrt{k} }  \right);  \tau_{t}^* > m  \right]  dt,   \notag\\
J_{22}  & = \frac{1}{\sigma \sqrt{k}}  \int_{0}^{ \alpha_n \sqrt{n} }   g (t-\ee)   
 \bb E \left[ 1 - \Phi \left( \frac{t + S_m^* - \ee }{ \sigma \ee^{1/4} \sqrt{k} }  \right);  \tau_{t}^* > m  \right]  dt.  
\end{align*}
\textit{Bound of $J_{21}$.}
Since the function $1 - \Phi$ is bounded,  
we get 
\begin{align}\label{J21Order32-Large-x}
J_{21}  \leq  \frac{c }{\sqrt{n}}  \int_{ \alpha_n \sqrt{n} }^{\infty}    g (t-\ee)  dt.  
\end{align}
\textit{Bound of $J_{22}$.}
Proceeding in the same way as in \eqref{ExpectaPhiUpp} and \eqref{J12hh01}, 
from the conditioned integral limit theorem \eqref{C-CLTalphan} of Corollary \ref{Cor-CCLT-Optimal}
and the fact that $1 - \Phi (\infty) = 0$, 
one has that uniformly in $t \in [0, \alpha_n \sqrt{n}]$, 
\begin{align}\label{Pf-xlarge-LLT-ee}
& \left| 
 \bb E \left[ 1 - \Phi \left( \frac{t + S_m^* - \ee }{ \sigma \ee^{1/4} \sqrt{k} }  \right);  \tau_{t}^* > m  \right]   
 -  \frac{2V^*(t)}{\sigma \sqrt{2\pi m}}  \int_{\bb R_+}  \phi (v) 
    \Phi^+ (  v_{k,m, \ee} )   dv  \right|  \notag\\
&  \leq  c_{\ee} \left(  \alpha_n  + n^{-\ee} \right)  \frac{V^*(t)}{ n^{1/2} },
\end{align}
where $v_{k,m, \ee} = \frac{  \ee^{1/4} \sigma \sqrt{k} v  + \ee}{ \sigma \sqrt{m}}$. 
Since the function $\Phi^+$ is Lipschitz continuous, by elementary calculations, we have 
\begin{align*}
\int_{\bb R_+}  \phi (v) \Phi^+ (  v_{k,m, \ee} )   dv
& \leq   \int_{\bb R_+}  \phi (v) \Phi^+ \left( \ee^{1/4} \frac{\sqrt{k}}{\sqrt{m}}  v  \right)   dv  +  \frac{c \ee}{\sqrt{n}}  \notag\\
& = \frac{1}{2} - \frac{1}{2}  \left( 1 + \frac{\sqrt{\ee} k}{m} \right)^{-1/2}  +  \frac{c \ee}{\sqrt{n}}  \leq c \ee^{1/4}. 
\end{align*}
Combining this with \eqref{Pf-xlarge-LLT-ee} and \eqref{J21Order32-Large-x}, we derive that
\begin{align}\label{J2-32-Large}
J_2   & \leq   \frac{1}{ n }  
  \left( c \ee^{1/4} +  c_{\ee} \alpha_n  + c_{\ee} n^{-\ee} \right)  
  \int_{0}^{  \alpha_n \sqrt{n}  }   g (t-\ee) V^* (t) dt   \notag\\
& \quad +  \frac{c }{\sqrt{n}}  \int_{ \alpha_n \sqrt{n} }^{\infty}    g (t-\ee)  dt. 
\end{align}
\textit{Bound of $J_3$.}
By \eqref{BoundHmsmallx}, we have 
\begin{align}\label{BoundJ2Order32_Large}
J_3 
&\leq     \frac{ c_{\ee} }{n^{1 +\ee }}  \int_{\bb R_+} g(t-\ee) (1+t) dt.
\end{align}
Putting together \eqref{BoundJ11Order32Largex}, \eqref{BoundJ12Order32_Large}, \eqref{J2-32-Large}
 and \eqref{BoundJ2Order32_Large}, 
we conclude the proof of the upper bound \eqref{theorem-n3/2 001_Large} of the theorem. 

We next sketch the proof of the lower bound \eqref{theorem-n3/2 002_Large}. 
Similarly to the proof of \eqref{UpperBoundn32Largex}, we use the lower bound \eqref{eqt-C 002} of Theorem \ref{t-C 002} to get
$I_n(x) \geq K_1 - K_2 - K_3$, where
\begin{align*}
K_1  & = \frac{1 }{ \sigma \sqrt{k} }   \int_{\bb{R} }  L_{m} (t) 
  \psi \left( \frac{t}{\sigma \sqrt{k}}, \frac{x}{\sigma \sqrt{k}}  \right)  dt,     \notag\\
K_2 & =   \frac{ 1 }{ \sigma \sqrt{k} }   
     \int_{\bb{R} }  H_m(t)  \left[ 1 - \Phi \left( \frac{t}{\sigma \ee^{1/4} \sqrt{k}} \right) \right]   dt   \notag\\
K_3 & =   \frac{ c \ee }{ \sigma \sqrt{k} }   
     \int_{\bb{R} }  H_m(t)   \psi \left( \frac{t}{\sigma \sqrt{k}}, \frac{x}{\sigma \sqrt{k}}  \right)  dt    \notag\\
 & \quad  +  c   \frac{ \ee^{1/12} }{\sqrt{n}}  \int_{\mathbb R} \big[ H_m( t)  +  H_m( t + \ee) \big]
   \phi \left( \frac{t}{ \sigma \sqrt{k}} \right)   dt   
  +   \frac{ c_{\ee} }{n^{1/2 + \ee }} \left\Vert  H_m \right\Vert_{1}, 
\end{align*}
with $H_m$ and $L_m$ defined by \eqref{DefHmt} and  \eqref{DefLmt}, respectively. 

\noindent
\textit{Bound of $K_1$.} 
Note that $L_m(t) = 0$ for $t < \ee$. 
As in the estimate of $J_1$ (cf.\  \eqref{Seconthm-Decom-J1}), 
using the duality formula (Lemma \ref{lemma-duality-lemma-2_Cor}), we get
\begin{align*}
K_1  & =  \frac{1}{\sigma \sqrt{k}}  \int_{\bb R_+}   h (t + \ee)   
 \bb E \left[ \psi \left( \frac{t + S_m^* + \ee }{\sigma \sqrt{k}},  \frac{x}{\sigma \sqrt{k}}  \right);  \tau_{t}^* > m  \right]  dt  \notag\\
 & \geq  \frac{1}{\sigma \sqrt{k}}   \int_{0}^{ \alpha_n \sqrt{n} }   h (t + \ee)   
 \bb E \left[ \psi \left( \frac{t + S_m^* + \ee }{\sigma \sqrt{k}},  \frac{x}{\sigma \sqrt{k}}  \right);  \tau_{t}^* > m  \right]  dt, 
\end{align*}
where in the last inequality we used the fact that $t + S_m^* + \ee \geq \ee$ on the event $\tau_{t}^* > m$,
so that $\psi >0$. 
Following the proof of \eqref{BoundJ12Order32_Large}, one has 
\begin{align*}
K_1 \geq  \frac{2}{\sigma^2 \sqrt{2\pi} n }  
 \left[ \phi^+ \left( \frac{x}{\sigma \sqrt{n}} \right)  -   c_{\ee} \left(  \alpha_n  + n^{-\ee} \right)   \right]
  \int_{0}^{ \alpha_n \sqrt{n}  }   h (t + \ee) V^* (t) dt.   
\end{align*}
\textit{Bound of $K_2$.} 
By \eqref{Pf-Def-J2-yy} and \eqref{J2-32-Large}, we have
\begin{align*}
& K_2   =  \frac{1}{\sigma \sqrt{k}}  \int_{\bb R_+}   g (t - \ee)   
 \bb E \left[ 1 - \Phi \left( \frac{t + S_m^* - \ee }{\sigma \ee^{1/4} \sqrt{k}}  \right);  \tau_{t}^* > m  \right]  dt  \notag\\
& \leq  \frac{1}{ n }  
  \left( c \ee^{1/4} +  c_{\ee} \alpha_n  + c_{\ee} n^{- \ee}  \right)  
  \int_{0}^{ \alpha_n \sqrt{n}  }   g (t-\ee) V^* (t) dt  
 +  \frac{c }{\sqrt{n}}  \int_{ \alpha_n \sqrt{n} }^{\infty}    g (t-\ee)  dt. 
\end{align*}
\textit{Bound of $K_3$.}
Since the functions $\psi$ and $\phi$ are bounded, using \eqref{BoundHmsmallx} we get 
\begin{align*}
K_3 \leq  \left( \frac{c \ee^{1/12} }{\sqrt{n}} +  \frac{ c_{\ee} }{n^{1/2 + \ee }}  \right)   \| H_m \|_1   
   \leq  \left( \frac{c \ee^{1/12} }{ n } +  \frac{ c_{\ee} }{n^{1 + \ee }}  \right)   \int_{\bb R_+} g(t-\ee) (1+t) dt.  
\end{align*}
Collecting the above bounds for $K_1$, $K_2$ and $K_3$,
and using the fact that $V^*(t) \leq c(1 + t)$ for $t \in \bb R_+$ and $h(t + \ee) \leq f(t) \leq g(t-\ee)$ for any $t \in \bb R$,
we conclude the proof of the lower bound \eqref{theorem-n3/2 002_Large}. 
\end{proof}

\subsection{Proof of Theorems \ref{Theorem-BB002} and \ref{Theorem-BB002bis}} \label{SecProofThm4}


\begin{proof}[Proof of Theorem \ref{Theorem-BB002}]
We first prove \eqref{LLT32LargeStartingaa}. 
Since $\int_{\bb R_+}  f(t) (1 + t)^{\gamma} dt < \infty$ for some constant $\gamma >1$, 
the result \eqref{LLT32LargeStartingaa} is a consequence of Theorem \ref{theorem-n3/2-upper-lower-bounds_Large}, 
Lemma \ref{lemma-DRI-convergence} and the Lebesgue dominated convergence theorem. 
%
%

We next prove \eqref{demoCLLTLargeffbbaa}. 
Applying the upper bound \eqref{theorem-n3/2 001_Large} of Theorem \ref{theorem-n3/2-upper-lower-bounds_Large}
with $g = \overline f_{\delta,\ee}$ and $\alpha_n$ replaced by $2 \alpha_n$, 
we get that uniformly in $x \in [\eta^{-1} \sqrt{n}, \eta \sqrt{n}]$ and $y \in [\alpha_n^{-1}, \alpha_n \sqrt{n}]$, 
\begin{align*}
& \mathbb E \left( f(x+S_n - y); \tau_x > n \right)        \notag\\
&   \leq   \frac{2  }{\sigma^2  \sqrt{2\pi} n }  
\left[  \phi^+ \left( \frac{x}{\sigma \sqrt{n}} \right)   +   \big( c \ee^{1/4} +  c_{\ee} \alpha_n  \big)   \right]  
     \int_{0}^{ 2 \alpha_n \sqrt{n} }  \overline f_{\delta,\ee} (t - y - \ee)   V^*(t) dt    \notag\\
& \quad  +  \frac{c }{\sqrt{n}}  
    \int_{ 2 \alpha_n \sqrt{n} }^{\infty}    \overline f_{\delta,\ee} (t - y - \ee)   dt   
        + \frac{ c_{\ee} }{n^{1 +\ee }}  
    \int_{\bb R_+} \overline f_{\delta,\ee} (t - y - \ee)   (1+t) dt. 
\end{align*}
Using \eqref{Pfn32Equiv01}, \eqref{Pfn32Equiv02} and \eqref{Pfn32Equiv03},
and the fact that $\phi^+ ( \frac{x}{\sigma \sqrt{n}} )$ is bounded from below by a constant $c_{\eta} >0$ 
for $x \in [\eta^{-1} \sqrt{n}, \eta \sqrt{n}]$, we get
that uniformly in $x \in [\eta^{-1} \sqrt{n}, \eta \sqrt{n}]$ and $y \in  [\alpha_n^{-1}, \alpha_n \sqrt{n}]$,
\begin{align*}
\limsup_{n \to \infty}  
\frac{ \mathbb E \left( f(x+S_n - y); \tau_x > n \right)    }{  \frac{2 y }{\sqrt{2\pi } \sigma^2 n }    
  \phi^+ ( \frac{x}{\sigma \sqrt{n}} )  }    
  \leq  \left( 1 + c \ee^{1/4}   \right)  
      \int_{-\ee}^{ \infty }  \overline f_{\delta,\ee} (u)  du.  
\end{align*}
This yields the desired upper bound by taking first $\ee \to 0$ and then $\delta \to 0$, and using Lemma \ref{lemma-DRI-convergence}. 
The lower bound can be proved in the same way 
by using \eqref{theorem-n3/2 002_Large} of Theorem \ref{theorem-n3/2-upper-lower-bounds_Large}. 
The proof of \eqref{demoCLLTLargeffbbaa} is complete. 
\end{proof}


\begin{proof}[Proof of Theorem \ref{Theorem-BB002bis}]
Applying \eqref{theorem-n3/2 001_Large} of Theorem \ref{theorem-n3/2-upper-lower-bounds_Large}
with $f = \mathds 1_{[y, y + \Delta]}$, $g = \mathds 1_{[y-\ee, y + \Delta+\ee]}$
and $\alpha_n$ replaced by $2 \alpha_n$ (so that the second integral in \eqref{theorem-n3/2 001_Large} vanishes),   
we derive that uniformly in $x \in [ \eta^{-1} \sqrt{n}, \eta \sqrt{n}]$, $y \in [\alpha_n^{-1}, \alpha_n \sqrt{n}]$
and $\Delta \in [\Delta_0, o(y) ]$,  
\begin{align*}
&  \mathbb P \left( x + S_n \in [0, \Delta] + y, \, \tau_x >n\right)   \notag\\
&   \leq    \frac{2 }{\sigma^2 \sqrt{2\pi} n }  
 \left[  \phi^+ \left( \frac{x}{\sigma \sqrt{n}} \right)   +   \big( c \ee^{1/4} +  c_{\ee} \alpha_n  \big)   \right] 
  \int_{-\ee}^{ \Delta + \ee}     V^*(t + y + \ee) dt    \notag\\
& \quad  +   \frac{ c_{\ee} }{ n^{1 + \ee} }  
    \int_{-\ee}^{ \Delta + \ee}   (1 + t + y) dt.    
\end{align*}
Since $1  \leq  c_{\eta} \phi^+ ( \frac{x}{\sigma \sqrt{n}} )$ 
holds uniformly in $x \in [ \eta^{-1} \sqrt{n}, \eta \sqrt{n}]$ for some constant $c_{\eta} >0$, 
using \eqref{BoundInteDelta01} and \eqref{BoundInteDelta02}, 
we obtain that uniformly in $x \in [ \eta^{-1} \sqrt{n}, \eta \sqrt{n}]$, $y \in [\alpha_n^{-1}, \alpha_n \sqrt{n}]$
and $\Delta \in [\Delta_0, o(y) ]$,  
\begin{align*}
\limsup_{n \to \infty}  \frac{ \mathbb P \left( x + S_n \in [0, \Delta] + y, \,  \tau_x >n\right)  }{ 
   \frac{2y \Delta }{\sigma^2 \sqrt{2\pi} n }  \phi^+ \big( \frac{x}{\sigma \sqrt{n}} \big) }
  \leq  \left( 1 + c_{\eta} \ee^{1/4} \right),
\end{align*}
which concludes the proof of the upper bound by letting $\ee \to 0$.
The lower bound can be obtained in a similar way by using \eqref{theorem-n3/2 002_Large}.
The proof of Theorem \ref{Theorem-BB002bis} is complete. 
\end{proof}

\subsection{Proof of Theorems \ref{Thm-Iglehart001} and \ref{Thm-Iglehart002}}
Theorems \ref{Thm-Iglehart001} and \ref{Thm-Iglehart002} are easy consequences of the following result,
which is deduced from  Theorems \ref{Theorem-AA002} and \ref{Theorem-BB002}.

\begin{theorem} \label{Thm_CLLT_Drift_Negative001}
Assume \ref{A1} and \ref{ExponentialMoment}. 
Let $f: \bb R_+ \mapsto \bb R$ be a directly Riemann integrable function 
satisfying $\int_{\bb R_+}  f(t) e^{-\lambda t} (1 + t)^{\gamma} dt < \infty$ for some constant $\gamma >1$. 
Then, for any sequence of positive numbers $(\alpha_n)_{n\geq 1}$  satisfying
$\lim_{n\rightarrow \infty} \alpha_n =  0$, we have,  
 as $n \to \infty$,  uniformly in $x \in [0, \alpha_n \sqrt{n}]$,  
\begin{align}\label{Asymn32Smallx}
\mathbb E \left( f(x+S_n);  \,   \tau_x > n  \right)
\sim      \frac{ 2 V_{\lambda}(x) e^{n \Lambda(\lambda) + \lambda x} }{ \sqrt{2\pi }  \sigma_{\lambda}^3  n^{3/2}  } 
    \int_{\bb R_+}   f( t)  e^{-\lambda t}  V_{\lambda}^{\ast } (t) dt. 
\end{align}
Moreover, for any $\eta \geq 1$,  
we have, as $n \to \infty$,  uniformly in $x \in [\eta^{-1} \sqrt{n}, \eta \sqrt{n}]$,  
\begin{align}\label{LLT32LargeStarting}
\mathbb E \left( f(x+S_n);  \,  \tau_x > n  \right)  
  \sim  
    \frac{2 e^{n \Lambda(\lambda) + \lambda  x}  }{ \sqrt{2\pi} \sigma_{\lambda}^2 n }  
      \phi^+ \left( \frac{x}{\sigma_{\lambda} \sqrt{n}} \right)
    \int_{\bb R_+}   f( t)  e^{-\lambda t}  V_{\lambda}^{\ast } (t) dt. 
\end{align}
\end{theorem}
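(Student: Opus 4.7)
The proof will be a direct consequence of Theorems \ref{Theorem-AA002} and \ref{Theorem-BB002} via the Cramér-type exponential change of measure, exactly as advertised in Section \ref{Sect-Application}. The plan is to tilt the walk by $\lambda$ from condition \ref{ExponentialMoment} so that the tilted walk is centered, then absorb the tilt into the target function.

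First I would write the standard identity
\begin{align*}
\mathbb E \bigl( f(x+S_n);\, \tau_x > n \bigr)
 = e^{\,n \Lambda(\lambda) + \lambda x}\,
   \mathbb E_\lambda \bigl( \tilde f(x+S_n);\, \tau_x > n \bigr),
\end{align*}
where $\tilde f(t) := f(t)\, e^{-\lambda t}$. This follows from the fact that the Radon--Nikodym density of $\bb P_\lambda$ with respect to $\bb P$ on $\sigma(X_1,\dots,X_n)$ is $e^{\lambda S_n - n\Lambda(\lambda)}$, and $\{\tau_x > n\}$ is $\sigma(X_1,\dots,X_n)$-measurable. Next I would verify that under $\bb P_\lambda$ the random walk $(S_n)$ fits the setting of Section \ref{Sect-Mean0}: by condition \ref{ExponentialMoment}, $\bb E_\lambda X_1 = e^{-\Lambda(\lambda)}\bb E X_1 e^{\lambda X_1} = 0$, $\bb E_\lambda X_1^2 = e^{-\Lambda(\lambda)} \sigma_\lambda^2 \in (0,\infty)$ (after absorbing the constant into the definition of $\sigma_\lambda^2$; in any case the second moment is finite and positive), and $\bb E_\lambda |X_1|^{2+\delta} < \infty$. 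Non-latticity \ref{A1} is preserved since tilting does not alter the support.

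Second I would check that $\tilde f$ satisfies the hypotheses of Theorems \ref{Theorem-AA002} and \ref{Theorem-BB002}. By assumption $\int_{\bb R_+} \tilde f(t) (1+t)^\gamma dt < \infty$ for some $\gamma > 1$. Direct Riemann integrability of $\tilde f$ on $\bb R_+$ follows from that of $f$ because $e^{-\lambda t}$ is bounded and monotone on $\bb R_+$: if $I_k = [k\delta,(k+1)\delta)$, then writing $\bar f_\delta, \underline f_\delta$ for the upper/lower Riemann sums of $f$ and $\ol{m}_k, \underline{m}_k$ for those of $e^{-\lambda t}$ on $I_k$, one bounds the oscillation of $\tilde f$ on $I_k$ by $(\sup_{I_k} |f|)(\ol{m}_k - \underline{m}_k) + (\bar f_\delta - \underline f_\delta)(\sup_{I_k} e^{-\lambda t})$, and both contributions have integral tending to $0$ as $\delta \to 0$ by Lemma \ref{lemma-DRI-convergence} applied to $f$ together with $\sup_{I_k}(\ol m_k - \underline m_k) = O(\delta)$. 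Splitting $f$ into positive and negative parts handles the sign.

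Third I would invoke the two main conditioned local limit theorems. For \eqref{Asymn32Smallx}, apply \eqref{Asymn32Smallxaa} of Theorem \ref{Theorem-AA002} under $\bb P_\lambda$, with $y = 0$ and target function $\tilde f$: this yields, uniformly in $x \in [0, \alpha_n\sqrt n]$,
\begin{align*}
\mathbb E_\lambda \bigl( \tilde f(x+S_n);\, \tau_x > n \bigr)
 \sim \frac{2\, V_\lambda(x)}{\sqrt{2\pi}\, \sigma_\lambda^3\, n^{3/2}}
   \int_{\bb R_+} f(t)\, e^{-\lambda t}\, V_\lambda^\ast(t)\, dt,
\end{align*}
and multiplying by $e^{n\Lambda(\lambda) + \lambda x}$ gives \eqref{Asymn32Smallx}. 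For \eqref{LLT32LargeStarting}, apply \eqref{LLT32LargeStartingaa} of Theorem \ref{Theorem-BB002} with $y = 0$ and the same target function $\tilde f$ under $\bb P_\lambda$, then multiply by $e^{n\Lambda(\lambda) + \lambda x}$. There is essentially no obstacle beyond the bookkeeping above; the principal point to watch is that the presence of the target function $f$ in Theorems \ref{Theorem-AA002} and \ref{Theorem-BB002} is exactly what makes the change of measure work, since the tilt factor $e^{-\lambda(x+S_n)}$ must be integrated against the conditional distribution, and the integrability constraint $\int f(t) e^{-\lambda t}(1+t)^\gamma dt < \infty$ is precisely what the $(1+t)^\gamma$-weighted hypothesis of those theorems demands once the tilt has been absorbed into $\tilde f$.
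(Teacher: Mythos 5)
Your proof is correct and takes exactly the same route as the paper: tilt by $\lambda$ so that $\bb E_\lambda X_1 = 0$, absorb the factor $e^{-\lambda(x+S_n)}$ into the target function $\tilde f(t) = f(t)e^{-\lambda t}$, and invoke Theorems \ref{Theorem-AA002} and \ref{Theorem-BB002} with $y=0$, using the weighted integrability $\int_{\bb R_+} f(t)e^{-\lambda t}(1+t)^\gamma dt < \infty$ to meet their hypotheses. The extra checks you carry out (that $\tilde f$ inherits direct Riemann integrability, that non-latticity survives tilting) are correct routine verifications that the paper leaves implicit.
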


\begin{proof}
By a change of measure, we get
\begin{align*}
 \mathbb E \left( f(x+S_n); \tau_x >n\right)  
& = e^{n \Lambda(\lambda)}  \bb E_{\lambda} \left( e^{- \lambda S_n} f(x+S_n); \tau_x >n \right)  \notag\\
& =  e^{n \Lambda(\lambda) + \lambda x} \bb E_{\lambda} \left( e^{- \lambda (x + S_n)} f(x+S_n); \tau_x >n \right). 
\end{align*}
Since $\int_{\bb R_+}  f(t) e^{-\lambda t} (1 + t)^{\gamma} dt < \infty$ for some constant $\gamma >1$, 
applying \eqref{Asymn32Smallxaa} of Theorem \ref{Theorem-AA002} we get \eqref{Asymn32Smallx}, 
and  applying \eqref{LLT32LargeStartingaa} of Theorem \ref{Theorem-BB002} we obtain \eqref{LLT32LargeStarting}. 
\end{proof}

\begin{proof}[Proof of Theorems \ref{Thm-Iglehart001} and \ref{Thm-Iglehart002}]
In the particular case when the random walk $S_n$ has negative drift, i.e.\, $\mu = \bb E X_1 < 0$ 
(in this case $\lambda > 0$ by condition \ref{ExponentialMoment}), 
taking $f = \mathds 1_{[0, \infty) }$ in \eqref{Asymn32Smallx} we get Theorem \ref{Thm-Iglehart001},
and taking $f = \mathds 1_{[0, \infty) }$ in \eqref{LLT32LargeStarting} we get Theorem \ref{Thm-Iglehart002}. 
\end{proof}

\subsection{Proof of Theorems \ref{Thm-LLT-taux-xsmall} and \ref{Thm-LLT-taux-xlarge}} \label{Sec: loc taux}
To prove Theorems \ref{Thm-LLT-taux-xsmall} and \ref{Thm-LLT-taux-xlarge}, we need the following lemma. 

\begin{lemma}\label{Lem-Equ-Harmonic}
Assume \ref{ExponentialMoment} for some $\lambda >0$. Then,
\begin{align}\label{Equlity-Harmonic-tt}
\int_{\bb R_+}   \bb P(t + X_1 < 0)  e^{-\lambda t}  V_{\lambda}^{\ast } (t) dt
= \bb E e^{\lambda X_1} \int_{- \infty}^0  e^{-\lambda t}  V_{\lambda}^{\ast } (t) dt  \in (0, \infty). 
\end{align}
\end{lemma}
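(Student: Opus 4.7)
The plan is to derive the identity by invoking the harmonicity of $V_{\lambda}^*$ (extended to $\bb R$ as described just before condition \ref{ExponentialMoment}) and then swapping the order of integration via Fubini. The finiteness and positivity of $\varkappa_{\lambda}$ will be handled separately using the moment bound \ref{ExponentialMoment} and the non-lattice assumption \ref{A1}.

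First, I would rewrite the dual harmonicity identity under $\bb P_{\lambda}$ in terms of the original measure $\bb P$. Since $S_1^* = -X_1$, the identity $V_{\lambda}^*(t) = \bb E_{\lambda}[V_{\lambda}^*(t - X_1)\mathds 1_{\{t - X_1 \geq 0\}}]$, valid for every $t \in \bb R$, becomes, after reversing the exponential change of measure,
\begin{equation*}
V_{\lambda}^*(t) = e^{-\Lambda(\lambda)} \int_{x \leq t} V_{\lambda}^*(t - x)\, e^{\lambda x}\, \bb P(X_1 \in dx), \qquad t \in \bb R.
\end{equation*}
Multiplying by $e^{-\lambda t}$ and integrating over $t \in (-\infty, 0)$, Fubini (applied on the region $\{(t,x) : x < 0,\ x \leq t < 0\}$) followed by the substitution $s = t - x$ for fixed $x < 0$ converts the inner integral $\int_x^0 e^{-\lambda(t-x)} V_{\lambda}^*(t - x) dt$ into $\int_0^{-x} e^{-\lambda s} V_{\lambda}^*(s) ds$. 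Multiplying both sides by $e^{\Lambda(\lambda)} = \bb E e^{\lambda X_1}$ yields
\begin{equation*}
\bb E e^{\lambda X_1} \int_{-\infty}^0 e^{-\lambda t} V_{\lambda}^*(t)\, dt
\;=\; \int_{x < 0} \left( \int_0^{-x} e^{-\lambda s} V_{\lambda}^*(s)\, ds \right) \bb P(X_1 \in dx).
\end{equation*}
Applying Fubini once more on the right-hand side, the inner indicator $\mathds 1_{\{s < -x,\, x < 0\}}$ integrated against $\bb P(X_1 \in dx)$ produces exactly $\bb P(X_1 < -s)$, which is the LHS of \eqref{Equlity-Harmonic-tt}.

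For finiteness, I would use the linear bound $V_{\lambda}^*(t) \leq c(1+t)$ (from the analogue of Lemma \ref{Theorem harmonic func} for $V_{\lambda}^*$ under $\bb P_{\lambda}$). When $\lambda \geq 0$, $e^{-\lambda s}(1+s)$ is already integrable and $\bb P(X_1 < -s) \leq 1$ suffices. When $\lambda < 0$, Chebyshev applied to $\bb E |X_1|^{2+\delta} e^{\lambda X_1} < \infty$ gives, on $\{X_1 \leq -s\}$ where simultaneously $|X_1|^{2+\delta} \geq s^{2+\delta}$ and $e^{\lambda X_1} \geq e^{-\lambda s}$, the bound $\bb P(X_1 \leq -s) \leq c\, s^{-(2+\delta)} e^{\lambda s}$, so that $\bb P(X_1 \leq -s) e^{-\lambda s} V_{\lambda}^*(s) \leq c\, s^{-(2+\delta)}(1+s)$, which is integrable. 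These same estimates a posteriori justify each Fubini exchange used above.

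For positivity, the identity $\bb E X_1 e^{\lambda X_1} = 0$ combined with \ref{A1} (which rules out $X_1 \equiv 0$) forces both $\bb P(X_1 < 0) > 0$ and $\bb P(X_1 > 0) > 0$; in particular $s \mapsto \bb P(X_1 < -s)$ is strictly positive on a right neighbourhood of $0$. Since $V_{\lambda}^*(s) > 0$ on the support $\mathscr D_{\lambda}^* = \{s : \bb P_{\lambda}(s + X_1^* > 0) > 0\}$, which contains a right neighbourhood of $0$, the integral on the LHS is strictly positive. The main obstacle is making sure the Fubini exchanges are legitimate for all three sign regimes of $\lambda$; in the case $\lambda < 0$ this rests crucially on the full $(2+\delta)$-moment built into \ref{ExponentialMoment}.
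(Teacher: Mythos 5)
Your argument is correct and is essentially the paper's proof: both rest on the harmonicity of $V_{\lambda}^*$ extended to $\bb R$, a Fubini/Tonelli exchange, and the linear substitution $s = t - x$, with the only difference being direction — you run the computation from the right-hand side to the left, whereas the paper starts from the left-hand side, applies Fubini, substitutes $s = t + X_1$, and then recognises $\bb E_{\lambda}\big[V_{\lambda}^*(s - X_1);\, s - X_1 \geq 0\big] = V_{\lambda}^*(s)$. One small slip worth noting: your finiteness remark that for $\lambda \geq 0$ the integrability of $e^{-\lambda s}(1+s)$ together with $\bb P(X_1 < -s) \leq 1$ suffices is false at $\lambda = 0$ (where $1+s$ is not integrable; the paper instead uses Markov's inequality to extract decay of $\bb P(X_1 < -s)$), but since the lemma assumes $\lambda > 0$ this does not affect your proof, and the $\lambda < 0$ case you discuss is outside the lemma's scope.
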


\begin{proof}
By Fubini's theorem and a change of variable, we get
\begin{align*}
I: & =  \int_{\bb R_+}   \bb P(t + X_1 < 0)  e^{-\lambda t}  V_{\lambda}^{\ast } (t) dt  \notag\\
& = \bb E \int_0^{\infty} \mathds 1_{\{ t + X_1 < 0 \}}  e^{-\lambda t}  V_{\lambda}^{\ast } (t) dt  \notag\\
& = \bb E \int_{X_1}^{0}   e^{-\lambda (s - X_1)}  V_{\lambda}^{\ast } (s - X_1) ds  \notag\\
& =   \int_{- \infty}^{0}  \bb E \left[  e^{-\lambda (s - X_1)}  V_{\lambda}^{\ast } (s - X_1); s - X_1 \geq 0  \right] ds. 
\end{align*}
By the harmonicity property of the function $V_{\lambda}^{\ast }$,  we have that for any $s \leq 0$, 
\begin{align*}
\bb E_{\lambda} \left[  V_{\lambda}^{\ast } (s - X_1); s - X_1 \geq 0  \right] = V_{\lambda}^{\ast }(s).
\end{align*}
Then, by the definition of $\bb E_{\lambda}$ it follows that 
\begin{align*}
I &=  \bb E e^{\lambda X_1}\int_{- \infty}^{0}  e^{-\lambda s}  \bb E_{\lambda} \left[  V_{\lambda}^{\ast } (s - X_1); s - X_1 \geq 0  \right] ds \\
 &= \bb E e^{\lambda X_1} \int_{- \infty}^{0}  e^{-\lambda s}  V_{\lambda}^{\ast }(s) ds, 
\end{align*}
which proves the equality in \eqref{Equlity-Harmonic-tt}. 
Using Markov's inequality, condition \ref{ExponentialMoment} and the fact that $V_{\lambda}^{\ast }$ 
is strictly positive and non-decresing on $\bb R_+$, 
it is easy to see that $I \in [0, \infty)$.  
Suppose that $I = 0$, then $\bb P(X_1 < 0) = 0$. This contradicts to the first requirement in condition \ref{ExponentialMoment}, 
so that $I \in (0, \infty)$. 
\end{proof}

\begin{proof}[Proof of Theorems \ref{Thm-LLT-taux-xsmall} and \ref{Thm-LLT-taux-xlarge}]
We only give a proof of Theorem \ref{Thm-LLT-taux-xlarge} since Theorem \ref{Thm-LLT-taux-xsmall}
is a particular case of Theorem \ref{Thm-LLT-taux-xlarge} by taking $\lambda = 0$. 

We first prove \eqref{Asym-tau-n-xsmall-changemeas}. 
By the definition of $\tau_x$, we have for any $n \geq 1$, 
\begin{align*}
\mathbb P \left( \tau_x  =  n + 1  \right) = \mathbb P \left( x + S_{n + 1} < 0,  \tau_x  >  n  \right). 
\end{align*}
From the Markov property, it follows that 
\begin{align*}
\mathbb P \left( x + S_{n + 1} < 0,  \tau_x  >  n  \right)
=  \mathbb E \left( f(x + S_{n});  \tau_x  >  n  \right), 
\end{align*}
where
\begin{align*}
f(t) = \bb P (t + X_1 < 0) \mathds 1_{\{t \geq 0\}}, \quad t \in \bb R. 
\end{align*}
It is easy to see that $0 \leq f \leq 1$ and $f$ is a non-increasing function on $\bb R$.
In addition, using Markov's inequality and condition \ref{ExponentialMoment}, we get that for $t \geq 1$, 
\begin{align*}
f(t) \leq  \bb P (|X_1| > t)  \leq  \frac{1}{t^{2 + \delta} e^{\lambda t}}  \bb E  |X_1|^{2 + \delta} e^{ \lambda X_1 }
\leq  \frac{c}{t^{2 + \delta} e^{\lambda t}}, 
\end{align*}
so that by taking $\gamma = 1 + \frac{\delta}{2}$, 
\begin{align*}
\int_{\bb R_+}  f(t) e^{-\lambda t} (1 + t)^{\gamma} dt 
& =  \int_{0}^1  f(t) e^{-\lambda t} (1 + t)^{\gamma} dt + \int_1^{\infty}  f(t) e^{-\lambda t} (1 + t)^{\gamma} dt  
 \leq   c. 
\end{align*}
Hence the function $f$ satisfies the condition stated in Theorem \ref{Thm_CLLT_Drift_Negative001}.
Using \eqref{Asymn32Smallx} of Theorem \ref{Thm_CLLT_Drift_Negative001}, 
we obtain that as $n \to \infty$,  uniformly in $x \in [0, \alpha_n \sqrt{n}]$, 
\begin{align*}
\mathbb P \left( \tau_x  =  n + 1  \right)
\sim      \frac{ 2 V_{\lambda}(x) e^{n \Lambda(\lambda) + \lambda x} }{ \sqrt{2\pi }  \sigma_{\lambda}^3  n^{3/2}  }  
   \int_{\bb R_+}   \bb P(t + X_1 < 0)  e^{-\lambda t}  V_{\lambda}^{\ast } (t) dt, 
\end{align*}
which, together with Lemma \ref{Lem-Equ-Harmonic},
 concludes the proof of \eqref{Asym-tau-n-xsmall-changemeas}. 

The asymptotic \eqref{Asym-tau-n-xlarge-changemeas} can be obtained in the same way 
by using \eqref{LLT32LargeStarting} of Theorem \ref{Thm_CLLT_Drift_Negative001}. 
The proof of Theorems \ref{Thm-LLT-taux-xsmall} and \ref{Thm-LLT-taux-xlarge} is complete. 
\end{proof}

\section{Appendix: Proof of conditioned integral limit theorems}\label{Sect-Appendix}

\subsection{Auxiliary results}\label{Sec_PfThm_a}
The goal of this section is to prove some auxiliary results which will be used to 
establish the conditioned integral limit theorems (Theorems \ref{Theor-IntegrLimTh} and \ref{CorCCLT}). 

\begin{lemma}\label{Lem-MKillTn}
Assume \ref{SecondMoment} for some $\delta >0$. 
Then there exists a constant $c > 0$ such that 
for any $\varepsilon \in  [0, \frac{\delta}{2(2 + \delta)}]$, $n \geq 1$ and $x \geq n^{1/2 - \ee}$, 
\begin{align}
\mathbb{E} \left(x + S_n; \tau_x > n \right)
\leq x +  c n^{1/2 - \ee}
\leq \left(1 + \frac{c}{n^{ \ee}} \right) x.   \nonumber
\end{align}
\end{lemma}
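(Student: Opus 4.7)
The strategy is a clean two-step reduction: bound $\mathbb E(x+S_n;\tau_x>n)$ by the harmonic function $V(x)$ via optional stopping, then quantify $V(x)-x$.

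First, by the optional stopping theorem applied to the zero-mean martingale $(S_k)_{k\ge 0}$ with the bounded stopping time $\tau_x\wedge n$, we have $\mathbb E S_{\tau_x\wedge n}=0$. Splitting this expectation according to the events $\{\tau_x>n\}$ and $\{\tau_x\le n\}$, and using that $\tau_x<\infty$ almost surely together with the identity $V(x)=-\mathbb E S_{\tau_x}$ (both valid under \ref{SecondMoment}), one arrives at
\[
\mathbb E(x+S_n;\,\tau_x>n) \;=\; V(x) \;+\; \mathbb E(x+S_{\tau_x};\,\tau_x>n).
\]
Since $x+S_{\tau_x}<0$ on $\{\tau_x<\infty\}$, the correction term on the right is nonpositive, and we obtain the pointwise estimate $\mathbb E(x+S_n;\tau_x>n)\le V(x)$.

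Second, I apply Lemma \ref{Lem_V_Ineq_aa} with the free parameter $k_0$ chosen equal to $n$, giving $V(x)\le (1+c_\ee/n^\ee)x + c_\ee n^{1/2-\ee}$. Combined with the hypothesis $x\ge n^{1/2-\ee}$, the multiplicative error $c_\ee x/n^\ee$ is of the same order as the additive term $c_\ee n^{1/2-\ee}$, so it may be absorbed, yielding $V(x)\le x+c\, n^{1/2-\ee}$. Because $\ee$ is confined to the compact range $[0,\delta/(2(2+\delta))]$, the constant $c$ can be chosen uniformly. Together with the previous pointwise estimate, this establishes the first inequality. The second inequality then reduces to the elementary algebraic comparison $c n^{1/2-\ee}\le (c/n^\ee)\, x$, which follows from the hypothesis $x\ge n^{1/2-\ee}$ after a straightforward rescaling of constants (exploiting again that $\ee$ is bounded away from $1/2$).

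The main obstacle lies in Step~2: producing the quantitative bound $V(x)-x = O(n^{1/2-\ee})$ uniformly in the specified range of $\ee$ and $x$. This quantitative control on the harmonic function is exactly the content of the already-proved Lemma \ref{Lem_V_Ineq_aa}, which rests on a careful renewal-theoretic argument exploiting the polynomial moment condition \ref{SecondMoment} to control the expected overshoot $\mathbb E|x+S_{\tau_x}|=V(x)-x$. Once that estimate is in hand, the remainder of the proof is the short optional-stopping computation above.
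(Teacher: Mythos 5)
Your first step---the optional-stopping reduction to the inequality $\mathbb{E}(x+S_n;\tau_x>n)\le V(x)$---is mathematically correct and runs along the same lines as the paper's opening move (the paper writes the identity $\mathbb{E}(x+S_n;\tau_x>n)=x-\mathbb{E}(x+S_{\tau_x};\tau_x\le n)$, which is the same computation). The problem is in the second step: invoking Lemma~\ref{Lem_V_Ineq_aa} here is circular. Although Lemma~\ref{Lem_V_Ineq_aa} is \emph{stated} in Section~3.5, its \emph{proof} lives in the Appendix and reads simply ``The assertion of the lemma follows from \eqref{Pf_Recur_V} and \eqref{Def_V_002}.'' Equation~\eqref{Pf_Recur_V} is established inside the proof of Lemma~\ref{z+Mn killed Tz}, and that proof invokes Lemma~\ref{Lem-MKillTn} twice (once to bound $V_{n-k}(x')$, once to bound $V_{k_0}(x)$). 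Meanwhile, \eqref{Def_V_002} is the assertion $V(x)=\lim_n\mathbb{E}(x+S_n;\tau_x>n)$ of Lemma~\ref{Theorem harmonic func}, whose proof also rests on Lemma~\ref{z+Mn killed Tz}. So in the logical order of the paper, Lemma~\ref{Lem_V_Ineq_aa} is downstream of Lemma~\ref{Lem-MKillTn}, and you cannot use it here without producing a loop.

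The point of Lemma~\ref{Lem-MKillTn} is precisely to be the first rung of that ladder: it must be proved by elementary, self-contained means under condition~\ref{SecondMoment}, and the harmonic function $V$ and its refined bounds are then built on top of it. The paper does exactly this. After the optional-stopping identity and the observation that $X_{\tau_x}\le x+S_{\tau_x}<0$, it is left with bounding the overshoot $\mathbb{E}(|X_{\tau_x}|;\tau_x\le n)$. This is done by truncating at level $n^{1/2-\varepsilon}$, using H\"older and Markov to estimate $\mathbb{E}(|X_k|;|X_k|>n^{1/2-\varepsilon})\le c\,n^{-(1/2-\varepsilon)(1+\delta)}$, and summing over $k\le n$; the constraint $\varepsilon\le\frac{\delta}{2(2+\delta)}$ is exactly what makes $n^{1-(1/2-\varepsilon)(1+\delta)}\le n^{1/2-\varepsilon}$. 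No appeal to $V$ or any renewal theory is needed. If you wish to salvage your plan, you would have to reprove the content of Lemma~\ref{Lem_V_Ineq_aa} from scratch, and you would discover that the argument required is essentially the truncation-plus-moment bound just described---i.e., you would be reproving Lemma~\ref{Lem-MKillTn}.
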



\begin{proof}
By the optional stopping theorem, we get
\begin{align}\label{Pf_Optional_thm}
\mathbb{E} (x + S_n; \tau_x > n ) 
& = x - \mathbb{E} ( x + S_n; \tau_x \leq n )  \nonumber\\
& = x - \mathbb{E} (x + S_{\tau_x}; \tau_x \leq n ). 
\end{align}
By the definition of $\tau_x$, we have $X_{\tau_x} \leq x + S_{\tau_x} < 0$ and hence
\begin{align*}
\mathbb{E} (x + S_n; \tau_x > n ) < x + \mathbb{E} ( |X_{\tau_x} |; \tau_x \leq n ).
\end{align*}
Using H\"older's inequality, condition \ref{SecondMoment} and Markov's inequality gives
\begin{align*}
\mathbb{E} \left( | X_k |; |X_k| >  n^{1/2 - \varepsilon} \right)
& \leq  \mathbb{E}^{\frac{1}{2 + \delta} } \left( | X_k |^{2 + \delta} \right) 
    \bb P^{ \frac{1 + \delta}{2 + \delta} } \left( |X_k| >  n^{1/2 - \varepsilon} \right)  \notag\\
& \leq  c n^{ - (1/2 - \ee) (1 + \delta)}, 
\end{align*}
from which it follows that
\begin{align*}
\mathbb{E} \left( | X_{\tau_x} |; \tau_x \leq n \right) 
&  \leq   n^{1/2 - \varepsilon} 
 +  \mathbb{E} \left( | X_{\tau_x} |;  |X_{\tau_x}| >  n^{1/2- \varepsilon},  \tau_x \leq n \right)
  \nonumber\\
&  \leq     n^{1/2- \varepsilon}
  +  \sum_{ k =1 }^n \mathbb{E} \left( | X_k |; |X_k| >  n^{1/2 - \varepsilon} \right)  \notag\\
& \leq   n^{1/2- \varepsilon}  + c n^{1 - (1/2 - \ee) (1 + \delta)}
\leq  c  n^{1/2- \varepsilon}, 
\end{align*}
where in the last inequality we used the fact that $\ee \in [0, \frac{\delta}{2(2 + \delta)}]$. 
The desired result follows.  
\end{proof}

For $\ee \in (0, 1/2)$ and $x \geq 0$, consider the first time 
when the absolute value of the random walk $(x + S_k)_{k \geq 1}$ exceeds the level $n^{1/2 - \ee}$:
\begin{align}\label{Def_nun}
\nu_n = \nu_{n,x,\ee} = \inf \left\{ k \geq 1: |x + S_k| > n^{1/2-\ee} \right\}. 
\end{align}
It is easy to see that the stopping time $\nu_n$ converges $\bb P$-a.s.\ to $\infty$  as $n \to \infty$.
The following result gives the tail behavior of $\nu_{n}$.   

\begin{lemma}\label{Lem-NuTzEs}
Assume \ref{SecondMoment} for some $\delta >0$. 
Then, for any $\ee \in (0,1/2)$ and $\beta>0$,
there exists a constant $c_{\ee,\beta}$ such that for any $n\geq 1$ and $x \geq 0$, 
\begin{align*}
\mathbb{P} \left( \nu_{n} > \beta n^{1-\ee} \right)
\leq  c_{\ee,\beta} e^{ - c_{\ee,\beta} n^{\ee} }. 
\end{align*}
\end{lemma}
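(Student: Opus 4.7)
The idea is a block-independence argument. Partition the time horizon into $\sim n^{\ee}$ disjoint blocks, each of length $\sim n^{1-2\ee}$, and observe that if the walk remains confined to the strip $[-n^{1/2-\ee}, n^{1/2-\ee}]$ throughout, then \emph{every} block increment must be bounded by $2 n^{1/2-\ee}$, an event which, by the CLT, has probability bounded away from $1$ by a constant $p(\ee) < 1$ independent of $n$. Since the block increments are i.i.d., this yields the exponential decay $p^M \leq e^{-c_{\ee,\beta} n^\ee}$.

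More concretely, I would set $L := \lfloor n^{1-2\ee} \rfloor \vee 1$ and $M := \lfloor \beta n^{1-\ee}/L \rfloor$, so that $ML \leq \beta n^{1-\ee}$ and, for $n$ large, $M \geq \tfrac{\beta}{2} n^{\ee}$. Define block increments $Y_j := S_{jL} - S_{(j-1)L}$ for $j = 1, \ldots, M$; these are i.i.d.\ copies of $S_L$ and in particular do not depend on $x$. On the event $\{\nu_n > \beta n^{1-\ee}\}$ we have $|x + S_k| \leq n^{1/2-\ee}$ for every $k \leq ML$, and evaluating at $k = jL$ and $k = (j-1)L$ forces $|Y_j| \leq 2 n^{1/2-\ee}$ for every $1 \leq j \leq M$. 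Hence
\begin{align*}
\bb P\bigl(\nu_n > \beta n^{1-\ee}\bigr)
\leq \bb P\Bigl(\, |Y_j| \leq 2 n^{1/2-\ee} \ \forall\, 1\leq j \leq M \Bigr)
= \bigl[\bb P(|S_L| \leq 2 n^{1/2-\ee})\bigr]^M,
\end{align*}
uniformly in $x \geq 0$.

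It then remains to show that $p_n := \bb P(|S_L| \leq 2 n^{1/2-\ee})$ satisfies $\limsup_n p_n < 1$. For this I would appeal to the classical CLT: since $S_L / (\sigma \sqrt{L})$ converges in distribution to a standard normal $Z$ and the threshold $2 n^{1/2-\ee}/(\sigma \sqrt{L})$ remains in a bounded subinterval of $(0, \infty)$ (indeed it tends to $2/\sigma$), one obtains $\limsup_n p_n \leq \Phi(2/\sigma) - \Phi(-2/\sigma) =: p < 1$. Combined with $M \geq \tfrac{\beta}{2} n^\ee$, this gives $p^M \leq \exp\bigl(-\tfrac{\beta |\log p|}{2} n^\ee\bigr)$, which is the desired bound.

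\textbf{Main obstacle.} The only subtlety is to ensure that the constant $p$ in the CLT step is genuinely uniform in $n$. For $n$ beyond some threshold $n_0 = n_0(\ee,\beta)$, this follows directly from the CLT (a quantitative version via the Berry--Esseen theorem, which is available under \ref{SecondMoment} with $\delta \geq 0$, makes this explicit). For $n \leq n_0$, the bound $e^{-c_{\ee,\beta} n^\ee}$ is bounded below by the positive constant $e^{-c_{\ee,\beta} n_0^{\ee}}$, so the statement becomes trivial after enlarging $c_{\ee,\beta}$. Uniformity in the starting point $x \geq 0$ is automatic since the block increments $Y_j$ do not involve $x$ and the deterministic implication from $\nu_n > \beta n^{1-\ee}$ to $|Y_j| \leq 2 n^{1/2-\ee}$ holds regardless of $x$.
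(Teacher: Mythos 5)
Your proof is correct and follows essentially the same strategy as the paper: split $[1,\beta n^{1-\ee}]$ into roughly $n^{\ee}$ blocks of length roughly $n^{1-2\ee}$, observe via the CLT that the probability of the walk being confined over a single block is bounded away from $1$ uniformly, and raise to the power of the number of blocks. The one minor variation is that you bound the i.i.d.\ block \emph{increments} $Y_j = S_{jL}-S_{(j-1)L}$ directly (so no conditioning is needed), whereas the paper applies the Markov property recursively and takes a supremum over starting points in the per-block estimate; both variants rest on the same decomposition and the same CLT input, and both yield $\leq p^{\lfloor n^\ee\rfloor}$ with $p<1$.
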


\begin{proof}
Set $K:=[n^\ee]$ and 
split the interval $[1,\beta n^{1-\ee}]$ into $K$ subintervals of length $l:=[\beta n^{1-2\ee}]$, 
so that $Kl\leq \beta n^{1-\ee}$. 
We have for large enough $n$,
\begin{align*}
\mathbb{P} \big( \nu_{n}> \beta n^{1-\ee} \big)
\leq   \mathbb{P} \left( \max_{ 1\leq j \leq  Kl} |y + S_j|  \leq n^{1/2-\ee} \right).
\end{align*}
For each $k = 1,\ldots, K$, 
denote $A_{k,x}:=\{ \max_{ 1\leq k'\leq k }  |x + S_{k'l}|  \leq  n^{1/2-\ee} \}$.  
Using the Markov property, we have
\begin{align*}
\mathbb{P} (A_{K,x}) 
\leq \mathbb{P} (A_{K-1,x}) \sup_{x \in \bb R } \mathbb{P} (A_{1,x})
\leq \Big( \sup_{x \in \bb R } \mathbb{P} (A_{1,x}) \Big)^K. 
\end{align*}
By the central limit theorem for $S_l$,  we get
that uniformly in $x \in \bb R $, 
\begin{align*} 
\mathbb{P} (A_{1,x}) \leq 
\mathbb{P} \big( |x + S_l| \leq 2n^{1/2-\ee} \big)
\leq  \frac{1}{ \sqrt{2\pi} } 
 \int_{ - \frac{x}{ \sqrt{l} } -c_{\ee,\beta} }^{ - \frac{x}{ \sqrt{l} } + c_{\ee,\beta} } 
  e^{-\frac{u^2}{2}}du + cr_l  
 < 1, 
\end{align*}
where $c_{\ee,\beta}=\frac{ n^{1/2-\ee}}{\sqrt{l}}$. 
Since $K=[n^\ee]$, the result follows. 
\end{proof}

The following result shows that the expectation of 
the random walk $(x + S_n)_{n \geq 1}$ killed at the exit time $\tau_x$
is uniformly bounded with respect to $n$.

\begin{lemma}\label{z+Mn killed Tz}
Assume \ref{SecondMoment} for some $\delta >0$.  
There exists $c > 0$ such that for any $x \geq 0$ and $n \geq 1$, 
\begin{align}
\mathbb{E} \left( x + S_n; \tau_x > n  \right) \leq  c (1 + x). \nonumber
\end{align}
\end{lemma}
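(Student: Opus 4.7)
The plan is to reduce the claim to the classical upper bound $V(x) \leq c(1+x)$ on the harmonic function by exploiting the iterated harmonicity identity. Specifically, starting from the identity \eqref{Doob transf}, namely $\mathbb{E} V(x+S_1)\mathds 1_{\{x+S_1\geq 0\}} = V(x)$, an induction on $n$ using the Markov property together with the observation $\{\tau_x > n\} = \{\tau_x > n-1\} \cap \{x + S_n \geq 0\}$ yields
\[
\mathbb{E}\bigl(V(x + S_n);\ \tau_x > n\bigr) = V(x), \quad n \geq 1,\ x \geq 0.
\]

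Next I would pass from the integrand $V(x+S_n)$ to the simpler $x+S_n$ using the pointwise lower bound $V(y) \geq y$ valid for every $y \geq 0$, which is the easy half of the two-sided estimate $x \leq V(x) \leq c(1+x)$ recorded in Lemma \ref{Theorem harmonic func} (and already used in the excerpt as part of Lemma \ref{Lem_V_Ineq_aa}). Since $x+S_n \geq 0$ on the event $\{\tau_x > n\}$, this immediately gives
\[
\mathbb{E}(x+S_n;\ \tau_x > n) \leq \mathbb{E}\bigl(V(x+S_n);\ \tau_x > n\bigr) = V(x) \leq c(1+x),
\]
which is precisely the asserted bound.

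There is no substantial obstacle: the argument depends only on the harmonicity identity \eqref{Doob transf}, already stated in the introduction, and on the elementary two-sided control of $V$ supplied by Lemma \ref{Theorem harmonic func}. The upper bound $V(x) \leq c(1+x)$ itself follows from the representation $V(x) = V(0) U(x)$, with $U$ the renewal function of the strict ascending ladder process, combined with the standard linear growth of renewal functions; this is independent of the present lemma, so no circularity arises. An equivalent route, which avoids appealing to the extended domain of $V$, would be to apply Doob's optional stopping to $(S_k)$ at the bounded stopping time $\tau_x \wedge n$ to get $\mathbb{E}(x+S_n;\tau_x>n) = x + \mathbb{E}\bigl(-(x+S_{\tau_x});\tau_x\leq n\bigr)$, and then to bound the last term by $-\mathbb{E} S_{\tau_x} - x = V(x)-x$ using monotone convergence and the classical finiteness of the descending ladder height moment under \ref{SecondMoment}; however, the harmonicity argument above is considerably shorter.
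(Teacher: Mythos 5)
Your argument is sound as a standalone derivation \emph{if} you import from the literature the facts that $V(x)=-\mathbb{E}S_{\tau_x}$ is finite, harmonic in the sense of \eqref{Doob transf}, and satisfies $V(x)\le c(1+x)$. The difficulty is that it cannot serve as a proof of this lemma \emph{within the paper}: the appendix derives precisely those three facts from the present lemma, not the other way around. The order of development there is Lemma~\ref{Lem-MKillTn} and Lemma~\ref{Lem-NuTzEs}, then Lemma~\ref{z+Mn killed Tz}, and only then Lemma~\ref{Theorem harmonic func}, whose proof opens with ``Using \eqref{Pf_Optional_thm} and Lemma~\ref{z+Mn killed Tz}'' to establish that $V$ is well-defined and of linear growth, and whose harmonicity statement \eqref{V_Harmonicity} is proved last, again via the present lemma. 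Likewise Lemma~\ref{Lem_V_Ineq_aa}, which you invoke, is derived from \eqref{Pf_Recur_V}, an estimate produced \emph{inside} the proof of Lemma~\ref{z+Mn killed Tz}. So both your main route (the $V$-martingale) and your optional-stopping alternative (which bounds $\mathbb{E}(-(x+S_{\tau_x});\tau_x\le n)$ by $V(x)-x$, hence needs $V(x)<\infty$ and $V(x)\le c(1+x)$) are circular relative to the paper's own logic: they feed back in, as hypotheses, the very conclusions this lemma is meant to bootstrap.

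The paper's proof is a genuinely different, self-contained argument that never mentions $V$. It introduces the stopping time $\nu_n$ (first time $|x+S_k|$ exceeds $n^{1/2-\ee}$), splits $V_n(x):=\mathbb{E}(x+S_n;\tau_x>n)$ according to whether $\nu_n\le[n^{1-\ee}]$, controls the late-$\nu_n$ piece by Cauchy--Schwarz and the super-exponential tail of $\nu_n$ (Lemma~\ref{Lem-NuTzEs}), controls the early-$\nu_n$ piece by applying Lemma~\ref{Lem-MKillTn} at the post-$\nu_n$ starting point (which is then $\ge n^{1/2-\ee}$, the regime where that lemma applies) together with the submartingale property of $(x+S_n)\mathds 1_{\{\tau_x>n\}}$, and arrives at the recursion $V_n(x)\le(1+c_\ee n^{-\ee})V_{[n^{1-\ee}]}(x)+c_\ee(1+x)e^{-c_\ee n^\ee}$, which is then iterated via \cite[Lemma~A.1]{GLL18}. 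The point of doing it this way, rather than citing the renewal identity $V=V(0)U$, is robustness: the bootstrap uses only second-moment estimates, a CLT, and the Markov property, and therefore transfers to settings such as Markov random walks --- an extension the authors explicitly flag --- where the classical Wiener--Hopf / ladder-height representation of the harmonic function is not available.
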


\begin{proof}
The proof is a combination of a recursive argument and the Markov property.
For brevity, denote $V_n (x) = \mathbb{E} (x + S_n; \tau_x > n)$. 
For $\ee \in  (0, \frac{\delta}{2(2 + \delta)}]$, we have 
\begin{align}\label{z+Mn killed Tz J1 et J2}
V_n (x)
& =  \mathbb{E} \left( x + S_n; \tau_x > n, \nu_{n} > [n^{1-\varepsilon}]  \right) \nonumber\\
  & \quad + \mathbb{E} \left( x + S_n; \tau_x >n, \nu_{n} \leq [n^{1-\varepsilon}]  \right)  \nonumber\\
&  = : J_1 + J_2.
\end{align}
\textit{Bound of $J_1$}.  
By Cauchy-Schwarz's inequality, Minkowski's inequality and Lemma \ref{Lem-NuTzEs}, 
we obtain that for any  $\ee \in  (0, \frac{\delta}{2(2 + \delta)}]$, 
\begin{align}\label{z+Mn killed Tz J1 bound}
J_1 & \leq \mathbb{E} \left( x + S_n;  \nu_{n} > [n^{1-\varepsilon}]  \right)  
     \leq  \mathbb{E}^{1/2} \left( |x + S_n|^2 \right) 
       \mathbb{P}^{1/2} \left( \nu_n > [n^{1-\varepsilon}] \right)  \nonumber\\
    & \leq  \left( x + c n^{1/2} \right) c_\ee e^{- c_\ee n^{\ee}}  
    \leq  c_\ee (1 + x) e^{- c_\ee n^{\ee}}. 
\end{align}
\textit{Bound of $J_2$.}  Using the Markov property leads to
\begin{align}\label{z+Mn killed Tz J2}
J_2  & =   \sum_{k=1}^{ [ n^{1-\varepsilon} ] } \mathbb{E} (x + S_n; \tau_x > n, \nu_n = k)  \nonumber\\
     & =  \sum_{k=1}^{ [ n^{1-\varepsilon} ] }  
    \int_{\bb R }  V_{n-k} (x')
     \mathbb{P} \left( x + S_k \in dx'; \tau_x > k, \nu_n = k \right).
\end{align}
Note that on the event $\{\tau_x > k, \nu_n = k \}$, we have $x' = x + S_k > n^{ 1/2 -\varepsilon }$.
By Lemma \ref{Lem-MKillTn}, it holds that for any  $\ee \in  (0, \frac{\delta}{2(2 + \delta)}]$, 
\begin{align*} 
V_{n-k} (x') = \mathbb{E}  \left(  x' + S_{n-k}; \tau_{x'} > n - k  \right)
\leq  \left(1 + \frac{c}{(n-k)^{  \ee}} \right) x'.
\end{align*}
Implementing this bound into \eref{z+Mn killed Tz J2}, we derive that 
\begin{align}\label{y + M_n killed Tz001}
J_2 
& \leq  \sum_{k=1}^{ [ n^{1-\varepsilon} ] }  
   \left(1 +  \frac{c}{(n-k)^{ \ee}}   \right)  \mathbb{E} ( x + S_k;  \tau_x > k, \nu_n=k )  \nonumber\\
& \leq  \left(1 +  \frac{c_{\ee}}{n^{ \ee} }  \right) 
    \sum_{k=1}^{ [ n^{1-\varepsilon} ] }   \mathbb{E} ( x + S_k;  \tau_x > k, \nu_n=k ). 
\end{align}
Since $\big( (x + S_n) \mathds{1}_{\{\tau_x > n\}} \big)_{n\geq1}$ is a submartingale with respect to the natural filtration 
$\mathscr F_{n} = \sigma (X_1, \ldots, X_n)$, 
we get that for any $1 \leq k \leq [ n^{1-\varepsilon} ]$,
\begin{align*}
\mathbb{E} ( x + S_k;  \tau_x > k, \nu_n = k ) 
\leq   \mathbb{E} \left( x + S_{ [ n^{1-\ee} ] };  \tau_x > [ n^{1-\ee} ], \nu_n = k  \right), 
\end{align*}
and consequently, for any  $\ee \in  (0, \frac{\delta}{2(2 + \delta)}]$, 
\begin{align}\label{Pf_Sum_killed_J2}
J_2  \leq  \left(1 +  \frac{ c_\ee }{ n^{ \ee} }  \right)   V_{[ n^{1-\ee} ]} (x). 
\end{align}
Substituting \eqref{Pf_Sum_killed_J2} and \eref{z+Mn killed Tz J1 bound} into \eref{z+Mn killed Tz J1 et J2} gives 
\begin{align*}
V_{n} (x) \leq  \left(1 +   \frac{ c_\ee }{ n^{ \ee} } \right)  V_{[ n^{1-\ee} ]} (x)
       + c_{\ee} (1 + x) e^{ -c_{\ee} n^{\ee} }.
\end{align*}
Applying \cite[Lemma A.1]{GLL18}, we get that for any integer $k_0 \in [1, n]$,
\begin{align*}
V_{n} (x)  \leq  \left(1 +   \frac{ c_\ee }{ k_0^{  \ee} }  \right)   V_{k_0} (x) 
       + c_{\ee} (1 + x) e^{ -c_{\ee}  k_0^{\ee} }.
\end{align*}
By Lemma \ref{Lem-MKillTn}, we have $V_{k_0} (x) \leq x + c k_0^{1/2 - \ee}$ 
and therefore,  for any  $\ee \in  (0, \frac{\delta}{2(2 + \delta)}]$,  
\begin{align}\label{Pf_Recur_V}
V_{n} (x)  
& \leq  \left(1 +  \frac{ c_\ee }{ k_0^{  \ee} } \right)  x + c k_0^{1/2 - \ee} 
       + c_{\ee} (1 + x) e^{ -c_{\ee}  k_0^{\ee} }  \notag\\
& \leq  \left(1 +  \frac{ c_\ee' }{ k_0^{ \ee} } \right)  x + c_{\ee}' k_0^{1/2 - \ee},  
\end{align}
which concludes the proof of the lemma. 
\end{proof}

The following result proves the existence and gives some properties of the harmonic function $V$. 

\begin{lemma} \label{Theorem harmonic func} 
Assume \ref{SecondMoment} for some $\delta >0$. 

\noindent 1. For any $x\geq 0$, the limit $\lim_{n \to \infty }\mathbb{E}\left(x+S_n ;\tau_x >n\right)$
exists and 
\begin{align}\label{Def_V_002}
V(x): = \lim_{n \to \infty } \mathbb{E} \left(x+S_n ; \tau_x  > n \right). 
\end{align}
\noindent 2.  The function $V$ is increasing on $\bb R _{+}$
and $x \leq V(x) \leq c(1 + x) $ for any $x>0$.  
Moreover, $\lim_{x\to \infty }\frac{V(x)}{x} = 1.$ 

%
%

\noindent 3. The function $V$ is harmonic in the sense that for any $x \geq 0,$ 
\begin{align}\label{V_Harmonicity}
\mathbb{E} \left( V \left( x + S_{1} \right); \tau_{x} > 1 \right) = V(x).
\end{align}
\end{lemma}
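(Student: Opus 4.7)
The plan is to build all three assertions from the uniform bound of Lemma~\ref{z+Mn killed Tz} together with the monotonicity of the sequence $V_n(x) := \mathbb{E}(x+S_n;\tau_x>n)$ in $n$. First I would establish that $V_n(x)$ is non-decreasing in $n$ for each fixed $x\geq 0$. This follows from the optional stopping identity already used in \eqref{Pf_Optional_thm}, namely
\begin{align*}
V_n(x) = x - \mathbb{E}(x+S_{\tau_x};\tau_x \leq n),
\end{align*}
together with the observation that $x+S_{\tau_x}<0$ on $\{\tau_x\leq n\}$, so $-\mathbb{E}(x+S_{\tau_x};\tau_x\leq n)$ is non-negative and non-decreasing in $n$. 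Combining monotonicity with the uniform bound $V_n(x)\leq c(1+x)$ from Lemma~\ref{z+Mn killed Tz}, the monotone convergence theorem yields the existence of the limit $V(x):=\lim_{n\to\infty} V_n(x)$, together with $V(x)\leq c(1+x)$.

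Next I would verify the remaining statements in part~2. The lower bound $V(x)\geq x$ follows from monotonicity in $n$ applied to $V_0(x)=x$. Monotonicity of $V$ in $x$ is obtained by a pointwise coupling on the same probability space: for $0\leq x\leq x'$, the inclusion $\{\tau_x>n\}\subseteq\{\tau_{x'}>n\}$ combined with $x+S_n\leq x'+S_n$ gives $(x+S_n)\mathds{1}_{\tau_x>n}\leq (x'+S_n)\mathds{1}_{\tau_{x'}>n}$ pointwise, whence $V_n(x)\leq V_n(x')$ and thus $V(x)\leq V(x')$. For the sharp asymptotic $V(x)/x\to 1$, the key input is the finite-$n$ bound \eqref{Pf_Recur_V}, which transfers to the limit via monotonicity: for any fixed $k_0\geq 1$ and all $n\geq k_0$,
\begin{align*}
V(x) \leq \Big(1+\tfrac{c_\ee'}{k_0^\ee}\Big) x + c_\ee' k_0^{1/2-\ee}.
\end{align*}
Choosing $k_0=\lfloor x\rfloor$ yields $V(x)/x \leq 1 + O(x^{-\ee}) + O(x^{-1/2-\ee})$, which combined with $V(x)\geq x$ forces $V(x)/x\to 1$ as $x\to\infty$.

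For the harmonicity \eqref{V_Harmonicity}, I would apply the Markov property at time one to write
\begin{align*}
V_{n+1}(x) = \mathbb{E}\big(V_n(x+X_1)\mathds{1}_{\{x+X_1\geq 0\}}\big),
\end{align*}
and pass to the limit $n\to\infty$ on both sides. The left-hand side converges to $V(x)$ by part~1. On the right-hand side, the integrand is non-decreasing in $n$ and dominated by $V(x+X_1)\mathds{1}_{\{x+X_1\geq 0\}}\leq c(1+x+X_1^+)$, which is integrable under \ref{SecondMoment}; monotone convergence then produces the identity $V(x)=\mathbb{E}(V(x+X_1);x+X_1\geq 0)=\mathbb{E}(V(x+S_1);\tau_x>1)$.

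The step I anticipate as the main obstacle is the exact asymptotic $V(x)/x\to 1$: while the scheme of monotonicity plus the uniform bound of Lemma~\ref{z+Mn killed Tz} cleanly delivers existence, boundedness and harmonicity, pinning down the leading constant requires exploiting the polynomial error term $c_\ee k_0^{1/2-\ee}$ in the recursion behind Lemma~\ref{z+Mn killed Tz} with a judicious choice $k_0=k_0(x)$, and so ultimately relies on the $2+\delta$ moment assumption through the range of admissible $\ee$.
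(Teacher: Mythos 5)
Your proposal is correct and follows essentially the same route as the paper: the optional stopping identity \eqref{Pf_Optional_thm}, the uniform bound of Lemma~\ref{z+Mn killed Tz}, the recursion estimate \eqref{Pf_Recur_V} for the exact asymptotic $V(x)/x\to 1$, and a Markov-property-plus-convergence-theorem argument for harmonicity. The only genuine novelty is your explicit observation that $V_n(x)=x+\mathbb{E}\bigl(-(x+S_{\tau_x});\tau_x\leq n\bigr)$ is non-decreasing in $n$ (since $x+S_{\tau_x}<0$ on $\{\tau_x\leq n\}$ and the events are nested), which lets you invoke monotone convergence where the paper invokes dominated convergence; this is a neat simplification but does not change the substance of the argument.
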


\begin{proof}
We proceed with the first assertion. Using \eqref{Pf_Optional_thm} and Lemma \ref{z+Mn killed Tz}, we get that for $x \geq 0$, 
\begin{align}\label{Pf_V_Sec_def}
x \bb P(\tau_x > n)  +  \mathbb{E} (- S_{\tau_x}; \tau_x \leq n )
= \mathbb{E} (x + S_n; \tau_x > n )
  \leq c(1+x).
\end{align}
Since $x + S_{\tau_x} \leq 0$,  we have $S_{\tau_x} \leq 0$ and hence, using \eqref{Pf_V_Sec_def},
\begin{align*}
\mathbb{E} ( - S_{\tau_x}; \tau_x \leq n ) \leq c(1+x). 
\end{align*} 
By the Lebesgue monotone convergence theorem,  
it follows that 
\begin{align}\label{Pf_Integrab_Sn}
\lim_{n \to \infty} \mathbb{E} ( - S_{\tau_x}; \tau_x \leq n )
= \mathbb{E} ( - S_{\tau_x} ) \leq c(1+x). 
\end{align}
This, together with \eqref{Pf_V_Sec_def} and the fact that $\bb P(\tau_x > n) \to 0$ as $n \to \infty$, proves \eqref{Def_V_002}. 

We now prove the second assertion. Since for any $0 \leq x_1 \leq x_2$, 
\begin{align*}
\mathbb{E} (x_1 + S_n; \tau_{x_1} > n )  \leq  \mathbb{E} (x_2 + S_n; \tau_{x_2} > n ), 
\end{align*}
from \eqref{Def_V_002} it follows that $V$ is increasing on $\bb R_+$. 

Using \eqref{Pf_Optional_thm} and the fact that $x + S_{\tau_x} \leq 0$, we see that 
$\mathbb{E} (x + S_n; \tau_x > n ) \geq x$.
Hence, by \eqref{Def_V_002}, we get $V(x) \geq x$. 
Since $V(x) = - \mathbb E S_{\tau_x}$, using \eqref{Pf_Integrab_Sn} we get that $V(x) \leq c(1+x)$ for some constant $c >0$.
Therefore $x \leq V(x) \leq c(1+x)$.

From \eqref{Pf_Recur_V} and \eqref{Def_V_002}, it follows that 
\begin{align*}
\limsup_{x \to \infty} \frac{V(x)}{x} \leq  1 +  \frac{ c_\ee }{ k_0^{ \ee} }.   
\end{align*}
Taking $k_0 \to \infty$, we get $\limsup_{x \to \infty} V(x)/x \leq 1$.
This, together with the inequality $x \leq V(x)$, shows that $\lim_{x \to \infty} V(x) / x = 1$.

For the third assertion, denoting $V_n (x) = \mathbb{E} (x + S_n; \tau_x > n)$, by the Markov property, we have
\begin{align*}
V_{n+1} (x) = \int_{\bb R} V_n(x') \bb P \left( x + S_1 \in dx', \tau_x >1 \right). 
\end{align*}
The identity \eqref{V_Harmonicity} follows from the Lebesgue dominated convergence theorem, 
the inequality $V_n(x') \leq c(1+x')$  and \eqref{Def_V_002}. 
\end{proof}

\begin{proof}[Proof of Lemma \ref{Lem_V_Ineq_aa}]
The assertion of the lemma follows from \eqref{Pf_Recur_V} and \eqref{Def_V_002}. 
\end{proof}

Recall that $\nu_n$ is a stopping time defined by \eqref{Def_nun}. 

\begin{lemma}\label{lemma E1 and E2}
Assume \ref{SecondMoment} for some $\delta >0$. 
Then,  for any  $\ee \in  (0, \frac{\delta}{2(2 + \delta)}]$,  there exists a constant $c_{\ee} >0$ such that 
for any $x \geq 0$ and $n \geq 1$, 
\begin{align}
 E_1:= \mathbb{E} \left( x + S_{\nu_n}; \tau_x > \nu_n, \nu_n \leq [n^{1-\ee}] \right)
  \leq c_\ee (1 + x),   \label{control of E1}
\end{align}
and 
\begin{align}\label{control of E2}
E_1 \leq  V(x)  \leq  \left( 1 +  \frac{ c_\ee }{ n^{  \ee/2} }  \right) E_1
 + c_{\ee} (1 +x) e^{ -c_{\ee}n^{\ee} }. 
\end{align}
\end{lemma}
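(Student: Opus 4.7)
The plan is to derive both bounds simultaneously by applying the optional stopping / Markov property at the bounded stopping time $T := \nu_n \wedge [n^{1-\ee}]$, and then passing to the limit in the definition \eqref{Def_V_002} of $V$. The key identity to establish is
\begin{align*}
V(x) = \mathbb{E}\bigl( V(x+S_T);\, \tau_x > T \bigr),
\end{align*}
from which both inequalities follow after splitting according to whether $\nu_n \leq [n^{1-\ee}]$ or not. The bound \eqref{control of E1} is then an immediate consequence of \eqref{control of E2} together with $V(x)\leq c(1+x)$ from Lemma \ref{Theorem harmonic func}, so I will focus on \eqref{control of E2}.

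First I would fix $m$ large, apply the strong Markov property at $T$ (which takes only finitely many values $1,\dots,[n^{1-\ee}]$) to write
\begin{align*}
\mathbb{E}(x+S_m;\tau_x>m)
= \sum_{k=1}^{[n^{1-\ee}]} \int_{\bb R_+} \mathbb{E}(x'+S_{m-k};\tau_{x'}>m-k)\, \bb P(x+S_k\in dx',\,\tau_x>k,\,T=k),
\end{align*}
and then let $m\to\infty$. By Lemma \ref{z+Mn killed Tz} the integrand is bounded by $c(1+x')$, which is integrable against the law of $x+S_T$ on $\{\tau_x>T\}$ because $x+S_T\leq x+n^{1/2-\ee}$ on $\{T=[n^{1-\ee}]\}$ and more generally $\mathbb{E}(x+S_T;\tau_x>T)\leq c(1+x)$ by optional stopping applied to the submartingale $(x+S_{k\wedge\tau_x})_{k\geq 1}$. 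Dominated convergence and \eqref{Def_V_002} then yield the displayed harmonic identity.

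Next I would split $\mathbb{E}(V(x+S_T);\tau_x>T) = A + B$ with $A$ the contribution of $\{\nu_n\leq[n^{1-\ee}]\}$ and $B$ that of $\{\nu_n>[n^{1-\ee}]\}$. On $A$ we have $T=\nu_n$ and, because $\tau_x>\nu_n$ forces $x+S_{\nu_n}\geq 0$, the definition of $\nu_n$ gives $x+S_{\nu_n}>n^{1/2-\ee}$. Applying Lemma \ref{Lem_V_Ineq_aa} with $k_0=[n^{1/2}]$ yields $V(x')\leq (1+c_\ee n^{-\ee/2})x'$ uniformly for such $x'$, hence $A\leq (1+c_\ee n^{-\ee/2})E_1$, while $V(x')\geq x'$ yields $A\geq E_1$. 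On $B$ we have $|x+S_k|\leq n^{1/2-\ee}$ for all $k\leq[n^{1-\ee}]$, so $V(x+S_{[n^{1-\ee}]})\leq c(1+n^{1/2-\ee})\leq c\sqrt{n}$, and Lemma \ref{Lem-NuTzEs} gives $\bb P(\nu_n>[n^{1-\ee}])\leq c_\ee e^{-c_\ee n^\ee}$, so $B\leq c\sqrt{n}\cdot c_\ee e^{-c_\ee n^\ee}\leq c_\ee(1+x)e^{-c_\ee n^\ee}$ (after slightly enlarging the exponent constant, using $\sqrt{n}\leq e^{c_\ee n^\ee/2}$ for large $n$). Combining the two-sided bounds on $A$ with the upper bound on $B$ gives \eqref{control of E2}.

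The main obstacle is obtaining the correct rate $c_\ee/n^{\ee/2}$ in the upper bound, which forces a careful choice of the auxiliary parameter $k_0$ in Lemma \ref{Lem_V_Ineq_aa}: one must balance the two error terms $c_\ee k_0^{-\ee}x'$ and $c_\ee k_0^{1/2-\ee}$, where the latter needs to be absorbed into $c_\ee n^{-\ee/2}x'$ using $x'\geq n^{1/2-\ee}$. Everything else is a straightforward combination of the martingale argument, the exponential tail of $\nu_n$ from Lemma \ref{Lem-NuTzEs}, and the linear growth bound on $V$ from Lemma \ref{Theorem harmonic func}.
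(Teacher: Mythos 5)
Your plan is correct and follows essentially the same strategy as the paper: the optional-stopping identity for the martingale $(V(x+S_k)\mathds 1_{\{\tau_x>k\}})_{k\geq 0}$, a split according to whether $\nu_n\leq [n^{1-\ee}]$, the bound $V(x')\geq x'$ for the lower estimate, Lemma~\ref{Lem_V_Ineq_aa} with a tuned $k_0$ for the upper estimate on the main term, and Lemma~\ref{Lem-NuTzEs} for the tail. Two small points where you diverge, both legitimate: (i) you evaluate the martingale at the genuinely bounded time $T=\nu_n\wedge[n^{1-\ee}]$, so on the event $\{\nu_n>[n^{1-\ee}]\}$ you can bound $V(x+S_T)\leq c\sqrt{n}$ deterministically and avoid the Cauchy--Schwarz/Minkowski step that the paper uses in \eqref{control of E22 all}, which the paper needs because it evaluates the second term at time $n$; this is a genuine simplification. (ii) You take $k_0=[n^{1/2}]$ instead of the paper's $k_0=n^{1-2\ee}$. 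Your choice makes the $k_0^{-\ee}$ error term equal to $n^{-\ee/2}$ on the nose, but absorbing $c_\ee k_0^{1/2-\ee}=c_\ee n^{1/4-\ee/2}$ into $c_\ee n^{-\ee/2}x'$ via $x'>n^{1/2-\ee}$ requires $1/4-\ee/2\leq 1/2-3\ee/2$, i.e.\ $\ee\leq 1/4$. Since $\frac{\delta}{2(2+\delta)}$ can exceed $1/4$ for $\delta>2$, you should note this restriction, or (safer) observe that $\ee\leq 1/4$ can be assumed without loss of generality because the error bound is only useful for small $\ee$; the paper has the same issue hidden in its passage from $n^{-\ee(1-2\ee)}$ to the stated $n^{-\ee/2}$. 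Finally, one cosmetic remark: you write the submartingale as $(x+S_{k\wedge\tau_x})$, but in fact $(x+S_{k\wedge\tau_x})$ is a bona fide martingale; the submartingale used in the paper and needed for your monotonicity argument is $(x+S_k)\mathds 1_{\{\tau_x>k\}}$, obtained by deleting the strictly negative overshoot $x+S_{\tau_x}$ at $\tau_x$. That is the object to which Lemma~\ref{z+Mn killed Tz} applies. With these minor fixes, both inequalities follow exactly as you describe, and in particular \eqref{control of E1} does drop out of $E_1\leq V(x)\leq c(1+x)$ once the lower half of \eqref{control of E2} is in hand.
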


\begin{proof}
We first prove \eqref{control of E1}. 
Since $x + S_{\nu_n}>0$, we have
\begin{align*}
E_1 
& \leq  \mathbb{E} \left( x + S_{[n^{1-\ee}]};  \tau_{x} > [n^{1-\ee}],  \nu_n \leq [n^{1-\ee}] \right)  \nonumber\\
& \leq  \mathbb{E} \left( x + S_{[n^{1-\ee}]};  \tau_{x} > [n^{1-\ee}] \right).  
\end{align*}
Using Lemma \ref{z+Mn killed Tz}, we obtain \eqref{control of E1}.

We next prove \eqref{control of E2}. 
By Lemma \ref{Theorem harmonic func} 
and the fact that $\big( V(x + S_n) \mathds{1}_{\{ \tau_x > n\}} \big)_{n \geq 0}$ is a martingale
with respect to the filtration $\mathscr F_{n} = \sigma (X_1, \ldots, X_n)$, we get 
\begin{align}
V(x) & =  \mathbb{E} \left( V(x + S_n); \tau_x > n, \nu_n \leq [n^{1-\ee}] \right)  \nonumber\\
  & \quad   + \mathbb{E} \left( V(x + S_n); \tau_x > n, \nu_n > [n^{1-\ee}]  \right) \nonumber\\ 
  & =  \mathbb{E} \left( V(x + S_{\nu_n}); \tau_x > \nu_n, \nu_n \leq [n^{1-\ee}] \right)  \nonumber\\
  & \quad + \mathbb{E} \left( V(x + S_n); \tau_x > n, \nu_n > [n^{1-\ee}] \right).   \label{control of E2 1}
\end{align}
On the one hand, since $V(x) \geq x$, from \eqref{control of E2 1} we get
\begin{align}
V(x)  \geq \mathbb{E} \left( V(x + S_{\nu_n}); 
     \tau_x > \nu_n, \nu_n \leq [n^{1-\ee}] \right) \geq  E_1. \label{E2 lower bound}
\end{align}
On the other hand, 
using Lemma \ref{Lem_V_Ineq_aa} and the inequality $V(x) \leq c(1 + x)$,  we obtain
that for any  $\ee \in  (0, \frac{\delta}{2(2 + \delta)}]$, 
\begin{align}\label{control of E21 1}
V(x) & \leq  \left(1 +  \frac{ c_\ee }{ k_0^{  \ee} } \right)   E_1 
     +  c_{\ee} k_0^{1/2 - \ee} \mathbb{P} \left( \tau_x > \nu_n, \nu_n \leq [n^{1-\ee}] \right)   \nonumber\\
    & \quad +  c \mathbb{E} \left( 1 + x + S_n; \tau_x >n, \nu_n > [n^{1 - \ee}] \right).
\end{align}
By H\"older's inequality and Lemma \ref{Lem-NuTzEs}, we get that for any $\ee \in (0,1/2)$,  
\begin{align}\label{control of E22 all}
 \mathbb{E} \left( x + S_n; \tau_x >n, \nu_n > [n^{1 - \ee}] \right)  
& \leq c (x + n^{1/2}) \mathbb{P}^{1/2} \left( \nu_n > [n^{1-\ee}] \right)  \nonumber\\
& \leq   c_{\ee} (1 + x) e^{ -c_{\ee}n^{\ee} }. 
\end{align}
Since $n^{1/2-\ee}\leq x + S_{\nu_n}$ on the event $\{\tau_x > \nu_n \}$, it holds that  
\begin{align} \label{tau n and nu n}
&   \mathbb{P} \left(\tau_x > \nu_n, \nu_n \leq [n^{1 - \ee}]\right) \nonumber\\
& \leq  \frac{1}{n^{1/2-\ee}}\mathbb{E} \left(x + S_{\nu_n}; \tau_x > \nu_n, \nu_n\leq [n^{1-\ee}]\right) 
=  \frac{1}{n^{1/2-\ee}} E_1. 
\end{align}
Combining \eref{control of E21 1}, \eref{control of E22 all} and \eqref{tau n and nu n} 
gives that for any  $\ee \in  (0, \frac{\delta}{2(2 + \delta)}]$, 
%
\begin{align*}
V(x) \leq  \left( 1 +  \frac{ c_\ee }{ k_0^{  \ee} } + \frac{ c_{\ee} k_0^{1/2-\ee} }{ n^{1/2-\ee} }  \right) E_1
 + c_{\ee} (1 +x) e^{ -c_{\ee}n^{\ee} }. 
\end{align*}
By taking $k_0 = n^{1 - 2 \ee}$, this implies that 
\begin{align}\label{E2 upper bound}
V(x) \leq  \left( 1 +  \frac{ c_\ee }{ n^{  \ee (1 - \ee)} }  \right) E_1
 + c_{\ee} (1 +x) e^{ -c_{\ee}n^{\ee} }. 
\end{align}
Combining this with \eqref{E2 lower bound},  
we get \eqref{control of E2}. 
\end{proof}

\begin{lemma}\label{Lem_tau01}
Assume \ref{SecondMoment} for some $\delta >0$. 
Then,  for any  $\ee \in  (0, \frac{\delta}{2(2 + \delta)}]$,  there exists a constant $c_{\ee} >0$ such that 
for any $x \geq 0$ and $n \geq 1$, 
\begin{align*}
\mathbb{P} (\tau_x > n) \leq \frac{c_\ee}{n^{1/2-\ee}}(1 + x). 
\end{align*}
\end{lemma}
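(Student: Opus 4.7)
The plan is to prove Lemma \ref{Lem_tau01} by decomposing the event $\{\tau_x > n\}$ according to the position of the stopping time $\nu_n$ defined in \eqref{Def_nun}, and then invoking the two already-available tools: the tail estimate for $\nu_n$ in Lemma \ref{Lem-NuTzEs} and the Markov-type bound \eqref{tau n and nu n} embedded in the proof of Lemma \ref{lemma E1 and E2}, together with the bound $E_1 \leq c_\ee(1+x)$ from \eqref{control of E1}.

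Concretely, I would write
\begin{align*}
\mathbb{P}(\tau_x > n)
= \mathbb{P}\bigl(\tau_x > n,\ \nu_n \leq [n^{1-\ee}]\bigr)
 + \mathbb{P}\bigl(\tau_x > n,\ \nu_n > [n^{1-\ee}]\bigr).
\end{align*}
For the first term, since $[n^{1-\ee}] \leq n$ for $n$ large, on the event $\{\nu_n \leq [n^{1-\ee}]\}$ the inequality $\tau_x > n$ forces $\tau_x > \nu_n$, so this term is bounded by $\mathbb{P}(\tau_x > \nu_n,\ \nu_n \leq [n^{1-\ee}])$. Now \eqref{tau n and nu n} from the proof of Lemma \ref{lemma E1 and E2} gives
\begin{align*}
\mathbb{P}\bigl(\tau_x > \nu_n,\ \nu_n \leq [n^{1-\ee}]\bigr) \leq \frac{E_1}{n^{1/2-\ee}},
\end{align*}
and \eqref{control of E1} tells us that $E_1 \leq c_\ee(1+x)$, yielding the desired main order $c_\ee(1+x) n^{-1/2+\ee}$.

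For the second term, Lemma \ref{Lem-NuTzEs} applied with $\beta$ chosen so that $\beta n^{1-\ee} \leq [n^{1-\ee}]+1$ yields
\begin{align*}
\mathbb{P}\bigl(\tau_x > n,\ \nu_n > [n^{1-\ee}]\bigr) \leq \mathbb{P}\bigl(\nu_n > [n^{1-\ee}]\bigr) \leq c_\ee\, e^{-c_\ee n^\ee},
\end{align*}
which for every $n \geq 1$ is dominated by $c_\ee'\, n^{-(1/2-\ee)}$ because the exponential decay beats any polynomial, so it can be absorbed into the main term (using $1+x \geq 1$). Adding the two bounds finishes the proof.

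There is essentially no obstacle here: the statement is a clean consequence of results already established a few lines above. The only minor point requiring care is to note that the sub-exponential contribution from $\{\nu_n > [n^{1-\ee}]\}$ is uniformly dominated by the polynomial factor $n^{-1/2+\ee}$ for all $n \geq 1$, and to keep track of the fact that $\ee$ must lie in the range $(0,\frac{\delta}{2(2+\delta)}]$ where the preceding lemmas are valid.
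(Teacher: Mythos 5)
Your proposal is correct and follows exactly the same route as the paper: decompose $\{\tau_x > n\}$ on whether $\nu_n \leq [n^{1-\ee}]$, control the first piece via the Markov-type bound \eqref{tau n and nu n} and $E_1 \leq c_\ee(1+x)$, and control the second via Lemma \ref{Lem-NuTzEs}. The only cosmetic difference is that the paper writes the Markov step directly in terms of $\mathbb{E}(x+S_{\nu_n};\cdot)$ rather than citing \eqref{tau n and nu n}, but the content is identical.
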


\begin{proof}
By Lemmas \ref{lemma E1 and E2} and \ref{Lem-NuTzEs}, we get
\begin{align*}
\mathbb{P} ( \tau_x > n ) 
& =  \mathbb{P} \left( \tau_x > n, \nu_n \leq [n^{1-\ee}] \right)
  + \mathbb{P} \left( \tau_x > n, \nu_n > [n^{1-\ee}] \right)  \nonumber\\
& \leq  \frac{1}{n^{1/2-\ee}}
  \mathbb{E} \left(x + S_{\nu_n}; \tau_x > \nu_n, \nu_n \leq [n^{1-\ee}] \right)
 + c_\ee e^{-c_\ee n^\ee}. 
\end{align*}
Using the bound \eqref{control of E1}, the result follows. 
\end{proof}

\begin{lemma}\label{Lem-E3-bound}
Assume \ref{SecondMoment} for some $\delta >0$. 
There exists $\ee_0>0$ such that for any $\ee\in (0, \ee_0)$, 
$x \geq 0$ and $n\geq 1$, 
\begin{align*}
E_2:  =   \mathbb{E} 
 \left( x + S_{\nu_n}; x + S_{\nu_n} >  n^{1/2-\ee/2}, \tau_x > \nu_n, \nu_n \leq [n^{1-\ee}] \right) 
  \leq  \frac{  c_\ee  (1 + x) }{ n^{\delta/2 - \ee (1 + \ee + \delta/2)} }.
\end{align*}
\end{lemma}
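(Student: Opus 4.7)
\bigskip

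\noindent\textbf{Proof proposal for Lemma \ref{Lem-E3-bound}.} The strategy is to decompose $E_2$ over the possible values of $\nu_n$, localise the event $\{x+S_{\nu_n}>n^{1/2-\ee/2}\}$ onto a single large jump, and then exploit independence together with condition \ref{SecondMoment} and Lemma \ref{Lem_tau01}. Since the bound becomes trivial for small $n$, we may freely assume $n$ large enough that $n^{\ee/2}\geq 2$.

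First, write
\[
E_2 \;=\; \sum_{k=1}^{[n^{1-\ee}]} \mathbb{E}\bigl(x+S_k;\, x+S_k>n^{1/2-\ee/2},\, \tau_x>k,\, \nu_n=k\bigr).
\]
On $\{\nu_n=k\}$ we have $|x+S_{k-1}|\leq n^{1/2-\ee}$ by definition of $\nu_n$, and on $\{\tau_x>k\}$ we further have $x+S_{k-1}\geq 0$. Thus $x+S_k=(x+S_{k-1})+X_k$ with $0\leq x+S_{k-1}\leq n^{1/2-\ee}$, so the constraint $x+S_k>n^{1/2-\ee/2}$ forces
\[
X_k \;>\; n^{1/2-\ee/2}-n^{1/2-\ee}\;\geq\;\tfrac12 n^{1/2-\ee/2},
\qquad
x+S_k \;\leq\; n^{1/2-\ee}+X_k \;\leq\; 2X_k.
\]
Dropping the constraint $\nu_n=k$ to the weaker $\mathscr{F}_{k-1}$-measurable event $\{\tau_x>k-1,\,\nu_n>k-1\}$, we obtain
\[
E_2 \;\leq\; 2\sum_{k=1}^{[n^{1-\ee}]} \mathbb{E}\bigl(X_k;\, X_k>\tfrac12 n^{1/2-\ee/2}\bigr)\,\mathbb{P}\bigl(\tau_x>k-1\bigr),
\]
where we used the independence of $X_k$ from $\mathscr{F}_{k-1}$ and dropped the inclusion $\{\nu_n>k-1\}\subset\{\tau_x>k-1\}$ from one factor for simplicity.

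Next I bound each factor. By Markov's inequality applied to $|X_1|^{2+\delta}$ and condition \ref{SecondMoment},
\[
\mathbb{E}\bigl(X_1;\, X_1>\tfrac12 n^{1/2-\ee/2}\bigr) \;\leq\; \frac{c}{n^{(1+\delta)(1/2-\ee/2)}}.
\]
For the sum $\sum_{k=1}^{[n^{1-\ee}]}\mathbb{P}(\tau_x>k-1)$, Lemma \ref{Lem_tau01} gives $\mathbb{P}(\tau_x>m)\leq c_\ee(1+x)m^{-1/2+\ee}$ for $m\geq 1$, and the resulting Riemann sum is of order $(n^{1-\ee})^{1/2+\ee}=n^{(1-\ee)(1/2+\ee)}$, so
\[
\sum_{k=1}^{[n^{1-\ee}]}\mathbb{P}(\tau_x>k-1) \;\leq\; c_\ee(1+x)\,n^{(1-\ee)(1/2+\ee)}.
\]
Multiplying the two bounds gives $E_2 \leq c_\ee(1+x)\,n^{\gamma}$ with exponent
\[
\gamma \;=\; (1-\ee)(1/2+\ee)-(1+\delta)(1/2-\ee/2) \;=\; -\bigl(\delta/2 - \ee(1+\delta/2) + \ee^2\bigr),
\]
which is strictly less than the exponent claimed in the lemma (the discrepancy being $2\ee^2$, in our favour), yielding the desired bound.

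The routine work is the arithmetic of exponents; the only genuinely important step is recognising that on $\{\nu_n=k,\,\tau_x>k\}$ the event $\{x+S_k>n^{1/2-\ee/2}\}$ must come entirely from one jump $X_k$ of size comparable to $n^{1/2-\ee/2}$, so that $x+S_k\lesssim X_k$ and one can decouple using independence and apply Markov with the $(2+\delta)$-th moment. The mild technical point is to keep the bookkeeping between $\tau_x>k-1$ and $\tau_x>k$ clean so that the factor $(1+x)$ appears only once.
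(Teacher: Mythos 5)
Your proposal is correct and follows the same overall strategy as the paper: decompose over the value $k$ of $\nu_n$, observe that on $\{\nu_n=k,\tau_x>k,\ x+S_k>n^{1/2-\ee/2}\}$ the pre-jump position $x+S_{k-1}$ lies in $[0,n^{1/2-\ee}]$ so that the jump $X_k$ must exceed $c_\ee n^{1/2-\ee/2}$, then decouple via independence of $X_k$ from $\mathscr F_{k-1}$ and control the two factors by Markov's inequality and Lemma~\ref{Lem_tau01}. The one genuine difference is in how $x+S_k$ is handled. The paper writes $x+S_k=(x+S_{k-1})+X_k$ and produces two series $E_{21}$ and $E_{22}$: $E_{21}$ pairs $\mathbb{E}(x+S_{k-1};\tau_x>k-1)\leq c(1+x)$ (Lemma~\ref{z+Mn killed Tz}) with $\mathbb{P}(X_k>c_\ee n^{1/2-\ee/2})$, while $E_{22}$ pairs $\mathbb{E}(X_k;X_k>c_\ee n^{1/2-\ee/2})$ with $\mathbb{P}(\tau_x>k-1)$. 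You instead observe that for $n^{\ee/2}\geq 2$ the large jump dominates, $x+S_k\leq n^{1/2-\ee}+X_k\leq 2X_k$, which collapses everything into a single series of the $E_{22}$ type; this avoids Lemma~\ref{z+Mn killed Tz} entirely and, as you note, produces an exponent larger by $2\ee^2$ than the one stated (which is harmless slack). Both arguments tacitly assume $x\leq n^{1/2-\ee}$ when treating the term $k=1$ (so that $x+S_0=x$ is inside the band), which is the regime in which the lemma is applied in the proof of Theorem~\ref{Theor-IntegrLimTh}; this implicit restriction is shared with the paper and is not a defect of your argument. One phrasing slip: you write of ``the inclusion $\{\nu_n>k-1\}\subset\{\tau_x>k-1\}$'', which is not a true set inclusion; what you in fact do, and what is correct, is simply to enlarge $\{\tau_x>k-1,\ \nu_n>k-1\}$ to $\{\tau_x>k-1\}$ after factoring out the independent piece.
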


\begin{proof}
Since $x + S_{\nu_n} \leq X_{\nu_n} + n^{1/2-\ee}$
 on the event $\{ x + S_{\nu_n} > n^{1/2-\ee/2} \}$,
we have 
\begin{align*}
E_{2} \leq  
 \bb{E} \left( x + S_{\nu_n}; X_{\nu_n} > n^{1/2-\ee/2} - n^{1/2-\ee}, \tau_x > \nu_n, \nu_n \leq [n^{1-\ee}] \right). 
\end{align*}
Since  
$n^{1/2-\ee/2} - n^{1/2-\ee} > c_\ee n^{1/2-\ee/2}$,  
it follows that 
\begin{align*}
E_{2}  
& \leq  \sum_{k = 1}^{ [n^{1-\ee}] } 
\bb{E} \left( x + S_{k}; X_{k} > c_\ee n^{1/2-\ee/2}, \tau_x > k  \right)  \notag\\
&  \leq  \sum_{k = 1}^{ [n^{1-\ee}] } 
\bb{E} \left( x + S_{k-1} + X_k;  X_{k} > c_\ee n^{1/2-\ee/2}, \tau_x > k - 1  \right)  \notag\\
&  =   \sum_{k = 1}^{ [n^{1-\ee}] } 
\bb{E} \left( x + S_{k-1};   \tau_x > k - 1  \right)   \bb P \left( X_{k} > c_\ee n^{1/2-\ee/2} \right)    \notag\\
& \quad  +   \sum_{k = 1}^{ [n^{1-\ee}] } 
\bb{E} \left( X_k;  X_{k} > c_\ee n^{1/2-\ee/2} \right)   \bb P \left( \tau_x > k - 1  \right)  \notag\\
& =:  E_{21} + E_{22}. 
\end{align*}
For $E_{21}$,  using Lemma \ref{z+Mn killed Tz}, Markov's inequality and condition \ref{SecondMoment}, 
we get
\begin{align*}
E_{21} \leq   c (1 + x)  \sum_{k = 1}^{ [n^{1-\ee}] }   \bb P \left( X_{k} > c_\ee n^{1/2-\ee/2} \right) 
  \leq  \frac{c (1 + x) }{n^{\delta/2  - \ee \delta/2}}. 
\end{align*}
For $E_{22}$, using Lemma \ref{Lem_tau01} and again Markov's inequality, we obtain 
\begin{align*}
E_{22}  
& \leq  \bb{E} \left( X_1;  X_{1} > c_\ee n^{1/2-\ee/2} \right)   \sum_{k = 1}^{ [n^{1-\ee}] }   \bb P \left( \tau_x > k - 1  \right)   \notag\\
& \leq  \frac{c_{\ee}}{ n^{(1/2 - \ee/2)(1 + \delta)} }  \sum_{k = 1}^{ [n^{1-\ee}] }  \frac{c_\ee}{k^{1/2-\ee}}(1 + x)  \notag\\
& \leq   \frac{c_{\ee}}{ n^{(1/2 - \ee/2)(1 + \delta)} }     c_\ee  (1 + x)  n^{(1-\ee)(1/2 + \ee)}  
 \leq \frac{  c_\ee  (1 + x) }{ n^{\delta/2 - \ee (1 + \ee + \delta/2)} }.  
\end{align*}
The desired result follows.
\end{proof}

\subsection{Proof of Theorems \ref{Theor-IntegrLimTh} and \ref{CorCCLT}}

We first give a proof of Theorem \ref{Theor-IntegrLimTh}
by using the bounds shown in Section \ref{Sec_PfThm_a}
and the  functional central limit theorem (Lemma \ref{FCLT}).

\begin{proof}[Proof of Theorem \ref{Theor-IntegrLimTh}]


As in \eqref{Def_A_k_aa}, denote 
\begin{align}\label{Def_A_k_aabis}  
A_k = \left\{ \sup_{0 \leq t \leq 1} \left| S_{[tk]} -  B_{tk} \right| \leq  k^{1/2 - 2 \ee}  \right\} 
\end{align}
and by $A_k^c$ its complement. 
Using the Markov property, we have 
\begin{align}
\mathbb{P} \left( x + S_n \leq t \sqrt{n}, \tau_x > n \right)  =:  J_1 + J_2 + J_3, 
\end{align}
where 
\begin{align*}
& J_1 = \mathbb{P} \left( x + S_n \leq t  \sqrt{n}, \tau_x > n, \nu_n > [n^{1-\ee}] \right),  \nonumber\\
& J_2 = \sum_{k = 1}^{[n^{1-\ee}]} 
    \int_{\bb R } \mathbb{P}( x' + S_{n-k} \leq t \sqrt{n}, \tau_{x'} > n-k, A_{n-k}^c)  \nonumber\\
  & \qquad\qquad\qquad\qquad  \times
\mathbb{P} \left( x + S_k \in dx', \tau_x > k, \nu_n = k  \right),  \nonumber\\
& J_3 =  \sum_{k = 1}^{[n^{1-\ee}]}
  \int_{\bb R } \mathbb{P}(x' + S_{n-k} \leq t \sqrt{n}, \tau_{x'}>n-k, A_{n-k})  \nonumber\\
  & \qquad\qquad\qquad\qquad  \times
\mathbb{P} \left( x + S_k\in dx', \tau_x > k, \nu_n = k \right). 
\end{align*}
\textit{Bound of $J_1$}. 
By Lemma \ref{Lem-NuTzEs}, it holds that
\begin{align} \label{Pf-Thm3-J1}
J_1 \leq  \mathbb{P} \left( \nu_n > [n^{1-\ee}] \right) 
\leq   c_{\ee} e^{ -c_{\ee}n^{\ee} }. 
\end{align}
\textit{Bound of $J_2$}. 
For $k\leq [n^{1-\ee}]$, we have $n-k\geq c_\ee n$. 
By Lemma \ref{FCLT},  
\begin{align}\label{KMT_Inequ}
\mathbb{P} \left( x' + S_{n-k} \leq t \sqrt{n},  \tau_{x'} > n-k, A_{n-k}^c \right)
\leq \mathbb{P} \left( A_{n-k}^c \right)
\leq  \frac{c_{\ee}}{ n^{ \delta/2 - 2 \ee (2+\delta) } }. 
\end{align}
Note that on the event $\{ \tau_x > \nu_n \}$,
we have $n^{1/2-\ee}\leq x + S_{\nu_n}$. 
By Lemma \ref{lemma E1 and E2},  we get
\begin{align} \label{Pf-Thm3-J2}
J_2  &  \leq   \frac{c_{\ee}}{ n^{ \delta /2 - 2 \ee (2+\delta) } }
   \mathbb{P} \left(\tau_x > \nu_n, \nu_n \leq [n^{1-\ee}]\right)  \nonumber\\
& \leq  \frac{c_{\ee}}{ n^{ (\delta + 1) /2 - 2 \ee (2+\delta) - \ee } }
  \mathbb{E} \left(x + S_{\nu_n}; \tau_x > \nu_n, \nu_n \leq [n^{1-\ee}]\right)  \nonumber\\
&  \leq  \frac{c_\ee( 1 + x ) }{ n^{ (\delta + 1)/2 - \ee (5 + 2\delta)  }  }.
\end{align}
\textit{Bound of $J_3$}. 
We write $J_3 = J_{31} + J_{32}$, where 
\begin{align*}
J_{31} & =  \sum_{k = 1}^{[n^{1-\ee}]}
\int_{\bb R } \mathbb{P} (x' + S_{n-k} \leq t  \sqrt{n}, \tau_{x'} > n-k, A_{n-k} ) \nonumber\\
  &\qquad  \times
  \mathbb{P} ( x + S_k \in dx', x + S_k > n^{1/2-\ee/2}, \tau_x > k, \nu_n = k),  \nonumber\\
J_{32} & =  \sum_{k = 1}^{[n^{1-\ee}]} 
\int_{\bb R }\mathbb{P} (x' + S_{n-k} \leq t  \sqrt{n}, \tau_{x'} > n-k, A_{n-k})\nonumber\\
  & \qquad  \times
\mathbb{P} ( x + S_k \in dx', x + S_k \leq n^{1/2-\ee/2}, \tau_x > k, \nu_n = k). 
\end{align*}
\textit{Bound of $J_{31}$}. 
Denote $x_+' = x' + (n-k)^{1/2-2\ee}$. 
Since $x' > n^{1/2-\ee/2}$, from \eqref{Def_A_k_aabis} and \eqref{levy001a}, we derive that
\begin{align}\label{coupling result 1}
\mathbb{P} \left( \tau_{x'} > n-k,  A_{n-k} \right) 
\leq \mathbb{P} \left( \tau_{x_+'}^{bm} > n-k \right)
\leq c \frac{x_+'}{\sqrt{n - k}}
\leq  \frac{ c_{\ee} x' }{ \sqrt{n} }.
\end{align}
Implementing the bound \eqref{coupling result 1} into $J_{31}$, 
and using Lemma \ref{Lem-E3-bound}, we obtain 
\begin{align}\label{Pf-Thm3-J31}
J_{31}  \leq   \frac{c_\ee}{\sqrt{n}} E_2  
\leq  \frac{  c_\ee  (1 + x) }{ n^{(1+\delta)/2 - \ee (1 + \ee + \delta/2)} }. 
\end{align}
\textit{Upper bound of $J_{32}$}. 
Denote $x_+' = x' + (n-k)^{1/2-2\ee}$
and $t_+ = t+\frac{1}{n^{2\ee}}$.
By Lemma \ref{lemma tauBM},  we have 
\begin{align}
\mathcal K(x')  :  & =   \mathbb{P}(x' + S_{n-k }\leq t  \sqrt{n}, \tau_{x'} > n-k, A_{n-k})  \nonumber\\
& \leq  \mathbb{P} \left( x_+' + \sigma B_{n-k}\leq t_+ \sqrt{n}, \tau_{x_+'}^{bm} > n-k\right) \nonumber\\
&  =    \frac{1}{  \sigma \sqrt{2\pi(n-k)} } \int_0^{ \sqrt{n}t_+ }
\Big( e^{ -\frac{ ( u - x_+')^2 }{ 2 \sigma^2 (n-k) } } - e^{ -\frac{ ( u + x_+')^2}{ 2 \sigma^2 (n-k)} } \Big) du \nonumber\\
& \leq  \frac{1}{ \sigma \sqrt{2\pi(n-k)} }    \int_0^{ \sqrt{n}t_+ }  
    e^{- \frac{u^2}{2 \sigma^2 (n-k)}} \Big( e^{ \frac{u x_+'}{ \sigma^2 (n-k) } }  - e^{ - \frac{u x_+'}{ \sigma^2 (n-k) } } \Big)  du \notag\\
& =  \frac{1}{\sqrt{2\pi}}    \int_0^{  \sqrt{\frac{n}{n-k}} t_+ }
 e^{- \frac{s^2}{2} } \Big( e^{  s  \frac{ x_+' }{\sqrt{n-k}}  } - e^{ - s \frac{ x_+' }{\sqrt{n-k}}   } \Big)  ds  \notag\\
 & =  \frac{1}{\sqrt{2\pi}}   \int_0^{  \sqrt{\frac{n}{n-k}} t_+ }
 e^{- \frac{s^2}{2} }  e^{  s  \frac{ x_+' }{\sqrt{n-k}}  }  \Big( 1 - e^{ - 2 s \frac{ x_+' }{\sqrt{n-k}}   } \Big)  ds.  
\end{align}
Using the inequality $1 - e^{-z} \leq  z$ for $z \geq 0$, it follows that, with $t_+ = t+\frac{1}{n^{2\ee}}$, 
\begin{align}\label{Bound_Kx_kk}
\mathcal K(x') \leq  \frac{2x_+'}{  \sqrt{2\pi (n-k)} }   \int_0^{  \sqrt{\frac{n}{n-k}} t_+ }
s  e^{- \frac{s^2}{2} }   e^{  s  \frac{ x_+' }{\sqrt{n-k}}  }  ds.  
\end{align}
Since $k\leq [n^{1-\ee}]$ and  $x_+' = x' + (n-k)^{1/2-2\ee}$ with $x' \leq n^{1/2-\ee/2}$, 
we have 
\begin{align*}
\frac{x_+'}{ \sqrt{n-k} } \leq  \frac{c_{\ee}}{ n^{\ee /2} }. 
\end{align*}
Hence, when $t \in [0, n^{\ee^p}]$ for some constant $p \geq 2$ sufficiently large,  
we have $s \leq \sqrt{\frac{n}{n-k}}  t_+ \leq  c_{\ee} n^{\ee^p}$,
so that 
\begin{align*}
\exp  \left(  s  \frac{ x_+' }{\sqrt{n-k}} \right)   \leq  \exp  \left( \frac{ c_{\ee} }{ n^{\ee/2 - \ee^p} } \right)
\leq  1 +  \frac{ c_{\ee} }{ n^{\ee/2 - \ee^p} }.  
\end{align*}
This implies that when $t \in [0, n^{\ee^p}]$,
\begin{align}\label{Bound_Kx_tSmall}
\int_0^{  \sqrt{\frac{n}{n-k}} t_+ }
s  e^{- \frac{s^2}{2} }   e^{  s  \frac{ x_+' }{\sqrt{n-k}}  }  ds
& \leq   \left(  1 +  \frac{ c_{\ee} }{ n^{\ee/2 - \ee^p} } \right)
 \int_0^{  \sqrt{\frac{n}{n-k}} t_+ }  s  e^{- \frac{s^2}{2} }   ds   \notag\\
& \leq   \left(  1 +  \frac{ c_{\ee} }{ n^{\ee/2 - \ee^p} } \right)  \left( \Phi^+(t) +  \sqrt{\frac{n}{n-k}} t_+  - t \right)   \notag\\
& \leq  \left(  1 +  \frac{ c_{\ee} }{ n^{\ee/2 - \ee^p} } \right)  \left( \Phi^+(t) +  \frac{c_{\ee}}{ n^{\ee} } \right).    
\end{align}
When $t > n^{\ee^p}$, we have 
\begin{align}\label{Bound_Kx_tLarge}
& \int_0^{  \sqrt{\frac{n}{n-k}} t_+ }  s  e^{- \frac{s^2}{2} }   e^{  s  \frac{ x_+' }{\sqrt{n-k}}  }  ds   \notag\\
& =  \int_0^{  n^{\ee^p} }  s  e^{- \frac{s^2}{2} }   e^{  s  \frac{ x_+' }{\sqrt{n-k}}  }  ds
   +   \int_{ n^{\ee^p} }^{  \sqrt{\frac{n}{n-k}} t_+ }  s  e^{- \frac{s^2}{2} }   e^{  s  \frac{ x_+' }{\sqrt{n-k}}  }  ds   \notag\\
& \leq  \left(  1 +  \frac{ c_{\ee} }{ n^{\ee/2 - \ee^p} } \right) 
   \int_0^{  n^{\ee^p} }  s  e^{- \frac{s^2}{2} }   ds   +  c_{\ee}  e^{ - c_{\ee} n^{\ee^p} }   \notag\\
& \leq  \left(  1 +  \frac{ c_{\ee} }{ n^{\ee/2 - \ee^p} } \right)
 \int_0^{t}  s  e^{- \frac{s^2}{2} }   ds  +  c_{\ee}  e^{ - c_{\ee} n^{\ee^p} }.  
\end{align}
Combining \eqref{Bound_Kx_kk}, \eqref{Bound_Kx_tSmall} and \eqref{Bound_Kx_tLarge},  we get that uniformly in $t \in \bb R_+$, 
\begin{align*}
\mathcal K(x') \leq  \frac{2x_+'}{  \sqrt{2\pi (n-k)} }  
\left(  1 +  \frac{ c_{\ee} }{ n^{\ee/2 - \ee^p} } \right)  \left( \Phi^+(t) +  \frac{c_{\ee}}{ n^{\ee} } \right).  
\end{align*}
Since $k\leq [n^{1-\ee}]$,
we have $\frac{ 1}{  \sqrt{ n-k }} \leq  \frac{1}{\sqrt{n}} (1 +\frac{ c_{\ee}}{ n^{\ee} } )$
and 
\begin{align*}
\frac{x_+'}{ \sqrt{n-k} } \leq \frac{1}{\sqrt{n}} \left( x' +  n^{1/2 - 2 \ee}  \right)  \left( 1 + \frac{c_\ee}{n^{\ee}} \right), 
\end{align*}
which implies that
\begin{align}\label{Upper_Pf_CCLT}
\mathcal K(x') 
&  \leq   \frac{2}{\sqrt{2 \pi n}}  \left( x' +  n^{1/2 - 2 \ee}   \right)  
\left(  1 +  \frac{ c_{\ee} }{ n^{\ee/2 - \ee^p} } \right)  \left( \Phi^+(t) +  \frac{c_{\ee}}{ n^{\ee} } \right). 
\end{align}
%
Therefore,  using  the fact that $n^{1/2-\ee} \leq x + S_{\nu_n}$,  we get
\begin{align}\label{Pf_Upper_CCLT_j32}
J_{32} & \leq   \frac{2}{\sqrt{2 \pi n}}  
   \left(  1 +  \frac{ c_{\ee, p} }{ n^{\ee/2 - \ee^p} } \right)
    \left( \Phi^+(t) +  \frac{c_{\ee}}{ n^{\ee} } \right) \notag\\
&  \quad  \times  \sum_{k = 1}^{[n^{1-\ee}]} 
\mathbb{E} \left( x + S_k + n^{1/2 - 2 \ee}, x + S_k \leq n^{1/2-\ee/2}, \tau_x > k, \nu_n = k  \right)   \notag\\
& \leq  \frac{2}{\sqrt{2 \pi  n}}   
\left(  1 +  \frac{ c_{\ee, p} }{ n^{\ee/2 - \ee^p} } \right)
  \left( \Phi^+(t) +  \frac{c_{\ee}}{ n^{\ee} } \right)    \notag\\
&  \quad  \times   
   \mathbb{E} \left( x + S_{\nu_n}, x + S_{\nu_n} \leq n^{1/2-\ee/2}, \tau_x >  \nu_n,  \nu_n \leq [n^{1-\ee}]  \right)   \notag\\
& \leq    \frac{2}{\sqrt{2 \pi  n}}  \left(  1 +  \frac{ c_{\ee, p} }{ n^{\ee/2 - \ee^p} } \right)
   \left( \Phi^+(t) +  \frac{c_{\ee}}{ n^{\ee} } \right)    E_1   \notag\\
& \leq    \frac{2  V(x) }{\sqrt{2 \pi  n}}  \left(  1 +  \frac{ c_{\ee, p} }{ n^{\ee/2 - \ee^p} } \right)
   \left( \Phi^+(t) +  \frac{c_{\ee}}{ n^{\ee} } \right),  
\end{align}
where in the last inequality we used Lemma \ref{lemma E1 and E2}.

\textit{Lower bound of $J_{32}$}.
Denote $x_-' = x' - (n-k)^{1/2-2\ee}$ and $t_- = t -  n^{-2\ee}$.
By Lemma \ref{lemma tauBM},  we have that for $t \geq  n^{- 2\ee}$, 
\begin{align}
 \mathcal K(x') 
& \geq  \mathbb{P} \left( x_-' +  B_{n-k}\leq t_- \sqrt{n}, \tau_{x_-'}^{bm} > n-k\right) \nonumber\\
&  =    \frac{1}{  \sqrt{2\pi(n-k)} } \int_0^{ \sqrt{n}t_- }
\Big( e^{ -\frac{ ( u - x_-')^2 }{ 2  (n-k) } } - e^{ -\frac{ ( u + x_-')^2}{ 2  (n-k)} } \Big) du \nonumber\\
& =  \frac{1}{  \sqrt{2\pi(n-k)} }   e^{ - \frac{ (x_-')^2 }{ 2 (n-k) } }  \int_0^{ \sqrt{n}t_- }  
    e^{- \frac{u^2}{2(n-k)}} \Big( e^{ \frac{u x_-'}{n-k} }  - e^{ - \frac{u x_-'}{n-k} } \Big)  du \notag\\
& =  \frac{1}{\sqrt{2\pi}}  e^{ - \frac{ (x_-')^2 }{ 2 (n-k) } }  \int_0^{  \sqrt{\frac{n}{n-k}} t_- }
 e^{- \frac{s^2}{2} } \Big( e^{  s  \frac{ x_-' }{\sqrt{n-k}}  } - e^{ - s \frac{ x_-' }{\sqrt{n-k}}   } \Big)  ds  \notag\\
& \geq  \frac{1}{\sqrt{2\pi}}  e^{ - \frac{ (x_-')^2 }{ 2 (n-k) } }  \int_0^{  t_- }
 e^{- \frac{s^2}{2} }  e^{ - s  \frac{ x_-' }{\sqrt{n-k}}  }  \Big(  e^{ 2s \frac{ x_-' }{\sqrt{n-k}}   }  - 1 \Big)  ds.   
\end{align}
Using the inequality $e^{z} - 1 \geq  z$ for $z \in \bb R$, we get
\begin{align}\label{LowerBound_Kx_kk}
\mathcal K(x')  \geq   \frac{ 2 x_-' }{  \sqrt{2\pi(n-k)} }
e^{ - \frac{ (x_-')^2 }{ 2 (n-k) } }  \int_0^{  t_- }
 s  e^{- \frac{s^2}{2} }  e^{ - s  \frac{ x_-' }{\sqrt{n-k}}  }    ds.  
\end{align}
Since $x' \leq n^{1/2 - \ee/2}$, $x_-' = x' - (n-k)^{1/2-2\ee}$ and $k\leq [n^{1-\ee}]$, we have 
\begin{align}\label{LowerBound_Kx_g}
\left| \frac{x_-'}{ \sqrt{n-k} }  \right| \leq  \frac{c_{\ee}}{ n^{\ee /2} },  
\qquad  
e^{ - \frac{ (x_-')^2 }{ 2 (n-k) } }  \geq  \left( 1 - \frac{c_{\ee}}{ n^{\ee} } \right). 
\end{align}
Hence, when $t_- \in [0, n^{\ee^p}]$ for some constant $p \geq 2$ sufficiently large,  
we have $s \leq  n^{\ee^p}$,
so that 
\begin{align*}
\exp  \left( - s  \frac{ x_-' }{\sqrt{n-k}} \right)   \geq  \exp  \left( - \frac{ c_{\ee} }{ n^{\ee/2 - \ee^p} } \right)
\geq  1  -  \frac{ c_{\ee} }{ n^{\ee/2 - \ee^p} }.  
\end{align*}
This implies that when $t_- \in [0, n^{\ee^p}]$,
\begin{align}\label{LowerBound_Kx_tSmall}
\int_0^{  t_- }   s  e^{- \frac{s^2}{2} }  e^{ - s  \frac{ x_-' }{\sqrt{n-k}}  }    ds
\geq   \left(  1 -  \frac{ c_{\ee} }{ n^{\ee/2 - \ee^p} } \right)   \int_0^{  t_- }  s  e^{- \frac{s^2}{2} }   ds.  
\end{align}
When $t_- > n^{\ee^p}$, we have 
\begin{align}\label{LowerBound_Kx_tLarge}
 \int_0^{  t_- }   s  e^{- \frac{s^2}{2} }  e^{ - s  \frac{ x_-' }{\sqrt{n-k}}  }    ds   
& \geq  \int_0^{  n^{\ee^p} }  s  e^{- \frac{s^2}{2} }  e^{ - s  \frac{ x_-' }{\sqrt{n-k}}  }   ds    \notag\\
& \geq  \left(  1 -  \frac{ c_{\ee} }{ n^{\ee/2 - \ee^p} } \right) 
   \int_0^{  n^{\ee^p} }  s  e^{- \frac{s^2}{2} }   ds      \notag\\
& \geq  \left(  1 -  \frac{ c_{\ee} }{ n^{\ee/2 - \ee^p} } \right)
 \int_0^{  t_- }  s  e^{- \frac{s^2}{2} }   ds  -  c_{\ee}  e^{ - c_{\ee} n^{\ee^p} }.  
\end{align}
Combining \eqref{LowerBound_Kx_kk}, \eqref{LowerBound_Kx_g},
 \eqref{LowerBound_Kx_tSmall} and \eqref{LowerBound_Kx_tLarge},  
we get that uniformly in $t \in \bb R_+$, 
\begin{align*}
\mathcal K(x') \geq  \frac{ 2 x_-' }{  \sqrt{2\pi(n-k)} }
\left[  \left(  1 -  \frac{ c_{\ee} }{ n^{\ee/2 - \ee^p} } \right)
 \int_0^{  t_- }  s  e^{- \frac{s^2}{2} }   ds  -  c_{\ee}  e^{ - c_{\ee} n^{\ee^p} }  \right].  
\end{align*}
Since $k\leq [n^{1-\ee}]$ and $x_-' = x' - (n-k)^{1/2-2\ee}$,
we have 
\begin{align*}
\frac{x_-'}{ \sqrt{n-k} } \geq \frac{1}{\sqrt{n}} \left( x' -  c_{\ee}  n^{1/2 - 2 \ee}  \right). 
\end{align*}
It follows that
\begin{align}\label{Lower_PfKx_CCLT}
\mathcal K(x') 
&  \geq   \frac{2}{\sqrt{2 \pi n}}  \left( x' -  c_{\ee}  n^{1/2 - 2 \ee}   \right)  \left(  1 -  \frac{ c_{\ee} }{ n^{\ee/2 - \ee^p} } \right)
   \left(  \int_0^{  t_- }  s  e^{- \frac{s^2}{2} }   ds  -  c_{\ee}  e^{ - c_{\ee} n^{\ee^p} }  \right)   \notag\\
&  =   \frac{2}{\sqrt{2 \pi n}}  \left( x' -  c_{\ee}  n^{1/2 - 2 \ee}  \right)  \left(  1 -  \frac{ c_{\ee} }{ n^{\ee/2 - \ee^p} } \right)
    \left[ \Phi^+ (t_-)  -  c_{\ee}  e^{ - c_{\ee} n^{\ee^p} }  \right].  
\end{align}
Following the proof of \eqref{Pf_Upper_CCLT_j32}, 
using Lemmas \ref{lemma E1 and E2} and \ref{Lem-E3-bound}, we obtain 
\begin{align}\label{Pf_Lower_CCLT_j32}
J_{32} & \geq   \frac{2}{\sqrt{2 \pi  n}}  \left(  1 -  \frac{ c_{\ee} }{ n^{\ee/2 - \ee^p} } \right)
    \left[ \Phi^+ (t_-)  -  c_{\ee}  e^{ - c_{\ee} n^{\ee^p} }  \right]   \notag\\
&  \quad  \times  \sum_{k = 1}^{[n^{1-\ee}]} 
\mathbb{E} \left( x + S_k  -  c_\ee  n^{1/2 - 2 \ee}, x + S_k \leq n^{1/2-\ee/2}, \tau_x > k, \nu_n = k  \right)   \notag\\
& \geq    \frac{2}{\sqrt{2 \pi  n}}   \left(  1 -  \frac{ c_{\ee} }{ n^{\ee/2 - \ee^p} } \right)
    \left[ \Phi^+ (t_-)  -  c_{\ee}  e^{ - c_{\ee} n^{\ee^p} }  \right]    \left(  E_1 - E_2 \right)  \notag\\
& \geq    \frac{2}{\sqrt{2 \pi  n}}  \left(  1 -  \frac{ c_{\ee} }{ n^{\ee/2 - \ee^p} } \right)
    \left[ \Phi^+ (t_-)  -  c_{\ee}  e^{ - c_{\ee} n^{\ee^p} }  \right]  
\left(   V(x) -  \frac{  c_\ee  (1 + x) }{ n^{\delta/2 - \ee (1 + \ee + \delta/2)} } \right)   \notag\\
& \geq  \frac{2 V(x)}{\sqrt{2 \pi  n}}   \left(  1 -  \frac{ c_{\ee} }{ n^{\ee/2 - \ee^p} } \right)
   \Phi^+ ( t_-)
   -  \frac{  c_\ee  (1 + x) }{ n^{\delta/2 - \ee (1 + \ee + \delta/2)} }.  
\end{align}
We conclude the proof of Theorem \ref{Theor-IntegrLimTh} by combining \eqref{Pf-Thm3-J1}, \eqref{Pf-Thm3-J2}, 
 \eqref{Pf-Thm3-J31},  \eqref{Pf_Upper_CCLT_j32} and \eqref{Pf_Lower_CCLT_j32}.  
\end{proof}

To prove Theorem \ref{CorCCLT}, we first show the following result which is based on the coupling method
using the functional central limit theorem (Lemma \ref{FCLT}).  

\begin{theorem} 
\label{Theor-limit tau-large n-001CCLT}  
Assume \ref{SecondMoment} with some $\delta>0$. 
For any $\gamma \in (0,  \frac{\delta}{2(2+\delta)})$,
there exists a constant $c_{\gamma} >0$ such that the following assertions hold: 

1. For any $n\geq 1$, $x >0$ and $t>0$,    
\begin{align} \label{CCLT_Appro_Upper}
\mathbb{P} \left(  \frac{x+S_n }{\sigma \sqrt{n}} > t, \tau_x >n\right) 
&  \leq   
    \int_{t}^{\infty}  \psi \left( s, \frac{x}{\sigma \sqrt{n}} + \frac{1}{\sigma n^{\gamma}} \right)  ds  
   +  \frac{c_{\gamma}}{ n^{\frac{\delta}{2} - \gamma (2+\delta) } }. 
\end{align}

2. For any $n\geq 1$, $x > n^{1/2 - \gamma}$ and $t>0$,  
 \begin{align} \label{CCLT_Appro_Lower}
\mathbb{P} \left(  \frac{x+S_n }{ \sqrt{n}} > t, \tau_x >n\right) 
&  \geq   
    \int_{t}^{\infty}  \psi \left( s, \frac{x}{\sigma \sqrt{n}} - \frac{1}{\sigma n^{\gamma}} \right)  ds 
    -  \frac{c_{\gamma}}{ n^{\frac{\delta}{2} - \gamma (2+\delta) } }. 
\end{align}

%
%
\end{theorem}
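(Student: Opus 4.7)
The plan is to couple the random walk $(S_n)$ with a standard Brownian motion via Sakhanenko's strong invariance principle (Lemma~\ref{FCLT}), and then sandwich the event $\{(x+S_n)/(\sigma\sqrt{n}) > t,\, \tau_x > n\}$ between two analogous Brownian events whose probabilities are given explicitly by L\'evy's formula (Lemma~\ref{lemma tauBM}). For any admissible $\gamma$, I would introduce the coupling good event
\begin{align*}
A_n := \Bigl\{ \sup_{0 \leq s \leq 1} \bigl| S_{[ns]} - \sigma B_{ns} \bigr| \leq n^{1/2-\gamma} \Bigr\},
\end{align*}
so that by Lemma~\ref{FCLT}, $\mathbb{P}(A_n^c) \leq c_\gamma n^{-(\delta/2 - \gamma(2+\delta))}$; this produces the common additive error term in both \eqref{CCLT_Appro_Upper} and \eqref{CCLT_Appro_Lower}. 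On $A_n$ one has the uniform two-sided control $\sigma B_k - n^{1/2-\gamma} \leq S_k \leq \sigma B_k + n^{1/2-\gamma}$ for all $0 \leq k \leq n$.

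For the upper bound, set $x_+ := x + n^{1/2-\gamma}$. On $A_n$, the inequality $x+S_k \geq 0$ gives $\sigma B_k \geq -x - n^{1/2-\gamma} = -x_+$, so $\tau_x > n$ forces $\tau^{bm}_{x_+} > n$; similarly $x+S_n > t\sigma\sqrt{n}$ implies $x_+ + \sigma B_n > t\sigma\sqrt{n}$. Decomposing on $A_n$ and its complement and applying \eqref{levy001b} to the Brownian event (with change of variable $u = s/(\sigma\sqrt{n})$) then yields
\begin{align*}
\mathbb{P}\!\left( \frac{x+S_n}{\sigma\sqrt{n}} > t,\, \tau_x > n \right)
 \leq \int_t^\infty \psi\!\left( s, \frac{x_+}{\sigma\sqrt{n}} \right) ds + \mathbb{P}(A_n^c),
\end{align*}
which is exactly \eqref{CCLT_Appro_Upper} since $x_+/(\sigma\sqrt{n}) = x/(\sigma\sqrt{n}) + 1/(\sigma n^\gamma)$.

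For the lower bound, set $x_- := x - n^{1/2-\gamma}$; the hypothesis $x > n^{1/2-\gamma}$ ensures $x_- > 0$, so that $\psi(\cdot, x_-/(\sigma\sqrt{n}))$ is a genuine nonnegative sub-density and the Brownian exit formula applies without modification. A symmetric check on $A_n$ yields the reverse inclusion: if $\tau^{bm}_{x_-} > n$ and $(x_- + \sigma B_n)/(\sigma\sqrt{n}) > t$, then $\sigma B_k \geq -x_-$ forces $S_k \geq \sigma B_k - n^{1/2-\gamma} \geq -x$, i.e.\ $\tau_x > n$, and likewise $x + S_n \geq x_- + \sigma B_n > t\sigma\sqrt{n}$. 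Subtracting $\mathbb{P}(A_n^c)$ from the resulting Brownian probability and invoking Lemma~\ref{lemma tauBM} gives \eqref{CCLT_Appro_Lower}. The only delicate point is keeping track of signs in the coupling so that the sandwich inclusions are correctly oriented in each direction and the constraint $x_- > 0$ is respected; once this bookkeeping is in place, the argument reduces entirely to the explicit Brownian computation, with no probabilistic estimate beyond Lemma~\ref{FCLT} being needed.
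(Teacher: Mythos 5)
Your argument is exactly the paper's: couple via Sakhanenko's strong invariance principle (Lemma~\ref{FCLT}), sandwich the event between Brownian analogues with starting points $x \pm n^{1/2-\gamma}$, and read off the probability from L\'evy's formula (Lemma~\ref{lemma tauBM}), with the coupling-failure probability producing the error term. Only the notational convention for $A_n$ (good versus bad event) differs; the substance is identical.
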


\begin{proof}
Denote
\begin{align}\label{eq-fff-000_bb}
A_n = \left\{ \sup_{0 \leq t \leq 1} \left| S_{[tn]} - \sigma B_{tn} \right|  >  n^{1/2 - \gamma }  \right\}
\end{align}
and by $A_n^c$ the complement of $A_n$.  
By Lemma \ref{FCLT}, 
for any $\gamma \in (0,  \frac{\delta}{2(2+\delta)})$, 
there exists a constant $c_{\gamma}>0$ such that 
\begin{align} \label{boundprobabA_n001_bb}
\mathbb{P} \left( A_{n} \right) \leq  \frac{c_{\gamma}}{ n^{\frac{\delta}{2} - \gamma (2+\delta) } }.  
\end{align}
Since $x_{\gamma}^+ = x + n^{\frac{1}{2}- \gamma }$, 
using Lemma \ref{lemma tauBM} and \eqref{boundprobabA_n001_bb}, we get
\begin{align*}
\mathbb{P} \left(  \frac{x+S_n }{\sigma \sqrt{n}} > t, \tau_{x} > n \right)  
& \leq  \mathbb{P} \left( \frac{x_{\gamma}^+ + \sigma B_{n}}{\sigma \sqrt{n}} > t, \tau_{ x_{\gamma}^+ }^{bm} > n   \right)  
   + \bb{P} ( A_{n} )  \notag\\
&  \leq  \frac{1}{\sigma \sqrt{n} } \int_{t \sigma \sqrt{n}}^{\infty}  
\psi  \left( \frac{s}{\sigma \sqrt{n}},  \frac{x_{\gamma}^+}{\sigma \sqrt{n}} \right)  ds
      + \frac{c_\gamma}{ n^{\frac{\delta}{2} - \gamma (2+\delta) } }, 
\end{align*}
which ends the proof of the upper bound \eqref{CCLT_Appro_Upper}. 

Since $x_{\gamma}^- = x - n^{\frac{1}{2}- \gamma}$, 
using Lemma \ref{lemma tauBM} and \eqref{boundprobabA_n001_bb}, 
we get that for any $x > n^{\frac{1}{2}- \gamma}$,  
\begin{align*}
\mathbb{P} \left(  \frac{x+S_n }{\sigma \sqrt{n}} > t, \tau_{x} > n \right)
&  \geq   
 \mathbb{P} \left( \frac{x_{\gamma}^-  + \sigma B_{n}}{\sigma \sqrt{n}} > t, \tau_{x_{\gamma}^-}^{bm} > n,  A_{n}^c   \right)   \notag\\
& \geq  \mathbb{P} \left( \frac{x_{\gamma}^- + \sigma B_{n}}{\sigma \sqrt{n}} > t, \tau_{x_{\gamma}^-}^{bm} > n   \right)  
    - \bb P(A_{n})  \notag\\
& \geq  \frac{1}{\sigma \sqrt{n} } \int_{t \sigma \sqrt{n}}^{\infty}  
\psi  \left( \frac{s}{\sigma \sqrt{n}},  \frac{x_{\gamma}^-}{\sigma \sqrt{n}} \right)  ds 
    - \frac{c_\gamma}{ n^{\frac{\delta}{2} - \gamma (2+\delta) } },  
\end{align*}
which concludes the proof of the lower bound \eqref{CCLT_Appro_Lower}. 
\end{proof}

Using Theorem \ref{Theor-limit tau-large n-001CCLT}, we now prove Theorem \ref{CorCCLT}. 

\begin{proof}[Proof of Theorem \ref{CorCCLT}]
By the definition of $\psi$ (cf.\ \eqref{Def-Levydens}), we have for $t>0$, 
\begin{align*}
&  \int_{t}^{\infty}  \psi \left( s, \frac{x}{\sigma \sqrt{n}} + \frac{1}{\sigma n^{\gamma}} \right)  ds 
      -   \int_{t}^{\infty}  \psi \left( s, \frac{x}{\sigma \sqrt{n}}   \right)  ds    \notag\\
& =   \frac{1}{ \sqrt{2 \pi} } \int_{t}^{\infty}   
  \left(   e^{ -\frac{1}{2} (s - \frac{x}{\sigma \sqrt{n}} - \frac{1}{\sigma n^{\gamma}} )^2 }  
    -   e^{ -\frac{1}{2} (s - \frac{x}{\sigma \sqrt{n}}  )^2 }  \right)  ds 
    \notag\\
& \quad       +   \frac{1}{ \sqrt{2 \pi} } \int_{t}^{\infty}   
  \left(  e^{ -\frac{1}{2} (s + \frac{x}{\sigma \sqrt{n}}  )^2 }   
     -  e^{ -\frac{1}{2} (s + \frac{x}{\sigma \sqrt{n}} + \frac{1}{\sigma n^{\gamma}} )^2 }   \right)  ds 
   \notag\\
& = :  I_1 + I_2. 
\end{align*}
For $I_1$, by a change of variable and elementary calculations, we get
\begin{align*}
I_1 
& = \frac{1}{ \sqrt{2 \pi} } \int_{t - \frac{x}{\sigma \sqrt{n}}}^{\infty}   
  \left(   e^{ -\frac{1}{2} (s - \frac{1}{\sigma n^{\gamma}} )^2 }  -   e^{ -\frac{1}{2} s^2 }  \right)  ds   \notag\\
&  \leq  \int_{\bb R}  \left|   e^{ -\frac{1}{2} (s - \frac{1}{\sigma n^{\gamma}} )^2 }  -   e^{ -\frac{1}{2} s^2 }  \right|  ds   \notag\\
& =  \int_{|s| \leq n^{\gamma/2} }  \left|   e^{ -\frac{1}{2} (s - \frac{1}{\sigma n^{\gamma}} )^2 }  -   e^{ -\frac{1}{2} s^2 }  \right|  ds  
   +  \int_{|s|  > n^{\gamma/2}  }  \left|   e^{ -\frac{1}{2} (s - \frac{1}{\sigma n^{\gamma}} )^2 }  -   e^{ -\frac{1}{2} s^2 }  \right|  ds   \notag\\
& =:  I_{11}  + I_{12}.  
\end{align*}
For $I_{11}$, using the inequality $e^x - 1 \leq |x| + |x| e^x$, $x \in \bb R$,  we have 
\begin{align*}
I_{11} & =   \int_{|s| \leq n^{\gamma/2} }  e^{ -\frac{1}{2} s^2 }  
   \left|   e^{ \frac{s}{\sigma n^{\gamma}} - \frac{1}{2 \sigma^2 n^{2 \gamma}} }  -   1  \right|  ds  \notag\\
 & \leq \int_{|s| \leq n^{\gamma/2} }  e^{ -\frac{1}{2}  s^2 } 
   \left|  \frac{s}{\sigma n^{\gamma}} - \frac{1}{2 \sigma^2 n^{2 \gamma}}  \right|   ds \notag\\
&   \qquad\qquad +  \int_{|s| \leq n^{\gamma/2} }  e^{ -\frac{1}{2} (s - \frac{1}{\sigma n^{\gamma}} )^2 } 
   \left|  \frac{s}{\sigma n^{\gamma}} - \frac{1}{2 \sigma^2 n^{2 \gamma}}  \right|   ds   \notag\\
& \leq  2 \int_{|s| \leq 2 n^{\gamma/2} }  e^{ -\frac{1}{2}  s^2 } 
   \left|  \frac{s}{\sigma n^{\gamma}} - \frac{1}{2 \sigma^2 n^{2 \gamma}}  \right|   ds   \notag\\
& \leq   \frac{2}{\sigma n^{\gamma} } \int_{|s| \leq 2 n^{\gamma/2} } |s| e^{ -\frac{1}{2}  s^2 } ds 
      +  \frac{1}{\sigma^2 n^{2 \gamma}}  \int_{|s| \leq 2 n^{\gamma/2} }  e^{ -\frac{1}{2}  s^2 }   ds   
      \leq  \frac{c}{ n^{\gamma} }. 
\end{align*}
For $I_{12}$, we have 
\begin{align*}
I_{12} \leq   \int_{ n^{\gamma/2} - \frac{1}{n^{\gamma}} }^{ n^{\gamma/2} }   e^{ -\frac{1}{2} s^2 }   ds
  +   \int_{ - n^{\gamma/2} - \frac{1}{n^{\gamma}} }^{ -n^{\gamma/2} }   e^{ -\frac{1}{2} s^2 }   ds
  \leq  \frac{2}{ n^{\gamma} }. 
\end{align*}
Hence, we get $I_1 \leq    \frac{c}{ n^{\gamma} }.$ Similarly, one can also check that $I_2 \leq    \frac{c}{ n^{\gamma} }.$
Therefore, 
\begin{align}\label{UpperI001CCLT}
\int_{t}^{\infty}  \psi \left( s, \frac{x}{\sigma \sqrt{n}} + \frac{1}{\sigma n^{\gamma}} \right)  ds 
      -   \int_{t}^{\infty}  \psi \left( s, \frac{x}{\sigma \sqrt{n}}   \right)  ds  
      \leq    \frac{c}{ n^{\gamma} }. 
\end{align}
Using \eqref{CCLT_Appro_Upper} and taking $\gamma = \frac{\delta}{ 2(3 + \delta) }$, we obtain 
\begin{align}\label{UpperCCLTLarge}
\mathbb{P} \left(  \frac{x+S_n }{\sigma \sqrt{n}} > t, \tau_x >n\right) 
&  \leq   
    \int_{t}^{\infty}  \psi \left( s, \frac{x}{\sigma \sqrt{n}}  \right)  ds  
  + \frac{c}{ n^{\gamma} }  +  \frac{c_{\gamma}}{ n^{\frac{\delta}{2} - \gamma (2+\delta) } }   \notag\\
& \leq   \int_{t}^{\infty}  \psi \left( s, \frac{x}{\sigma \sqrt{n}}  \right)  ds  
  + \frac{c_{\gamma}}{ n^{ \frac{\delta}{ 2(3 + \delta) } } }.  
\end{align}

To show the lower bound, we first write for $t>0$, 
\begin{align*}
&  \int_{t}^{\infty}  \psi \left( s, \frac{x}{\sigma \sqrt{n}} - \frac{1}{\sigma n^{\gamma}} \right)  ds 
      -   \int_{t}^{\infty}  \psi \left( s, \frac{x}{\sigma \sqrt{n}}   \right)  ds    \notag\\
& =   \frac{1}{ \sqrt{2 \pi} } \int_{t}^{\infty}   
  \left(   e^{ -\frac{1}{2} (s - \frac{x}{\sigma \sqrt{n}} + \frac{1}{\sigma n^{\gamma}} )^2 }  
   -   e^{ -\frac{1}{2} (s - \frac{x}{\sigma \sqrt{n}}  )^2 }  \right)  ds 
    \notag\\
& \quad       +   \frac{1}{ \sqrt{2 \pi} } \int_{t}^{\infty}   
  \left(  e^{ -\frac{1}{2} (s + \frac{x}{\sigma \sqrt{n}}  )^2 }   
    -  e^{ -\frac{1}{2} (s + \frac{x}{\sigma \sqrt{n}} - \frac{1}{\sigma n^{\gamma}} )^2 }   \right)  ds 
   \notag\\
& = :  J_1 + J_2. 
\end{align*}
For $J_1$, by a change of variable and elementary calculations, we get
\begin{align*}
J_1 
& = \frac{1}{ \sqrt{2 \pi} } \int_{t - \frac{x}{\sigma \sqrt{n}}}^{\infty}   
  \left(   e^{ -\frac{1}{2} (s + \frac{1}{\sigma n^{\gamma}} )^2 }  -   e^{ -\frac{1}{2} s^2 }  \right)  ds   \notag\\
&  \geq - \int_{\bb R}  \left|   e^{ -\frac{1}{2} (s + \frac{1}{\sigma n^{\gamma}} )^2 }  -   e^{ -\frac{1}{2} s^2 }  \right|  ds   \notag\\
& =  - \int_{|s| \leq n^{\gamma/2} }  \left|   e^{ -\frac{1}{2} (s + \frac{1}{\sigma n^{\gamma}} )^2 }  
    -   e^{ -\frac{1}{2} s^2 }  \right|  ds  
   -  \int_{|s|  > n^{\gamma/2}  }  \left|   e^{ -\frac{1}{2} (s + \frac{1}{\sigma n^{\gamma}} )^2 }  
      -   e^{ -\frac{1}{2} s^2 }  \right|  ds   \notag\\
& =:  J_{11}  + J_{12}.  
\end{align*}
For $J_{11}$, using again the inequality $e^x - 1 \leq |x| + |x| e^x$, $x \in \bb R$,  we have 
\begin{align*}
J_{11} & =  - \int_{|s| \leq n^{\gamma/2} }  e^{ -\frac{1}{2} s^2 }  
   \left|   e^{ - \frac{s}{n^{\gamma}} - \frac{1}{2 n^{2 \gamma}} }  -   1  \right|  ds  \notag\\
 & \geq  - \int_{|s| \leq n^{\gamma/2} }  e^{ -\frac{1}{2}  s^2 } 
   \left|  \frac{s}{n^{\gamma}} + \frac{1}{ n^{2 \gamma}}  \right|   ds
   -  \int_{|s| \leq n^{\gamma/2} }  e^{ -\frac{1}{2} (s + \frac{1}{n^{\gamma}} )^2 } 
   \left|  \frac{s}{n^{\gamma}} + \frac{1}{2 n^{2 \gamma}}  \right|   ds   \notag\\
& \geq  - 2 \int_{|s| \leq 2 n^{\gamma/2} }  e^{ -\frac{1}{2}  s^2 } 
   \left|  \frac{s}{n^{\gamma}} + \frac{1}{ n^{2 \gamma}}  \right|   ds   \notag\\
& \geq  -  \frac{2}{ n^{\gamma} } \int_{|s| \leq 2 n^{\gamma/2} } |s| e^{ -\frac{1}{2}  s^2 } ds
      -  \frac{2}{ n^{2 \gamma}}  \int_{|s| \leq 2 n^{\gamma/2} }  e^{ -\frac{1}{2}  s^2 }   ds   
      \geq  -  \frac{c}{ n^{\gamma} }. 
\end{align*}
For $J_{12}$, we have 
\begin{align*}
I_{12} \leq   \int_{ n^{\gamma/2}  }^{ n^{\gamma/2} + \frac{1}{n^{\gamma}} }   e^{ -\frac{1}{2} s^2 }   ds
  +   \int_{ - n^{\gamma/2}  }^{ -n^{\gamma/2} +  \frac{1}{n^{\gamma}} }   e^{ -\frac{1}{2} s^2 }   ds
  \leq  \frac{2}{ n^{\gamma} }. 
\end{align*}
Hence, we get $I_1 \leq    \frac{c}{ n^{\gamma} }.$ Similarly, one can also check that $I_2 \leq    \frac{c}{ n^{\gamma} }.$
Therefore, 
\begin{align*}
\int_{t}^{\infty}  \psi \left( s, \frac{x}{\sigma \sqrt{n}} - \frac{1}{\sigma n^{\gamma}} \right)  ds 
      -   \int_{t}^{\infty}  \psi \left( s, \frac{x}{\sigma \sqrt{n}}   \right)  ds 
      \geq   - \frac{c}{ n^{\gamma} }. 
\end{align*}
Using \eqref{CCLT_Appro_Lower} and taking $\gamma = \frac{\delta}{ 2(3 + \delta) }$, we obtain 
\begin{align}\label{LowerCCLTLarge}
\mathbb{P} \left(  \frac{x+S_n }{\sigma \sqrt{n}} > t, \tau_x >n\right) 
&  \geq   
    \int_{t}^{\infty}  \psi \left( s, \frac{x}{\sigma \sqrt{n}}  \right)  ds  
  - \frac{c}{ n^{\gamma} }  -  \frac{c_{\gamma}}{ n^{\frac{\delta}{2} - \gamma (2+\delta) } }   \notag\\
& \geq   \int_{t}^{\infty}  \psi \left( s, \frac{x}{\sigma \sqrt{n}}  \right)  ds  
  - \frac{c_{\gamma} }{ n^{ \frac{\delta}{ 2(3 + \delta) } } }.  
\end{align}
Combining \eqref{UpperCCLTLarge} and \eqref{LowerCCLTLarge} finishes the proof of \eqref{CorCCLT02}. 
\end{proof}

\begin{proof}[Proof of Corollary \ref{Cor-CCLT-Optimal}]
Note that 
\begin{align}\label{pf-Cor-29-001}
I : = \int_{0}^{t}  \psi \left( s, \frac{x}{\sigma \sqrt{n}}   \right)  ds 
=   e^{- \frac{x^2}{2 \sigma^2 n}}   \frac{1}{\sqrt{2 \pi}}  
 \int_{0}^{t}  e^{- \frac{s^2}{2}}   e^{\frac{sx}{\sigma \sqrt{n}}}
  \left( 1 - e^{- \frac{2sx}{\sigma \sqrt{n}}} \right)  ds. 
\end{align}
For the upper bound, using the inequality $1 - e^{-u} \leq u$, $u \geq 0$, we get that for any $x \geq 0$ and $t \geq 0$, 
\begin{align}\label{pf-Cor-29-002}
I & \leq    \frac{2 x}{\sigma \sqrt{2 \pi n}}  
e^{- \frac{x^2}{2 \sigma^2 n}}  \int_{0}^{t}  s  e^{- \frac{s^2}{2}}   e^{\frac{sx}{\sigma \sqrt{n}}}  ds   \notag\\
& =   \frac{2 x}{\sigma \sqrt{2 \pi n}}  
  \int_{0}^{t}  s  e^{- \frac{1}{2} (s - \frac{x}{\sigma \sqrt{n}})^2 }    ds
  =  \frac{2 x}{\sigma \sqrt{2 \pi n}}  
  \int_{ - \frac{x}{\sigma \sqrt{n}} }^{t -  \frac{x}{\sigma \sqrt{n}}}  
   \left( u +  \frac{x}{\sigma \sqrt{n}}  \right) e^{- \frac{1}{2} u^2 }    du  \notag\\
 & \leq   \frac{2 x}{\sigma \sqrt{2 \pi n}}
  \left(  e^{- \frac{x^2}{2 \sigma^2 n}}  - e^{- \frac{1}{2} (t - \frac{x}{\sigma \sqrt{n}})^2 } \right)
   +  c \frac{x^2}{n}   \notag\\
 & =  \frac{2 x}{\sigma \sqrt{2 \pi n}}   e^{- \frac{x^2}{2 \sigma^2 n}} 
  \left(  1  - e^{- \frac{t^2}{2} + \frac{tx}{\sigma \sqrt{n}} } \right)
   +  c \frac{x^2}{n}   \notag\\
 & \leq  \frac{2 V(x)}{\sigma \sqrt{2 \pi n}}  \left(  1  - e^{- \frac{t^2}{2} } \right)   +  c \frac{x^2}{n}  
  =  \frac{2 V(x)}{\sigma \sqrt{2 \pi n}}  \Phi^+(t)   +  c \frac{x^2}{n}, 
\end{align}
where in the last inequality we used the fact that $e^{- \frac{x^2}{2 \sigma^2 n}} \leq 1$, $t \geq 0$ and $x \leq V(x)$.

For the lower bound, 
from \eqref{pf-Cor-29-001} 
and the inequalities $e^{\frac{sx}{\sigma \sqrt{n}}} \geq 1$ and $1 - e^{-u} \geq  u - \frac{u^2}{2}$, $u \geq 0$,
we obtain that for any $x \geq 0$ and $t \geq 0$, 
\begin{align}\label{pf-Cor-29-003}
I  & \geq   e^{- \frac{x^2}{2 \sigma^2 n}}   \frac{1}{\sqrt{2 \pi}}  
 \int_{0}^{t}  e^{- \frac{s^2}{2}}   
  \left( 1 - e^{- \frac{2sx}{\sigma \sqrt{n}}} \right)  ds  \notag\\
& \geq   e^{- \frac{x^2}{2 \sigma^2 n}}   
 \frac{2 x }{ \sigma \sqrt{2 \pi n}}   
 \left(  \int_{0}^{t}   s  e^{- \frac{s^2}{2}}  ds  -   \frac{x}{\sigma \sqrt{n}}  \int_{0}^{t}   s^2  e^{- \frac{s^2}{2}}  ds \right)  \notag\\
& \geq  \left( 1 -  c \frac{x^2}{n} \right)  \frac{2 x }{ \sigma \sqrt{2 \pi n}}    
    \left(  \Phi^+(t)  -  c_t \frac{x}{\sqrt{n}}  \right)   \notag\\
& \geq   \frac{2 x }{ \sigma \sqrt{2 \pi n}}    \left(  \Phi^+(t)  -  c_t \frac{x}{\sqrt{n}}  \right), 
\end{align}
where $c_t = \frac{1}{\sigma} \int_{0}^{t}   s^2  e^{- \frac{s^2}{2}}  ds$. 
Applying Lemma \ref{Lem_V_Ineq_aa} with $k_0 = n$, we get that uniformly in $x \in [n^{1/2 - \ee}, \alpha_n \sqrt{n}]$, 
\begin{align*}
x   \geq  \frac{1}{ 1 + c_{\ee} k_0^{-\ee}}  \left( V(x) - c_{\ee} k_0^{1/2 - \ee} \right)
& \geq  V(x) - c_{\ee} \frac{V(x)}{n^{\ee}} - c_{\ee}  n^{1/2 - \ee}     \notag\\
&  \geq   V(x) \left( 1 -   \frac{ c_{\ee} }{n^{\ee}} \right). 
\end{align*}
Substituting this into \eqref{pf-Cor-29-002} gives that uniformly in $x \in [n^{1/2 - \ee}, \alpha_n \sqrt{n}]$, 
\begin{align}\label{pf-Cor-29-004}
I \geq  \frac{2 V(x) }{ \sigma \sqrt{2 \pi n}}  
\left( 1 -   \frac{ c_{\ee} }{n^{\ee}} \right)  \left(  \Phi^+(t)  -  c_t \frac{x}{\sqrt{n}}  \right)
\geq   \frac{2 V(x) }{ \sigma \sqrt{2 \pi n}}   \left(  \Phi^+(t)  -  c_t' \frac{x}{\sqrt{n}}  \right), 
\end{align}
where $c_t'$ is bounded uniformly in $t \in \bb R_+$. 
Combining \eqref{pf-Cor-29-002} and \eqref{pf-Cor-29-004},
and taking into account Theorems \ref{Theor-IntegrLimTh} and \ref{CorCCLT}, we conclude the proof of \eqref{C-CLTalphan}.

For the second assertion \eqref{CorCCLT02bis},       
applying Lemma \ref{Lem_V_Ineq_aa} with $k_0 = n^{\beta}$, 
we get that for any $\ee \in (0, 1/2)$ and $\beta \in (0, 1/2 - \ee)$, uniformly in $x \in [n^{\beta}, \alpha_n \sqrt{n}]$, 
\begin{align*}
x \geq  \frac{1}{ 1 + c_{\ee} k_0^{-\ee}}  \left( V(x) - c_{\ee} k_0^{1/2 - \ee} \right)
& \geq  V(x) - c_{\ee}  n^{-  \ee \beta}  V(x)  - c_{\ee} n^{ \beta/2}  \notag\\
&  \geq  V(x) - c_{\ee}  n^{-  \ee \beta }  V(x)  - c_{\ee} x  n^{ - \beta/2 }   \notag\\
&  \geq  V(x)  \left( 1 -  c_{\ee}  n^{- \ee \beta } \right). 
\end{align*}
Substituting this into \eqref{pf-Cor-29-003}, we get that uniformly in $x \in [n^{\beta}, n^{1/2 - \ee}]$, 
\begin{align*}
I  & \geq  \frac{2 V(x) }{ \sigma \sqrt{2 \pi n}}  \left( 1 -  c_{\ee}  n^{- \ee \beta } \right)  
  \left(  \Phi^+(t)  -  c_t \frac{x}{\sqrt{n}}  \right)   \notag\\
& \geq  \frac{2 V(x) }{ \sigma \sqrt{2 \pi n}}  \left( \Phi^+(t)  -  c_{\ee}  n^{- \ee \beta }  \right). 
\end{align*}
This, together with \eqref{pf-Cor-29-002} and Theorems \ref{Theor-IntegrLimTh} and \ref{CorCCLT},
finishes the proof of \eqref{CorCCLT02bis}. 
\end{proof}


\vskip5mm

\end{document}